\def\hangbox to #1 #2{\vskip3pt\hangindent #1\noindent \hbox to #1{#2}$\!\!$}
\newtheorem*{mthm}{Main Theorem}
\newtheorem*{thmA}{Theorem A}
\newtheorem*{thmB}{Theorem B}
\newtheorem*{thmC}{Theorem C}
\newtheorem{thm}{Theorem}[section]
\newtheorem{lem}[thm]{Lemma}
\newtheorem{cor}[thm]{Corollary}
\newtheorem{prop}[thm]{Proposition}
\theoremstyle{definition}
\newtheorem{ex}[thm]{Example}
\newtheorem{exs}[thm]{Examples}
\newtheorem{defin}[thm]{Definition}
\theoremstyle{remark}
\newtheorem{rem}[thm]{Remark}
\def\B{{\mathbb B}}\def\D{{\mathbb D}}
\def\E{{\mathbb E}}
\def\P{{\mathbb P}}
\def\N{{\mathbb N}}
\def\R{{\mathbb R}}
\def\cA{{\mathcal A}}
\def\cT{{\mathcal T}}
\def\cB{{\mathcal B}}
\def\cF{{\mathcal F}}
\def\cS{{\mathcal S}}
\def\cG{{\mathcal G}}
\newcommand{\tn}{|\!|\!|}
\newcommand{\keq}{\!=\!}
 \newcommand{\kneq}{\!\neq\!}
\newcommand{\kleq}{\!\leq\!}
\newcommand{\kge}{\!\ge\!}\newcommand{\kgeq}{\!\ge\!}
\newcommand{\kle}{\!<\!}
\newcommand{\kgr}{\!>\!}
\newcommand\kin{\!\in\!}
\newcommand{\ksubset}{\!\subset\!}
\newcommand{\ksupset}{\!\supset\!}
\newcommand{\ksetminus}{\!\setminus\!}
\newcommand{\kplus}{\!+\!}
\newcommand{\kminus}{\!-\!}
\newcommand{\ab}{\overline{a}}
\newcommand{\xb}{\overline{ x}}
\newcommand{\zb}{\overline{z}}
\newcommand{\jb}{\overline{\text{\em \j}}}
\newcommand{\yb}{\overline{y}}
\newcommand{\vpb}{{\overline \vp}}
\renewcommand{\l}{{\rm l}}
\newcommand{\fw}{\text{\fw}}
\newcommand{\rg}{\text{\rm rg}}
\newcommand{\supp}{{\rm supp}}
\newcommand{\spa}{{\rm span}}
\newcommand{\Sz}{\text{\rm Sz}}
\newcommand{\CB}{\text{\rm CB}}
\newcommand{\MAX}{\text{\rm MAX}}
\def\vp{\varepsilon}
\def\vpb{{\overline{\vp}}}
\newcommand{\xt}{{\tilde x}}
\newcommand{\zt}{{\tilde z}}
\newcommand{\sder}[1]{\ensuremath{^{(#1)}_S}} 
\newcommand{\si}{\ensuremath{\mathrm{I}_S}} 
\newcommand{\wi}{\ensuremath{\mathrm{I}_w}} 
\newcommand{\ie}{\textit{i.e.,}\ }
\begin{document}

\allowdisplaybreaks
\title[On Zippin's Embedding Theorem]{On Zippin's Embedding Theorem of Banach spaces into Banach spaces with bases}
\author{Th.~Schlumprecht}
\begin{abstract} We present a new proof of Zippin's Embedding Theorem, that every  separable reflexive  Banach space 
embeds into one with shrinking and boundedly complete  basis, and every Banach space with a separable dual embeds into one with a shrinking
 basis. This new  proof leads to improved versions of other embedding results.
\end{abstract}
\thanks{Research partially supported by grants  from the National Science Foundation DMS 0856148 and DMS 1160633}
\address{Department of Mathematics, Texas A\&M University, College Station, TX 77843, USA and }
\address{Faculty of Electrical Engineering, 
Czech Technical University in Prague, Zikova 4, 16627, Prague, Czech Republic}
\email{schlump@math.tamu.edu}
\keywords{Embedding into  Banach spaces with bases, Szlenk index}

\subjclass[2000]{46B03, 46B10.}
\maketitle

\tableofcontents

\section{Introduction}\label{S:1}

In 1988 M.~Zippin answered a question posed by Pe\l czy\'nski \cite[Problem I]{Pe}  in 1964 and proved the following  embedding  result.
\begin{thm}\label{T:1.1}\cite[Corollary]{Z} Every separable and reflexive Banach space embeds into  a reflexive Banach space with a basis.\end{thm}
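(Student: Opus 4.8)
The plan is to deduce Theorem~\ref{T:1.1} from two ingredients: a Davis--Figiel--Johnson--Pe\l czy\'nski (DFJP) interpolation, which converts a weakly compact convex symmetric set into a reflexive space, performed \emph{along a coordinate structure} so that the output carries a finite-dimensional decomposition (FDD); and the classical fact that a space with an FDD embeds, in a reflexivity-preserving way, as a complemented subspace of a space with a basis. Thus the whole problem reduces to showing that \emph{every separable reflexive space $X$ embeds into a reflexive space with an FDD.}

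First I would fix (Banach--Mazur) an isometric embedding $X\hookrightarrow Z:=C[0,1]$ and recall that $Z$ has a monotone Schauder basis $(e_i)$ with partial-sum projections $(R_n)$; since $X$ is reflexive its unit ball $B_X$ is weakly compact in $X$, and --- the inclusion being weak-to-weak continuous --- weakly compact as a subset of $Z$. Applying the DFJP construction to $W:=B_X\subseteq Z$ (take $\|\cdot\|_n$ to be the Minkowski gauge of $2^nW+2^{-n}B_Z$ and $\|z\|_Y:=(\sum_n\|z\|_n^2)^{1/2}$) yields a Banach space $Y$, sitting as a linear subspace of $Z$ with a stronger norm, such that the inclusion $j\colon Y\to Z$ is bounded and $B_X\subseteq cB_Y$ for some constant $c$; because $W$ is weakly compact, $Y$ is reflexive. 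On $X$ one has $\|x\|_X=\|x\|_Z\le\|j\|\,\|x\|_Y$ (the embedding $X\hookrightarrow Z$ is isometric and $j$ is bounded) and $\|x\|_Y\le c\,\|x\|_X$ (as $B_X\subseteq cB_Y$), so the inclusion $X\hookrightarrow Y$ is an isomorphic embedding into the reflexive space $Y$.

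The subtle point --- and the step I expect to be the main obstacle --- is that the bare space $Y$ need not possess any FDD: a subspace of $C[0,1]$ carrying a stronger norm can fail even the approximation property. The interpolation must therefore be organized compatibly with the basis $(e_i)$, so that the partial-sum projections $R_n$ stay uniformly bounded for $\|\cdot\|_Y$ and the finitely supported vectors stay dense; then a suitably blocked version of $(e_i)$ is an FDD for $Y$. Concretely I would first pass to a sufficiently fast blocking $(F_k)$ of $(e_i)$ --- chosen relative to $W$ so that the vectors of $W$ are, up to small errors, supported on few consecutive blocks --- and then interpolate along $(F_k)$, possibly enlarging $W$ to a weakly compact convex symmetric set adapted to the corresponding block projections. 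Verifying that such a set remains weakly compact (here reflexivity of $X$ and a diagonalization argument should be used) while still capturing $B_X$, and that the resulting norm genuinely makes $(F_k)$ an FDD of a still-reflexive space, is the heart of the matter; this is presumably where the new proof streamlines Zippin's original transfinite induction.

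Finally, with $X$ isomorphically embedded in a reflexive space $Y$ carrying an FDD $(F_k)$, I would invoke the well-known construction realizing a space with an FDD as a complemented subspace of a space with a basis --- e.g.\ a space whose norm is an $\ell_2$-type combination of the norms of the truncations $F_1\oplus\cdots\oplus F_k$ --- which one checks to be reflexive when $Y$ is. This exhibits $X$ as a subspace of a reflexive Banach space with a basis, proving Theorem~\ref{T:1.1}. I would also expect the argument to be quantitative: tracking the Szlenk indices $\Sz(X)$ and $\Sz(X^*)$ --- both countable ordinals for reflexive $X$ --- through the construction should bound $\Sz(Y)$ in terms of $\Sz(X)$, which is exactly the form needed for the sharpened statements advertised in the abstract, and the same scheme applied on the dual side should handle the case of a separable dual and a shrinking basis.
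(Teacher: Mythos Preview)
Your approach is fundamentally different from the paper's, and it contains a genuine gap exactly where you yourself flag it. You work externally: embed $X$ into $C[0,1]$, run DFJP on $B_X$, and then hope to arrange the interpolation so that the basis of $C[0,1]$ survives as an FDD of the interpolated space. The difficulty is real: the basis of $C[0,1]$ is not shrinking, so the block projections $R_n$ need not be uniformly bounded on the DFJP norms, and your proposed fix---``enlarging $W$ to a weakly compact convex symmetric set adapted to the corresponding block projections''---is not carried out. Making such an enlargement simultaneously (i) weakly compact, (ii) still containing $B_X$, and (iii) invariant enough under the $R_n$ to force uniform boundedness is essentially the content of Zippin's theorem; you have relocated the problem rather than solved it. The classical route that \emph{does} work via DFJP first invokes Theorem~\ref{T:1.2} to embed $X$ into a space with a \emph{shrinking} basis and only then interpolates---but that presupposes the harder theorem.

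The paper avoids $C[0,1]$ and DFJP entirely. It works intrinsically: it takes a shrinking $1$-norming FMD $(E_j)$ of $X$ (these always exist when $X^*$ is separable), blocks it via Lemma~\ref{L:2.2} so that skipped block sequences are uniformly basic, and then uses Johnson's lemma (Lemma~\ref{L:3.1}) to choose integers $n_1<n_2<\dots$ so that every $x^*\in B_{X^*}$ is nearly zero on some $E_{j_k}$ with $j_k\in[n_k,n_{k+1}]$. The superspace $Z$ is built directly: $Z_k=\spa(E_j:n_{k-1}<j<n_{k+1})$ (these overlap), and the norm on $c_{00}(Z_k)$ is defined by declaring the set $\B^*$ of ``skipped'' decompositions of functionals in $B_{X^*}$ to be $1$-norming. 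One then checks by hand that $(Z_k)$ is an FDD, that it is shrinking (Lemma~\ref{L:3.5}, via a compactness argument on Schreier-type families), and that it is boundedly complete when $X$ is reflexive (Lemma~\ref{L:3.7}). The passage from FDD to basis is the Lindenstrauss--Tzafriri construction (Theorem~\ref{T:3.9}), not an $\ell_2$-sum of truncations. The payoff of this intrinsic approach is precisely the Szlenk-index control in the Main Theorem: because $Z$ is normed by (images of) functionals from $B_{X^*}$, one gets $\Sz(Z)=\Sz(X)$ and, in the reflexive case, $\Sz(Z^*)=\Sz(X^*)$---something the $C[0,1]$/DFJP route does not readily yield.
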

It was shown in  \cite{DFJP}, and mentioned in \cite{Z}, that   Theorem \ref{T:1.1}  can be deduced from the  following result which answers a  question 
of Lindenstrauss and Tzafriri \cite[Problem 1.b.16]{LT}.
\begin{thm}\label{T:1.2}\cite[Theorem]{Z}  Every Banach space with a separable dual embeds into a space with shrinking basis.\end{thm}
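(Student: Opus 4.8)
The plan is to split the argument in two. Part (i): $X$ embeds isomorphically into a Banach space $Z$ having a \emph{shrinking finite-dimensional decomposition} (FDD). Part (ii): every Banach space with a shrinking FDD embeds into a Banach space with a shrinking basis. Part (ii) is a classical amalgamation: one refines an FDD $(F_n)$ into a genuine basis by splitting each $F_n$ along an Auerbach basis and renorming the enlarged coordinate system, and the shrinking property is inherited because the dual FDD of a shrinking FDD is itself an FDD of the separable space $Z^*$. So the real content of the theorem is Part (i), to which the rest of this sketch is devoted.

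For Part (i) I would first realise $X$ concretely: fix an increasing sequence $(E_n)$ of finite-dimensional subspaces of $X$ with $\overline{\bigcup_n E_n}=X$, which is possible since $X$ is separable (equivalently, regard $X$ as a subspace of $C[0,1]$ with its usual monotone basis). The separability of $X^*$ enters through Szlenk's theorem, in the quantitative form $\Sz(X)=\sup_{\vp>0}\Sz(X,\vp)<\omega_1$: for every $\vp>0$ the transfinite $w^*$-Szlenk derivations $s_\vp^{\alpha}(B_{X^*})$ reach $\emptyset$ after countably many steps. I would then build $Z$ as the completion of $c_{00}$ under a mixed, weighted norm whose formal unit vectors are grouped into blocks of sizes $\dim E_n$ — in the spirit of Tsirelson-type constructions, or of a transfinite Davis--Figiel--Johnson--Pe\l czy\'nski interpolation performed against the layers $s_\vp^{\alpha}(B_{X^*})\setminus s_\vp^{\alpha+1}(B_{X^*})$ rather than against one weakly compact set — with the ``admissibility rules'' of the norm indexed by the ordinals below $\Sz(X,\vp)$. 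By design the blocks form an FDD of $Z$; and the norm is arranged so that, on the one hand, it dominates $\|\cdot\|_X$ on the copy of $X$ (exploiting that the functionals harvested from $B_{X^*}$ during the construction norm $X$), so that the inclusion $X\hookrightarrow Z$ is an isomorphic — in fact almost isometric — embedding, while on the other hand any element of $Z^*$ can ``see'' a normalised block sequence through only boundedly many blocks at each fixed scale.

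The step I expect to be the main obstacle is the verification that this FDD of $Z$ is shrinking. It cannot be deduced from separability of $Z^*$ alone — the James space has a separable dual but a non-shrinking natural basis — so it must be forced by the admissibility rules, and this is exactly where the \emph{countability} of $\Sz(X,\vp)$ is indispensable. The mechanism should be: a failure of shrinkingness at some scale $\vp$ exhibits a normalised block sequence $(z_k)$ in $Z$ and a functional $\phi\in B_{Z^*}$ with $|\phi(z_k)|\ge\vp$ for all $k$; unwinding the definition of $\|\cdot\|_Z$ and restricting to $X$, an iteration of this situation manufactures a $\vp$-separated $w^*$-tree inside $B_{X^*}$ whose order exhausts the whole $\Sz(X,\vp)$-long hierarchy coded into the norm — which is absurd, since a $\vp$-separated $w^*$-tree of order $\delta$ already forces $\Sz(X,\vp)>\delta$. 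Making this reflection rigorous — choosing the filtrations of $X^*$ and the interpolating norm so that ``non-shrinking at scale $\vp$'' has genuinely no alternative but to climb through all of the layers, while simultaneously keeping the norm large enough for $X$ still to embed — is where the technical weight lies; running the same construction while bookkeeping how the derivation length of $Z$ depends on $\Sz(X)$ is then what yields the sharper, Szlenk-index-controlled embedding statements.
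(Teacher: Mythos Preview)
Your two-part plan matches the paper's architecture, and Part~(ii) is essentially the Lindenstrauss--Tzafriri construction the paper carries out in Theorem~3.9. But Part~(i) both diverges from the paper and, as written, has a genuine gap: you never actually define the norm on $Z$. ``A Tsirelson-type construction, or a transfinite DFJP interpolation against the Szlenk layers $s_\vp^\alpha(B_{X^*})$'' names an aspiration, not a space; without a concrete norming set one cannot verify that $X$ embeds, nor run the tree argument you sketch for shrinkingness. You flag this yourself as ``where the technical weight lies'', but what is missing is the \emph{mechanism} that makes the embedding and the shrinkingness hold simultaneously, not merely the bookkeeping on top of it.

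The paper's mechanism is quite different and does not use the Szlenk index at all for Theorem~1.2 (that machinery enters only later, for the quantitative parts (a) and (b) of the Main Theorem). The decisive input from separability of $X^*$ is that $X$ admits a \emph{shrinking FMD} $(E_j)$. After a preliminary blocking (Lemma~2.2), Johnson's Lemma~3.1 produces $(n_k)$ such that every $x^*\in B_{X^*}$ is small on some $E_{j_k}$ with $j_k\in[n_k,n_{k+1}]$; this yields a norming set $B^*\subset B_{X^*}$ of functionals that actually \emph{vanish} on one coordinate in each block-interval. One then sets $Z_k=\spa(E_j:n_{k-1}<j<n_{k+1})$ and defines $Z$ by declaring the lift $\B^*$ of $B^*$ to be its norming set; the map $x\mapsto\big(P^E_{(n_{k-1},n_{k+1})}x\big)_k$ is an isometric embedding. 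Shrinkingness (Lemmas~3.5--3.6) follows from an elementary compactness-plus-Schreier-space argument: because each $x^*\in D^*$ decomposes as a \emph{norm-convergent} skipped sum $\sum_k x^*_k$, the family of finite sets $A$ on which some $z^*\in\B^*$ stays $\ge c$ along a given normalized block sequence is compact in $[\N]^{<\omega}$, and a short $C(K)$ argument then rules out $\ell_1$-behaviour. No ordinal induction, no Szlenk derivations --- so your proposed contradiction with $\Sz(X,\vp)$ is aiming at the wrong target for this theorem.
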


Zippin's Theorem is the starting point of several other embedding results.  In \cite{OSZ},  it was shown that  if $X$ is a reflexive and separable
Banach space 
 and $\alpha$ is a countable ordinal  for which $\max(\Sz(X),\Sz(X^*))\le \omega^{\alpha\omega}$ then $X$ embeds into a  reflexive space $Z$ with basis
 for which $\max(\Sz(Z),\Sz(Z^*))\le \omega^{\alpha\omega}$.  Here $\Sz(Y)$ denotes the Szlenk index of a Banach space $Y$ \cite{Sz} (see Section \ref{S:4}). In \cite{FOSZ} it was shown 
 that if $X$ has a separable dual and  $\Sz(X)\le  \omega^{\alpha\omega}$, then $X$ embeds in a space $Z$  with shrinking basis for which 
 $\Sz(Z)\le  \omega^{\alpha\omega}$.  Causey \cite{Ca1,Ca2}  refined these results and proved that if  $\Sz(X)\le \omega^{\alpha }$, then 
  $X$ embeds into a space $Z$ with a shrinking basis with $\Sz(Z)\le     \omega^{\alpha+1}$,
   and   it  embeds into a space $Z$  with shrinking and boundedly complete basis for which 
   $\max(\Sz(Z)\Sz(Z^*))\le     \omega^{\alpha+1}$, in case that $X$ is reflexive and $\max(\Sz(X)\Sz(X^*))\le     \omega^{\alpha}$. Recall that by  \cite[Theorems 3.22 and
4.2]{AJO} the Szlenk index of a space with separable dual is always of the form $\omega^\alpha$, for some $\alpha<\omega_1$.
 In \cite{JZ1}  Johnson and Zheng characterized  reflexive spaces, which embed into reflexive spaces,  having an unconditional basis, and in 
 \cite{JZ2} they obtained an analogous result for spaces with separable duals. 
 
 The proof of all these embedding results start by applying Theorem \ref{T:1.1}
 or Theorem \ref{T:1.2} to embed the given space $X$ into a  reflexive space or a space with separable dual $Z$, which has a basis. Then, using the additional  properties 
 of $X$, one modifies the space $Z$  appropriately, to achieve the wanted  properties of  $Z$, without losing the embeddability of $X$ into $Z$.

The two  known proofs of Zippin's Embedding Theorem \ref{T:1.2}, namely Zippin's original  proof, as well as the  proof by  Ghoussoub, Maurey and Schachermayer \cite{GMS} start by embedding
 the given Banach space $X$ into $C(\Delta)$,  the space of continuous functions on the Cantor set $\Delta$,   and then
  passing  to subspaces and modifying  the  norm on them.  Unfortunately, neither proof provides additional information about the space with basis in which $X$ embeds.
In  this paper we will follow a different approach and  present a proof of Theorems \ref{T:1.1}  and \ref{T:1.2} which starts from  a Markushevich basis of the given space $X$, 
 and  then extends and modifies  this Markushevich basis   
  just enough to arrive to a space with shrinking basis.
 The resulting space $W$ will then be  much closer to the space $X$ and inherit several properties.
 
Our main result is as  follows. All possibly unfamiliar  notation will be introduced later.
\begin{mthm} Assume that $X$ is a Banach space with separable dual. Then $X$ embeds into a space
$W$ with a shrinking  basis   $(w_i)$  so that 
\begin{enumerate} 
\item[ a)] $\Sz(W)=\Sz(X)$,
\item[b)] if $X$ is reflexive then $W$ is reflexive and $\Sz(X^*)=\Sz(W^*)$, and 
\item[c)] if $X$ has the {\em $w^*$-Unconditional Tree Property}, then $(w_i)$ is unconditional.
\end{enumerate}
\end{mthm}
Part (a) and (b) of the Main Theorem  answer a question posed by  Pe\l czy\'nski, and sharpen the  results of 
\cite{Ca1,Ca2, FOSZ, OSZ} which were stated at the beginning of this section.
As mentioned before,  the fact that a reflexive separable  Banach space $X$, or a space  with separable dual, having the $w^*$-{\em Unconditional Tree Property } (see Section \ref{S:6}) embeds into a space $Y$ with boundedly and shrinking basis, or with shrinking basis, respectively,
 was first shown in \cite{JZ1} and \cite{JZ2}. While the proofs of the  main results of  \cite{JZ1} start out by using  
 Zippin's Theorem \ref{T:1.1} and 
 first consider an embedding of the given separable space $X$  into a reflexive space with basis, in \cite{JZ2} it was directly shown that 
  a space with separable dual and  the $w^*$-Unconditional Tree Property, embeds into  one with an unconditional and shrinking basis.
 Our argument will follow more along the lines of \cite{JZ2}  and use coordinate systems which are known to exist in every separable Banach space, namely Markushevich bases, and their multidimensional counterparts {\em Finite Dimensional Markushevish Decompositions}  (see Section \ref{S:2}).

The  Main Theorem will follow from the following two results, Theorem A and Theorem B. The  first one is  a version of the Main Theorem  for {\em Finite Dimensional Decompositions} (FDD), which will be defined in Section \ref{S:2}.
\begin{thmA} Assume that $X$ is Banach space with separable dual. Then $X$ embeds into a space
$Z$ with a shrinking    FDD  $(Z_i)$  so that 
\begin{enumerate} 
\item[ a)] $\Sz(Z)=\Sz(X)$,
\item[b)] if $X$ is reflexive then $Z$ is reflexive and $\Sz(X^*)=\Sz(Z^*)$, and 
\item[c)] if $X$ has the {\em $w^*$-Unconditional Tree Property}, then $(Z_i)$ is unconditional.
\end{enumerate}
\end{thmA}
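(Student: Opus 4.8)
\smallskip
\noindent\emph{A proof strategy.}
The plan is to build $Z$ directly from a coordinate system of $X$, avoiding the detour through $C(\Delta)$. First I would fix a Finite Dimensional Markushevich Decomposition $(G_n)$ of $X$ as in Section~\ref{S:2}, together with an associated sequence of finite dimensional subspaces $(G_n^*)\subset X^*$ whose union is total; since $X^*$ is separable these can be arranged so that $\overline{\bigcup_n G_n^*}=X^*$. On $c_{00}\big(\bigoplus_n G_n\big)$ I would define a norm $\|z\|_Z=\sup_{\varphi\in\mathcal W}\varphi(z)$, where the norming set $\mathcal W\subset c_{00}\big(\bigoplus_n G_n^*\big)$ contains, for every $x^*\in\bigcup_n G_n^*$, the ``diagonal'' functional $(g_n)_n\mapsto x^*\big(\sum_n g_n\big)$, is closed under restrictions to intervals of $\N$ (forcing the canonical copies $Z_n$ of $G_n$ to be a bimonotone FDD), and is otherwise generated subject to an admissibility condition of Schreier / mixed-Tsirelson type tied to the transfinite Szlenk derivation tree of $B_{X^*}$ — the last ingredient being exactly what will make the Szlenk index come out right. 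The diagonal functionals guarantee that the summation map $S\colon Z\to X$, $S((g_n)_n)=\sum_n g_n$, is well defined and norm one, and that they recover the original norm of $X$ on the range of the coordinate map $J\colon X\to Z$; choosing $\mathcal W$ so that the admissibility condition also keeps $\varphi\circ J$ uniformly bounded for $\varphi\in\mathcal W$, one obtains that $J$ is an isomorphic embedding of $X$ into $Z$ with $S\circ J=\mathrm{id}_X$.

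Next I would verify that $(Z_n)$ is shrinking: since every $\varphi\in\mathcal W$ has finite support and is assembled from vectors of $\bigcup_n G_n^*$, one checks that $Z^*=\overline{\bigcup_n(Z_1^*\oplus\cdots\oplus Z_n^*)}$, equivalently that every normalized block sequence of $(Z_n)$ is weakly null. For (b), if $X$ is reflexive then $(G_n^*)$ is itself an FDMD of $X^*$ and the whole construction is symmetric in $X$ and $X^*$ (the norming set can be taken self-dual), so the same argument on the dual side shows $(Z_n^*)$ is shrinking, i.e. $(Z_n)$ is boundedly complete; hence $Z$ is reflexive and $Z^*$ is the analogous space built over $X^*$. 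For (c), if $X$ has the $w^*$-Unconditional Tree Property, then using the mechanism of Section~\ref{S:6} the tree of norming functionals generating $\mathcal W$ can be chosen so that $\mathcal W$ is invariant under sign changes of the coordinates, which makes the FDD $(Z_n)$ unconditional.

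The main obstacle, and the point that dictates the precise admissibility condition on $\mathcal W$, is the estimate $\Sz(Z)\le\Sz(X)$ (and, in the reflexive case, $\Sz(Z^*)\le\Sz(X^*)$); the reverse inequalities are automatic, since $X$ embeds into $Z$ and the Szlenk index is monotone under passing to subspaces and, for the dual statement, to quotients. To obtain the upper bound I would run the transfinite $w^*$-Szlenk derivation on $B_{Z^*}$ and compare it step by step with that of $B_{X^*}$: a sequence in $B_{Z^*}$ surviving one derivation step with separation $\delta$ must, after discarding a tail contribution controlled by the shrinking FDD structure, project via $S^*$ (up to a small perturbation permitted by the admissibility condition) to a $\delta'$-separated $w^*$-convergent configuration in $B_{X^*}$; iterating this comparison along the ordinals shows that the derivation on $B_{Z^*}$ is exhausted no later than $\Sz(X)$. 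The delicate part is to make this comparison uniform, so that the unavoidable $\varepsilon$-losses incurred at successive derivation steps do not accumulate into an extra factor of $\omega$ in the Szlenk index — this is precisely the combinatorial bookkeeping encoded in the Schreier/mixed-Tsirelson admissibility built into $\mathcal W$, and it is the heart of the argument. The reflexive and unconditional refinements then follow by carrying this same transfinite analysis through the self-dual, respectively sign-invariant, versions of the construction.
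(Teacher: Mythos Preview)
Your strategy diverges from the paper's at the most important point, and the divergence is not cosmetic. You propose to control $\Sz(Z)$ by building a Schreier/mixed-Tsirelson type admissibility condition into the norming set $\mathcal W$; this is precisely the mechanism used in the earlier papers \cite{OSZ,FOSZ,Ca1,Ca2}, and those constructions only yield $\Sz(Z)\le\omega^{\alpha+1}$ (or worse) when $\Sz(X)=\omega^\alpha$. The ``$\varepsilon$-losses do not accumulate into an extra factor of $\omega$'' problem you flag as the heart of the argument is exactly the obstacle those papers could not overcome, and nothing in your outline indicates how a cleverer admissibility condition would do better. As written, your plan would reprove Causey's bound, not the sharp equality in (a).

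The paper avoids this difficulty by a completely different device. It does \emph{not} set $Z_n\cong G_n$; instead, after choosing an FMD $(E_j)$ and integers $(n_k)$ via Johnson's lemma (Lemma~\ref{L:3.1}), it defines the \emph{overlapping} blocks $Z_k=\spa(E_j:n_{k-1}<j<n_{k+1})$ and takes as norming set $\B^*$ the collection of sequences $(x^*_k)$ arising from functionals $x^*\in B_{X^*}$ that vanish on some $E_{j_k}$ in each interval $[n_k,n_{k+1}]$. The point is that the restriction $I^*|_{\B^*}:\B^*\to B^*\subset B_{X^*}$ is \emph{norm-preserving} (Proposition~\ref{P:3.3}(4)), so the Szlenk derivations of $\B^*$ and of its image $B^*$ match step for step with no $\varepsilon$-loss at all (Lemma~\ref{L:5.6}). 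One then needs only the general fact, proved independently as Theorem~C, that a norming subset $K\subset B_{X^*}$ satisfies $\Sz(X)=\min\{\omega^\alpha:\omega^\alpha\ge\Sz(K)\}$; applied to $\B^*\subset B_{Z^*}$ this gives $\Sz(Z)\le\Sz(X)$ immediately. No Schreier bookkeeping enters the construction of $Z$.

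Two further points. Your claim that in the reflexive case ``the whole construction is symmetric in $X$ and $X^*$'' and hence self-dual is not how the paper proceeds and is unlikely to be true for your construction either: the paper shows bounded completeness and $\Sz(Z^*)=\Sz(X^*)$ by separate arguments (Lemma~\ref{L:3.7} and Lemma~\ref{L:5.7}) exploiting that skipped blocks in $\D^*$ are isometrically equivalent to skipped blocks in $D^*$. And for (c), the paper does not make $\mathcal W$ sign-invariant; rather it shows (Proposition~\ref{P:3.8}) that $(Z_j)$ is automatically unconditional whenever skipped blocks of $(F_j)$ in $X^*$ are uniformly unconditional, and then uses the infinite asymptotic game machinery of Section~\ref{S:6} to deduce the latter from the $w^*$-UTP after a suitable blocking of the FMD.
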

The second result uses a construction in \cite{LT} and allows to pass from FDDs to bases.
\begin{thmB} Assume that $V$ is Banach space with an FDD $(V_j)$. Then there exists a Banach space $W$ with a basis $(w_j)$, which contains $V$ so that 
\begin{enumerate} 
\item[ a)] if $(V_i)$  is shrinking, so is $(w_j)$, and in that case   $\Sz(W)=\Sz(V)$,
\item[b)] if $V$ is reflexive so is $W$, and in this case  $\Sz(W^*)=\Sz(V^*)$, and 
\item[c)] if $(V_j)$ is an  unconditional FDD then  $(w_j)$ is an  unconditional basis.
\end{enumerate}
\end{thmB}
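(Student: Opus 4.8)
The plan is to build $W$ by the classical Lindenstrauss--Tzafriri device \cite{LT} for turning an FDD into a basis, and then to track each invariant through the construction. Let $K$ be the FDD constant of $(V_j)$. Inside each block fix a basis $(x^j_i)_{i=1}^{d_j}$ of $V_j$ (say an Auerbach basis) and enlarge $V_j$ to a fixed finite-dimensional space $E_j\supseteq V_j$ carrying a monotone basis, the extra (``buffer'') coordinates being chosen precisely so that folding an interval which cuts inside the $j$-th group does not produce a large partial block of $V_j$. Let $(w_k)$ be the concatenation, in the natural order, of these group-bases, and for a finitely supported vector $x$ put
$$\|x\|_W:=\sup\bigl\{\,\|F_{[m,n]}x\|_V:\ [m,n]\text{ an interval of coordinates}\,\bigr\},$$
where $F_{[m,n]}x\in V$ is obtained by restricting $x$ to $[m,n]$ and ``folding'' those coordinates back into $V$. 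Two bookkeeping facts set up the theorem: (i) restricting to a sub-interval only deletes terms from the supremum, so the interval projections of $(w_k)$ have norm $1$ and $(w_k)$ is a bimonotone basis of $W$; and (ii) the folding map $F:W\to V$ (fold all coordinates) has norm $1$ and a norm-one right inverse $\iota:V\to W$, so $V$ embeds isometrically into $W$. A telescoping computation is what makes the buffer coordinates do their job in (ii), and is the place where one must be careful.

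For (a) assume $(V_j)$ is shrinking. Since each $E_j$ is finite dimensional, a normalized block sequence of $(w_k)$ with supports tending to $\infty$ is, after a small perturbation, supported on finitely many consecutive groups per vector, and $F$ carries it to a perturbed block sequence of $(V_j)$; shrinking of $(V_j)$ makes such sequences weakly null, so $(w_k)$ is shrinking, i.e.\ $(w_k^*)$ spans $W^*$. The inequality $\Sz(V)\le\Sz(W)$ is immediate from $V\hookrightarrow W$ and the monotonicity of the Szlenk index under subspaces (Section~\ref{S:4}). For the reverse, write $\Sz_\varepsilon(Y)$ for the rank of the $\varepsilon$-Szlenk derivation on $B_{Y^*}$, so that $\Sz(Y)=\sup_{\varepsilon>0}\Sz_\varepsilon(Y)$; one shows that for each $\varepsilon>0$ there is $\delta(\varepsilon)>0$ with $\Sz_\varepsilon(W)\le\Sz_{\delta(\varepsilon)}(V)$, by pruning and perturbing a weak$^*$-null tree in $B_{W^*}$ that realizes an $\varepsilon$-derivation (once more using finite dimensionality of the groups) into one which, after the dual folding onto $B_{V^*}$, realizes a $\delta(\varepsilon)$-derivation, and then running the transfinite induction on derivation ranks. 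Taking the supremum over $\varepsilon$ yields $\Sz(W)\le\Sz(V)$, hence equality.

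For (b), if $V$ is reflexive then $(V_j)$ is both shrinking and boundedly complete; by (a) $(w_k)$ is shrinking, and the same block-sequence analysis shows $(w_k)$ is boundedly complete (a norm-bounded, non-Cauchy sequence of partial sums of $(w_k)$, bounded away from its cluster points, would fold to such a sequence for $(V_j)$), so $W$ is reflexive. The identity $\Sz(W^*)=\Sz(V^*)$ follows by a dual version of the analysis of (a): $V^*$ is a quotient of $W^*$, giving $\Sz(V^*)\le\Sz(W^*)$, while the reverse comes from a dual pruning argument on weak$^*$-null trees in $B_{W^{**}}=W$, the construction being compatible with duality. For (c), when $(V_j)$ is $C$-unconditional one runs the construction with the supremum over intervals replaced by a supremum over arbitrary finite subsets of coordinates together with arbitrary signs on the folded vector; $C$-unconditionality of $(V_j)$ and the unconditionality of the group-bases keep this within a fixed factor of the interval norm, so (i)--(ii) and the arguments of (a)--(b) persist, and now sign changes of $(w_k)$ do not increase $\|\cdot\|_W$, so $(w_k)$ is unconditional. (Take the symmetric version of $W$ when $(V_j)$ is unconditional, the interval version otherwise.)

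The step I expect to be the main obstacle is $\Sz(W)\le\Sz(V)$ in (a) and its dual in (b): although the auxiliary groups $E_j$ are finite dimensional and hence ``invisible'' to the Szlenk index, extracting from a weak$^*$-null tree in $B_{W^*}$ a genuine weak$^*$-null tree in $B_{V^*}$ of comparable complexity requires a careful pruning-and-perturbation argument and an exact accounting of how the dual folding interacts with the $\varepsilon$-Szlenk derivation. A secondary difficulty is arranging one norm on $W$ that is at once bimonotone (so $(w_k)$ is a basis), compatible with the isometric embedding of $V$, and symmetric enough to give unconditionality in case (c), and then re-verifying the Szlenk computations for the symmetric norm.
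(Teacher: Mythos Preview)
Your construction is not the one the paper uses, and the difference matters for the Szlenk computation. The paper's $W$ is built \emph{dually}: for each $n$ one fixes a finite $\vp_n$-net $(x^*_{(n,i)})_{i\le l_n}$ in $B_{V_n^*}$, sets $\Gamma=\{(n,i)\}$, and defines the norm on $c_{00}(\Gamma)$ via the norming set
\[
B=\Big\{\sum_n a_n e^*_{(n,i_n)}:\ \Big\|\sum_n a_n x^*_{(n,i_n)}\Big\|_{V^*}\le 1\Big\}.
\]
The embedding $J:V\to W$ is $v\mapsto \big(x^*_{(n,i)}(v_n)\big)_{(n,i)}$. There are no buffer coordinates and no folding map; instead one has, by construction, a $1$-norming set $B\subset B_{W^*}$ and a $1$-norming set $A=J^*(B)\subset B_{V^*}$, and the crucial structural fact is that \emph{block sequences in $B$ are isometrically equivalent, via $J^*$, to block sequences in $A$}. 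This is what makes parts (a)--(c) of Theorem~\ref{T:3.9} short.

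The point you correctly identify as the obstacle, $\Sz(W)\le\Sz(V)$, is handled in the paper not by tree-pruning but by Theorem~C: if $K\subset B_{X^*}$ is norming then $\Sz(X)=\min\{\omega^\alpha:\omega^\alpha\ge\Sz(K)\}$. With the dual construction one shows directly that $J^*(K'_c)\subset (J^*(K))'_{c-\eta}$ for $K\subset B$, hence $\Sz(B)\le\Sz(A)\le\Sz(V)$, and Theorem~C promotes this to $\Sz(W)\le\Sz(V)$. The dual statement $\Sz(W^*)=\Sz(V^*)$ is then obtained from the weak-index description of $\Sz$ (Lemma~\ref{L:5.2}) together with the same isometric-equivalence-of-blocks fact. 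Your proposed route---prune a $w^*$-null tree in $B_{W^*}$ and push it through a ``dual folding'' to $B_{V^*}$---is not obviously wrong, but you have not said what the dual folding is, why it preserves the $\vp$-separation of the tree, or why the transfinite induction goes through; without a norming-set description of $W$ this looks genuinely hard, and you yourself flag it as unresolved. The paper's choice of construction is made precisely so that Theorem~C applies and this step becomes a two-line derivative computation.
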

It it noteworthy to mention that, independently from the property of the given  space $X$, the construction  of the spaces $Z$ and $W$ is the same.
 $Z$ and $W$ inherit automatically the additional properties from $X$ mentioned in (a), (b) and (c) of the aforementioned Theorems. Since the construction is very concrete one may hope that $X$ and its superspaces $Z$ and $W$ (where $W$ is built for $V=Z$)  share also other properties.

Our paper will be organized as follows. In Section \ref{S:2} we introduce 
 {\em Finite Dimensional Markushevich  Decompositions} (FMD)  of  a separable Banach space $X$, which are obtained by {\em blocking} a given Markushevich basis.  We finish Section \ref{S:2} with a  blocking Lemma \ref{L:2.3} which shows that a given 
 FMD  can be blocked into a further  FMD  which has  the property that {\em skipped blocks} are basic sequences.
  Starting with an appropriately blocked  shrinking FMD of the  space $X$ with separable dual $X^*$  we   construct  in Section \ref{S:3} the space $Z$ with FDD $(Z_i)$, which contains $X$.  Then we  prove an FDD version of Zippin's Theorem, namely that $(Z_i)$ is shrinking (Lemma \ref{L:3.5}), and that $(Z_i)$ is boundedly complete if $X$ is reflexive (Lemma \ref{L:3.7}). Moreover we prove that if  the biorthogonal sequence  $(F_j)$ of $(E_j)$ (which is an FMD of $X^*$)
is {\em skipped  unconditional}, then $(Z_j)$ is unconditional. In the second part of Section \ref{S:3}   we construct for  a space $V$ with FDD $(V_j)$ a space
$W$ containing $V$ with a   basis $(w_j)$,  which is shrinking if $(V_j)$ is shrinking, and, moreover,  boundedly complete if $V$ is reflexive, and which is unconditional if $(V_j)$ is unconditional
 (Theorem \ref{T:3.9}). We therefore proved  Zippin's Theorems \ref{T:1.1} and \ref{T:1.2}, and, moreover,  we   reduced the  proof of Johnson's and Zheng's results
 \cite{JZ1,JZ2}
 to the problem of showing  that  the $w^*$-UTP implies the existence of FMDs which are skipped unconditional.
 Section \ref{S:4} serves as an introduction to Section \ref{S:5}. We introduce certain trees on sets and different  ordinal valued indices on them. We also introduce as an example {\em Schreier} and {\em Fine Schreier Families},  and  observe how these families can serve to measure the indices of trees.
At the end of Section \ref{S:4} we verify   some type of  {\em concentration phenomena} for families of functions defined on  maximal Schreier sets (Corollary \ref{C:4.12}).
 In Section \ref{S:5} we recall the definition of the Szlenk index  $\Sz(K)$   for bounded $K\subset X^*$  and the Szlenk index of $X$, defined by $\Sz(X)=\Sz(B_{X^*})$. We recall some, for our purposes  relevant, results from the literature. Using the  above mentioned Corollary \ref{C:4.12},  we
 prove the following result on the Szlenk index which is  of independent interest:
 
 \begin{thmC}
    If $K\ksubset B_{X^*}$ is norming $X$, then 
 $\Sz(X)\keq\min\{ \omega^\alpha: \alpha\kle\omega_1\text{ and } \omega^\alpha\kge \Sz(K)\}$.
 \end{thmC}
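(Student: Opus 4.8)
The plan is to prove two inequalities. The easy direction is $\Sz(X) \ge \min\{\omega^\alpha : \omega^\alpha \ge \Sz(K)\}$: since $K \subset B_{X^*}$, any $w^*$-slicing procedure applied to $B_{X^*}$ restricts to one on $K$, so $\Sz(K) \le \Sz(B_{X^*}) = \Sz(X)$; combined with the fact (cited from \cite{AJO}) that $\Sz(X)$, when $X^*$ is separable, is itself of the form $\omega^\beta$, we get $\Sz(X) = \omega^\beta \ge \Sz(K)$, hence $\Sz(X)$ is one of the ordinals in the set on the right, so it is $\ge$ the minimum. (If $X^*$ is nonseparable both sides are "$\infty$"/undefined in the relevant sense, so one works under the standing separability hypothesis.)

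For the reverse inequality I need: if $\omega^\alpha \ge \Sz(K)$ then $\Sz(X) \le \omega^\alpha$, i.e. $\Sz(B_{X^*}) \le \omega^\alpha$. The idea is that because $K$ is norming, the $w^*$-derivation behavior of $B_{X^*}$ is controlled by that of $K$, up to the loss of a power of $\omega$. Concretely, I would fix $\vp > 0$ and relate the $\vp$-Szlenk derivations $(B_{X^*})^{(\xi)}_\vp$ to the $\delta$-derivations $K^{(\eta)}_\delta$ of $K$. The standard tool here is that a norming set $K$ (say $K$ is $c$-norming, so $c\,\|x\| \le \sup_{k \in K}|k(x)|$) lets one approximate functionals in $B_{X^*}$ by elements of $\overline{\mathrm{conv}}^{w^*}(K \cup -K)$, and a convex-combination / successive-approximation argument shows that an $\vp$-separated $w^*$-tree in $B_{X^*}$ forces, after finitely many "coordinates," a $\delta$-separated structure built from $K$, with $\delta$ depending only on $\vp$ and $c$. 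The ordinal bookkeeping that turns "finitely many coordinates at each level" into the jump from $\Sz(K)$ to $\omega^\alpha$ is exactly the content of the Schreier/Fine Schreier machinery of Section \ref{S:4}, and in particular the concentration statement Corollary \ref{C:4.12}: on maximal Schreier sets a family of functions must stabilize, which is what allows one to pass from a $\delta$-separated tree indexed by a tree of Szlenk-index $> \omega^\alpha$ down to a $\delta'$-separated tree inside $K$ of index $\ge \Sz(K)$, a contradiction once $\omega^\alpha \ge \Sz(K)$. I would set this up so that a tree witnessing $\Sz(B_{X^*}) > \omega^\alpha$ is reindexed over (maximal) Schreier sets $S_\alpha$ (or a suitable fine variant matching the exponent $\alpha$), apply Corollary \ref{C:4.12} to the family of "coordinate functionals" read off along branches, extract a stabilized subtree whose nodes lie $\delta'$-close to $K$, and conclude $\Sz(K) > \omega^\alpha$.

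The main obstacle is this second inequality, and within it the precise extraction argument: one must convert the qualitative statement "$K$ norms $X$" into a quantitative, tree-level statement that survives $w^*$-derivation, and then match the combinatorial index of the tree one produces inside $K$ against $\Sz(K)$ without losing more than the allowed power of $\omega$. The delicate points are (i) controlling the dependence of the separation constant $\delta'$ on $\vp$, $c$, and the number of approximation steps, so that it stays bounded below along the whole tree; (ii) choosing the right indexing family — ordinary Schreier families give the "$\omega$-jump," and a fine Schreier family is what lets one hit a general exponent $\alpha$ rather than only successor-type exponents — and checking that Corollary \ref{C:4.12} applies to that family; and (iii) handling the $w^*$-closure: elements of $B_{X^*}$ are $w^*$-limits, not finite combinations, of elements of $K$, so the approximation must be done "from the outside in" along the derivation, typically by a diagonal/compactness argument in the $w^*$ topology. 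Once these are in place, the two inequalities combine to give $\Sz(X) = \min\{\omega^\alpha : \alpha < \omega_1,\ \omega^\alpha \ge \Sz(K)\}$, which is the assertion of Theorem C.
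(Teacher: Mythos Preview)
Your easy inequality is correct and matches the paper. The hard direction, however, has a genuine gap.

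You propose to start from a tree of \emph{functionals} in $B_{X^*}$ witnessing $\Sz(B_{X^*})>\omega^\alpha$ and then, using that $K$ is norming, approximate its nodes by elements of (convex combinations of) $K$. This cannot work as stated: after renorming so that $K$ is $1$-norming, the $w^*$-closed convex hull of $K\cup(-K)$ is exactly $B_{X^*}$, so approximating by that hull gives no information whatsoever about $\Sz(K)$. There is no mechanism in your outline that produces an element of $K$ itself, and the ``successive-approximation'' idea does not survive $w^*$-derivation for this reason.

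The paper avoids this by dualizing. Via Lemma~\ref{L:5.2} it replaces Szlenk derivations by the weak index of the tree
\[
\cF_c(L)=\big\{(x_1,\dots,x_l)\subset S_X:\ (x_j)\text{ skipped block},\ \exists\, x^*\in L\ \forall j\ x^*(x_j)\ge c\big\},
\]
so that $\Sz(L)=\sup_{c>0}\wi(\cF_c(L))$ for any $w^*$-compact $L$. Assuming $\Sz(X)=\omega^\alpha$ and, for contradiction, $\Sz(K)\le\omega^\beta$ with $\beta<\alpha$, one gets a Schreier-indexed family $(x_F)_{F\in\cS_\beta}\subset S_X$ with $\xb_F\in\cF_c(B_{X^*})$. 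Now the norming hypothesis is used \emph{once per maximal branch}: for each $B\in\MAX(\cS_\beta)$ there is a single $x^*_B\in K$ with $x^*_B\big(\sum_i p_{(\beta,B)}(n_i)\,x_{\{n_1,\dots,n_i\}}\big)\ge c$. Corollary~\ref{C:4.12} is then applied to the scalar functions $f_B(n_i)=x^*_B(x_{\{n_1,\dots,n_i\}})$: its content is not ``stabilization of approximations'' but the upgrade from ``$\E_{(\beta,B)}(f_B)\ge c$'' to ``$f_B\ge c/2$ on a subset whose Cantor--Bendixson complexity is still $\omega^\beta+1$''. That subset indexes a family witnessing $\wi(\cF_{c/2}(K))>\omega^\beta$, contradicting $\Sz(K)\le\omega^\beta$.

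So the missing idea is precisely this passage to trees of \emph{vectors} (Lemma~\ref{L:5.2}), which turns ``$K$ is norming'' into the existence of an \emph{exact} functional $x^*_B\in K$ rather than an approximation, and the correct reading of Corollary~\ref{C:4.12} as a pointwise-from-average concentration along Schreier-indexed branches.
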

 
 With the help of Theorem C
 we verify the claims on the Szlenk indices in (a) and (b) of Theorems A and B at the end of Section \ref{S:5}. In our last Section \ref{S:6} we recall {\em Infinite  Asymptotic Games }
 as introduced in \cite{OS1,OS2} but with respect  to FMDs instead of FDDs and show that the main results also hold in this more general 
 framework.  We then proof the last part of Theorem A, and show that if $X$ enjoys the $w^*$-UTP a given shrinking FMD can be blocked to be skipped unconditional. 

\section{Finite Dimensional Markushevich Decompositions}\label{S:2}

In this section we  introduce {\em Finite Dimensional Markushevich  Decompositions}  of  a separable Banach space $X$. These  are the multidimensional 
versions of {\em Markushevich bases}.
 
 Let $X$ be a separable  Banach space.  
  By a result of Markushevich  \cite{Mark} (see also \cite[Theorem 1.22]{HMVZ}) $X$ admits a {\em Markushevich basis, or $M$-basis} which is  {\em $1$-norming}.
  Recall that a sequence $(e_i) \subset X$ is called
  {\em fundamental for $X$}  if  $\spa(e_i:i\in\N)$, the linear span of $(e_i)$, is  norm dense in $X$,
  and a fundamental sequence $(e_i)$ is called {\em minimal } if  $e_i\not\in\overline{\spa(e_j:j\in \N\setminus\{i\})}$, for every $i\in\N$.
  The Hahn Banach Theorem yields that the minimality of a fundamental sequence $(e_i)$ in $X$ is equivalent to  the existence of a
  unique  sequence $(f_i)\subset X^*$ which is biorthogonal to $(e_i)$.
    If $(e_i)$  is fundamental and minimal and $(f_i)$ is its biorthoganal sequence,  we say that  $(f_i)$ is {\em  total}, if for all $x\kin X$,  $f_i(x)\keq0$, for all $i\kin\N$, implies that 
    $x\keq0$. A fundamental and minimal sequence $(e_i)$, whose biorthogonals $(f_i)$ are total, is called a {\em Markushevich basis} or {\em $M$-basis}.
 If $(e_i)$ is a   Markushevich basis, the  biorthogonal sequence $(f_j)$ of $(e_i)$, is called  {\em $c$-norming } for some $c\in(0,1]$, if 
     $$ \sup_{f\in\spa(f_j:j\in\N), \|f\|\le 1} f(x)  \ge c\|x\|.$$
    
     If $(e_i)$ is a Markushevich basis and $(f_j)$ are its biorthogonals, we call a sequence
     $(E_k)$ with $E_k=\spa( e_j: n_{k-1}<j\le n_k)$,  where $0=n_0<n_1<n_2<\ldots $ are in $\N$, a {\em blocking of $(e_j)$ into finite dimensional spaces} and
     note that in that case 
          \begin{enumerate}
      \item[a)] the sequence $(E_k)$ is  {\em fundamental}, i.e. $\spa(E_k:k\in\N)$ is dense in $X$,
\item[b)] $(E_k)$ is {\em minimal}, meaning that  $E_k\cap \overline{\spa(E_j:j\in\N\setminus \{k\})}=\{0\}$, for every $k\in\N$. 
    In that case we call 
  the sequence $(F_k)$, with 
  \begin{align*}
  F_k&= \spa\big(E_j:j\in\N\setminus \{k\}\big)^\perp\\
        &=\big\{ f\in X^*: f|_{\spa(E_j:j\in\N\setminus \{k\})}=0\big\}=\spa(f_j: n_{k-1} <j\le n_k), \text{ for $k\in\N$,}\notag
         \end{align*}
  {\em the  biorthogonal sequence  to $(E_j)$}.
 \item[c)] $(F_k)$ is {\em total}, which means that for $x\in X $,  with $f(x)\keq0$, for all $f\kin F_k$ and $k\in\N$, it follows that $x\keq0$.
 \item[d)] In the  case that $(f_j)$ is $c$-norming, then    $(F_k)$ is  also $c$-norming, 
   $$\|x\|\ge c \sup_{f\in\spa(F_j:j\in\N), \|f\|\le 1} f(x).$$
       \end{enumerate}
      We call any sequence $(E_k)$ of finite dimensional subspaces of $X$ a {\em Finite Dimensional Markushevich Decomposition of $X$ (FMD) }
      if $(E_k)$ and  the sequence $(F_k)$, as defined by the first equation in (b), satisfy (a), (b) and (c).
      As we just pointed out, any blocking of an $M$-basis of $X$  is an  FMD of $X$. Conversely, it is also easy to obtain an $M$-basis from 
      an FMD. Indeed, assume that $(E_k)$ is an FMD and let $(F_k)$ be its biorthogonal sequence. First note that 
      it follows that  $F_k$ separates points of $E_k$, and  $E_k$ separates the  points of $F_k$, for each $k\in\N$, and 
      thus
       $\dim(E_k)=\dim(F_k)$, and we can find  a basis $(e^{(k)}_j:1\le j\le \dim(E_k))$      of $E_k$ and a 
       basis  $(f^{(k)}_j:1\le j\le \dim(E_k))$      of $F_k$ which is biorthogonal to 
      $(e^{(k)}_j:1\le j\le \dim(E_k))$. It follows therefore that the set  $\{ e^{(k)}_j :k\in \N, 1\le j\le \dim(E_k)\}$,   
   arbitrarily ordered into a sequence, is an $M$-basis of $X$ and $\{ f^{(k)}_j :k\in \N, 1\le j\le \dim(E_k)\}$ are the biorthogonals.

Assume that
 $(E_j)$ is an FMD of $X$.  From the minimality in (b) it follows that every $x\in \spa(E_j:j\kin\N)$ can be written uniquely as $x=\sum_{j=1}^\infty x_j$,
 with $x_j\in E_j$, for $j\kin\N$,  and $\#\{j\in\N: x_j\not=0\}<\infty$, thus we can identify $\spa(E_j:j\kin\N)$ with
 $$c_{00}( E_j)=\big\{ (x_j): x_j\in E_j,\, j\in \N, \text{ and }\#\{j\in\N: x_j\not=0\}<\infty\big\}.$$ 
 From the minimality  condition  (b)  it also  follows for all $m\in\N$, that  $X$ is the complemented sum of  $E_m$ and  the space
    $$\overline{ \spa(E_n:n\kin\N\setminus\{m\})}= ^\perp\!\!\!F_m=\{x\in X: x^*(x)=0 \text{ for all } x^*\in F_m\}.$$  Thus,  for every $m\in\N$ the
     projection $P_m^E: X\to E_m$ is bounded, where   $P_m^E(x)= x_m$,  for $x\in X$, if $x=y_m+x_m$ is the unique 
    decomposition of $x$ into  $y_m\in\overline{ \spa(E_n:n\kin\N\setminus\{m\})}$ and $x_m\in E_m$. For a finite set $A\subset \N$ we
     define $P_A^E=\sum_{m\in A} P_m^E$ and for a cofinite $A\subset \N$ we put $P_A^E=Id- \sum_{m\in \N\setminus A}P^E_m$. 
   For $x\in X$ we call the {\em support of $x$ with respect to $(E_n)$} the set 
   $$\supp_E(x)=\{ j\in\N: P_j^E(x)\not= 0\}.$$
 For $x^*\in X^*$ we   define the   {\em support of $x^*$ with respect to $(E_n)$} by 
   $$\supp_E(x^*)= \big\{ j\in\N: x^*|_{E_j}\not= 0\}.$$
    The {\em  range of $x\in X$ or $x^*\in X^*$} is the smallest interval in $\N$ containing the support of $x$, or $x^*$, and is denoted by $\rg_E(x)$, or
    $ \rg_E(x^*)$. A {\em block sequence  with respect to $(E_n)$ in $X$ or in $X^*$} is a finite or infinite  sequence $(x_n)$ in $X$,  or a sequence $(x^*_n)$ in $X^*$ for which
      $\max\rg_E(x_n)<\min\rg_{E}(x_{n+1})$ or 
    $\max\rg_E(x^*_n)<\min\rg_{E}(x^*_{n+1})$, respectively, for all $n\in\N$ for which $x_{n+1}$, or $x^*_{n+1}$ are defined.
     In the case that 
    $\max\rg_E(x_n)<\min \rg_{E}(x_{n+1})-1$ or 
    $\max \rg_E(x^*_n)<\min\rg_{E}(x^*_{n+1})-1$, respectively, we call  the  sequence   a  {\em skipped  block sequence  with respect to $(E_n)$ in $X$ or in $X^*$}.
    Note that in  finite blocks the last element does not need to have a finite range.
 
   It is easy to see that the   sequence $(F_j)$ is an FMD  of $Y=\overline{\spa(F_j:j\in\N)}$ whose biorthogonal sequence is $(E_j)$. Here we identify
   $X$ in the canonical way with a subspace of $Y^*$. Using our notation it follows then for $y\in Y$, that $\supp_F(y)=\supp_E(y)$,  and $\rg_F(y)=\rg_E(y)$.
 But since we want to apply the support and range also to elements of $X^*$ which are not 
    in $Y$, we prefer to write $\rg_E(y)$, and  $\supp_E(y)$.

   Similar to the case of $M$-bases we can of course also define blockings of an FMD $(E_j)$ as follows. If  $(E_j)$ is an FMD and $(F_j)$ is its biorthogonal sequence, then $(G_k)$ is a {\em blocking of $(E_j)$} if  $G_k=\spa(E_j: n_{k-1} < j\le n_k)$, for all $k\in\N$, and some natural numbers $0=n_0<n_1<n_2...$.
  $(G_k)$  is then also an FMD of $X$ and its biorthogonal sequence is $(H_k)$ with $H_k=\spa(F_j: n_{k-1} < j\le n_k)$, and $(H_k)$  is $c$-norming if
   $(F_j)$ was $c$-norming.

 An FMD $(E_j)$ is called  a {\em Finite Dimensional Decomposition of $X$} or  FDD, if  for every $x\in X$ there is a unique sequence $(x_j)$, $x_j\in E_j$, for $j\kin\N$, so that 
$x=\sum_{j=1}^\infty x_j$. As in the case of Schauder bases it follows    from the Uniform Boundedness Principle 
that an  FMD of  $X$ is an FDD of $X$ if and only if  the sequence $(P^E_{[1,k]}:k\in\N)$ is uniformly bounded.  
As in the case of Schauder bases we call for  an FDD $(E_n)$  the number
$b= \sup_{ m\le n} \big\| P^E_{[m,n]}\big\|$
{\em the projection constant of $(E_j)$}, and we call $(E_j)$  {\em bimonotone}  if $b=1$. We call an FDD  $(E_i)$ {\em  shrinking} if the 
biorthogonal sequence $(F_n)$  spans a dense subspace of $X^*$, and we call $(E_i)$ boundedly complete if  for every block sequence
$(x_n)$, for which $\sup_{n\in \N}\|\sum_{j=1}^n x_j\|<\infty$, the series $\sum_{j=1}^\infty x_j$ converges.
An FDD $(E_j)$ is called {\em unconditional} if  
$$c_u=
\sup\Big\{ \Big\|\sum_{j=1}^\infty \sigma_j x_j\Big\| : (\sigma_j)\in\{\pm1\}^\omega , \Big\|\sum_{j=1}^\infty  x_j\Big\|\le 1, x_j\in E_j, j\in\N\Big\}<\infty.$$
This  is equivalent with 
$$c_s =\sup \big\{\|P^E_A\|: A\subset \N, \text{ finite}\big\}<\infty,$$
and in this case $c_s\le c_u\le 2c_s$. An FDD $(E_n)$ is called $c$-unconditional if $c_u\le c$ and 
$c$-suppression unconditional if $c_s\le c$.

 We avoid to denote the biorthogonal sequence $(F_n)$ of an FMD $(E_n)$  by $(E^*_n)$ because we reserve the notion $E^*$ to the dual space  of  a space  $E$.
 Of course the   map $T_n:F_n\to E_n^*$, $x^*\mapsto x^*|_{E_n}$ is a  linear bijection,  and $\|T_n\|\le 1$, for $n\in\N$, but, unless $(F_n)$ is an FDD the inverses of  the $T_n$  may 
 not be  uniformly bounded. 
  Nevertheless, for Markushevich bases $(e_n)$ we will denote, as usual,  the biorthogonals by $(e_n^*)$. Also in case that $(E_n)$ is an FDD we will   denote the biorthogonal sequence by  $(E^*_n)$.

As in the case of bases or FDDs, we call an FMD $(E_j)$ 
 {\em  shrinking in $X$} if the span of  the biorthogonal sequence $(F_j)$ is dense in $X^*$. Note that in this case 
 $(F_j)$ is an FMD of $X^*$ whose biorthogonal sequence is $(E_j)$. 
Recall that if $X^*$ is separable then $X$ admits a  shrinking 
 $M$-basis 
  \cite[Lemma  1.21]{HMVZ}.
  
  The proof of the following observation is obtained like in the case of $M$-bases or in the case of FDDs.
  \begin{prop}\label{P:2.1}  Assume that  $(E_n) $  is an FMD of $X$.
 The following are equivalent:
\begin{enumerate}
\item  $(E_n)$  is shrinking,
\item for all $x^*\in X^*$ it follows that 
 $\lim_{n\to\infty} \| x^*|_{\spa(E_j:j> n)}\|=0,$
\item every bounded  sequence $(y_n)$, with $y_n\in\spa(E_j:j\in\N, j\ge n)$, is weakly null. 
\end{enumerate}

\end{prop}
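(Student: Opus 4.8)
The plan is to prove the two biconditionals $(1)\Leftrightarrow(2)$ and $(2)\Leftrightarrow(3)$ by reducing everything to a single duality computation, namely the identity
\[
\dist\bigl(x^*,\spa(F_j:j\le n)\bigr)=\bigl\|x^*|_{\spa(E_j:j>n)}\bigr\|,\qquad x^*\in X^*,\ n\in\N .
\]
Granting this, $(1)\Leftrightarrow(2)$ will be pure bookkeeping and $(2)\Leftrightarrow(3)$ a routine weak-nullity argument; the work is in the identity.

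To prove the identity I would first show that the pre-annihilator of $M_n:=\spa(F_j:j\le n)$ is ${}^\perp M_n=\overline{\spa(E_j:j>n)}$. The inclusion ``$\supseteq$'' is clear from biorthogonality. For ``$\subseteq$'', the decisive point is that $\spa(E_j:j\le n)$ is finite dimensional: hence $Q_n:=P^E_{[1,n]}$ is a bounded projection of $X$ onto $\spa(E_j:j\le n)$, and since each $P^E_jx$ lies in $E_j$ and the $E_j$ with $j\le n$ are in direct sum, $\ker Q_n=\bigcap_{j\le n}\ker P^E_j=\bigcap_{j\le n}{}^\perp F_j={}^\perp M_n$. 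On the other hand $\spa(E_j:j\le n)+\overline{\spa(E_j:j>n)}$ is closed (finite dimensional plus closed) and dense, hence all of $X$, and the sum is direct because $\spa(E_j:j\le n)\cap\overline{\spa(E_j:j>n)}\subseteq\spa(E_j:j\le n)\cap\ker Q_n=\{0\}$. Thus $\overline{\spa(E_j:j>n)}\subseteq{}^\perp M_n=\ker Q_n$, and both are subspaces of the same finite codimension $\dim\spa(E_j:j\le n)$ in $X$, so they coincide. Finally $M_n$, being finite dimensional, is $w^*$-closed, so $({}^\perp M_n)^\perp=M_n$; then $\dist(x^*,M_n)=\|x^*|_{{}^\perp M_n}\|$ by the usual argument (``$\ge$'' is immediate; for ``$\le$'' extend $x^*|_{{}^\perp M_n}$ to $\tilde x^*\in X^*$ of the same norm by Hahn--Banach and note $x^*-\tilde x^*\in({}^\perp M_n)^\perp=M_n$), and the displayed identity follows since a functional and its restriction to a dense subspace have equal norm.

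From the identity, $(1)\Leftrightarrow(2)$ is automatic: $(E_n)$ is shrinking iff $\overline{\spa(F_j:j\in\N)}=X^*$, iff $\dist(x^*,M_n)\to0$ for every $x^*$ (the distances being nonincreasing in $n$), iff $\|x^*|_{\spa(E_j:j>n)}\|\to0$ for every $x^*$, which is (2). For $(2)\Rightarrow(3)$: if $(y_n)$ is bounded with $y_n\in\spa(E_j:j\ge n)$ then $|x^*(y_n)|\le\|y_n\|\cdot\|x^*|_{\spa(E_j:j>n-1)}\|\to0$ for each $x^*$, so $(y_n)$ is weakly null. For $(3)\Rightarrow(2)$ I argue contrapositively: if (2) fails there are $x^*$, $\vp>0$ and $n_1<n_2<\cdots$ with $\|x^*|_{\spa(E_j:j>n_k)}\|>\vp$; choosing $y_k\in\spa(E_j:j>n_k)$ with $\|y_k\|\le1$ and $x^*(y_k)>\vp$ and putting $z_m:=y_k$ if $m=n_k+1$ and $z_m:=0$ otherwise yields a bounded sequence with $z_m\in\spa(E_j:j\ge m)$ that is not weakly null, contradicting (3).

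The only subtlety — and the reason the classical FDD argument does not transcribe verbatim — is that an FMD need not be an FDD, so the truncations $P^E_{[1,n]}$ are bounded individually but possibly not uniformly, and one cannot simply say that $(P^E_{[1,n]})^*x^*\to x^*$ in norm. The identification ${}^\perp M_n=\overline{\spa(E_j:j>n)}$ is precisely what sidesteps this: it rests only on the finite-dimensionality of $\spa(E_j:j\le n)$ (a subspace contained in another of the same finite codimension coincides with it), and no quantitative control over the projections $P^E_{[1,n]}$ is used anywhere. That step is where I expect the main effort to lie.
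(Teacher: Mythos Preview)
Your proof is correct and follows essentially the same path as the paper's. The paper proves $(1)\Leftrightarrow(2)$ by the two separate implications (an approximation argument for $(1)\Rightarrow(2)$, a Hahn--Banach extension for $(2)\Rightarrow(1)$), which are precisely the two inequalities in your identity $\dist(x^*,\spa(F_j:j\le n))=\|x^*|_{\spa(E_j:j>n)}\|$; your version packages them into one explicit duality statement and adds the clean codimension argument for ${}^\perp M_n=\overline{\spa(E_j:j>n)}$, which the paper uses implicitly. The $(2)\Leftrightarrow(3)$ arguments are identical in both.
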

\begin{rem} From the equivalence (1)$\iff$(3) in Proposition \ref{P:2.1} we deduce that in a reflexive space $X$ every FMD $(E_n)$ of $X$  is shrinking, and thus the biorthogonal sequence $(F_j)$ is a shrinking FMD of $X^*$. Indeed, if $y_n\in B_X\cap \spa(E_j:j\in\N, j\ge n)$, for $n\in\N$, then we can assume that $y_n$ is weakly converging to some $y\in X$, but  $y^*(y)$  must vanish for all $y^*\in \spa (F_j:j\kin \N)$ and it follows therefore  that $y=0$.
 
\end{rem}

\begin{proof}[Proof of Proposition \ref{P:2.1}] Let $(F_n)$ be the biorthogonal sequence of $(E_n)$.\\
``(1) $\Rightarrow$ (2)'' If   $(E_n)$ is shrinking and $x^*\in X^*$  we can find  for an arbitrary $\vp>0$ an element $y^*=\sum   f_j\in\spa(F_j:j\in\N)$, $f_j\in F_j$, for $j\kin\N$, so that
 $\|x^*-y^*\|<\vp$. Thus 
  $$\limsup_{n\to\infty} \| x^*|_{\spa(E_j:j> n)}\|\le \vp +  \lim_{n\to\infty} \| y^*|_{\spa(E_j:j> n)}\|  =\vp,$$
  which proves our claim since $\vp>0$ is arbitrary.
  
  \noindent
  ``(2) $\Rightarrow$ (1)''  Assume (2) is satisfied and let $x^*\kin X$ and $\vp\kgr0$. For large enough $m$ it follows that $\| x^*|_{\spa(E_j:j> m)}\|\kle\vp$.
   Now let $y^*\kin X^*$ be a Hahn-Banach extension  of  $x^*|_{\spa(E_j:j> m)}$. Then
   $x^*-y^*\in \spa(F_j:j\ge m)$ (since $(x^*-y^*)|_{\spa(E_j:j <m)}\equiv 0$) and $\| x^*- (x^*-y^*)\|=\|y^*\|\kle\vp$. 
   
   \noindent
   ``$\neg$(3)$\Rightarrow \neg$(2)''  Assume $x_n\kin B_{X}\cap \spa(E_j:j\kgr n)$ and $(x_n)$ is not weakly null. After passing to a subsequence we can assume that there is an  $x^*\kin B_{X^*}$ with 
   $|x^*(x_n)|\ge \vp>0$.
   Then $$\|x^*|_{\spa(E_j:j> n)}\|\kge |x^*(x_n)|\ge \vp,$$
   thus (2)  is not satisfied.
   
      \noindent
   ``$\neg$(2)$\Rightarrow \neg$(3)'' Assume that $\|x^*|_{\spa(E_j:j> n)}\|\ge\vp>0$ for all $n$.
   Then choose $x_n\in B_{X}\cap \spa(E_j:j> n)$ with  $|x^*(x_n)|\ge \vp/2$. Thus (3) is not satisfied.    
\end{proof}

We finish this introductory section with the following easy, and in similar versions well known observation, which will be crucial  for future arguments.
\begin{lem}\label{L:2.2} Let  $X$  be a separable Banach space. Assume that   $(E'_j)$  is a $1$-norming  FMD of $X$ and $(F'_j)$ is its biorthogonal 
 sequence.  
 Then $(E'_j)$ can be blocked to an FMD $(E_n)$ satisfying  with its biorthogonal sequence $(F_n)$  the following conditions  for every $m\kleq n$ in $\N$.
     \begin{align}
   \label{E:2.2.1}&\text{For all }e^*\kin ( E_m\!+\!E_{m+1}\!+ \ldots +E_n)^*\text{ there exists }x^*\kin F_{m-1}\!+\!F_m\!+\!\ldots +F_{n+1}\text{ so that}\\
     &\qquad x^*|_{E_m\!+\!E_{m+1}\!+ \ldots + E_n}= e^* \text{ and } \|x^*\|\le2.5\|e^*\|, \notag \\
      \label{E:2.2.2}&\text{for all } f^*\kin( F_m\!+\!F_{m+1}\!+ \ldots +F_{n})^* \text{ there exists } z\kin E_{m-1}\!+\!E_m\!+\!\ldots +E_{n+1}\text{ so that}\\
     & \qquad z|_{F_m\!+\!F_{m+1}\!+ \ldots + F_{n}}= f^* \text{ and } \|z\|\le \ 2.5 \|f^*\|,\notag \\ 
     \label{E:2.2.3}&\text{for all  }x^*\kin F_m\!+\!F_{m+1} \!+ \ldots+ F_{n} \\
   &\qquad \|x^*\|\le 2.5      \big\| x^*|_{E_{m-1}+E_m+ \ldots +E_n+E_{n+1}}\big\|=2.5 \sup_{x\in E_{m-1}+E_m+ \ldots +E_{n+1}, \|x\|\le 1} |x^*(x)|, \notag \\
      \label{E:2.2.4} &\text{for all   $x\in  E_m\!+\!E_{m+1}\!+ \ldots +E_{n}$} \\ 
  &\qquad\|x\|\le  2.5  \big\|x|_{F_{m-1}+F_{m}+\ldots  + F_n+F_{n+1}}\big\|= 2.5 \sup_{x^*\in F_{m-1}+F_{m}+\ldots  + F_n+F_{n+1}, \|x^*\|\le 1} |x^*(x) |.\notag 
         \end{align}
         Here  we let $E_0$ and $F_0$ be the null spaces in $X$ and $X^*$, respectively.

 \end{lem}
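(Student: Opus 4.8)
The plan is to reduce all four estimates \eqref{E:2.2.1}--\eqref{E:2.2.4} to two ingredients: a \emph{symmetry} pairing \eqref{E:2.2.1} with \eqref{E:2.2.2} and \eqref{E:2.2.3} with \eqref{E:2.2.4}, and a single \emph{skipped-blocking} estimate on partial-sum projections. Since $(F'_j)$ is $1$-norming, the canonical map of $X$ into $Y^*$, with $Y=\overline{\spa(F'_j:j\in\N)}$, is an isometric embedding, $(F'_j)$ is a $1$-norming FMD of $Y$, and its biorthogonal sequence is $(E'_j)$ (with the blocks $E_n$ viewed inside $Y^*$, where $\|\cdot\|_{Y^*}$ restricts to $\|\cdot\|_X$). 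Under this identification, for any blocking $(E_n)$ of $(E'_j)$ with biorthogonals $(F_n)$, statement \eqref{E:2.2.2} is literally \eqref{E:2.2.1}, and \eqref{E:2.2.3} is literally \eqref{E:2.2.4}, applied to the blocked FMD $(F_n)$ of $Y$ with $(E_n)$ as its biorthogonals; the $1$-norming hypothesis is exactly what makes the $Y^*$-norms occurring there coincide with the $X$-norms demanded in \eqref{E:2.2.2}, \eqref{E:2.2.3}. Moreover \eqref{E:2.2.4} follows from \eqref{E:2.2.1}: if $x\in E_m+\dots+E_n$, choose a norming functional $e^*\in(E_m+\dots+E_n)^*$ for $x$ and let $x^*$ be the extension from \eqref{E:2.2.1}; then $x^*(x)=\|x\|$ and $\|x^*\|\le 2.5$, which is \eqref{E:2.2.4} (and likewise \eqref{E:2.2.3} from \eqref{E:2.2.2}). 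Hence it suffices to build one blocking $(E_n)$ of $(E'_j)$ for which \eqref{E:2.2.1} holds for $(E_n)$ in $X$ and, simultaneously, the analogue of \eqref{E:2.2.1} holds for $(F_n)$ in $Y$.

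For the blocking I would choose $0=n_0<n_1<n_2<\dots$ by induction so that the blocked FMDs $(E_n),(F_n)$ satisfy a \emph{skipped partial-sum bound}: there are $\vp_k\le 1/4$ such that for every $k$ and every $z\in X$ with $P^E_{k+1}z=0$ one has $\|P^E_{[1,k]}z\|\le(1+\vp_k)\|z\|$, and symmetrically, for every $w\in Y$ with $P^F_{k+1}w=0$, $\|P^F_{[1,k]}w\|\le(1+\vp_k)\|w\|$. This is the usual skipped-blocking construction: having fixed $n_0,\dots,n_{k-1}$, one uses compactness of the unit spheres of the finite-dimensional spaces $\spa(E'_j:j\le n_{k-1})$ and $\spa(F'_j:j\le n_{k-1})$, together with the density of $\spa(E'_j)$ in $X$ and of $\spa(F'_j)$ in $Y$, to pick $n_k$ so large that the finitely many inequalities needed at stage $k$ hold on both sides, and then diagonalises.

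Given such a blocking, I would deduce \eqref{E:2.2.1} as follows. Fix $m\le n$ and $e^*\in(E_m+\dots+E_n)^*$. Restriction is a linear bijection of $F_m+\dots+F_n$ onto $(E_m+\dots+E_n)^*$ (injective because $\spa(E'_j)$ is dense in $X$, and the dimensions agree since $\dim F_k=\dim E_k$), so there is a unique $c\in F_m+\dots+F_n$ with $c|_{E_m+\dots+E_n}=e^*$; splitting an arbitrary $x^*\in F_{m-1}+\dots+F_{n+1}$ into its $F$-block parts shows $x^*|_{E_m+\dots+E_n}=e^*$ exactly when $x^*\in c+(F_{m-1}+F_{n+1})$, so the minimal norm of such an extension equals $\dist_{X^*}\!\bigl(c,\,F_{m-1}+F_{n+1}\bigr)$, attained since $F_{m-1}+F_{n+1}$ is finite-dimensional. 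As $F_{m-1}+F_{n+1}$ is finite-dimensional, hence weak$^*$-closed, pre-annihilator duality gives
\[
\dist_{X^*}\!\bigl(c,\,F_{m-1}+F_{n+1}\bigr)=\sup\bigl\{\,c(z): z\in B_X,\ z\in{}^\perp(F_{m-1}+F_{n+1})\,\bigr\},
\]
and ${}^\perp(F_{m-1}+F_{n+1})=\ker P^E_{m-1}\cap\ker P^E_{n+1}$. For such $z$ with $\|z\|\le 1$, since $c$ is supported on blocks $m,\dots,n$ we have $c(z)=c\bigl(P^E_{[m,n]}z\bigr)\le\|e^*\|\,\|P^E_{[m,n]}z\|$; writing $P^E_{[m,n]}z=P^E_{[1,n]}z-P^E_{[1,m-2]}z$ and applying the skipped bound at $k=n$ (block $n+1$ null for $z$) and at $k=m-2$ (block $m-1$ null for $z$), with the evident conventions when $m=1$, gives $\|P^E_{[m,n]}z\|\le(1+\vp_n)+(1+\vp_{m-2})\le 2.5$. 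Hence $\dist_{X^*}(c,F_{m-1}+F_{n+1})\le 2.5\|e^*\|$, which is \eqref{E:2.2.1}. The same computation for $(F_n)$ in $Y$ yields \eqref{E:2.2.2}, and \eqref{E:2.2.3},\eqref{E:2.2.4} follow from the reductions above.

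The one nontrivial point is the second step. For a general, non-FDD FMD the partial-sum projections $P^{E'}_{[1,k]}$ are unbounded, so the $n_k$ cannot merely be taken ``large''; the standard skipped-blocking argument must be run carefully, and run simultaneously on $(E'_j)\subset X$ and on $(F'_j)\subset Y$. Everything else is soft: Hahn--Banach, the finite-dimensional identification $F_m+\dots+F_n\cong(E_m+\dots+E_n)^*$, and the pre-annihilator duality.
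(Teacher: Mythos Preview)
Your reduction is sound and your endgame is correct: once the skipped partial-sum bound holds on both sides, the pre-annihilator computation you give really does produce \eqref{E:2.2.1}, and the symmetry and Hahn--Banach reductions dispose of \eqref{E:2.2.2}--\eqref{E:2.2.4}. The identification $P^E_{[m,n]}z=P^E_{[1,n]}z-P^E_{[1,m-2]}z$ (using $P^E_{m-1}z=0$) is the key trick, and it works.

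The paper, however, argues differently. It does not pass through any skipped partial-sum bound; instead it proves a small extension lemma (Lemma~\ref{L:2.3}) and uses it to choose $n_k$ so that every functional on $E'_1+\dots+E'_{n_k}$ extends to one in $F'_1+\dots+F'_{n_{k+1}}$ with norm blown up by at most $\rho$, and symmetrically on the $Y$-side. From these one-sided ``fill to the right'' extensions it then obtains the two-sided window of \eqref{E:2.2.1} by a subtraction: extend $e^*$ on $E_m+\dots+E_n$ to $z^*\in F_1+\dots+F_{n+1}$, then extend $z^*|_{E_1+\dots+E_{m-2}}$ to $y^*\in F_1+\dots+F_{m-1}$, and take $x^*=z^*-y^*$. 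So the paper \emph{constructs} the extension explicitly, while you characterise its minimal norm via $\dist_{X^*}(c,F_{m-1}+F_{n+1})$ and then bound that distance by a dual projection estimate. Your route makes the link to skipped-block basicness transparent (indeed Corollary~\ref{C:2.4} falls out immediately), whereas the paper's route avoids the quotient duality step and gives the extension in hand. One small slip in your sketch: the inductive choice of $n_k$ uses the $1$-norming property of $\spa(F'_j)$ (and of $\spa(E'_j)$ as a subspace of $Y^*$), not merely the density of $\spa(E'_j)$ in $X$; and no diagonalisation is needed, just a straight induction.
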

The proof will follow  using repeatedly the following Lemma.

   \begin{lem}\label{L:2.3} Let $X$ be a Banach space and let
    $Y'$ be a (not necessarily closed) subspace  of $X^*$ for which $B_{Y'} $ is $w^*$-dense in $B_{X^*}$.
 Assume that  $E\subset X$  is  finite dimensional, and $\vp>0$ .
 Then there is a finite dimensional subspace $F\subset Y'$, so that every
  $e^*\in E^*$ can be extended to an element $x^*\in F$, with $\|x^*\|\le (1+\vp)\|e^*\|$. 
 \end{lem}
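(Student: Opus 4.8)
The plan is to reduce the problem to approximately extending finitely many functionals taken from a net in the unit sphere $S_{E^*}$, to realize those approximate extensions inside $Y'$ using the $w^*$-density of $B_{Y'}$ together with finite-dimensionality, and finally to run a geometric ``successive approximation'' argument that upgrades an approximate extension to an exact one with controlled norm. Fix at the outset $\delta>0$ with $2\delta<1$ and $\tfrac1{1-2\delta}\le1+\vp$ (e.g. $\delta\le\tfrac{\vp}{2(1+\vp)}$). Since $E^*$ is finite dimensional, $S_{E^*}$ is norm compact, so choose a finite $\delta$-net $\{e^*_1,\dots,e^*_k\}\subset S_{E^*}$ and, by Hahn--Banach, extensions $\widehat e^*_i\in B_{X^*}$ of the $e^*_i$. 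The restriction map $r\colon X^*\to E^*$ is $w^*$-to-$w^*$ continuous, and since $B_{Y'}$ is $w^*$-dense in $B_{X^*}$ this yields $r(B_{X^*})\subset\overline{r(B_{Y'})}^{\,w^*}$; because $E^*$ is finite dimensional the $w^*$-topology on it coincides with the norm topology, so $r(B_{X^*})$ lies in the \emph{norm} closure of $r(B_{Y'})$. Hence for each $i$ there is $y^*_i\in B_{Y'}$ with $\|y^*_i|_E-e^*_i\|<\delta$, and we set $F=\spa\{y^*_1,\dots,y^*_k\}\subset Y'$, a finite dimensional subspace.

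Next I would check that $F$ works. Given $e^*\in E^*$, by homogeneity assume $\|e^*\|=1$; put $r_0=e^*$ and construct inductively indices $i_n\in\{1,\dots,k\}$ and scalars $c_n=\|r_{n-1}\|$ as long as $r_{n-1}\ne0$, by choosing $i_n$ with $\big\|r_{n-1}/c_n-e^*_{i_n}\big\|<\delta$ and setting $r_n=r_{n-1}-c_n\,y^*_{i_n}|_E$ (the process stops if some $r_n=0$). Then
\[
\|r_n\|=c_n\big\|r_{n-1}/c_n-y^*_{i_n}|_E\big\|\le c_n\big(\|r_{n-1}/c_n-e^*_{i_n}\|+\|e^*_{i_n}-y^*_{i_n}|_E\|\big)<2\delta\,c_n,
\]
so $c_{n+1}=\|r_n\|<2\delta\,c_n$ and therefore $c_n\le(2\delta)^{n-1}$ for all $n$. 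Consequently $x^*:=\sum_{n\ge1}c_n\,y^*_{i_n}$ converges in norm (as $\|y^*_{i_n}\|\le1$), lies in $F$ because $F$ is closed, satisfies $x^*|_E=\lim_n(e^*-r_n)=e^*$ since $\|r_n\|\to0$, and obeys $\|x^*\|\le\sum_{n\ge1}c_n\le\sum_{n\ge0}(2\delta)^n=\tfrac1{1-2\delta}\le1+\vp$. Rescaling handles a general $e^*\in E^*$, producing an extension in $F$ of norm at most $(1+\vp)\|e^*\|$.

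The compactness used in the first step and the geometric estimates in the second are routine; the delicate point — and what dictates the shape of the argument — is that the approximation must \emph{iterate} with a uniform constant, i.e. all residuals $r_n$ have to be handled by the \emph{same} fixed finite family $\{y^*_i\}$. This is precisely why the net is taken in the unit sphere $S_{E^*}$ and why one normalizes $r_{n-1}$ before hitting it with the net: the blow-up factor $2\delta$ is then independent of $n$, so the series converges, and the bound $(1+\vp)$ is secured by choosing $\delta$ small before $F$ is built.
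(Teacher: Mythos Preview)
Your proof is correct and follows essentially the same approach as the paper's: choose a $\delta$-net in $S_{E^*}$, approximate each net element by the restriction of some $y^*_i\in B_{Y'}$ using $w^*$-density and finite-dimensionality of $E^*$, take $F$ to be their span, and run a geometric successive-approximation argument to obtain exact extensions with norm at most $\tfrac{1}{1-2\delta}$. Your write-up is in fact slightly more explicit than the paper's in tracking the residuals and in justifying why $w^*$-closeness becomes norm-closeness on $E^*$.
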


\begin{proof} Let $\delta\in(0,\frac12)$, and choose a $\delta$-net $(e^*_j)_{j=1}^N$ in $S_{E^*}$, and let $x^*_j\in S_{X^*}$ be a Hahn-Banach extension of $e^*_j$, for $j=1,2,\ldots ,N$.
 By the assumption that $B_{Y'}$ is $w^*$-dense in $B_{X^*}$ we can choose $(y^*_j)_{j=1}^N\subset B_{Y'}$, so  that $\|y^*_j|_E-e^*_j\|<\delta$ for all $j=1,2,\ldots, N$. Let $F=\spa(y^*_j:j=1,2,\ldots, N)$ and consider the 
   restriction map $T: F\to E^*$, $x^*\mapsto x^*|_E$. By our construction $T(B_F)$ is $2\delta$- dense in $B_{E^*}$. 
   
   Now let $e^*\in B_{E^*}$. We can successively choose $x^*_1, x^*_2, x^*_3, \ldots $
   so that   $\|x^*_n\|\le (2\delta)^{n-1}$ and  
\begin{align*}\big\| e^*\kminus \big(T(x^*_1)\kplus T(x^*_2)+ \ldots T(x^*_{n-1})\big)-T(x^*_n)\big\|&\le  2\delta\big\|(e^*-\big(T(x^*_1)+T(x^*_2)+ \ldots T(x^*_{n-1})\big)\big\|\\
   &\le (2\delta)^n,
   \end{align*}
and thus, letting  $x^*=\sum_{n=1}^\infty x^*_n\in F$, we deduce that $e^*=T(x^*)$ and $\|x^*\|\le \frac1{1-2\delta}$. Choosing $\delta>0$ sufficiently small, we obtain 
our claim.
\end{proof}

     \begin{proof}[Proof of Lemma \ref{L:2.2}]      Define $X'=\spa(E'_j:j\kin\N)$ and $Y'=\spa(F'_j:j\kin\N)$.  Since $(E'_j)$ is a $1$-norming FMD, 
   $B_{Y'}$ is $w^*$-dense  in $B_{X^*}$, and moreover 
      the map $T:X\to Y^*$,  defined by
     $T(x)(y)=y(x)$, for $x\kin X$ and $y\kin Y$,  is an isometric embedding. It follows that $B_X$ and, therefore also $B_{X'}$, is $w^*$-dense  
     in $B_{Y^*}$.  Let $\rho\kgr1$ with $\rho+\rho^2<2.5$. 
   Inductively, we choose $0=n_0<n_1<n_2<\ldots$ in $\N$, so that  for all $k\in\N_0$  the following two conditions  hold.
         \begin{align}
      \label{E:2.2.6} &\text{For all $e^*\in (E'_1+E'_2+\ldots+ E'_{n_k})^*$ there is an $x^*\in F'_1+F'_2+\ldots+F'_{n_{k+1}}$ so that}\\
   &\qquad\qquad x^*|_{E'_1+E'_2+\ldots+ E'_{n_k}}=e^*\text{ and } \|x^*\|\le \rho \|e^*\|,\notag\\
   \label{E:2.2.7} &\text{for all $f^*\in (F'_1+F'_2+\ldots +F'_{n_{k}})^*$ there is an  $ x\in E'_1+E'_2+\ldots +E'_{n_{k+1}}$ so that}\\
   &\qquad\qquad x|_{F'_1+F'_2+\ldots +F'_{n_k}}=f^*\text{ and } \|x\|\le \rho \|f^*\|\notag 
   \end{align}
(with $E'_1+E'_2+\ldots + E'_0=\{0\}$ and $F'_1+F'_2+\ldots +F'_0=\{0\}$).

   For $k=0$ we choose $n_1=1$ and note that \eqref{E:2.2.6} and \eqref{E:2.2.7} are trivially satisfied.

   Assume that we have chosen  $n_k$ for some $k\ge 1$.
  We  first apply Lemma \ref{L:2.3} to $E=E'_1\!+\!E'_2\!+\!\ldots+ E'_{n_k}$  and $Y'$ to obtain
   a finite dimensional subspace $F'\subset Y'$ so that every $e^*\in ( E'_1\!+\!E'_2\!+\!\ldots+ E'_{n_k})^*$ can be extended to an element $x^*\in F'$, with $\|x^*\|\le \rho \|e^*\|$.
   Then  we apply  Lemma \ref{L:2.3} to the Banach space $Y=\overline{Y'}$, instead of $X$,  to $X'$ (recall that $B_{X'}$ is $w^*$ dense in $B_{Y^*}$),  and  to  $F=F'_1+F'_2+\ldots+ F'_{n_k}$  to obtain
   a finite dimensional subspace $E'$ of $X'$ so that every $f^*\in(F'_1+F'_2+\ldots+ F'_{n_k})^*$ can be extended to an element $x\in E'$  with $\|x\|\le \rho \|f^*\|$.
  
   Because $E'$ and $F'$ are finite dimensional subspaces of $X'$ and $Y'$, respectively, there is some $n_{k+1}>n_k$ in $\N$ so that
   $E'\subset E'_1+E'_2+\ldots	+ E'_{n_{k+1} }$ and
    $F'\subset F'_1+F'_2+\ldots+ F'_{n_{k+1} }$. This finishes the recursive definition of $n_k$, $k\in\N$.  
    
    We define $E_k=E'_{n_{k-1}+1}+E'_{n_{k-1}+2}+\ldots +E'_{n_k}$ and $F_k=F'_{n_{k-1}+1}+F'_{n_{k-1}+2}+\ldots+ F'_{n_k}$.
    
    In order to verify \eqref{E:2.2.1} let $m\le n$ and $e^*\in (E_m+E_{m+1}+\ldots+ E_n)^*$.  We first apply \eqref{E:2.2.6} to obtain
    $z^*\in F_1+F_2+\ldots +F_{n+1}$ which is an extension of $e^*$ with   $\|z^*\|\le \rho \|e^*\|$. If $m\le 2$ we can choose $x^*=z^*$. Otherwise
     we apply again \eqref{E:2.2.6} to $z^*|_{E_1+E_2+ \ldots +E_{m-2}}$ and extend it to an element $y^*$ in $F_1+F_2+\ldots+ F_{m-1}$ with
     $\|y^*\|\le \rho\|z^*|_{E_1+E_2+ \ldots+ E_{m-2}}\|\le \rho^2 \|e^*\|$ and finally put $x^*=z^*-y^*$.
     Since $x^*$ vanishes on all $E_j$ with $j\in\N\setminus[m-1,n+1]$,  it follows that  $x^*\in F_{m-1}+F_m+ \ldots+ F_n+F_{n+1}$. It is also clear that $x^*$ extends $e^*$ and 
     since $\|x^*\|\le \|z^*\|+\|y^*\| \le \rho \|e^*\|+ \rho^2 \|e^*\|$, we deduce \eqref{E:2.2.1}.
     The verification of \eqref{E:2.2.2} can be accomplished similarly to the  proof  of  \eqref{E:2.2.1}.

     To show \eqref{E:2.2.3} let $x^*\in F_m+F_{m+1}+\ldots +F_n$ and let $\eta>0$.  We can choose  $x\in S_X$ so that 
     $|x^*(x)|\ge \|x^*\|-\eta$. We view $x$ as an element of $Y^*$  and put $$f^*=x|_{F_m+F_{m+1}+\ldots +F_n}\in 
      (F_m+F_{m+1}+\ldots +F_n)^*.$$ Using \eqref{E:2.2.2} we can extend $f^*$ to an element $z\in E_{m-1}+E_{m} +\ldots+ E_n+ E_{n+1}$, with $\|z\|\le 2.5$ and 
      thus 
    $$\|x^*\|-\eta\le |x^*(x)|=|x^*(z)| \le(2.5)\sup_{y\in E_{m-1}+E_m+\ldots+ E_{n}+E_{n+1}, \|y\|\le 1} |x^*(y)|,$$
    which implies our claim since $\eta>0$ was arbitrary.
    
    \eqref{E:2.2.4} can be shown the same way using \eqref{E:2.2.1}.
      \end{proof}

     From \eqref{E:2.2.3} we easily observe the following
\begin{cor}\label{C:2.4} Assume  the sequence $(E_n)$ is an FMD of $X$  with biorthogonal sequence $(F_j)$ satisfying the conclusions of Lemma \ref{L:2.2}.
Then every  skipped block in $X$  and  block sequence $(y_j)$ in $Y=\overline{\spa(F_j:j\in\N)}$ 
 is basic with a projection constant  not larger than $2.5$.
\end{cor}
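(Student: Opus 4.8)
First, recall that a sequence is basic with projection constant at most $2.5$ exactly when $\big\|\sum_{j\le m}a_jx_j\big\|\le 2.5\big\|\sum_{j\le n}a_jx_j\big\|$ for all $m\le n$ and all scalars $(a_j)_{j\le n}$, so the plan is to fix such data and verify this inequality, first for a skipped block sequence $(x_j)$ in $X$ with respect to $(E_n)$, and then for a skipped block sequence $(y_j)$ in $Y$ with respect to $(F_n)$. The case $m=n$ is trivial, so assume $m<n$; then $x_1,\dots,x_m$ all have finite $(E_n)$-range, say $\rg_E(x_j)=[p_j,q_j]$, and the skip hypothesis reads $q_j+2\le p_{j+1}$. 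Since $x_j=P^E_{[p_j,q_j]}x_j\in E_{p_j}+\dots+E_{q_j}$ and these intervals are increasingly ordered inside $[p_1,q_m]$, the vector $u:=\sum_{j\le m}a_jx_j$ lies in $E_{p_1}+\dots+E_{q_m}$.

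The core step is to feed $u$ into \eqref{E:2.2.4} of Lemma \ref{L:2.2} with window $[p_1,q_m]$: given $\eta>0$, this produces $x^*\in F_{p_1-1}+\dots+F_{q_m+1}$ with $\|x^*\|\le 1$ and $|x^*(u)|\ge\|u\|/2.5-\eta$. Since each $F_k$ annihilates $E_j$ for every $j\ne k$, such an $x^*$ has $\supp_E(x^*)\subseteq[p_1-1,q_m+1]$. Now the skip pays off: for $j>m$ we have $\rg_E(x_j)\subseteq[p_{m+1},\infty)\subseteq[q_m+2,\infty)$, which is disjoint from $[p_1-1,q_m+1]$, and a functional and a vector with disjoint $(E_n)$-supports always pair to zero — for $g_k\in F_k$ one has $g_k(z)=g_k(P^E_k z)$ for all $z\in X$, which also disposes of the possibly infinite range of the last block $x_n$. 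Hence $x^*\big(\sum_{j\le n}a_jx_j\big)=x^*(u)$, so $\big\|\sum_{j\le n}a_jx_j\big\|\ge|x^*(u)|\ge\|u\|/2.5-\eta$, and letting $\eta\downarrow 0$ gives the claim in $X$.

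For a skipped block sequence $(y_j)$ in $Y$ the argument is the mirror image, obtained by interchanging $(E_n)$ with $(F_n)$ and $X$ with $Y$: recall that $(F_n)$ is an FMD of $Y$ with biorthogonal sequence $(E_n)$, that $\rg_F$ and $\rg_E$ agree on $Y$, and that \eqref{E:2.2.1}--\eqref{E:2.2.4} are symmetric under this interchange. Thus $u=\sum_{j\le m}a_jy_j\in F_{p_1}+\dots+F_{q_m}$, and \eqref{E:2.2.3} with window $[p_1,q_m]$ supplies a vector $x\in E_{p_1-1}+\dots+E_{q_m+1}\subseteq X$ with $\|x\|\le 1$, supported in $[p_1-1,q_m+1]$, and $|u(x)|\ge\|u\|/2.5-\eta$; the skip again forces $y_j(x)=0$ for $j>m$, and one finishes as above.

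The only point that requires attention — and the sole place where "skipped" rather than merely "block" is needed — is the off-by-one padding built into Lemma \ref{L:2.2}: its norming windows $F_{m-1}+\dots+F_{n+1}$ and $E_{m-1}+\dots+E_{n+1}$ are one block wider on each side than $[m,n]$, so the functional (resp.\ the vector) they produce can reach the single index $q_m+1$ just beyond the support of $u$, and it is precisely the gap $q_m+2\le p_{m+1}$ that keeps this index away from the support of the tail $\sum_{j>m}a_jx_j$. Everything else is routine bookkeeping with supports and ranges.
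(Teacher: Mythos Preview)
Your proof is correct and follows exactly the approach the paper indicates: the paper gives no details beyond the sentence ``From \eqref{E:2.2.3} we easily observe the following'', and your argument via \eqref{E:2.2.4} for $X$ and \eqref{E:2.2.3} for $Y$ is precisely how one fills this in. One small remark: the statement as printed says ``block sequence'' (not ``skipped block sequence'') for $Y$, but you are right to impose the skip hypothesis there as well --- your final paragraph correctly identifies that the off-by-one padding in \eqref{E:2.2.3} forces it, and this is surely what was intended (every application of the corollary in the paper is to skipped blocks).
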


\section{Construction of $Z$ and $W$ and proof of Zippin's Theorems}\label{S:3}

Throughout this section  $X$ is a Banach space whose dual $X^*$ is separable.
We  also assume that we have chosen a shrinking FMD  $(E_n)$  of $X$ which, together with its biorthogonal sequence $(F_n)$, satisfies the 
conclusions  of Lemma \ref{L:2.2}.
 The following observation was made in \cite{Jo2} in the FDD case. The proof in the FMD case is the same. 
\begin{lem}\label{L:3.1} Let $(\vp_k)\subset (0,1]$  be given. Then there is an increasing sequence $(n_k)\subset \N$, so that for each $x^*\kin B_{X^*}$  and each $k\in\N$ there is a $j_k\in [n_{k}, n_{k+1}]$, so that $\|x^*|_{E_{j_k}}\|\le \vp_k$.  
\end{lem}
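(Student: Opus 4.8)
The plan is to build $(n_k)$ by recursion on $k$, arranging at the $k$-th step that the block of coordinates $[n_k,n_{k+1}]$ is wide enough that \emph{every} $x^*\in B_{X^*}$ is small, in the sense $\|x^*|_{E_j}\|\le\vp_k$, on at least one coordinate $j$ of that block. The only facts I expect to use are: that $(E_n)$ is shrinking, so by Proposition \ref{P:2.1} one has $\|x^*|_{\spa(E_j:j>n)}\|\to 0$, hence in particular $\|x^*|_{E_n}\|\to 0$, for each fixed $x^*\in X^*$; the $w^*$-compactness of $B_{X^*}$ together with its metrizability (as $X$ is separable); and the finite-dimensionality of the spaces $E_j$.

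Concretely, I would set $n_1=1$ and, assuming $n_1<\dots<n_k$ already chosen, claim that there is an $m>n_k$ such that for every $x^*\in B_{X^*}$ there is a $j\in[n_k,m]$ with $\|x^*|_{E_j}\|\le\vp_k$; one then takes $n_{k+1}=m$. Suppose the claim fails. Then for every $m>n_k$ there is an $x^*_m\in B_{X^*}$ with $\|x^*_m|_{E_j}\|>\vp_k$ for all $j\in[n_k,m]$. Since $(B_{X^*},w^*)$ is compact and metrizable, we may pass to a subsequence $x^*_{m_l}\to x^*$ in the $w^*$-topology, for some $x^*\in B_{X^*}$.

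The point that makes the argument work is that for each fixed $j$ the map $y^*\mapsto\|y^*|_{E_j}\|$ is $w^*$-continuous on $X^*$, not merely $w^*$-lower semicontinuous: the restriction map $X^*\to E_j^*$ is $w^*$-to-$w^*$ continuous (it is evaluation against points of $E_j\subset X$), and on the finite-dimensional space $E_j^*$ the $w^*$-topology coincides with the norm topology, so $y^*\mapsto y^*|_{E_j}$ is $w^*$-to-norm continuous. Fixing $j\ge n_k$, for all large $l$ we have $m_l\ge j$, hence $\|x^*_{m_l}|_{E_j}\|>\vp_k$; letting $l\to\infty$ gives $\|x^*|_{E_j}\|\ge\vp_k$. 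As $j\ge n_k$ was arbitrary, this contradicts $\|x^*|_{E_j}\|\to 0$, proving the claim and completing the recursion. I do not anticipate a serious obstacle here; the one place to be careful is precisely this continuity (as opposed to semicontinuity) of $y^*\mapsto\|y^*|_{E_j}\|$, which is where finite-dimensionality of the blocks is essential — with only $\liminf$ the contradiction would evaporate — together with the use of sequences rather than nets, which is legitimate thanks to the separability hypothesis in force throughout this section.
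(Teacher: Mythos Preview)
Your proof is correct and follows essentially the same route as the paper: reduce to the claim that for any $\delta>0$ and any starting index there is a finite block on which every $x^*\in B_{X^*}$ drops below $\delta$ on some coordinate, and prove the claim by contradiction via a $w^*$-convergent subsequence. Your explicit justification that $y^*\mapsto\|y^*|_{E_j}\|$ is $w^*$-continuous (not merely lower semicontinuous) thanks to $\dim E_j<\infty$, and your remark on metrizability, make precise two points the paper leaves implicit but uses.
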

\begin{proof} Our conclusion follows from iterating the following claim.

\noindent{\bf Claim:} for any $\delta>0$ and any $m\in\N$ there is an $n>m$ in $\N$ so that for each $x^*\in B_{X^*}$ there is a $j\in[m,n]$ with $\|x^*|_{E_j}\|\le \delta$.

Assume that our claim is not true, and for each $n\ge m$ we could choose $x^*_n\in B_{X^*}$ so that $\|x^*_n|_{E_j}\|\ge \delta$ for all $j\in[m,n]$.
 After passing to a subsequence we can assume that $(x^*_n)$ $w^*$- converges to some $x^*\in B_{X^*}$. But then it follows that $\|x^*|_{E_j}\|\ge \delta$, for all 
 $j\ge m$ which contradicts property (2) in Proposition \ref{P:2.1}.
\end{proof}
We choose a decreasing sequence $(\vp_k)\subset (0,1)$ with $\sum_{k\in \N} \vp_k<\frac1{50} $ and let $(n_k)\subset\N$ satisfy the conclusion of  Lemma \ref{L:3.1}. Note that this choice implies that  $n_{k+1}>n_k+2$, for $k\kin\N$.
%
Then we define
\begin{equation} \label{E:3.1}  
D^*=\big\{ x^*\in {X^*} :\forall k\kin\N\, \exists j\kin [n_k, n_{k+1}]\quad x^*|_{E_j}\equiv 0\big\},\, \text{and }B^*=D^*\cap B_{X^*}.
\end{equation}

\begin{lem}\label{L:3.2}  $B^*$ is  $\frac1{10}$-dense in $B_{X^*}$.  \end{lem}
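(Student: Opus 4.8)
The plan is to fix an arbitrary $x^*\in B_{X^*}$ and move it into $D^*$ by subtracting off, on one very sparse block in each window $[n_k,n_{k+1}]$, the tiny part of $x^*$ that lives there; a single rescaling at the end then puts the result into $B^*$ within $\frac1{10}$ of $x^*$.

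First I would apply Lemma \ref{L:3.1} to choose, for each $k$, an index $j_k$ with $\|x^*|_{E_{j_k}}\|\le\vp_k$. I want $j_k$ off the endpoints of its window, i.e.\ $j_k\in(n_k,n_{k+1})$; this is harmless, since the construction of $(n_k)$ in the proof of Lemma \ref{L:3.1} can be arranged (in the iteration producing $(n_k)$ there, apply the Claim with $m$ replaced by $n_k+1$ and set $n_{k+1}$ to be one more than the $n$ it returns) so that such a $j_k$ always exists, and we assume $(n_k)$ so chosen. Because the windows $[n_k,n_{k+1}]$ and $[n_{k+1},n_{k+2}]$ overlap only at the point $n_{k+1}$, this already forces $j_k\le n_{k+1}-1$ and $j_{k+1}\ge n_{k+1}+1$, hence $j_{k+1}\ge j_k+2$; in particular the set $J=\{j_k:k\in\N\}$ contains no two adjacent integers.

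Next, for each $k$ I would invoke \eqref{E:2.2.1} of Lemma \ref{L:2.2} with $m=n=j_k$ to extend the functional $x^*|_{E_{j_k}}\in E_{j_k}^*$ to some $h_k^*\in F_{j_k-1}+F_{j_k}+F_{j_k+1}$ with $h_k^*|_{E_{j_k}}=x^*|_{E_{j_k}}$ and $\|h_k^*\|\le 2.5\,\|x^*|_{E_{j_k}}\|\le 2.5\,\vp_k$. Since $\sum_k\vp_k<\frac1{50}$, the series $g^*:=\sum_k h_k^*$ converges absolutely in $X^*$ with $\|g^*\|\le 2.5\sum_k\vp_k<\frac1{20}$; set $z^*:=x^*-g^*$, so $\|z^*-x^*\|<\frac1{20}$. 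Now fix $k$. Each $h_l^*$ is supported, with respect to $(E_n)$, in $\{j_l-1,j_l,j_l+1\}$ — every functional in $F_i$ annihilates $E_m$ whenever $m\ne i$ — so, because $J$ has no two adjacent elements, $h_l^*|_{E_{j_k}}=0$ for every $l\ne k$, while $h_k^*|_{E_{j_k}}=x^*|_{E_{j_k}}$. Since restriction to $E_{j_k}$ commutes with the norm-convergent sum, $z^*|_{E_{j_k}}=x^*|_{E_{j_k}}-x^*|_{E_{j_k}}=0$; as this holds for all $k$, $z^*\in D^*$. Finally I would pass to $\zt^*:=z^*/\max(1,\|z^*\|)$: the conditions defining $D^*$ are invariant under scalar multiples, so $\zt^*\in D^*\cap B_{X^*}=B^*$, and $\|z^*-\zt^*\|=(\|z^*\|-1)^{+}\le\|g^*\|<\frac1{20}$, whence $\|x^*-\zt^*\|<\frac1{20}+\frac1{20}=\frac1{10}$.

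The one genuinely FMD-specific difficulty is that, unlike in the FDD case, one cannot simply subtract the coordinate functional $(P^E_{j_k})^*x^*$: it does lie in $F_{j_k}$, hence is $(E_n)$-supported on the single block $j_k$, but its norm is controlled only by $\|P^E_{j_k}\|$, and the coordinate projections of an FMD need not be uniformly bounded. The point of \eqref{E:2.2.1} of Lemma \ref{L:2.2} is precisely to provide a norm-controlled extension instead, at the price of spreading the support over the two neighbouring blocks — and it is that price which forces the annihilated indices $j_k$ to be chosen pairwise non-adjacent, so that the perturbations $h_k^*$ do not reintroduce mass on the very blocks we are trying to kill. Once these two points are secured, the remaining estimates are routine.
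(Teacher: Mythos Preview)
Your argument is correct and follows essentially the same line as the paper's own proof: pick one index $j_k$ per window where $x^*$ is small, use \eqref{E:2.2.1} to build a norm-controlled correction supported on $\{j_k-1,j_k,j_k+1\}$, subtract the sum of corrections, and rescale into $B^*$. The only difference is in how the non-adjacency of the $j_k$ is arranged: you go back and strengthen the choice of $(n_k)$ so that each $j_k$ can be taken strictly inside its window, whereas the paper works with the $(n_k)$ already fixed and instead, when $j_{k+1}=j_k+1$ occurs, merges the pair to the common endpoint $n_{k+1}$ (using $n_{k+1}>n_k+2$ to see such collisions cannot cascade). Your route is slightly cleaner at the cost of retroactively adjusting the setup; the paper's route proves the lemma exactly as stated for the $(n_k)$ already in hand. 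The final rescalings also differ cosmetically (you divide by $\max(1,\|z^*\|)$, the paper multiplies by the fixed factor $\tfrac{20}{21}$), but the estimates are the same.
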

\begin{proof} Let $x^*\in B_X^*$, and choose, according to Lemma \ref{L:3.1},
$j_k\in [n_{k}, n_{k+1}]$, for each $k\in\N$, so that $\|x^*|_{E_{j_k}}\|\le \vp_{k}$. 
 In the case that for some $k\in\N$ we have  $j_{k+1}=j_k+1$ we change $j_k$ and $j_{k+1}$ in the following way:
 First note that  $j_{k+1}=j_k+1$ only happens if $j_k=n_{k+1}-1 $ and $j_{k+1}=n_{k+1}$ or
  $j_k=n_{k+1} $ and $j_{k+1}=n_{k+1}+1$. In that case we redefine $j_k=j_{k+1}=n_{k+1}$.
  Then we define $K=\{k\in \N: j_{k}\not= j_{k-1}\}$. This, and the  above observed fact, that $n_{k+1}>n_k+2$, for $k\kin\N$,  implies that $(j_k:k\in K)$ is a skipped sequence in $K$, and we still have 
  $j_k\in [n_{k}, n_{k+1}]$ and $\|x^*|_{E_{j_k}}\|\le \vp_{k}$,
for each $k\in K$.

 Applying for each $k\in K$,
part \eqref{E:2.2.1} of Lemma \ref{L:2.2} to $e^*_k=x^*|_{E_{j_k}}$,
 we obtain an extension of $e^*_k$ to  $f_k\in F_{j_k-1}+F_{j_k}+F_{j_k+1}$ with $\|f_k\|\kleq2.5 \vp_k$.
Then we define  
$$y^*=x^*-\sum_{k\in K} f_k \text{ and } z^*=\frac{20}{21}{y^*}.$$
Since $\|y^*\|\le \|x^*\| + \frac1{20}\le \frac{21}{20}$  it follows that $\|z^*\|\le 1$ and  thus  $z^*\in B^*$ and 
$$\|x^*-z^*\| = \|x^*-y^*\big\| +\Big\| y^* -\frac{20}{21}{y^*}\Big\|\le \|x^*-y^*\|+\frac{1}{20} <\frac1{10},$$
which finishes the proof of our claim. \end{proof}
From now on  we assume, possibly after renorming $X$,  that
\begin{equation}\label{E:3.2}
\|x\|=\sup_{x^*\in B^*} |x^*(x)| \text{ for all $x\in X$,}
\end{equation} 
in other words we assume that $B^*\subset B_{X^*}$, as defined in \eqref{E:3.1}, is $1$-norming the space  $X$.

For $x^*\in D^*$ we let $\jb=(j_k)\in \prod_{k=1}^\infty [n_{k}, n_{k+1}]$ so that $x^*|_{E_{j_k}}\equiv 0$ and put
$x^*_k= P^{F}_{(j_{k-1}, j_k)}(x^*)$ for $k\kin\N$ (with $j_0=0$). Since $(E_j)$ is shrinking it follows from Proposition \ref{P:2.1} and Lemma \ref{L:2.2}  for $m\in\N$  that 
\begin{align*}
\Big\|x^*-\sum_{k=1}^m x^*_k\Big\|&\le 3 \sup_{x\in \spa(E_j:j\ge j_{m}), \|x\|\le 1} \Big|\Big(x^*\!-\!\sum_{k=1}^m x^*_k\Big)\!(x)\Big|
= 3\big\| x^*\vert_{\spa(E_j:j\ge j_{m})}\big\|\to_{m\to\infty} 0.
\end{align*}
Thus 
\begin{equation}\label{E:3.4} x^*=\sum_{k=1}^\infty x^*_k  \text{ and this series converges in norm, for all $x^*\in D^*$.}
\end{equation}
Lemma \ref{L:2.2}  also yields  for $m\le n$ that 
\begin{align}\label{E:3.3}
\Big\|\sum_{k=m}^n x^*_k\Big\|&\le 3\sup_{x\in E_{j_{m-1}}+\ldots+ E_{j_n}, \|x\|\le 1} \sum_{k=m}^n x^*_k(x)\\
 &=3\sup_{x\in E_{j_{m-1}}+\ldots+ E_{j_n}, \|x\|\le 1}\sum_{k=1}^\infty  x^*_k(x)\le 3\|x^*\|.\notag
\end{align} 
We define
\begin{align}\label{E:3.4a} \D^*&=\left\{ (x^*_k)\subset X^*:  \begin{matrix}\exists (j_k)\kin\prod_{k=1}^\infty[n_k,n_{k+1}] \text{ so that }\\
     \text{$\rg_E(x^*_k)\subset (j_{k-1},j_{k})$, for  $k\in\N$,  $\big\|\sum_{k=1}^\infty x^*_k\big\|<\infty$} \end{matrix}\right\}\\
     \intertext{(In the definition of $\D^*$  it is possible that $j_{k-1}=j_{k}= n_{k}$ or $j_k=j_{k-1}+1$, and that in either case $x^*_k\equiv 0$) and }
\label{E:3.4b}\B^*&= \D^*\cap \Big\{ (x^*_k)\subset X^*: \Big\|\sum_{k=1}^\infty x^*_k\Big\|\le 1\Big\}. \end{align}
We can rewrite the sets $D^*$ and $B^*$ as 
\begin{align}\label{E:3.5}D^*&=\Big\{ \sum_{k=1}^\infty x^*_k: (x^*_k)\in \D^* \Big\} \text{ and }
                   B^*=D^*\cap B_{X^*}=\Big\{ \sum_{k=1}^\infty x^*_k: (x^*_k)\in \B^*\Big\}.
\end{align}

We now construct the space $Z$ with  shrinking FDD $(Z_j)$  which contains $X$.
 The important point  in the construction of the space will be that $\B^*$  will become the $1$-norming set of $Z$, and that the similarities between
 $X$ and $Z$ stem from  the similarities of the sets $D^*$ and $\D^*$, and $B^*$ and $\B^*$, respectively.
We put   for $k\in\N$
\begin{equation}\label{E:3.7} 
Z_k=\spa(E_j: n_{k-1}<j<n_{k+1}), \text{ for $k\in\N$ (as before, $n_0=0$),}
\end{equation}
and note that for $(x^*_k)\in\B^*$ and each  $k\in\N$ it follows that  $\rg_E(x^*_k)\subset (n_{k-1},n_{k+1})$, $x^*_k$ can therefore be seen as functional acting on $Z_k$.
For $z=(z_k)\in c_{00}(Z_k)$  we define
\begin{equation}\label{E:3.8} \|z\| =\sup\Bigg\{ \sum_{k=1}^\infty x^*_k(z_k):  (x^*_k)\in \B^*\Bigg\}.
\end{equation}
We define $Z$ to be the completion of $c_{00}( Z_k)$ with respect to $\|\cdot\|$. We will from now on consider the elements of $\D^*$   to be elements of $Z^*$.
If $x^*\in X^*$ with $\rg_E(x^*)\subset (n_{k-1},n_{k+1})$, for some $k\in\N$,
 we can identify $x^*$ with the sequence $(x^*_m)\in \D^*$, with $x^*_m=x^*$, if $m=k$, and $x^*_m=0$, otherwise.
Thus 
  we can consider $x^*$ to be an element of $Z^*$. 

The following Proposition gathers some properties  of the space $Z$, and shows how $Z$ inherits  the properties of $X$.

\begin{prop}\label{P:3.3} (Properties of the space $Z$)
\begin{enumerate}
\item The map
$$I:c_{00}( E_j)\to Z, \quad x\mapsto \big(P^E_{(n_{k-1},n_{k+1})}(x): k\in\N\big)   $$
extends to an isometric embedding from $X$ into $Z$.

\item
 $(Z_j)$ is an FDD of $Z$ whose projection constant is not larger than $3$.

\item
For a sequence  $\jb=(j_k)\in \prod_{k=1}^\infty [n_{k}, n_{k+1}]$,  define
$$U_{\jb}=\big\{x^*\in X^* : \forall k\in\N, x^*|_{E_{j_k}}\equiv 0\big\} .$$  Then 
$U_{\jb}$ is a $w^*$-closed subspace of $X^*$ and the map 
$$\Phi_{\jb}:U_{\jb} \to Z^*, \quad x^*\mapsto    \big(P^F_{(j_{k-1},j_{k})}(x^*):k\in\N\big) ,$$
is an isometric embedding which is continuous with respect to the $w^*$-topology of $X^*$ restricted to $U_{\jb}$ and the $w^*$-topology of $Z^*$.

\item $\B^*$ is  a $w^*$-compact subset of $B_{Z^*}$ which is $1$-norming $Z$ and 
  the restriction of $I^*: Z^*\to X^*$ to the set   $\D^*$ is a norm preserving map  from
 $\D^*$  onto $D^*$.
  
Moreover, if $(z^*_i)$ is a skipped block with respect to the FDD $(Z^*_k)$ whose elements are in $\D^*$, then $\big(I^*(z^*_i)\big)$ is 
a skipped  block in $D^*$ with respect to $(F_j)$ which is isometrically equivalent to $(z^*_i)$.

 \item  Let $Y$ be a Banach space   and  let $T_k:Y\to Z_k$, be linear, for $k\in\N $, and assume that 
  $T_k\equiv 0$ for all but finitely many $k$. 
 Define $$T : Y\to  Z, \quad y\mapsto (T_k(y):k\in\N).$$
For  every $\xb^*=(x^*_k)_{k=1}^\infty\in \B^*$  define
 $$T^*_{\xb^*}= T^*\big|_{\overline{\spa(x^*_k:k\in \N)}} : \overline{\spa(x^*_k:k\in \N)}\to Y^*,    \quad \sum a_k x^*_k \mapsto   \sum  a_k x^*_k\circ T_k.$$
Then 
 \begin{equation} \label{E:3.3.1} \|T\|_{L(Y,Z)}=\sup_{\xb\in \B^*}  \big\| T^*_{\xb^*}\|.\end{equation}
We expressed therefore the norm of an operator $T:X\to Z$ by the norm of its adjoint restricted to spaces of the form 
$\overline{\spa(x^*_j:j\in \N)}$ with $(x^*_j)\in \B^*$.
 \end{enumerate}
 \end{prop}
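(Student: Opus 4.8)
\emph{Plan.} I would prove the five assertions in order, extracting as much as possible from the defining formula \eqref{E:3.8} together with the estimates \eqref{E:3.3} and \eqref{E:3.4}. The elementary observation to record first is that if $(x^*_k)\in\D^*$ with associated $(j_k)\in\prod_k[n_k,n_{k+1}]$, then $\rg_E(x^*_k)\subset(j_{k-1},j_k)\subset(n_{k-1},n_{k+1})$, so $x^*_k(P^E_{(n_{k-1},n_{k+1})}(x))=x^*_k(x)$ for all $x\in X$ and therefore $\sum_k x^*_k(P^E_{(n_{k-1},n_{k+1})}(x))=\big(\sum_k x^*_k\big)(x)$ for $x\in c_{00}(E_j)$. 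Since each $E_j$ lies in at most two of the windows $(n_{k-1},n_{k+1})$, $I$ sends $c_{00}(E_j)$ into $c_{00}(Z_k)$; and by the identity just noted together with \eqref{E:3.5} and the normalization \eqref{E:3.2}, $\|I(x)\|_Z=\sup_{x^*\in B^*}x^*(x)=\|x\|_X$ for $x\in c_{00}(E_j)$, so $I$ extends to an isometric embedding $X\hookrightarrow Z$, which is (1). Reading the same identity the other way, $I^*((x^*_k))=\sum_k x^*_k$ for every $(x^*_k)\in\D^*$ (both functionals agree on the dense set $c_{00}(E_j)$); this is used repeatedly below.

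For (2) I would bound block projections directly. Given $z\in c_{00}(Z_k)$ and $(x^*_k)\in\B^*$, the truncation $(y^*_k)$ with $y^*_k=x^*_k$ for $m\le k\le n$ and $y^*_k=0$ otherwise lies in $\D^*$, and $\|\sum_{k=m}^n x^*_k\|_{X^*}\le3\|\sum_k x^*_k\|_{X^*}\le3$ by the argument behind \eqref{E:3.3}, so $\tfrac13(y^*_k)\in\B^*$; pairing with $z$ gives $\sum_{k=m}^n x^*_k(z_k)\le3\|z\|_Z$, hence $\|P^Z_{[m,n]}\|\le3$. This uniform bound plus density of $c_{00}(Z_k)$ makes $(Z_k)$ an FDD of $Z$ with projection constant $\le3$. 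I would treat (3) next, as it feeds into (4): $U_{\jb}$ is $w^*$-closed (an intersection of kernels of point evaluations), and for $x^*\in U_{\jb}$ one has $x^*|_{E_{j_k}}\equiv0$, so $x^*\in D^*$ with $\jb$ an admissible choice and, by \eqref{E:3.4}, $x^*=\sum_k P^F_{(j_{k-1},j_k)}(x^*)$ converges in norm; thus $\Phi_{\jb}(x^*)\in\D^*$. Next I would establish the key norm identity $\|(x^*_k)\|_{Z^*}=\|\sum_k x^*_k\|_{X^*}$ for all $(x^*_k)\in\D^*$: ``$\le$'' comes from applying \eqref{E:3.8} to $c^{-1}(x^*_k)$ with $c=\|\sum_k x^*_k\|_{X^*}$, and ``$\ge$'' from $\|I\|=1$ together with $I^*((x^*_k))=\sum_k x^*_k$. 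Consequently $\Phi_{\jb}$ is isometric; it is plainly linear and injective, and its $w^*$--$w^*$-continuity I would check on $c_{00}(Z_k)$, where $\langle\Phi_{\jb}(x^*),(z_l)\rangle=\sum_l x^*(P^E_{(j_{l-1},j_l)}(z_l))$ is $w^*$-continuous in $x^*$, and then pass to all of $Z$ using boundedness of $\Phi_{\jb}$ and the Krein--Smulian theorem.

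Assertion (4) assembles from this. That $I^*$ restricts to a norm-preserving surjection $\D^*\to D^*$ is $I^*((x^*_k))=\sum_k x^*_k$, the norm identity, and \eqref{E:3.5}; that $\B^*\subset B_{Z^*}$ is $1$-norming $Z$ is the definition \eqref{E:3.8}. For $w^*$-compactness I would use the parametrization $\B^*=\bigcup_{\jb}\Phi_{\jb}(B_{X^*}\cap U_{\jb})$ — an element of $\B^*$ with admissible $(j_k)$ is $\Phi_{\jb}$ of its (norm-convergent) sum, and that sum lies in $U_{\jb}$ since it vanishes on every $E_{j_k}$ — together with the observation that $K=\{(\jb,x^*):x^*\in B_{X^*}\cap U_{\jb}\}$ is closed in the compact space $\prod_k[n_k,n_{k+1}]\times(B_{X^*},w^*)$ and $(\jb,x^*)\mapsto\Phi_{\jb}(x^*)$ is continuous into $(B_{Z^*},w^*)$; so $\B^*$ is a continuous image of a compact set. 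For the skipped-block statement, the hypothesis that consecutive supports are separated by a genuine gap lets one pick one admissible $(j_k)$ serving every finite combination $\sum_i a_i z^*_i$ at once, so $\sum_i a_i z^*_i\in\D^*$; the norm identity applied to it and to $I^*(\sum_i a_i z^*_i)=\sum_i a_i I^*(z^*_i)$ gives the isometric equivalence, while $\rg_F(I^*(z^*_i))\subset(n_{\min A_i-1},n_{\max A_i+1})$ shows $(I^*(z^*_i))$ is again skipped with respect to $(F_j)$.

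Finally (5) is a duality computation: by \eqref{E:3.8}, $\|T\|_{L(Y,Z)}=\sup_{y\in B_Y}\sup_{\xb^*\in\B^*}\sum_k x^*_k(T_k y)=\sup_{\xb^*\in\B^*}\|\sum_k x^*_k\circ T_k\|_{Y^*}$. Each $T^*_{\xb^*}$ is a restriction of $T^*$, so $\|T^*_{\xb^*}\|\le\|T\|$; conversely $\xb^*=\sum_k x^*_k$ lies in $\overline{\spa(x^*_k:k\in\N)}\cap B_{Z^*}$ (its tails $\sum_{k>N}x^*_k$ tend to $0$ in $Z^*$, again by the norm identity and \eqref{E:3.4}) and $T^*_{\xb^*}(\xb^*)=\sum_k x^*_k\circ T_k$, so $\|T^*_{\xb^*}\|\ge\|\sum_k x^*_k\circ T_k\|_{Y^*}$; combining yields \eqref{E:3.3.1}. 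The step I expect to need the most care is the $w^*$-compactness of $\B^*$ in (4) — hence the round-trip through the compactness of $\prod_k[n_k,n_{k+1}]$ via $\Phi_{\jb}$ — together with the closely related point in the skipped-block claim that a \emph{single} admissible $(j_k)$ works for all $z^*_i$ simultaneously, which is exactly where the ``skipped'' (rather than merely ``block'') hypothesis enters.
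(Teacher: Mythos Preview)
Your proposal is correct and follows essentially the same approach as the paper. The only cosmetic differences are that for $w^*$-compactness of $\B^*$ you argue via continuity of $(\jb,x^*)\mapsto\Phi_{\jb}(x^*)$ on the compact parameter space $\prod_k[n_k,n_{k+1}]\times(B_{X^*},w^*)$ whereas the paper works sequentially (pass to a subsequence to fix a common $\jb$, then invoke the $w^*$--$w^*$ continuity of $\Phi_{\jb}$ from (3)), and that you isolate the norm identity $\|(x^*_k)\|_{Z^*}=\|\sum_k x^*_k\|_{X^*}$ on $\D^*$ as a stand-alone lemma and split the two inequalities in (5) more explicitly than the paper does.
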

 \begin{proof} To verify  (1) let $x\in c_{00}( E_j)$ and note that 
\begin{align*}
\| I(x)\| =\sup_{(x^*_k)\in \B^*}   \Big|\sum_{k=1}^\infty  x^*_k\big(P^E_{(n_{k-1},n_{k+1})}(x) \big)\Big| 
 =\sup_{(x^*_k)\in \B^*}  \Big|\sum_{k=1}^\infty  x^*_k(x) \Big| 
 =\sup_{x^*\in B^*}| x^*(x)|=\|x\|.
\end{align*}

\medskip\noindent (2)   Let $m\le n$ and $z=(z_k)\in c_{00}(Z_k)$. Then it  follows from \eqref{E:3.3} 

$$\big\|P^Z_{[m,n]}(z)\big\|=\sup _{(x^*_k)\in \B^*}  \Big\|\sum_{k=m}^n x^*_k z_k\Big\|\le\sup _{(x^*_k)\in \B^*} \Big\|\sum_{k=m}^n x^*_k\Big\|\cdot  \| z\|\le 3\|z\|.$$

\medskip\noindent (3) For $\jb=(j_k)\in \prod_{k=1}^\infty [n_k, n_{k+1}]$ the space
$U_{\jb}=\{x^*\kin X^* : \forall k\kin\N, x^*|_{E_{j_k}}\equiv 0\} $ is    clearly a $w^*$-closed   subspace of $X^*$.
If $x^*\in U_{\jb}$, with $\|x^*\|=1$ put $x^*_k=P^F_{(j_{k-1}, j_k )}(x^*)$, $k\in\N$, and $z^*=(x^*_k)$. On the one hand it follows
 from the definition of the norm on $Z$ that  $z^*\in \B^*$ and, thus, $\|z^*\|\le 1$. On the other hand it follows 
for all $x\kin X$ that $z^*(I(x))= \sum_{k=1}^\infty x^*_k(x)=x^*(x)$ and thus, since $I$ is an isometric embedding, $\|z^*\|\kgeq \|x^*\|\keq1$. 
Thus $\Phi_{\jb}$  is an isometric embedding from $U_{\jb}$ into $Z^*$. 

In order to show that $\Phi_{\jb}$ is $w^*$-continuous, it is enough to show that $\Phi_{\jb}$  restricted to  the unit ball is $w^*$ continuous.
 Then the $w^*$-continuity on all of $U_{\jb}$
  follows from the already observed fact that $U_{\jb}$ is $w^*$-closed, and the Theorem of Krein-Smulian, which says that a subspace a dual space 
 is $w^*$ closed if its intersection with the unit ball is $w^*$-closed (c.f. \cite[V.5 Corollary 8]{DS})

In order to  show that $\Phi_{\jb}$  is  $w^*$-continuous on $B_{U_{\jb}}$,  let $(x^*(n))$ be a sequence in $B_{U_{\jb}}\subset B_{X^*}$ which $w^*$-converges to $x^*$, and let 
$x^*_k=P^F_{(j_{k-1},j_k)}(x^*)$ and 
$x^*_k(n)=P^F_{(j_{k-1},j_k)}(x^*(n))$, for  $k,n\kin\N$. 
 It follows that for each $k\kin\N$ the sequence $x^*_k(n)$    converges to  $x^*_k$ in $Z^*_k$ and thus  $(x^*_k(n))$  converge in $w^*$ to $x^*_k$ as functionals on $Z^*$, which act on the $k$-th coordinate. 
 But this implies that the  sequence $\Psi_{\jb}\big(x^*(n)\big)$ converges point wise  on a dense set of $Z$ to $\Phi_{\jb}(x^*)$. Since $(x^*(n):n\kin\N)$ is bounded
 we deduce that   $\big(\Psi_{\jb}(x^*(n)):n\kin\N\big)$ $w^*$-converges  in $Z^*$ to $\Phi_{\jb}(x^*)=(x^*_k)\in \B^*\subset B_{Z^*}$.
 
\medskip\noindent (4)
In order to show that $\B^*$ is $w^*$-compact let $z^*(n)= (x^*_k(n))\in \B^*$, for $n\kin\N$, and assume that $(z^*(n))$ converges in $w^*$ to some $z^*\in Z^*$.
After  passing to subsequences we can assume that there is a sequence $\jb=(j_k)\in \prod_{k=1}^\infty[n_{k}, n_{k+1}] $, so that 
$\rg_E(x^*_k(n))\subset (j_{k-1}, j_k)$ for all $k\in\N$ and all $n\ge k$. This implies that  the sequence $(x^*(n))$, with  $x^*(n)=\sum_{k\in\N}^\infty x^*_k(n)$, for $n\in\N$, converges  $w^*$ to some element 
$x^*$ which is in $U_{\jb}\cap B_{X^*}$.  It follows therefore that $z^*=\Phi_{\jb}(x^*)$, and, thus,  that $z^*\in \B^*$.

If $z^*=(x^*_k)\in \D^*$ then there is some   $\jb=(j_k)\in \prod_{k=1}^\infty [n_{k},n_{k+1}]$,  so that  $x^*=\sum x^*_k\in U_{\jb}$  and 
$z^*=\Phi_{\jb}(x^*)$.
By part (3) it follows that $\|z^*\|=\|x^*\|$, and since for all $x\in X$
$$z^*(I(x))=\sum x^*_k\big(P_{(n_{k-1},n_{k+1})}^E( x)\big)=\sum  x^*_k (x)= x^*(x),$$
  it follows that $x^*=I^*(z^*)$. 
Note also that $D^*$ consists of the union of all spaces $U_{\jb}$,
with $\jb\in\prod_{k=1}^\infty[n_{k}, n_{k+1}]$, and that  $I^*\circ\Phi_{\jb}(x^*)=x^*$ for all $x^*\kin U_{\jb}$.  It follows therefore that the image of $I^*|_{\D^*}$ is all of $D^*$.

 If  $(z^*_i)\subset \D^*$ is a skipped block with respect to $(Z^*_k)$, we choose $(m_i)\subset \N$ increasing so that 
 $\rg_{Z^*} (z^*_i)\subset (m_{i-1},m_i)$,  for $i\in\N$ (with $m_0=0$). We write  for $i\kin\N$  the element $z^*_i$ as $z^*_i= (x^*_{k}: k\keq m_{i-1}\kplus1, m_{i-1}\kplus2,\ldots, m_{i}-1)\subset X^*$, with 
  $\rg_{E} (x^*_{k})\subset (j_{k-1}, j_{k})$,  for $ k= m_{i-1}+1, m_{i-1}+2,\ldots m_{i}-1$,  where 
   $j_ {k}\in [n_k,n_{k+1}]$, for   $k=m_{i-1}, m_{i-1}+1,\ldots, m_{i}-1$.
   Finally we put $x^*_{m_i}=0$ for $i\in\N$, and deduce that  $(x^*_k)\subset U_{\jb}$ with  $\jb=(j_k)$.
    It follows that all the $z^*_i$, together with their linear combinations 
 are in the image of $U_{\jb}$ under $\Phi_{\jb}$ and it follows from the already verified facts, that $(z^*_i)$ is isometrically equivalent to its inverse image 
 which is the sequence  $(I^*(z^*_i))$.

 \medskip\noindent(5) We  deduce from the definition \eqref{E:3.8} of the norm on $Z$ that  
  \begin{align*}\|T\|=\sup_{y\in B_Y} \|T(y)\|&=\sup_{y\in B_Y}\sup_{(x^*_k)\in \B^*} \Big|\sum  x^*_k\big(T_k(y)\big)\Big|\\
  &= \sup_{\xb^*=(x^*_j)\in \B^*} \sup_{y\in B_Y}\Big|T_{\xb^*}\Big(\sum  x ^*_k\Big)(y)\Big|
  =\sup_{\xb^*\in\B^*}\big\|T_{\xb^*}\big\| 
 \end{align*}
  which proves our claim (7).
 \end{proof} 
\begin{rem}\label{R:3.4} Note  that  $I^*|_{\D^*}$ is norm preserving but   not  injective. Indeed,  let $x^*\kin B^*$ have the property that  for some  $k_0\in\N$ there are $j,j'\in [n_{k_0}, n_{k_0+1}]$, with  $j<j'-1$,
 $x^*|_{E_j}= x^*|_{E_{j'}}=0$ , and  there is $i\in (j,j')$ so that  $x^*|_{E_i}\not=0$, then 
 we  can  write $x^*=\sum_{k=1}^\infty x^*_k$ and $x^*=\sum_{k=1}^\infty y^*_k$, with
  $\rg_E(x^*_{k_0})\subset (0,j)$ and $i \in \rg_E(y^*_{k_0})\not\subset (0,j)$ and thus $(x^*_k)$ and $(y^*_k)$ are as elements of
  $Z^*$ different.
\end{rem}
In our  next step we will show that $(Z_j)$ is shrinking in $Z$ and  we first need to recall some notion for families  of finite subsets of $\N$.
  
\noindent{\bf Notation.}
For any set $M$ we denote by 
$[M]$,
$[M]^{<\omega}$ and $[M]^{\omega}$ 
 the subsets,  the finite subsets, and the  infinite subsets of $M$, respectively. 
 If $M=\N$ we introduce the following convention for subsets of $\N$. When we write 
 $A=\{a_1,a_2,\ldots, a_n\}\in [\N]^{<\omega}$ or $A=\{a_1,a_2,a_3\ldots \}\in [\N]^{\omega}$ it is implicitly assumed that the $a_j$ are increasing.

  For $A\in [\N]^{<\omega} $ and $B\in [\N]$ we write $A<B$ if $\max A<\min B$, and introduce the convention that $\emptyset>A$ and $\emptyset <A$ 
  for all $A\in[\N]^{<\omega}$.
   We say 
  that $B$ is an {\em extension of } $A$, and write $A\preceq B$, if $B=A\cup B'$ for some $B'\in [\N]$ with
  $B'>A$.  By $A\prec B$ we mean that $A\not=B$ and $A\preceq B$.
 
 We  identify 
 $[\N]$  in the usual way with the product   $\{0,1\}^\omega$ 
and  consider  on $[\N]$ the product topology  of the discrete topology on  $\{0,1\}$.

 $\cF\ksubset[\N]^{<\omega}$ is called 
 {\em closed under restrictions} if $A\kin\cF$ whenever $A\prec B$ and  $B\kin\cF$,
\emph{hereditary} if $A\kin\cF$ whenever $A\ksubset B$ and $B\kin\cF$,
and $\cF$ is called \emph{compact }if it is compact in the product
topology. Note that a  family  which is closed under restrictions is compact if and only if it is {\em well founded}, \ie if it does  not
contain strictly ascending chains with respect to extensions.
Given $n,\ a_1\kle \dots\kle
a_n,\ b_1\kle \dots\kle b_n$ in $\N$ we say that $\{b_1,\dots,b_n\}$
is \emph{a spread }of $\{a_1,\dots,a_n\}$ if $a_i\kleq b_i$ for $i\keq1,\dots,n$. A family $\cF\ksubset[\N]^{<\omega}$ is called
\emph{spreading }if every spread of every element of $\cF$ is also in
$\cF$.

\begin{lem}\label{L:3.5} $(Z_j)$ is a shrinking FDD of $Z$.
\end{lem}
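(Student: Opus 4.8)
The plan is to prove that the biorthogonal sequence $(Z^*_j)$ of $(Z_j)$ spans $Z^*$ in norm; since the projection constant of $(Z_j)$ is at most $3$ by Proposition~\ref{P:3.3}(2), this is equivalent to $(Z_j)$ being shrinking and to every bounded block sequence of $(Z_j)$ in $Z$ being weakly null. Put $N=\overline{\spa}(Z^*_j:j\in\N)$. The first, easy, point is that $\B^*\subset N$: if $(x^*_k)\in\B^*$ has witness $\jb=(j_k)\in\prod_k[n_k,n_{k+1}]$, then $\rg_E(x^*_k)\subset(n_{k-1},n_{k+1})$, so $x^*_k\in Z^*_k$, $\sum_{k\le m}x^*_k\in\spa(Z^*_1,\dots,Z^*_m)$, and by Proposition~\ref{P:3.3}(4) together with \eqref{E:3.4}, $\bigl\|(x^*_k)-\sum_{k\le m}x^*_k\bigr\|_{Z^*}=\bigl\|\sum_{k>m}x^*_k\bigr\|_{X^*}\to 0$. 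Since $B^*=I^*(\B^*)$ is $\tfrac1{10}$-dense in $B_{X^*}$ by Lemma~\ref{L:3.2}, it follows that $I^*(N)\supset\spa(B^*)$ is norm-dense in $X^*$.

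Next I would reduce everything to the inclusion $I(X)^\perp\subset N$. As $I$ is an isometric embedding, $I^*$ is a metric surjection with kernel $I(X)^\perp$, so $(I(X)^\perp)^\perp=I^{**}(X^{**})$ inside $Z^{**}$. Assuming $I(X)^\perp\subset N$: if $N\ne Z^*$, choose $F\in Z^{**}\setminus\{0\}$ with $F|_N\equiv0$; then $F$ annihilates $I(X)^\perp$, so $F=I^{**}(x^{**})$ with $x^{**}\ne0$, while $F|_N\equiv0$ forces $x^{**}$ to vanish on the norm-dense subspace $I^*(N)$, i.e.\ $x^{**}=0$, a contradiction. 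Hence the whole proof comes down to showing $I(X)^\perp\subset N$, and it is here that the standing hypothesis that the FMD $(E_n)$ is shrinking in $X$ is used essentially.

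The heart of the matter is therefore $I(X)^\perp\subset N$. Because $Z_k=\spa(E_j:n_{k-1}<j<n_{k+1})$, every $E_j$ lies in at most two consecutive spaces $Z_k$, and matching the overlap one checks that $z^*=(z^*_k)\in Z^*$ lies in $I(X)^\perp$ exactly when it is \emph{telescoping}, $z^*_k=\phi_k-\phi_{k-1}$ with $\phi_0=0$ and each $\phi_k$ a functional $E$-supported in $(n_k,n_{k+1})$ (one also gets $\sup_k\|\phi_k\|<\infty$ and $\phi_k\to0$ in the $w^*$-topology of $X^*$). To conclude $z^*\in N$ one must show $\bigl\|\sum_{k>m}z^*_k\bigr\|_{Z^*}\to0$. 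If this failed, then, passing to a block sequence of coordinates, one would obtain $\delta>0$ with $\bigl\|\sum_{k\in[p_i,q_i]}z^*_k\bigr\|_{Z^*}\ge\delta$; unwinding the telescoping with the norm comparisons of Lemma~\ref{L:2.2} and Corollary~\ref{C:2.4}, and testing against suitable blocks, this would produce $\|\phi_{k_i}\|\ge\delta'$ along a subsequence and a skipped block sequence $(e_i)$ with respect to $(E_j)$ in $X$, at levels tending to infinity, satisfying a lower $\ell_1$-estimate $\|\sum_{i\le r}e_i\|_X\ge c\,r$. But by Proposition~\ref{P:2.1}(3) such a sequence is weakly null, hence, after passing to a subsequence, its averages $\tfrac1r\sum_{i\le r}e_i$ tend to $0$ in norm, which rules out the lower $\ell_1$-estimate. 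Therefore $z^*\in N$, $I(X)^\perp\subset N$, and $(Z_j)$ is shrinking.

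The main obstacle is this last step. Concretely, (i) the two-fold overlap forces one to keep track of constants coming from Lemma~\ref{L:2.2} and from the overlap through every identification of an element of $I(X)^\perp$ with a telescoping sequence and through every norm estimate — these stay harmless because $\sum_k\vp_k<\tfrac1{50}$ and because the supports involved run off to infinity — and (ii) the block-selection producing $(e_i)$ has to be organised carefully; the hereditary, spreading and well-founded compact families of finite subsets of $\N$ recalled just above the statement are the natural device for this combinatorial bookkeeping.
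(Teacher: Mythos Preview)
Your route is genuinely different from the paper's. The paper never analyses $I(X)^\perp$ directly; instead it fixes a normalized block sequence $(z_j)$ in $Z$, shows that for each $c\in(0,1)$ the hereditary family
\[
\cA_c=\big\{\{m_1,\dots,m_l\}:\exists\,z^*\in\B^*\ \forall j\le l\ z^*(z_{m_j})\ge c\big\}
\]
is compact (using only the $w^*$-compactness of $\B^*$ from Proposition~\ref{P:3.3}(4) and the norm convergence $\sum_k x^*_k$ for $(x^*_k)\in\B^*$), and then invokes the general Lemma~\ref{L:3.6}, whose proof is a James/Schreier-space argument. The merit of that approach is that everything is phrased in terms of the concrete norming set $\B^*$, which is completely understood.

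Your reduction to $I(X)^\perp\subset N$ is correct and elegant, and the telescoping description of $I(X)^\perp$ is right. The gap is in the final paragraph. From $\big\|\sum_{k\in[p_i,q_i]}z^*_k\big\|_{Z^*}\ge\delta$ you do not get a skipped block sequence $(e_i)$ in $S_X$ satisfying $\big\|\sum_{i\le r}e_i\big\|_X\ge c\,r$: the witnesses live in $B_Z$, not in $I(B_X)$, and the telescoping identity $\sum_{k=p}^q z^*_k=\phi_q-\phi_{p-1}$ only holds after restricting to $I(X)$, precisely where $z^*$ vanishes. If you pick $e_i\in S_X$ with $\phi_{k_i}(e_i)\ge\delta'$, the $\phi_{k_i}$ have disjoint $E$-supports, so no single functional witnesses an $\ell_1$-lower bound on $\sum_i e_i$; and the vectors $v_i=(0,\dots,e_i,-e_i,\dots)\in Z$ that \emph{do} see $z^*$ are not in $I(X)$, so transferring back to $X$ fails. (Also, ``weakly null $\Rightarrow$ averages $\to 0$ in norm'' is false in general; Mazur gives convex combinations, which would be enough \emph{if} you had the $\ell_1$-estimate.)

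In short, the obstacle you flag in (ii) is real and your sketch does not overcome it. The paper's Lemma~\ref{L:3.6} is exactly the device that replaces this missing step: it converts the compactness of $\cA_c$ (which is easy here) into the non-existence of an $\ell_1$-block in $Z$, without ever needing to produce one in $X$.
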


\begin{proof} Let  $(z_j)$ be  a normalized   block sequence in $Z$ with respect to $(Z_j)$. For any $c\in(0,1)$ we  first show that the set 
$$\cA_c=\big\{ \{m_1, m_2, \ldots, m_l\} :\exists z^*\kin \B^*\,\forall j=1,2,\ldots, l \quad z^*(z_{m_j})\ge c\big\}$$
is compact. Indeed, if our claim were not true, we could find an increasing sequence
$( m_j)\subset \N$ and   $z^*_n\in \B^*$ for each $n\in\N$,  so that 
$z^*_n(z_{m_j})\ge c$ for all $j\le n$. Without loss of generality we can assume that $z^*_n$ converges in $w^*$  to some $z^*$ which 
by  part (4) of Proposition \ref{P:3.3} also lies  in $\B^*$. We write $z^*$ as $z^*=(x^*_k)\subset X^*$,  
and we let $\jb=(j_k)\in \prod_{k=1}^\infty [n_{k},n_{k+1}]$, so that $\rg_E(x^*_k)\subset (j_{k-1}, j_k)$, for $k\in\N$.

 It  follows that $\spa(x^*_k:k\in\N)\subset U_{\jb}$, and  Proposition \ref{P:3.3} yields
that  for $i\in\N$
   $$\Big\|\sum_{k\in \rg_Z(z_{m_i})}  x^*_k\Big\|\ge \sum_{k\in \rg_Z(z_{m_i})}  x^*_k(z_{m_i})= z^*(z_{m_i})\ge c,$$
   which  contradicts the convergence of the series  $\sum_{k=1}^\infty x^*_k$.
   
   We can deduce the rest of the proof  from the following more generally stated  result.
\end{proof}
\begin{lem}\label{L:3.6} Let $V$ be a Banach space having an FDD $(V_j)$ and assume that there is a $1$-norming  subset $B$ of $B_{V^*}$ 
 so that  for some $0<c<1$ and for all normalized block sequences $(v_j)$ in $V$ with respect to $(V_j)$ the set
 $$\cA=\big\{ \{m_1,m_2,\ldots,m_l\} :\exists v^*\kin B\,\forall j=1,2,\ldots, l \quad v^*(v_{m_j})\ge c\big\}$$
 is compact. Then $(V_j)$ is shrinking in $V$.

 Conversely if $(V_j)$ is shrinking then for every $0<c<1$ and every normalized block sequence $(v_j)$  in $V$ with respect to $(V_j)$  the set 
 $$\cB_c=\big\{ \{m_1,m_2,\ldots, m_l\} :\exists v^*\kin B_{V^*}\,\forall j=1,2\ldots, l \quad v^*(v_{m_j})\ge c\big\}$$
is compact.
 
\end{lem}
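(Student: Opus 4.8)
The plan is to prove the two implications separately. For the forward direction, I would argue by contradiction: suppose $(V_j)$ is not shrinking in $V$. Then the biorthogonal functionals $(V_j^*)$ do not span a dense subspace of $V^*$, so by a standard gliding-hump argument (using that $B$ is $1$-norming, so $V^*$ is spanned in norm by $B$ together with the tails $\overline{\spa(V_j^*:j>n)}$) there is a functional $v^*\in B_{V^*}$, some $\delta>0$, and a normalized block sequence $(v_j)$ with respect to $(V_j)$ such that $v^*(v_j)\ge\delta$ for infinitely many $j$ — indeed, after passing to a subsequence, for all $j$. The point is that failure of shrinkingness is witnessed by a single functional seeing a whole block sequence uniformly. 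However, the hypothesis only gives us functionals from $B$, not from $B_{V^*}$, so the first real step is to replace $v^*$ by elements of $B$. Since $B$ is $1$-norming, for each finite initial segment $\{v_1,\dots,v_l\}$ we can find $w^*\in B$ with $w^*(v_j)$ close to $v^*(v_j)$ for $j\le l$ — more precisely, since the range of $v^*$ restricted to $\spa(v_1,\dots,v_l)$ is finite-dimensional and $B$ is $1$-norming, a small perturbation / convexity argument produces $w^*\in B$ with $w^*(v_j)\ge\delta/2$ for all $j\le l$. (Alternatively one applies $w^*$-density of $\conv(B)$ in $B_{V^*}$, then extracts an extreme-type point, but the crude perturbation suffices here since we only need a fixed positive lower bound on finitely many coordinates.) This shows every initial segment $\{1,2,\dots,l\}$ lies in the set $\cA$ defined with constant $c=\delta/2$, so $\cA$ contains the infinite chain $\{1\}\prec\{1,2\}\prec\{1,2,3\}\prec\cdots$, hence is not compact — contradicting the hypothesis. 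Actually one must be slightly careful: $\cA$ as stated is not obviously closed under restrictions, so "not compact" should be extracted directly, namely $\{1,\dots,l\}\in\cA$ for all $l$ gives a sequence in $\cA$ (viewed in $\{0,1\}^\omega$) converging to the infinite set $\N\notin[\N]^{<\omega}$, so $\cA$ is not closed, hence not compact. That yields the contradiction and proves the first assertion.

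For the converse, suppose $(V_j)$ is shrinking, fix $0<c<1$ and a normalized block sequence $(v_j)$, and suppose toward a contradiction that $\cB_c$ is not compact. Since $(v_j)$ is a block sequence, $\cB_c$ is spreading in the relevant sense and closed under restrictions (if $v^*$ witnesses $\{m_1,\dots,m_l\}$ it witnesses every initial subset), so non-compactness means $\cB_c$ contains an infinite increasing chain, i.e.\ there are an increasing sequence $(m_j)$ and functionals $v_n^*\in B_{V^*}$ with $v_n^*(v_{m_j})\ge c$ for all $j\le n$. Pass to a $w^*$-convergent subsequence $v_n^*\to v^*\in B_{V^*}$; then for each fixed $i$, $v^*(v_{m_i})=\lim_n v_n^*(v_{m_i})\ge c$, since $v_{m_i}$ is a fixed element of $V$. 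But $(v_{m_i})$ is a normalized block sequence with respect to a shrinking FDD, hence weakly null (this is the standard fact that block sequences of shrinking FDDs are weakly null — it follows from Proposition \ref{P:2.1}(3) applied to $(V_j)$, or directly from density of $(V_j^*)$). Therefore $v^*(v_{m_i})\to 0$, contradicting $v^*(v_{m_i})\ge c>0$ for all $i$. Hence $\cB_c$ is compact.

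The main obstacle is the forward direction, specifically converting "$v^*\in B_{V^*}$ sees the block sequence" into "elements of $B$ see arbitrarily long initial segments", since $B$ need not be convex, norm-closed, or $w^*$-closed — only $1$-norming. The clean way around this is to note that $1$-norming means the $w^*$-closed convex hull of $B\cup(-B)$ is $B_{V^*}$, so $v^*$ restricted to the finite-dimensional space $\spa(v_1,\dots,v_l)$ is $w^*$-approximable, hence approximable in the only topology that matters on a finite-dimensional space, by convex combinations of elements of $\pm B$; averaging then shows some single element of $B$ must have $w^*(v_j)\ge\delta/2$ for all $j\le l$ once $l$ is fixed. I should double-check that this averaging step is legitimate — it uses that if a convex combination $\sum\lambda_i w_i^*$ has all $v_j$-coordinates $\ge 3\delta/4$, it need not follow that one $w_i^*$ does; so instead I would fix $l$, use $w^*$-density to get $w^*\in B$ (not a convex combination: $w^*$-density of $B$ itself in $B_{X^*}$, which holds because $B$ is $1$-norming and the argument only needs approximation on the finite set $\{v_1,\dots,v_l\}$) with $|w^*(v_j)-v^*(v_j)|<\delta/2$ for each $j\le l$, giving $w^*(v_j)\ge\delta/2$. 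This is exactly the type of finite-dimensional approximation used in Lemma \ref{L:2.3}, so it goes through. The remaining steps are routine.
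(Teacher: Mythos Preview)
Your converse direction is correct and essentially matches the paper's argument (pass to a $w^*$-limit in $B_{V^*}$ and use that block sequences of a shrinking FDD are weakly null).

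Your forward direction has a genuine gap. The crucial step is your claim that, given $v^*\in B_{V^*}$ with $v^*(v_j)\ge\delta$ for all $j$, you can find for each $l$ some $w^*\in B$ with $w^*(v_j)\ge\delta/2$ for all $j\le l$. You attempt to justify this by asserting ``$w^*$-density of $B$ itself in $B_{X^*}$, which holds because $B$ is $1$-norming''. This is false: being $1$-norming only gives that the $w^*$-closed convex hull of $B\cup(-B)$ equals $B_{V^*}$, not that $B$ itself is $w^*$-dense. For instance, in $V=c_0$ the set $B=\{e_n^*:n\kin\N\}\subset B_{\ell_1}$ is $1$-norming but its $w^*$-closure is just $\{e_n^*\}\cup\{0\}$. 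Lemma~\ref{L:2.3}, which you invoke, assumes $w^*$-density of a \emph{ball of a subspace}, which is a genuinely stronger hypothesis than $1$-norming of an arbitrary set. You yourself correctly observe that the convex-combination approximation does not let you extract a single $w^*\in B$ working on all $l$ coordinates simultaneously, and there is no way to repair this step as stated. (A secondary issue: the hypothesis fixes a specific $c$, while your argument would only contradict compactness of $\cA$ for the possibly smaller constant $\delta/2$.)

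The paper's proof takes a completely different route and never tries to find elements of $B$ witnessing long chains in $\cA$. Instead: if $(V_j)$ is not shrinking, some normalized block sequence $(v_j)$ satisfies $\big\|\sum b_jv_j\big\|\ge\rho\sum b_j$ for all $b_j\ge 0$, and by James' non-distortability of $\ell_1$ one may pass to further normalized blocks to arrange $\rho>1-\vp$ with $\vp=(1-c)/3$. Now the compactness of $\cA$ is \emph{used} rather than contradicted: the Schreier-type space $X_{\cA}$ with norm $\|(a_i)\|_{X_{\cA}}=\sup_{A\in\cA}\sum_{i\in A}|a_i|$ embeds into $C(\cA)$, hence contains no copy of $\ell_1$, so there exist $a_i\ge 0$ with $\sum a_i=1$ and $\|(a_i)\|_{X_{\cA}}<\vp$. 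For any $v^*\in B$ the set $\{i:v^*(v_i)\ge c\}$ lies in $\cA$, whence $v^*\big(\sum a_iv_i\big)\le \vp+c<\rho$; since $B$ is $1$-norming this yields $\big\|\sum a_iv_i\big\|<\rho$, contradicting the $\ell_1$-lower estimate. The missing idea in your approach is precisely this use of $X_{\cA}$: it turns compactness of $\cA$ into a uniform \emph{upper} bound on $v^*(v)$ over all $v^*\in B$ for a well-chosen convex combination $v$, which is exactly the kind of statement the $1$-norming hypothesis can exploit.
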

\begin{proof}   Assume our claim is wrong. Then  by Proposition \ref{P:2.1} we can choose a normalized block sequence  $(v_j)$ in $V$ which is not weakly null. Thus, for some
$\rho\in(0,1)$ it follows that 
\begin{equation}\label{E:3.11}
\Big\|\sum_{j=1}^\infty b_j v_j\Big\|\ge \rho \sum_{j=1}^\infty b_j,\text{ whenever  $(b_i)\kin c_{00}$, with $b_i\ge 0$, for $i\kin\N$}.
\end{equation}
Let $\vp=(1-c)/3$.
Using James' argument \cite{Ja} that $\ell_1$ is not distortable, we can, by passing to further normalized blocks of the $v_j$, assume that 
$\rho>1-\vp$.

 Let $\cA$ be defined as in the statement and note that $\cA$ is hereditary.  We recall the  {\em Schreier space  $X_{\cA}$} defined for $\cA$:
For  $(a_i)\in c_{00}$ we put 
$$\|(a_i)\|_{X_{\cA}} =\sup_{A\in \cA}\sum_{i\in A} |a_i|,$$
and let  $X_{\cA}$ be the completion of $c_{00}$ with respect to $\|\cdot\|_{X_{\cA}}$. We note that 
 the unit vector basis $(e_i)$ is a 1-unconditional basis of $X_{\cA}$ and that 
for $\ab=(a_i)\in X_{\cA}$ the map
$f_{\ab}:\cA\to \R, \quad B\mapsto \sum_{n\in B} a_n$, is a continuous function defined on the countable and compact space $\cA$. 
We compute
$$\|f_{\ab}\|_{C(\cA)}=\sup_{B\in\cA} \Big|\sum_{n\in B} a_n\Big|=\max\Bigg( \sup_{B\in\cA} \sum_{n\in B, a_n\ge 0} a_n,
 \sup_{B\in\cA} \sum_{n\in B, a_n\le 0}(- a_n)\Bigg)\begin{cases} \ge \frac12 \| a\|_\cA\\ \le  \| a\|_\cA,\end{cases}
$$
where the second equality follows form the fact that $\cA$ is hereditary.
Thus, $X_{\cA}$ isomorphically embeds into $C(\cA)$, the space of continuous functions on the countable and compact space $\cA$. But this means that $\ell_1$ cannot embed into 
$X_{\cA}$. In particular, we can  find a finite  sequence  of non negative numbers 
$(a_i)_{i=1}^l$, with $\sum_{i=1}^l a_i=1$ and $\|(a_i)\|_{X_{\cA}}<\vp$.

Since $\cA$ is compact and countable $C(\cF)$ is isomotrically isomorphic to $C[0,\alpha]$ for a countable ordinal $\alpha$, and thus $X_{\cF}$ is $c_0$-saturated.

 Define $v=\sum_{i=1}^l a_i v_i$  and let $v^*\in B$.
 Then $\{ i: v^*(v_i)\ge c\}\in \cA$, and, thus,
 \begin{align*}
 v^*(v)&=\sum_{i=1}^\infty a_iv^*(v_i)\\
             &=\sum_{ v^*(v_i)\ge c}  a_iv^*(v_i)+\sum_{ v^*(v_i)< c}  a_iv^*(v_i) \\
             &\le \sup_{A\in\cA} \sum_{i\in A}  a_i + c\sum_{i=1}^\infty a_i\le \vp +c=  1-2\vp\le\rho-\vp,
 \end{align*}
and thus $\|v\|=\sup_{v^*\in B}  |v^*(v)|\le \rho -\vp$ which contradicts \eqref{E:3.11}, and finishes the proof of our first claim.

The second claim follows from the $w^*$-compactness of $B_{V^*}$  by replacing $\cB^*$ by  $B_{V^*}$  and arguing as in  the first part of  the proof of Lemma \ref{L:3.5}.
  \end{proof}

\begin{lem}\label{L:3.7} If $X$ is reflexive then  $(Z_j)$ is a  boundedly complete  FDD of $Z$.
\end{lem}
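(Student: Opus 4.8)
The plan is to show that $(Z_j)$ being boundedly complete is essentially dual to the shrinking property of the biorthogonal FMD on the predual side. Since $X$ is reflexive, every FMD of $X$ is shrinking (as noted in the Remark after Proposition~\ref{P:2.1}), and in particular $(F_j)$ is a shrinking FMD of $X^*$, with $X^* = \overline{\spa(F_j)}$ and biorthogonal sequence $(E_j)$. The key point is that $\B^*$ is $w^*$-compact in $B_{Z^*}$ and $1$-norming $Z$ (Proposition~\ref{P:3.3}(4)), so $Z^* = \overline{\spa(\B^*)}$ in an appropriate sense, and one wants to transfer weak compactness from $X$ into $Z$ through the map $I$.

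The main step: I would show that $(Z_j)$ boundedly complete is equivalent to $(Z_j^*)$ being shrinking in $Z^*$, and then verify the latter using Lemma~\ref{L:3.6} applied to $V = Z^*$ with FDD $(Z_j^*)$ and the $1$-norming set $B = I(B_X) \subset B_{Z^{**}}$ (using reflexivity of $X$ to view $B_X$ as sitting in $B_{Z^{**}}$ via $I^{**}$, i.e. $x \mapsto I(x)$ under the canonical embeddings). Concretely: given a normalized block sequence $(z_j^*)$ in $Z^*$ with respect to $(Z_j^*)$, after perturbation we may assume each $z_j^* \in \D^*$, hence $z_j^* = \Phi_{\jb}(x_j^*)$ for suitable $\jb$ and $x_j^* = I^*(z_j^*) \in D^* \subset X^*$ with $\|x_j^*\| = \|z_j^*\| = 1$, and by Proposition~\ref{P:3.3}(4) a skipped subsequence of $(z_j^*)$ is isometrically equivalent to the corresponding skipped block $(x_j^*)$ in $X^*$ with respect to $(F_j)$. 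Now I invoke the \emph{second} assertion of Lemma~\ref{L:3.6}, applied in $X^*$ with its shrinking FMD-blocking $(F_j)$ (first passing to a genuine FDD blocking if needed, which is harmless for bounded-completeness arguments since the projections $P^Z_{[1,k]}$ are uniformly bounded by $3$): for $0 < c < 1$ the set of finite sets $\{m_1,\ldots,m_l\}$ for which some $x \in B_X$ satisfies $x(x_{m_j}^*) \ge c$ for all $j$ is compact. Transporting this back through the isometric equivalence of Proposition~\ref{P:3.3}(4) and the identification $x \leftrightarrow I(x)$ shows the corresponding set for $(z_j^*)$ against $B = I(B_X)$ is compact; since $B$ is $1$-norming $Z^*$ (as $I$ is an isometric embedding and $X$ is reflexive, so $I(B_X)$ norms $Z$, hence norms $I(X)$ — one needs here that the relevant functionals on $Z^*$ detecting the block are realized by elements of $Z = Z^{**}$... this is exactly where reflexivity enters).

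The cleanest route, which I would actually carry out, avoids worrying about whether $Z$ is reflexive: I show directly that a bounded block sequence $(\zeta_n)$ in $Z$ with $\sup_N \|\sum_{n=1}^N \zeta_n\| < \infty$ must converge. Set $s_N = \sum_{n=1}^N \zeta_n$; by Proposition~\ref{P:3.3}(2) the partial-sum projections are uniformly bounded, so $(s_N)$ is bounded. If $(s_N)$ does not converge in norm, then (passing to differences) we get a normalized block sequence $(v_j)$ in $Z$ with respect to $(Z_j)$ with $\inf_{(b_i)\in c_{00}, b_i \ge 0} \|\sum b_i v_i\| / \sum b_i > 0$, i.e. $(v_j)$ dominates the $\ell_1^+$-averaging estimate \eqref{E:3.11}; James's argument lets us assume the constant is $> 1-\vp$. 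Now this is precisely the situation ruled out in the proof of Lemma~\ref{L:3.6}: such a $(v_j)$ cannot exist once $(Z_j)$ is \emph{shrinking} — but shrinking alone is not enough, so instead I need the \emph{dual} statement.

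Therefore, on reflection, the honest plan: pass to the biorthogonal FDD $(Z_j^*)$ of $Z$, observe it is shrinking in $Z^*$ by applying Lemma~\ref{L:3.6} with $V = Z^*$, FDD $(Z_j^*)$, and the $1$-norming set $B = \{I^{**}(x) : x \in B_X\} \subset B_{Z^{**}}$ — using reflexivity of $X$ and Proposition~\ref{P:3.3}(4) to get the required compactness of the set $\cA$ (a skipped subsequence of any normalized block sequence of $(Z_j^*)$ reduces, after perturbation, to a skipped block in $X^*$ with respect to $(F_j)$, which is shrinking in $X^*$ because $X$ is reflexive, so the second half of Lemma~\ref{L:3.6} applied in $X^*$ gives the compactness, which transfers); then conclude that $(Z_j)$, being the biorthogonal FDD of a shrinking FDD, is boundedly complete. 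The one routine point to check is that $(Z_j^*)$ really is an FDD of $\overline{\spa(Z_j^*)} \subseteq Z^*$ with uniformly bounded partial sum projections (immediate from duality with Proposition~\ref{P:3.3}(2)), and that ``biorthogonal FDD shrinking $\Rightarrow$ original FDD boundedly complete'' — the standard fact, valid whenever the original FDD's partial sum projections are uniformly bounded, which holds here. \textbf{The main obstacle} I anticipate is bookkeeping the perturbation that replaces an arbitrary normalized block sequence of $(Z_j^*)$ by one lying in $\D^*$ (so that Proposition~\ref{P:3.3}(4) applies verbatim), together with passing between the FMD-blocking and a genuine FDD-blocking of $(F_j)$ in $X^*$ without disturbing the compactness hypothesis of Lemma~\ref{L:3.6}; everything else is a direct application of results already established.
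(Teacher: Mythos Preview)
Your plan is in the right spirit---reduce to the fact that $(F_j)$ is a shrinking FMD of $X^*$ when $X$ is reflexive---but the route through ``$(Z_j^*)$ is shrinking via Lemma~\ref{L:3.6} with $B=I(B_X)$'' has a real gap. Lemma~\ref{L:3.6} requires $B$ to be $1$-norming for $V=Z^*$, and $I(B_X)$ is \emph{not} $1$-norming for $Z^*$: for $z^*\in Z^*$ one has $\sup_{x\in B_X}|z^*(I(x))|=\|I^*(z^*)\|_{X^*}$, which is $\|z^*\|$ only for $z^*\in\D^*$ (Proposition~\ref{P:3.3}(4)), not for general $z^*$. You noticed something was off (``one needs here that the relevant functionals \ldots'') but did not repair it. One can probably push the argument through by restricting attention to block sequences in $\D^*$, but then you are no longer applying Lemma~\ref{L:3.6} as a black box, and the bookkeeping (perturbation into $\D^*$, FMD vs.\ FDD for $(F_j)$) that you flagged as ``routine'' becomes the entire proof.

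The paper's argument is much more direct and avoids Lemma~\ref{L:3.6} altogether. Assume $(Z_j)$ is not boundedly complete and pick a semi-normalized block sequence $(z_j)$ with $\big\|\sum_{j=1}^n z_j\big\|\le 1$ for all $n$. Since $\B^*$ is $1$-norming, choose $z^*_j\in\B^*$ with $z^*_j(z_j)\ge 1/2C$. Now set $y^*_i=P^{Z^*}_{\rg_Z(z_{2i})}(z^*_{2i})$: this is a semi-normalized \emph{skipped} block in $\D^*$, so by Proposition~\ref{P:3.3}(4) the sequence $(I^*(y^*_i))$ is a skipped block in $D^*\subset X^*$ with respect to $(F_j)$, isometrically equivalent to $(y^*_i)$. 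For any convex combination $\sum a_i I^*(y^*_i)$ one gets
\[
\Big\|\sum a_i I^*(y^*_i)\Big\|=\Big\|\sum a_i y^*_i\Big\|\ge \Big(\sum a_i y^*_i\Big)\Big(\sum_{j=1}^{2n} z_j\Big)\ge \frac{1}{2C},
\]
so no convex block of $(I^*(y^*_i))$ is norm-null, hence $(I^*(y^*_i))$ is not weakly null---contradicting that $(F_j)$ is shrinking in $X^*$ (Proposition~\ref{P:2.1}). This bypasses all of your obstacles: no norming set in $Z^{**}$ is needed, no perturbation into $\D^*$ (the $y^*_i$ are already there by construction), and the only fact used about $X^*$ is that bounded skipped block sequences with respect to $(F_j)$ are weakly null.
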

\begin{proof} Assume that $(Z_j)$ is not boundedly complete.  Then we can   find a semi normalized block sequence $(z_j)$, 
  say $\frac1C\le \|z_i\|\le 1$, for all $i\in\N$ and some $C\ge 1$, so that
   $\big\|\sum_{j=1}^n z_j\big\| \le 1$, for all $n\kin\N$. Since the set $\B^*$  is $1$-norming $Z$, we can find 
  $z^*_j\in \B^*$ so that $z^*_j(z_j)\ge 1/2C$. 
   For $i\in\N$,  and define
    $y^*_i=P^{Z^*}_{\rg_Z(z_{2i})}(z^*_{2i})$.
    
  Since $(y^*_i)$ is a semi normalized skipped block sequence with respect to $(Z^*_j)$   it follows from Proposition \ref{P:3.3} (4)
   that the sequence $(I^*(y^*_{i}))$ ($I:X\to Z$  as in  Proposition \ref{P:3.3}) is a semi normalized  skipped block sequence in $D^*$ with respect to $(E_j)$, 
   which is isometrically equivalent to $(y^*_i)$. For any sequence $(a_j)\subset[0,1]$ with $\sum_{j=1}^\infty a_j=1$ it follows that 
   $$\Big\|\sum a_iI^*( y^*_{i})\Big\|=\Big\|\sum a_i y^*_{i}\Big\|\ge \Bigg(\sum a_i y^*_{i}\Bigg)\Bigg(\sum_{i=1}^{2n} z_i\Bigg)\ge \frac{1}{2C}.$$
   Thus no convex block  of $(I^*(y^*_{i}))$ converges in norm to 0, which implies that 
   $(I^*(y^*_{i}))$ cannot converge weakly to 0, and contradicts the assumption that $(F_j)$ is a shrinking FMD of $X^*$.
     \end{proof}
   
   From part (5) of Proposition \ref{P:3.3} we also deduce the following criterium for  $(Z_j)$ being an unconditional FDD.
   It will depend on the choice of the FMD $(E_j)$ of $X$. In Section \ref{S:6} we will deduce a {\em coordinate free condition} on $X$ 
  implying that $(Z_j)$ is unconditional, and thereby deduce the  results of \cite{JZ1,JZ2}.

\begin{prop}\label{P:3.8} Assume that every skipped block basis in $X^*$ with respect to $(F_j)$ is $C$-suppression unconditional. Then $(Z_j)$ is 
$C$- suppression unconditional in $Z$.
\end{prop}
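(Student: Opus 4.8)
The plan is to use the operator-norm formula from part (5) of Proposition \ref{P:3.3} to reduce suppression unconditionality of $(Z_j)$ to a statement about adjoints acting on spaces $\overline{\spa(x^*_k:k\in\N)}$ with $(x^*_k)\in\B^*$, and then exploit the fact that each such sequence $(x^*_k)$ is, up to the isometry $I^*$, a \emph{skipped} block sequence in $X^*$ with respect to $(F_j)$ — skipped because of the defining property of $\D^*$ that consecutive supports $\rg_E(x^*_k)\subset(j_{k-1},j_k)$ are separated by the coordinate $j_k$ on which $x^*$ vanishes. That is exactly the place where the hypothesis applies.

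First I would fix a finite set $A\subset\N$ and estimate $\|P^Z_A\|$. Writing $z=(z_k)\in c_{00}(Z_k)$ with $\|z\|\le 1$, by \eqref{E:3.8} we have
$$\|P^Z_A z\| = \sup_{(x^*_k)\in\B^*}\Big|\sum_{k\in A}x^*_k(z_k)\Big|.$$
Fix $\xb^*=(x^*_k)\in\B^*$ and consider the operator $T:Z\to Z$ (or rather work through part (5) with $Y=Z$, $T_k=\delta_{k\in A}\,\mathrm{Id}$ restricted appropriately, so that $T=P^Z_A$). Part (5) tells us $\|P^Z_A\| = \sup_{\xb^*\in\B^*}\|(P^Z_A)^*_{\xb^*}\|$, where $(P^Z_A)^*_{\xb^*}$ sends $\sum a_k x^*_k\mapsto\sum_{k\in A} a_k x^*_k$ (the map $x^*_k\circ (P^Z_A)_k$ is $x^*_k$ if $k\in A$ and $0$ otherwise). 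So it suffices to bound, for each fixed $(x^*_k)\in\B^*$, the norm of the projection $\sum a_k x^*_k\mapsto\sum_{k\in A}a_k x^*_k$ on the subspace $\overline{\spa(x^*_k:k\in\N)}\subset Z^*$ by $C$.

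Now comes the key transfer: by Proposition \ref{P:3.3}(4), the sequence $(x^*_k)_{k\in\N}$ together with its closed linear span is carried isometrically by $I^*$ onto a skipped block sequence $(I^*(x^*_k))$ in $D^*\subset X^*$ with respect to $(F_j)$, and this isometry intertwines the natural "coordinate" structures — that is, $I^*$ takes the functional $\sum a_k x^*_k\in Z^*$ to $\sum a_k I^*(x^*_k)\in X^*$, and takes $\sum_{k\in A}a_k x^*_k$ to $\sum_{k\in A}a_k I^*(x^*_k)$. (Here I should be slightly careful: some $x^*_k$ may be zero, in which case they contribute nothing and can be discarded; after discarding zeros we still have a genuine skipped block sequence.) Therefore the norm of the projection $\sum a_k x^*_k\mapsto\sum_{k\in A}a_k x^*_k$ on $\overline{\spa(x^*_k:k)}$ equals the norm of the corresponding "coordinate projection onto $A$" on $\overline{\spa(I^*(x^*_k):k)}\subset X^*$. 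By hypothesis the skipped block basis $(I^*(x^*_k))$ is $C$-suppression unconditional, so every such coordinate projection has norm $\le C$. Taking the supremum over $(x^*_k)\in\B^*$ and over finite $A\subset\N$ gives $c_s=\sup_A\|P^Z_A\|\le C$, which is the definition of $C$-suppression unconditionality for the FDD $(Z_j)$.

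The main obstacle, I expect, is the bookkeeping in the transfer step: making precise that the isometry $I^*$ of Proposition \ref{P:3.3}(4) not only preserves norms of individual elements but respects the block decomposition, so that it actually identifies the coordinate projection $\sum a_k x^*_k\mapsto\sum_{k\in A}a_k x^*_k$ on the $Z^*$ side with the coordinate projection onto $A$ on the $X^*$ side — this requires noting that $I^*(x^*_k)=x^*_k$ under the identification (since $x^*_k\in X^*$ already has $\rg_E(x^*_k)\subset(n_{k-1},n_{k+1})$, it \emph{is} its own image), and handling the zero terms. Everything else is a direct application of parts (4) and (5) of Proposition \ref{P:3.3} together with the definition of $c$-suppression unconditionality and the fact, recorded after that definition, that $c_s<\infty$ is equivalent to uniform boundedness of the $P^E_A$.
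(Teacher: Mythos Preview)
Your proposal is correct and follows essentially the same route as the paper: apply Proposition~\ref{P:3.3}(5) to $T=P^Z_A$, identify $T^*_{\xb^*}$ with the coordinate projection $\sum a_k x^*_k\mapsto\sum_{k\in A}a_k x^*_k$, and bound this by $C$ using the skipped-block hypothesis. The only difference is cosmetic: you route the norm comparison explicitly through $I^*$ and Proposition~\ref{P:3.3}(4), whereas the paper invokes the hypothesis directly on the sequence $(x^*_k)$ viewed in $X^*$, relying implicitly on the isometry from part~(3)/(4) to equate the $Z^*$- and $X^*$-norms on $\spa(x^*_k:k\in\N)$.
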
 
\begin{proof} 
For each  finite $A\ksubset \N$, we will apply  (5) of Proposition \ref{P:3.3}   to the projection $T\keq P^Z_A: Z\to Z$,
and let $T_k: Z\to Z$, for $k\kin\N$, be defined by $T_k\keq P^Z_{\{k\}}$, if $k\kin A$, and $T_k\keq0$, otherwise. 
  Since every sequence $\xb^*\keq(x^*_j)\in \B^*$  is skipped with respect to 
$(F_j)$ we have for $x^*=\sum a_k x^*_k\kin \spa (x^*_j:j\kin\N)$ that
$$ \Big\|T_{\xb}\Big(\sum_{k=1}^\infty a_k x^*_k\Big)\Big\|=\Big\|\sum _{k=1}^\infty a_k x^*_k \circ T_k\Big\|
=\Big\|\sum_{k\in A} a_k x^*_k\Big\| = \Big\|P_{A}\Big(\sum_{k=1}^\infty a_k x^*_k\Big)\Big\|\le C\Big\|\sum_{k=1}^\infty a_k x^*_k\Big\|,$$
where $P_A$ is the projection  $\overline{\spa(x^*_i:i\in\N)}\to \spa(x^*_i:i\in A), \quad \sum_{i=1}^\infty a_ix^*_i\mapsto 
\sum_{i\in A} a_ix^*_i$, which by assumption is of norm not larger than $C$.
This proves by  part (5) of Proposition \ref{P:3.3} that $\|P^Z_A\|\le C$.
\end{proof}
Up to now we proved, that our given Banach space $X$ with shrinking FMD $(E_j)$ embeds into the Banach space $Z$ which has a shrinking FDD $(Z_j)$. Moreover $(Z_j)$ is boundedly complete if $X$ is reflexive, and $(Z_j)$ is unconditional if  there is a $C\ge1$ so that  all the skipped blocks in $X^*$ with  respect to the biorthogonal sequence $(F_j)$ are $C$-unconditional. 

We now show how to pass from an FDD with certain properties (shrinking, boundedly complete, and unconditional) to a basis with the same properties.

To do so assume that $V$ is  a Banach space with an  FDD $(V_j)$.  After renorming we can assume that $(V_j)$ is bimonotone in $V$.  We can therefore view the duals $V_j^*$, $j\kin\N$,   to be isometrically 
subspaces of $V^*$.
Moreover, in the case that the FDD $(V_j)$ is unconditional, we also can assume,
after the appropriate renorming, that  it is $1$-unconditional.

Let $(\vp_n)\subset (0,1)$ be a null sequence with
$\sum \vp_n<1/3$, and choose for each $n\in\N$ a finite $\vp_n$-net   $(x^*_{(n,i)}:i=1,2,\ldots,l_n) $  in $B_{V^*_n}$.  
It follows that the set 
$$A=\Big\{ \sum a_n  x^*_{(n,i_n)} : (i_n)\in \prod_{n=1}^\infty \{1,2,\ldots,l_n\},\, 
\Big\| \sum_{n=1}^\infty  a_n x^*_{(n,i_n)}\Big\|\le 1\Big\}$$
is $\frac23$-norming  the space $V$. After passing to the equivalent  norm defined by 
\begin{equation}\label{E:3.12a}
\tn v\tn =\sup_{v^*\in A} \big| v^*(v) \big|\text{ for $v\in V$}\end{equation}
we can assume that $A$ is $1$-norming the space $V$, and also note that $(V_j)$ is still bimonotone, respectively  $1$-unconditional with respect to this new  norm.

We put $\Gamma =\big\{ (n,i): \,n\kin \N, \text{ and } i=1,2,\ldots, l_n \big\}$. 
We denote the unit vector basis of $c_{00}(\Gamma)=\big\{(a_\gamma:\gamma\kin\Gamma)\subset \R\!:\! \#\{\gamma:a_\gamma\not=0\}<\infty\big\}$ by  $(e_\gamma:\gamma\in\Gamma)$ and its coordinate functionals 
by   $(e^*_\gamma:\gamma\in\Gamma)$. 
We define
\begin{align}\label{E:3.12}  B&=
\Bigg\{\sum_{n=1}^\infty a_n e^*_{(n,i_n)}:  
\sum_{n=1}^\infty  a_n x^*_{(n,i_n)}\in A\Bigg\}\\
&=\Bigg\{\sum_{n=1}^\infty a_n e^*_{(n,i_n)}:  
 (i_n)\in \prod_{n=1}^\infty \{1,2,\ldots,l_n\},\, 
\Big\| \sum_{n=1}^\infty  a_n x^*_{(n,i_n)}\Big\|\le 1\Bigg\}.\notag
  \end{align}
Then we define on $c_{00}(\Gamma)$ the norm
\begin{equation}\label{E:3.13} 
\|x\|=\sup_{w^*\in B} w^*(x),  \text{ for $x\in c_{00}(\Gamma)$}.
\end{equation}
Let $W$ be the completion of $c_{00}(\Gamma)$ with respect to $\|\cdot\|$.

\begin{thm}\label{T:3.9} Let $V$ and $W$ be as introduced above.
 Then $(e_\gamma:\gamma\in\Gamma)$ is a basis of $W$, where 
the set $\Gamma$ is lexicographically ordered.  The map
$$J:V\to W, \quad \sum v_n\mapsto \sum_{n=1}^\infty \sum_{i=}^{l_n} x^*_{(n,i)}(v_n)  e_{(n,i)}.$$ 
is an isometric embedding of $V$ into $W$ (using the norm in \eqref{E:3.12a}), and
\begin{enumerate}
\item[a)] $(e_\gamma:\gamma\in\Gamma)$ is $1$ unconditional, if $(V_j)$ is $1$-unconditional,
\item[b)] $(e_\gamma:\gamma\in\Gamma)$  is shrinking, if $(V_j)$ is shrinking,and 
\item[c)]  $(e_\gamma:\gamma\in\Gamma)$  is shrinking and boundedly complete if $V$ is reflexive.
\end{enumerate}
\end{thm}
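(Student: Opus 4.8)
The strategy is to set up, in parallel with the construction of $Z$ in the first half of this section, a dictionary between the space $W$ built from $(V_j)$ and the space $V$ itself, exploiting that $B\subset c_{00}(\Gamma)^*$ plays for $W$ exactly the role that $\B^*$ played for $Z$. First I would check that $(e_\gamma:\gamma\in\Gamma)$, with $\Gamma$ lexicographically ordered, is a basis: since $B$ is a norming set consisting of functionals whose coordinates with respect to the blocks $\{(n,i):1\le i\le l_n\}$ form a ``skipped-compatible'' pattern (each $w^*\in B$ has one nonzero block-coordinate per $n$), one shows that the partial-sum projections $P_{[1,\gamma]}$ are uniformly bounded. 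More precisely, grouping the coordinates into the finite blocks $W_n=\spa(e_{(n,i)}:1\le i\le l_n)$ first gives a bimonotone FDD $(W_n)$ of $W$ (this is immediate from the definition \eqref{E:3.13} of the norm and the structure of $B$, mimicking part (2) of Proposition \ref{P:3.3}), and then within each finite block $W_n$ the basis is automatically Schauder; monotonicity of the block decomposition together with a crude estimate inside each $W_n$ yields a uniform bound for all partial sums, hence $(e_\gamma)$ is a basis. That $J$ is an isometric embedding is the exact analogue of part (1) of Proposition \ref{P:3.3}: for $v=\sum v_n\in c_{00}(V_n)$ one has $\tn v\tn=\sup_{v^*\in A}|v^*(v)|$ by \eqref{E:3.12a}, and writing $v^*=\sum a_n x^*_{(n,i_n)}\in A$ the corresponding $w^*=\sum a_n e^*_{(n,i_n)}\in B$ satisfies $w^*(J(v))=\sum_n a_n x^*_{(n,i_n)}(v_n)=v^*(v)$, and conversely; so $\|J(v)\|=\tn v\tn$.

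For part (a), if $(V_j)$ is $1$-unconditional then so is the block FDD $(W_n)$ of $W$: indeed the set $A$ is invariant under the sign changes $\sum a_n x^*_{(n,i_n)}\mapsto \sum \sigma_n a_n x^*_{(n,i_n)}$ since $A$ is defined by the condition $\|\sum a_n x^*_{(n,i_n)}\|\le 1$ and $\|\cdot\|$ on $V$ is $1$-unconditional, whence $B$ is invariant under the corresponding coordinate-block sign changes and $\|P^W_S\|=1$ for every union of blocks $S$. To upgrade from block-unconditionality to unconditionality of the individual vectors $(e_\gamma)$, note that each $x^*_{(n,i)}$ is an individual functional (not just a block), so in fact every $w^*\in B$ is supported on a set meeting each block $W_n$ in at most one point; consequently the argument shows $B$ is invariant under \emph{all} sign changes $(\varepsilon_\gamma)\in\{\pm1\}^\Gamma$, giving $\|P^W_A\|=1$ for every $A\subset\Gamma$.

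For parts (b) and (c) I would reduce to the FDD statements already proved for $Z$ by applying them with $V$ in the role of $X$ and $(W_n)$ in the role of $(Z_n)$ — but more directly, I would re-run the proofs of Lemmas \ref{L:3.5}, \ref{L:3.6} and \ref{L:3.7} verbatim with $\B^*$ replaced by $B$. For (b): if $(V_j)$ is shrinking, then given a normalized block sequence $(w_j)$ of $W$ and $0<c<1$, the family $\{\{m_1,\dots,m_l\}:\exists w^*\in B\ \forall j,\ w^*(w_{m_j})\ge c\}$ is compact — this is the $w^*$-compactness-of-$B$ argument of Lemma \ref{L:3.5} (here $B$ is $w^*$-compact in $c_{00}(\Gamma)^*\subset W^*$ because it is a $w^*$-closed bounded set, being cut out by the closed condition coming from the $w^*$-compact ball of $V^*$), followed by Lemma \ref{L:3.6}, which then forces $(W_n)$, and hence $(e_\gamma)$ (grouping into blocks does not affect shrinkingness, by Proposition \ref{P:2.1}), to be shrinking. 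For (c): if $V$ is reflexive then $(V_j)$ is shrinking (by the Remark after Proposition \ref{P:2.1}), so by (b) $(e_\gamma)$ is shrinking; and $(W_n)$ is boundedly complete by the exact argument of Lemma \ref{L:3.7}, the point being that $B$ is $1$-norming and that a semi-normalized skipped block sequence in $W^*$ with elements in $B$ restricts to a semi-normalized skipped block sequence of $V^*$ which cannot have convex blocks tending to $0$ since $(V_j)$, being shrinking in the reflexive space $V$, has shrinking biorthogonals $(V_j^*)$ in the reflexive $V^*$; a boundedly complete and shrinking FDD makes $W$ reflexive, and again passing from the block FDD $(W_n)$ to $(e_\gamma)$ preserves both properties. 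I expect the only genuinely delicate point to be the verification that $(e_\gamma)$ — not merely the block FDD $(W_n)$ — is a Schauder basis with uniformly bounded partial sums; this is where the finiteness of each net $(x^*_{(n,i)})_{i=1}^{l_n}$ and the bimonotonicity of $(W_n)$ must be combined carefully, but it is the standard trick from the $c_0$-sum-type construction in \cite{LT} and involves no real difficulty beyond bookkeeping.
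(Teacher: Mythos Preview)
Your proposal is essentially correct and follows the same strategy as the paper, but it glosses over the one technical fact that the paper isolates and proves explicitly: if $(w^*_m)\subset B$ is a block sequence with respect to $(W^*_n)$, then $(w^*_m)$ is isometrically equivalent to $(w^*_m|_V)$ in $V^*$. The paper proves this directly (using that $A$ and $B$ are in bijection and that the defining condition $\|\sum a_n x^*_{(n,i_n)}\|\le 1$ is the same), and then both (b) and (c) are immediate consequences via bounded completeness of $(V^*_j)$. Your route through Lemma~\ref{L:3.5} and Lemma~\ref{L:3.6} for (b) also works, but the contradiction at the end of the compactness argument still needs exactly this equivalence (to pass from the norm of a tail block of $w^*$ in $W^*$ to the norm of its restriction in $V^*$), so you are not really avoiding it.

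For the basis claim, the paper is more direct than your sketch suggests: it shows in one stroke that $(e_\gamma)$ is \emph{bimonotone} by checking that for any lexicographic interval $[\gamma_-,\gamma_+]$ and any $w^*=\sum a_k e^*_{(k,i_k)}\in B$, the projection $P^*_{[\gamma_-,\gamma_+]}(w^*)$ is again in $B$ (because each block meets $\supp w^*$ in at most one point, a lex-interval projection reduces to a $V^*$-interval projection $P^{V^*}_{[m,n]}$ or $P^{V^*}_{[m+1,n]}$ etc., all of which preserve $A$ by bimonotonicity of $(V_j)$). Your two-step plan (bimonotone FDD $(W_n)$ plus a ``crude estimate inside each $W_n$'') would give only a basis constant, not bimonotonicity, and the crude estimate is not uniform in $n$ without invoking the same one-point-per-block structure; so in effect you end up doing the paper's case analysis anyway.
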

\begin{rem} The construction of $W,$ appears already in \cite[Theorem 1.g.5]{LT}, where it was shown that
$(e_\gamma:\gamma\kin\Gamma)$ is unconditional, if $(V_j)$ is unconditional. In \cite{LT}
 the space $W$ is defined by its unit ball, not by its norming set $B$.
 It was already mentioned in \cite{JZ2} that this construction leads to a shrinking basis in the  case that $(V_j)$ is a shrinking FDD. Nevertheless we will, to be 
 self-contained, 
present the complete argument, and  later we will show that the space $W$ has the same Szlenk index as the space $V$, and that, in the case that
$V$ is reflexive,  also $W^*$ and $V^*$ share the same Szlenk index.
\end{rem}

\begin{proof}  First we prove that $(e_\gamma:\gamma\in \Gamma)$, ordered lexicographically, is bimonotone.
 Indeed, denote the lexicographical order on $\Gamma$ by $\preceq$. 
For $\gamma_-=(m, j_-)$ and $\gamma_+=(n, j_+)$ in $\Gamma$, with $m\le n$ and $j_-<j_+$, if $m=n$,  and 
$w^*=\sum_{k=1}^\infty a_k e^*_{(k,i_k)}\in B$ it follows from the bimonotonicity of $(V_j)$ that
$$P^*_{[\gamma_-,\gamma_+]}(w^*):=\sum_{\gamma_-\preceq(k,i_k)\preceq \gamma_+ }^\infty a_k e^*_{(k,i_k)}= 
\left\{\begin{matrix}     \sum_{k=m}^{n}   a_k e^*_{(k,i_k)}                    &      \text{if $j_-\kleq i_m$ and $i_n\kleq j_+$,}\\
                              \sum_{k=m+1}^{n}   a_k e^*_{(k,i_k)}    &  \text{if $j_-\kgr i_m$  and $i_n\kleq j_+$,}\\
                          \sum_{k=m}^{n-1}   a_k e^*_{(k,i_k)}     &\text{if $j_-\kleq i_m$ and $i_n\kgr j_+$,}\\
                           \sum_{k=m+1}^{n-1}   a_k e^*_{(k,i_k)}     &\text{if $j_-\kgr i_m$ and $i_n\kgr j_+$,} \end {matrix} \right.$$
                   and since the set $A$ is closed under projections of the form $P^V_{[i,j]}$ it follows that $P_{[\gamma_-,\gamma_+]}(w^*)\in B$.
                           This yields for  $w=\sum \xi_\gamma e_\gamma \in c_{00}(\Gamma)$ that 
\begin{align*} \Big\|P_{[\gamma_-,\gamma_+]}   \Big(\sum \xi_\gamma e_\gamma    \Big)\Big\|=\Big\| \sum_{\gamma_-\preceq \gamma\preceq\gamma_+} 
 \xi_\gamma e_\gamma\Big\|&=
 \sup_{w^*\in B} P^*_{[\gamma_-,\gamma_+]}(w^*)(w)\le \|w\|.
\end{align*}
For $v=\sum_{n=1}^\infty v_n\in V$, with $v_n\in V_n$, for $n\in\N$, we have  
\begin{align*}
\Big\|J\Big(\sum_{n=1}^\infty v_n\Big)\Big\|&=\sup\Bigg\{\sum_{n=1}^\infty   a_nx^*_{(n,i_n)}(v_n)  :
 (i_n)\in \prod_{n=1}^\infty \{1,2,\ldots,l_n\},\, 
\Big\| \sum_{n=1}^\infty  a_n x^*_{(n,i_n)}\Big\|\le 1\Bigg\}\\
&=\sup\Bigg\{\Big(\sum_{n=1}^\infty   a_nx^*_{(n,i_n)}\Big)\Big(\sum_{n=1}^\infty v_n\Big)  :
  \sum_{n=1}^\infty  a_n x^*_{(n,i_n)}\in A \Bigg\}\\
&=\Big\|\sum_{n\in\N}^\infty  v_n\Big\|, \\
\end{align*} 
and thus $J$ is an isometric embedding of $V$ into $W$.
Assume that $(V_j)$ is $1$-unconditional.  In order to show that $(e_\gamma:\gamma\in\Gamma)$ is $1$-unconditional we observe   
for $(\xi_\gamma:\gamma\in\Gamma)\in c_{00}(\Gamma)$ and $(\sigma_{\gamma}:\gamma\in \Gamma)\kin\{-1,1\}^\Gamma$ that
\begin{align*}
\Big\|\!\sum_{\gamma\in\Gamma} \sigma_\gamma \xi_\gamma e_\gamma\Big\|
 &= \sup\!\Bigg\{\sum_{n=1}^\infty a_n  \sigma_{(n,i_n)}  \xi_{(n,i_n)} \!:\!
 (i_n)\kin \prod_{n=1}^\infty \{1,2,\ldots,l_n\},\, 
\Big\| \sum_{n=1}^\infty  a_n x^*_{(n,i_n)}\Big\|\kleq 1\Bigg\} \\
 &= \sup\!\Bigg\{\!\sum_{n=1}^\infty   a_n\xi_{(n,i_n)} \!:\!
 (i_n)\kin \prod_{n=1}^\infty \{1,2,\ldots,l_n\},
\Big\| \sum_{n=1}^\infty  a_n x^*_{(n,i_n)}\Big\|\kleq 1\!\Bigg\}
\keq\Big\|\sum_{\gamma\in\Gamma}  \xi_\gamma e_\gamma\Big\|,
\end{align*}
where the second equality follows from the equivalence
$$\Big\| \sum_{n=1}^\infty  a_n x^*_{(n,i_n)}\Big\|\le 1\iff \Big\| \sum_{n=1}^\infty  a_n \sigma_{(n,i_n)}x^*_{(n,i_n)}\Big\|\le 1.$$

For $n\kin\N$  put $W_n\keq\spa(e_{(n,j)}\!:\!j\keq1,2,\ldots,l_n)$, and note that $(W_n)$ is an FDD of $W$.
Let $(w^*_m)\subset B$ be a normalized block with respect to $(W^*_n)$. For $m\in\N$ we write $w^*_m=\sum_{j=k_{m-1}+1}^{k_m} a_j x^*_{(j,i_j)}$, for some 
sequences $(i_j)\in\prod_{j=1}^\infty \{1,2\ldots,l_j\}$, $k_1<k_2<\ldots$ in $\N$,  and $(a_j)\subset \R$. For $m\in\N$ define $v^*_m=w^*_m|_V\in V^*$.
On the one hand it follows for any $(b_m)\subset c_{00}$, that 
$\big\|\sum_{m=1}^\infty b_m w^*_m\|\ge  
\big\|\sum_{m=1}^\infty b_m v^*_m\big\|$. On the other hand, if 
 $\big\|\sum_{m=1}^\infty b_m v^*_m\big\|=\big\| \sum_{m=1}^\infty \sum_{j=k_{m-1}+1}^{k_m} a_j x^*_{(j,i_j)}\big\|=1$,
 then $\sum_{m=1}^\infty b_m v^*_m\in A$, and thus $\sum_{m=1}^\infty b_m w^*_m\in B$, which, by definition of the norm on $W$ means that  
 $\big\|\sum_{m=1}^\infty b_m w^*_m\big\|\le 1$. We thus proved that the sequences $(w^*_m)$  and  $(w^*_m|_V)$
are  isometrically equivalent.

Assume now that $(V_j)$ is shrinking.
To show that $(e_\gamma:\gamma\kin\Gamma)$  is shrinking, it will be enough to 
  show that $(W_n)$  is a  shrinking FDD of 
$W$. 
Assume that this were not true, and that by Lemma \ref{L:3.6} 
 there is  a $0<c<1$, a  normalized block  $(w_j)$ in $W$ with respect to $(W_j)$, 
an  increasing sequence $(m_j)$ and 
for each $n\in\N$ an element $w^*(n)\in B$, so that $w^*(n)(w_{m_j})\ge c$, for all $j=1,2,\ldots, n$. After passing to a subsequence we can assume 
$w^*=w^*-\lim_{n\to\infty} w^*(n)$ exists.  Put 
$w^*_k=P^{W^*}_{(\max\rg_W(w_{m_{k-1}}), \max\rg_W(w_{m_{k}})]}(w^*)$, for $k\in\N$.
It follows that $\|w^*_k\|\ge |w^*_k(w_{m_k})|\ge c$, and that 
$\big\|\sum_{k=1}^n w^*_k\big\|\keq\big\|P^{W^*}_{[1, \max\rg_W(w_{m_{k\|}})]}(w^*)\big\|\kleq 1.$
Thus, the previously observed equivalence between $(w^*_k)$ and $(w^*_k|_V)$ yields 
that $\|w^*_k|_V\|\kgeq c$,  for $k\kin\N$, and  
$\big\|\sum_{k=1}^n w^*_k|_V\big\|\kleq 1$, for $n\kin\N$,
which  contradicts  the assumption that $(V_j)$ is shrinking in  $V$ and thus that $(V^*_j)$ is boundedly complete in $V^*$.

Finally assume that $V$ is reflexive. Again we only need to show that $(W_j)$ is boundedly complete.
Assume that $(W_n)$ is not boundedly complete and that we can find a  normalized block $(w_j)$ in $W$ with respect to 
$(W_j)$  so that  $C=\sup_{n\in\N}\big\|\sum_{j=1}^n w_j\big\|< \infty$. For each $n\in\N$ 
 we choose $w^*_n\in B$ so that $w^*_n(w_n)\ge 1/2$. Since 
 $(W_n)$ is
 bimonotone, we can assume 
that also $(w^*_n)$ is a block sequence in $W^*$ with respect to $(W^*_j)$, and therefore, we deduce from the isometric equivalence 
between  $(w^*_j)$ and  $(w^*_j|_V)$,
that 
$$\Big\|\sum_{j=1}^n w^*_j|_V\Big\| \ge  \Big\|\sum_{j=1}^n w^*_j\Big\|  \ge\frac1C \Big(\sum_{j=1}^n w^*_j\Big)\Big(\sum_{j=1}^n w_j\Big)\ge \frac{n}{2C}.$$
which is a contradiction to the assumption that $V$ is reflexive.
\end{proof}

\section{Ordinal Indices for Trees}\label{S:4}

The  aim of this section is to introduce certain ordinal indices of 
trees, and prove some results which will  later be needed to compute the Szlenk indices of the spaces $Z$ and $W$, as defined in Section \ref{S:3}. 
We   first follow the exposition of  \cite{OSZ} and  recall some of the notation introduced there. We begin with defining a  general class of ordinal indices of trees
on arbitrary sets.  

Let $M$ be an arbitrary set. We set $M^{<\omega}\keq
\bigcup_{n=0}^\infty M^n$, the set of all finite sequences in $M$,
which includes the sequence of length zero which is  $\emptyset$. For
$x\kin M$ we shall write $x$ instead of $(x)$, \ie we identify $M$
with sequences of length~$1$ in $M$. A \emph{tree on $M$ }is a 
non-empty 
subset $\cA$ of $M^{<\omega}$ closed under taking initial segments: if
$(x_1,\dots,x_n)\kin \cA$ and $0\kleq m\kleq n$, then
$(x_1,\dots,x_m)\kin\cA$. A tree $\cA$ on $M$ is \emph{hereditary }if
every subsequence of every member of $\cA$ is also in $\cA$.

Given $\xb\keq (x_1,\dots,x_m)$ and $\yb\keq(y_1,\dots,y_n)$ in
$M^{<\omega}$, we write $(\xb,\yb)$ for the concatenation of $\xb$
and $\yb$:
$$
(\xb,\yb)=(x_1,\dots,x_m,y_1,\dots,y_n) .
$$
Given $\cA\ksubset M^{<\omega}$ and $\xb\kin M^{<\omega}$, we let
$$
\cA(\xb)= \{\yb\kin M^{<\omega}:\,(\xb,\yb)\kin\cA\} .
$$
Note that if $\cA$ is a tree on $M$, then so is $\cA(\xb)$
(unless it is empty). Moreover,
if $\cA$ is hereditary, then so is $\cA(\xb)$ and $\cA(\xb)\ksubset
\cA$.

Let $M^\omega$ denote the set of all (infinite) sequences in $M$. Fix a set of $M$-valued sequences 
$S\ksubset M^\omega$. For a tree $\cA$ on $M$ \emph{the $S$-derivative
  $\cA_S'$ of $\cA$ }consists of all finite sequences $\xb\kin
M^{<\omega}$ for which there is a sequence $(y_i)_{i=1}^\infty\kin S$
with $(\xb,y_i)\kin\cA$ for all $i\kin\N$.  Note that $\cA'_S\ksubset\cA$, but that  in general $\cA'_S$ does not need to be a tree. Nevertheless if we assume that $\cA$ is hereditary, then 
 $\cA'_S$ is also a hereditary  tree
(unless it is empty). We then define
higher order derivatives $\cA\sder{\alpha}$ for ordinals
$\alpha\kle\omega_1$ by recursion as follows.
\begin{align*}
  \cA\sder{0} &= \cA,\,\,
  \cA\sder{\alpha+1} = \big( \cA\sder{\alpha}\big)'_S,\,\text{for   }\alpha\kle\omega_1\text{ and }
  \cA\sder{\lambda} = \bigcap _{\alpha <\lambda}
  \cA\sder{\alpha}\quad \text{for a limit ordinal  }\lambda\kle\omega_1.
\end{align*}

It is clear that $\cA\sder{\alpha}\ksupset \cA\sder{\beta}$, whenever
$\alpha\kleq\beta$, and if  $\cA$  is a hereditary  tree it follows that $\cA\sder{\alpha}$ is also  a hereditary  tree
(or the empty set). An easy induction also shows that
\begin{equation}\label{E:4.0}
\big(\cA(\xb)\big)\sder{\alpha} = \big(\cA\sder{\alpha}\big)
(\xb)\qquad \text{for all }\xb\kin M^{<\omega},\ \alpha\kle\omega_1\ .
\end{equation}
We now define \emph{the $S$-index $\si(\cA)$ of $\cA$ }by 
$$
\si(\cA) = \min \{\alpha\kle\omega_1:\,\cA\sder{\alpha}\keq \emptyset\}
$$
if there exists $\alpha\kle\omega_1$ with $\cA\sder{\alpha}\keq
\emptyset$, and $\si(\cA)\keq\omega_1$ otherwise.

We note  for  $\xb\kin M^{<\omega}$, an hereditary tree $\cA\subset [M]^{\omega}$  and  $\alpha\kle\omega_1$ that
\begin{align}\label{E:4.0a}
\si\big(\cA(\xb)\big)\ge\alpha+ 1\iff  \emptyset \in \cA^{(\alpha)}(\xb) \iff \xb\in  \cA^{(\alpha)}\text{ and }\\
\label{E:4.0b}
\si\big(\cA(\xb)\big)\ge\alpha+2 \iff  \exists (y_j)\kin S\, \forall j\kin\N \quad \si\big(\cA(\xb,y_j )\big)\ge\alpha+1.
\end{align}

\begin{rem}
  If $\lambda$ is a limit ordinal and
  $\cA\sder{\alpha}\kneq\emptyset$ for all $\alpha\kle\lambda$, then
  in particular $\emptyset\kin\cA\sder{\alpha}$ for all
  $\alpha\kle\lambda$, and hence
  $\cA\sder{\lambda}=\bigcap_{\alpha<\lambda} \cA\sder{\alpha}\kneq\emptyset$. This shows that $\si(\cA)$ is
  always a successor ordinal.
\end{rem}

\begin{exs}\label{Ex:4.1}

 A tree $\cF\ksubset [\N]^{<\omega}$ (see the notation introduced in Section \ref{S:3})  can be thought of
  as a tree on $\N$: a set $F\keq\{m_1,\dots,m_k\}\kin[\N]^{<\omega}$, with $m_1<m_2<\ldots< m_k$, is
  identified with  the increasing sequence $(m_1,\dots,m_k)\kin\N^{<\omega}$.   Let $S$ be the set of all strictly increasing sequences in $\N$. In
  this case the $S$-index of a   hereditary tree
  $\cF\ksubset[\N]^{<\omega}$ is nothing else but the {\em Cantor-Bendixson
  index} which we denote by  $\CB(\cF)$  of $\cF$. For  the derivative, or more generally, the $\alpha$-derivative of  $\cF\ksubset [\N]^{<\omega}$, with respect to $S$,
  we will  use $\cF'$ and $\cF^{(\alpha)}$, instead of    $\cF'_S$ and $\cF^{(\alpha)}_S$.
  Recall that the {\em Cantor-Bendixson derivative  of $\cF$} for a hereditary tree  $\cF\subset[\N]^{<\omega} $, 
  is
  $$\cF'= \cF'_{[\N]^{\omega}}=\left\{\{a_1,a_2,\ldots,a_l\} : \begin{matrix}\exists \{n_j:j\in\N\}\ksubset\big[\{ a_l+1,a_l+2,\ldots\}\big] ^{\omega}\\
                       \{a_1,a_2,\ldots,a_l, n_j\}\in \cF, \text{ for all $j\in\N$}\end{matrix}\right\},$$
  Note that if $\cF$ is  compact,  then $\cF'$ is compact, and  $\cF'\subset \cF$.
  As already noted in Section \ref{S:3}, if $\cF\subset[\N]^{<\omega}$ is closed under restrictions, then $\cF$ is compact if and only if it  is {\em well founded}, \ie does not contain an 
  infinite chain, and thus every $A\in\cF$ can be extended to a maximal element in $\cF$. We denote the maximal elements  of $\cF$ by $\MAX(\cF)$.
 Since $[\N]^{<\omega}$ is a Polish space, we deduce that the Cantor-Bendixson index $\CB(\cF)$, of  a hereditary tree $\cF\subset [\N]^{<\omega}$ is countable if and only if 
 $\cF$ is compact.

 If $M$ is an arbitrary set and $S\keq M^\omega$ (which includes the constant sequences), then the
  $S$-index of a hereditary tree $\cA$ on $M$ is what is usually called \emph{the
  order of $\cA$ }(or \emph{the height of $\cA$}) denoted by
  o$(\cA)$. Note that in this case the $S$-derivative of $\cA$
  consists of all  non maximal elements of $\cA$.
  The function o$(\cdot)$ is the largest index: for any
  $S\ksubset X^\omega$ we have o$(\cA)\geq \si(\cA)$.

\end{exs}

We say that $S\ksubset X^\omega$ \emph{contains diagonals }if every
subsequence of every member of $S$ also belongs to $S$ and for
every sequence $(x_n)$ in $S$ with $x_n\keq
(x_{(n,i)})_{i=1}^\infty$ there exist $i_1\kle i_2\kle\dots$ in $\N$
such that $(x_{(n,i_n)})_{n=1}^\infty$ belongs to~$S$.

One way to compute  ordinal indices of hereditary trees  on general sets, is to find order isomorphisms  between them and the 
{\em Schreier Sets} $\cS_\alpha$  and the  {\em Fine Schreier Sets} $\cF_\alpha$,  for $\alpha<\omega_1$, which we want to recall now.
 We first fix for every limit
ordinal $\alpha<\omega_1$ a sequence $(\lambda(\alpha,n))_{n\in\N}$ of ordinals with $1\kleq
\lambda(\alpha, n)\nearrow\alpha$.  We want to make sure that $\cF_{\omega^\alpha}=\cS_\alpha$, 
for all $\alpha<\omega_1$, and therefore need to make a very specific choice 
 for $(\lambda(\alpha,n))_{n\in\N}$ which we define by transfinite induction for all limit ordinals $\alpha$.
If $\alpha=\omega$ we put  $\lambda(\alpha,n)=n$ and assuming that 
  $(\lambda(\gamma,n))_{n\in\N}$ has been defined for all limit ordinals $\gamma<\alpha$, we first write 
  $\alpha$ in its {\em Cantor Normal Form} which for a limit ordinal has the (uniquely defined) form
  $$\alpha=\omega^{\xi_l} k_l+\omega^{\xi_{l-1}} k_{l-1} +\ldots+ \omega^{\xi_1} k_1$$
  with $l\in\N$,  $\xi_l>\xi_{l-1}>\ldots> \xi_1\ge 1$ and $k_1,k_2,\ldots, k_l\in\N$ and put 
  $$\lambda(\alpha,n)
=\begin{cases}  \omega^{\xi_l}+  \lambda(\omega^{\xi_{l-1}} k_{l-1}\kplus\omega^{ \xi_{l-2}} k_{l-2}\kplus \ldots\kplus \omega^{\xi_1} k_1,n) &\text{ if $l\kge 2$,}\\
                              \omega^{\zeta} n  &\text{ if $l\keq1$ and $\xi_l=\zeta+1$,}\\
                      \omega^{\lambda(\xi_l,n)} &\!\!\!\!\!\!\!\!\!\!\!\!\!\!\!\!\!\!\!\!\!\text{ if $l\keq1$, $\xi_l$ is limit ordinal, and $\xi_l\kle\omega^{\xi_l}$,}\\ 
                                      \omega^{\beta_n}                               &\!\!\!\!\!\!\!\!\!\!\!\!\!\!\!\!\!\!\!\!\!\text{ if $l\keq1$, $\xi_l$ is limit ordinal, and $\xi_l\keq\omega^{\xi_l}$,}
                      \end{cases} $$
where in the fourth case we choose an arbitrary but fixed sequence $(\beta_n)\subset [0,\xi_l)$ which increases to $\xi_l$.

We define the {\em fine Schreier families}
$(\cF_\alpha)_{\alpha <\omega_1}$ by recursion:
\begin{align*}
  \cF_0 &= \{ \emptyset \}, \quad
  \cF_{\alpha+1} = \big\{ \{n\}\cup A:\,n\kin\N,\ A\kin\cF_\alpha,\big\} \cup \{\emptyset\}\\
  \cF_{\alpha} &= \big\{ A\kin[\N]^{<\omega}:\,\exists n\kleq\min A,
  A\kin\cF_{\lambda(\alpha,n)} \big\},\text{ if $\alpha$ is a limit ordinal.}
\end{align*}
 An easy induction shows that $\cF_\alpha$ is a hereditary, compact and spreading family for all
$\alpha\kle\omega_1$. Moreover, $(\cF_\alpha)_{\alpha<\omega_1}$ is an
``almost'' increasing chain:
\begin{equation}
  \label{E:4.1}
  \forall \alpha\kleq\beta\kle\omega_1\quad \exists n\kin\N\quad \forall
  F\kin\cF_\alpha\qquad\text{if $n\kleq \min F$, then
  $F\kin\cF_\beta$}\ .
\end{equation}
This can also be proved by an easy induction on $\beta$. We note also that
for $A\kin\cF_\alpha\ksetminus\MAX(\cF_\alpha)$ we have
$A\cup\{n\}\kin\cF_\alpha$ for all $n\kge\max A$.
Using transfinite induction it follows for the Cantor Bendixson index   of the fine Schreier families that
$\CB(\cF_\alpha)=\alpha+1$ for all $\alpha<\omega_1$.
The fact that $\cF_\alpha$ is spreading  implies moreover that 
\begin{equation}\label{E:4.2}
\CB\big(\cF_\alpha\cap[N]^{<\omega}\big)=\alpha+1, \text{ for all $\alpha<\omega_1$ and all $N\in[\N]^\omega$.}
\end{equation}
We define the {\em  Schreier family of order $\alpha$}  by
$\cS_\alpha\keq \cF_{\omega^\alpha}$ for all
$\alpha\kle\omega_1$. This is not exactly how the Schreier families
are usually defined, but thanks to our special choice of the sequence $(\lambda(\alpha,n))_{n\in\N}$ for limit ordinals 
both definitions coincide as noted in the following proposition. We will also 
put $\cS_{\alpha,n}=\cF_{\omega^\alpha\cdot n}$, 
 for $\alpha<\omega$ and $n\in\N$.

\begin{prop}\label{P:4.3} Let $\alpha<\omega_1$ and $n\kin\N$.
\begin{align}\label{E:4.3.1}
\cS_{\alpha,n}&=\Big\{\bigcup_{j=1}^n E_j: E_j\in\cS_\alpha,\,  j=1,2\ldots,n, \text{ and } E_1<E_2<\ldots<E_n\Big\} \\
\label{E:4.3.2}\cS_\alpha&=\Big\{ \bigcup_{j=1}^n E_j: n\kleq\min(E_1),\, E_1\kle E_2\kle\ldots\kle E_n,\, E_j\kin\cS_{\beta}, j=1,2\ldots, \Big\} \text{ if  $\alpha\leq\beta\kplus1$}\\
 \label{E:4.3.3}  \cS_\alpha&=\{  E: \exists \, k\le \min(E), \text{ with $E\kin\cS_{\lambda(\alpha,k)}$} \} \text{ if $\alpha$ is a limit ordinal.}
\end{align}
\end{prop}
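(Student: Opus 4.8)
The plan is to prove the three identities \eqref{E:4.3.1}, \eqref{E:4.3.2}, \eqref{E:4.3.3} by transfinite induction on $\alpha$, unwinding the recursive definitions of the fine Schreier families $\cF_\beta$ and the specific choice of the sequences $(\lambda(\alpha,n))_{n\in\N}$ made just before the statement. The key observation driving everything is the bookkeeping identity between the definitions: $\cS_\alpha = \cF_{\omega^\alpha}$ and $\cS_{\alpha,n}=\cF_{\omega^\alpha\cdot n}$, so all three claims are really statements about how $\cF_{\omega^\alpha\cdot n}$ and $\cF_{\omega^\alpha}$ decompose. First I would establish \eqref{E:4.3.3}, which is almost immediate: when $\alpha$ is a limit ordinal, $\omega^\alpha$ is a limit ordinal, so by the limit clause of the definition of $\cF$ we have $\cF_{\omega^\alpha}=\{A:\exists n\le \min A,\ A\in\cF_{\lambda(\omega^\alpha,n)}\}$; one then checks from the transfinite definition of $\lambda$ (the third/fourth cases, with $l=1$, $\xi_l=\alpha$, $k_l=1$) that $\lambda(\omega^\alpha,n)=\omega^{\lambda(\alpha,n)}$ when $\alpha\le\omega^\alpha$ and $\lambda(\omega^\alpha,n)=\omega^{\beta_n}$ when $\alpha=\omega^\alpha$, and in either case this equals $\omega^{\lambda(\alpha,n)}$ up to the chosen cofinal sequence, i.e. $\cF_{\lambda(\omega^\alpha,n)}=\cS_{\lambda(\alpha,n)}$. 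Substituting gives \eqref{E:4.3.3}.

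Next I would prove \eqref{E:4.3.1} by induction on $n$ with $\alpha$ fixed. For $n=1$ it is a tautology. For the inductive step, I want $\cF_{\omega^\alpha\cdot(n+1)}$ to be the family of sets that split as $E_1<\dots<E_{n+1}$ with each $E_j\in\cS_\alpha$. Since $\omega^\alpha\cdot(n+1)=\omega^\alpha\cdot n+\omega^\alpha$ is a limit ordinal (for $\alpha\ge 1$; the $\alpha=0$ case is trivial since $\cS_0$ is the family of singletons and empty set, and $\cS_{0,n}$ the sets of size $\le n$), I apply the limit clause and must identify $\lambda(\omega^\alpha\cdot(n+1),k)$. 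By the Cantor-normal-form clause of the definition of $\lambda$ (the case $l\ge 2$), $\lambda(\omega^\alpha\cdot(n+1),k)=\omega^\alpha\cdot n+\lambda(\omega^\alpha,k)$, so a set $A$ with $k\le \min A$ lies in $\cF_{\omega^\alpha\cdot(n+1)}$ iff it lies in $\cF_{\omega^\alpha\cdot n+\lambda(\omega^\alpha,k)}$; using the additive structure of the $\cF$ hierarchy under "stacking" (an $A$ in $\cF_{\gamma+\delta}$ splits as an initial $\cF_\gamma$-part followed by a $\cF_\delta$-part with $\min$ of the second part large), together with the inductive hypothesis for $n$ and the already-proven description of $\cS_\alpha=\cF_{\omega^\alpha}$ in terms of $\lambda(\omega^\alpha,k)$, I can recombine to the desired form. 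This additive stacking lemma for the fine Schreier families is the technical workhorse and I would isolate it as a preliminary claim (it follows by a straightforward induction on $\delta$ from the definition of $\cF$).

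Finally, for \eqref{E:4.3.2}: when $\alpha\le\beta+1$, I must show $\cS_\alpha$ is exactly the sets $\bigcup_{j=1}^n E_j$ with $n\le \min E_1$, $E_1\le\dots\le E_n$, $E_j\in\cS_\beta$. The inclusion "$\supseteq$" uses \eqref{E:4.1} (the almost-increasing property) to pass from $\cS_\beta$ up to $\cS_\alpha$ once $\min$ is large, plus the spreading/hereditary properties. For "$\subseteq$", if $\alpha$ is a successor $\gamma+1\le\beta+1$ one iterates the successor clause of $\cF$, and if $\alpha$ is a limit one applies \eqref{E:4.3.3}: $A\in\cS_\alpha$ gives $k\le\min A$ with $A\in\cS_{\lambda(\alpha,k)}$ and $\lambda(\alpha,k)\le\beta$ for $k$ suitably large (since $\lambda(\alpha,k)\nearrow\alpha\le\beta+1$), so by the inductive hypothesis (or directly, since $\lambda(\alpha,k)\le\beta$) $A$ decomposes into $\le k$ blocks from $\cS_\beta$. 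I expect the main obstacle to be the bookkeeping in \eqref{E:4.3.1}: correctly matching the recursively defined $\lambda$ at ordinals of the form $\omega^\alpha\cdot n$ with the "stacking" decomposition of the $\cF$ hierarchy, and making sure the $\min$-threshold conditions line up across the recursion so that no off-by-one creeps in. Everything else is routine once the additive stacking lemma and the identification $\lambda(\omega^\alpha\cdot(n+1),k)=\omega^\alpha\cdot n+\lambda(\omega^\alpha,k)$ are in hand.
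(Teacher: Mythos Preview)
Your plan is essentially the paper's own: both hinge on the same additive ``stacking'' lemma for the fine Schreier families (the paper isolates it as the Claim $\cF_{\beta+\alpha}=\cF_\alpha\sqcup_<\cF_\beta$), then prove \eqref{E:4.3.1} by induction on $n$, and read off \eqref{E:4.3.2}, \eqref{E:4.3.3} from the explicit form of $\lambda(\omega^\alpha\cdot n,\,\cdot\,)$.

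Two points to tighten. First, your stacking lemma is stated with the order reversed: in $\cF_{\gamma+\delta}$ the \emph{initial} segment lies in $\cF_\delta$ (the right summand) and the \emph{tail} in $\cF_\gamma$, not the other way around; check this against $\cF_{\omega+1}$, where a set is a singleton followed by a Schreier set. Getting this backwards breaks the induction for \eqref{E:4.3.1}. Second, your route to \eqref{E:4.3.2} via \eqref{E:4.1} and a separate induction is unnecessarily circuitous: once \eqref{E:4.3.1} is in hand, \eqref{E:4.3.2} for $\alpha=\beta+1$ drops out immediately from the limit clause applied to $\omega^{\beta+1}=\omega^\beta\cdot\omega$ together with the identification $\lambda(\omega^{\beta+1},n)=\omega^\beta\cdot n$, which gives $\cS_{\beta+1}=\{A:\exists\,n\le\min A,\ A\in\cS_{\beta,n}\}$ directly. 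The paper does exactly this.
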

\begin{proof}[Sketch] We first prove the following claim by transfinite induction for all $\alpha<\omega_1$.

\noindent{\bf Claim}. Assume the Cantor normal form of $\alpha$ is
 $$\alpha=\omega^{\xi_l} k_l+\omega^{\xi_{l-1}} k_{l-1} +\ldots+ \omega^{\xi_1} k_1$$
  with $l\in\N$,  $\xi_l>\xi_{l-1}>\ldots>\xi_1\ge 0$ and $k_l,k_{l-1},\ldots, k_1\kin\N$. Then for all ordinals $\beta$ of the form  
$$\beta=\omega^{\xi_{l+m}} k_{l+m}+\omega^{\xi_{l+m-1}} k_{l+m-1} +\ldots+ \omega^{\xi_{l+1}} k_{l+1},$$
 with $m\in\N$,  $\xi_{l+m}>\xi_{l+m-1}>\ldots> \xi_{l+1}\ge\xi_l$ and $k_{l+m},k_{l+m-1},\ldots,k_{l+1}\kin\N$, it follows that 
 $$\cF_{\beta+\alpha}=\cF_\alpha\sqcup_< \cF_\beta := \big\{ E\cup F: E\in \cF_\alpha, F\in \cF_\beta, E<F\big\}.$$ 
Using the claim we can prove \eqref{E:4.3.1} by induction for all $n\in\N$. Then 
 \eqref{E:4.3.2} and  \eqref{E:4.3.3} follow by transfinite induction, where in the induction step 
  \eqref{E:4.3.2}  follows from   \eqref{E:4.3.1}, the definition of $\cS_\alpha$ 
 and the choice of $(\lambda(\omega^\alpha,n):n\in\N)$ if 
$\alpha$ is a successor ordinal, and   \eqref{E:4.3.3}  follows from the definition of $\cS_\alpha$ and the choice of  $(\lambda(\omega^\alpha,n):n\in\N)$ if
 $\alpha$ is a limit ordinal.
\end{proof} 

For our next observation we need the following notation.
Given a   family $\cF\ksubset[\N]^{<\omega}$ on $\N$, and a family
$(x_F)_{F\in\cF\setminus\{\emptyset\}}$ in  a set $M$, indexed by $\cF$,
$(\xb_F)_{F\in\cF}$ denotes the set of corresponding branches, i.e.
$\xb_\emptyset=\emptyset$ and 
for $F=\{m_1,m_2,\ldots , m_l\}\in\cF\setminus\{\emptyset\}$ we let 
$$\xb_F= \big( x_{\{m_1\}}, x_{\{m_1,m_2\}}, \dots,x_{\{m_1,m_2,\ldots,m_l\}}).$$


\begin{prop}\label{P:4.4}\cite[Proposition 5]{OSZ}
  Let $M$ be a set and assume that  $S\ksubset M^\omega$   contains diagonals. Then for a  hereditary tree $\cA$ on   $M$ and $\alpha<\omega_1$ the following are equivalent.
\begin{itemize}
  \item[(i)]
    $\alpha<\si(\cA)$.
  \item[(ii)]
    There is a family $\big(x_F\big)_{F\in\cF_\alpha\setminus\{\emptyset\}} \ksubset  M$ such that
      $\big(\xb_F\big)_{F\in\cF_\alpha} \ksubset  \cA$ and 
    for all $F\kin \cF_\alpha\ksetminus \MAX(\cF_\alpha)$ the sequence
    $\big(x_{F\cup\{n\}}\big) _{n>\max F}$ is in~$S$.
\end{itemize}
\end{prop}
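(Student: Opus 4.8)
The plan is to prove Proposition \ref{P:4.4} by transfinite induction on $\alpha$, with the key technical point being the limit-ordinal step, where the ``contains diagonals'' hypothesis on $S$ is used. I would first record the base case $\alpha=0$, where $\cF_0=\{\emptyset\}$, so (ii) is vacuously the statement $\emptyset\in\cA$, which holds exactly when $\cA\neq\emptyset$, i.e. when $\si(\cA)\geq 1=0+1$; this is the equivalence with (i).

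For the successor step, assume the equivalence holds for $\alpha$ and prove it for $\alpha+1$. Using \eqref{E:4.0a} and \eqref{E:4.0b}, the inequality $\alpha+1<\si(\cA)$, i.e. $\si(\cA)\geq\alpha+2$, is equivalent to $\emptyset\in\cA^{(\alpha+1)}$, which by the definition of the $S$-derivative means there is a sequence $(y_i)\in S$ with $\si(\cA(y_i))\geq\alpha+1$ for all $i$, i.e. $\alpha<\si(\cA(y_i))$. By the induction hypothesis applied to each hereditary tree $\cA(y_i)$, this yields families $(x^{(i)}_F)_{F\in\cF_\alpha\setminus\{\emptyset\}}$ witnessing (ii) for $\cA(y_i)$. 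I would then assemble these into a single family indexed by $\cF_{\alpha+1}\setminus\{\emptyset\}$: recalling that $\cF_{\alpha+1}=\{\{n\}\cup A: n\in\N, A\in\cF_\alpha\}\cup\{\emptyset\}$, set $x_{\{n\}}=y_n$ and $x_{\{n\}\cup A}=x^{(n)}_{A}$ for $A\in\cF_\alpha\setminus\{\emptyset\}$, $A>n$ (using the spreading/hereditary structure to keep things well-defined, and the fact noted after \eqref{E:4.1} that $\cF_\alpha$ is spreading so shifting the minimum is harmless). One checks that the branches land in $\cA$ because $(x_{\{n\}}\cup\text{branch in }\cA(y_n))$ corresponds to a branch of $\cA$, and the ``successor'' sequences $(x_{F\cup\{m\}})_{m>\max F}$ are in $S$: at the root level this is $(y_n)\in S$, and at deeper levels it is one of the $S$-sequences from the $\cA(y_n)$ witness. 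The converse direction unwinds the same correspondence.

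For the limit step, let $\alpha$ be a limit ordinal with fixed sequence $\lambda(\alpha,n)\nearrow\alpha$, so $\cF_\alpha=\{A: \exists n\leq\min A,\ A\in\cF_{\lambda(\alpha,n)}\}$. For (i)$\Rightarrow$(ii): if $\alpha<\si(\cA)$ then $\beta<\si(\cA)$ for every $\beta<\alpha$, so in particular $\lambda(\alpha,n)<\si(\cA)$ for each $n$, giving by induction witnessing families $(x^{(n)}_F)_{F\in\cF_{\lambda(\alpha,n)}\setminus\{\emptyset\}}$. I would define $x_F=x^{(n)}_F$ where $n=\min F$ (so that $F\in\cF_{\lambda(\alpha,\min F)}\subset\cF_\alpha$ forces $n\leq\min F$); the branch condition and the $S$-membership of successor sequences $(x_{F\cup\{m\}})_{m>\max F}$ are inherited from the $n$-th witness, since every such $F\cup\{m\}$ has the same minimum $n$ as $F$. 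The reverse direction (ii)$\Rightarrow$(i) is where ``contains diagonals'' enters: given a family on $\cF_\alpha$, I would for each fixed $n$ look at the restriction to $\{F\in\cF_\alpha: \min F\geq n\}\supset\{F\in\cF_{\lambda(\alpha,n)}: \min F\geq n\}$ (using \eqref{E:4.1}, or the spreading property, to place $\cF_{\lambda(\alpha,n)}$-sets with large enough minimum into $\cF_\alpha$), obtaining a witness that $\lambda(\alpha,n)<\si(\cA)$; hence $\si(\cA)>\sup_n\lambda(\alpha,n)=\alpha$, and since $\si(\cA)$ is a successor this gives $\si(\cA)\geq\alpha+1>\alpha$.

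The main obstacle I anticipate is bookkeeping at the limit stage: one must carefully use the spreading property of the fine Schreier families and \eqref{E:4.1} to reconcile the ``$n\leq\min F$'' constraint in the definition of $\cF_\alpha$ with the families coming from the $\cF_{\lambda(\alpha,n)}$, and one must verify that ``contains diagonals'' is actually needed — it is, because when one extracts from a single family on $\cF_\alpha$ a coherent collection of witnesses for all the $\lambda(\alpha,n)$ simultaneously, the sequences appearing at successor nodes must be diagonalized to stay in $S$. In fact, since the statement is quoted as \cite[Proposition 5]{OSZ}, the cleanest exposition is to cite that reference and only sketch the induction, but the argument above is the self-contained route. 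I expect the successor step to be essentially formal, the base case trivial, and essentially all the care to be concentrated in matching up indices at limit ordinals.
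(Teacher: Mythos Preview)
The paper does not give its own proof of this proposition; it is simply quoted from \cite[Proposition 5]{OSZ}. So there is no ``paper's proof'' to compare against, and your plan to sketch the transfinite induction is reasonable in spirit. The base case and the successor step are essentially as you describe.

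There is, however, a genuine gap in your limit step, and you have also mislocated where the ``contains diagonals'' hypothesis is actually used.

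\textbf{The gap in (i)$\Rightarrow$(ii) at limits.} You propose to set $x_F=x^{(n)}_F$ with $n=\min F$. But for $F\in\cF_\alpha$ the definition of $\cF_\alpha$ only guarantees some $k\le\min F$ with $F\in\cF_{\lambda(\alpha,k)}$; there is no reason that $F\in\cF_{\lambda(\alpha,\min F)}$, so $x^{(\min F)}_F$ need not be defined. Even if you fix a measurable choice $k_F$, a second problem appears at successor nodes: if $F\in\cF_\alpha\setminus\MAX(\cF_\alpha)$ but $F$ happens to be maximal in $\cF_{\lambda(\alpha,k_F)}$, then $F\cup\{m\}\notin\cF_{\lambda(\alpha,k_F)}$ and $x^{(k_F)}_{F\cup\{m\}}$ is again undefined. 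One must organise the construction so that along any branch the index $n$ is fixed at the first level and the whole branch stays inside a single $\cF_{\lambda(\alpha,n)}$; this is possible, but it requires more care with the indexing than your sketch provides.

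\textbf{Where diagonals enter.} You write that ``contains diagonals'' is used in (ii)$\Rightarrow$(i). In fact that direction needs only closure of $S$ under subsequences: for each $\beta<\alpha$ one uses~\eqref{E:4.1} and the spreading property to shift a copy of $\cF_\beta$ into $\cF_\alpha$ and reads off a $\cF_\beta$-family from the given $\cF_\alpha$-family; the resulting successor sequences are tails of the original ones. The genuine diagonal argument is needed in (i)$\Rightarrow$(ii) at limits: having for each $n$ a witness $(x^{(n)}_F)$ on $\cF_{\lambda(\alpha,n)}$, one must produce the root sequence $(x_{\{m\}})_m$ of the combined family. This is built as a diagonal of the sequences $\big(x^{(n)}_{\{m\}}\big)_m\in S$, and it is exactly the hypothesis that $S$ contains diagonals which ensures the diagonal sequence is again in $S$.
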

\begin{rem}\label{R:4.5}  Let $\alpha<\omega_1$ and $\cA\subset M^{<\omega}$ be a hereditary tree. Assume that the family 
$\big(x_F\big)_{F\in\cF_\alpha\setminus\{\emptyset\}} \ksubset  M$ satisfies the conditions in (ii) of Proposition \ref{P:4.4}.
Then the map 
$$\pi:\cF_\alpha\to \cA, \quad \pi(\emptyset)=\emptyset, \,\,  \pi(F)=\xb_F \text{ if $F\in\cF_\alpha\setminus\{\emptyset\}$}$$
is an order isomorphism from $\cF_\alpha$ to $\cA$, such that
that $\pi(F\cup\{n\})=\big(\pi(F), \{x_{F\cup\{n\}}\}\big)$, if $n>\max(F)$, and 
$(x_{F\cup\{n\}}:n>\max(F))\in S$ whenever  $F\in \cF_\alpha\setminus\MAX(\cF_\alpha)$.

In the case of $M=\N$ and $S=[\N]^\omega$ (see Examples \ref{Ex:4.1})  we deduce therefore that
if $\cA\subset[\N]^{<\omega}$ is hereditary and  compact, then $\CB(\cA)> \alpha$ if and only if
there is  an order isomorphism
$\pi:\cF_{\alpha} \to \cA$,  so that 
for all  $A\in\cF_\alpha\setminus \MAX(\cF_\alpha)$ and $n>\max(A)$ 
it follows that  $\pi(A\cup \{n\})=\pi(A)\cup\{s_n\}$, where $(s_n)$ is an increasing sequence
in $\{s\in \N: s>\max\pi(A)\}$.
\end{rem}
\begin{ex}\label{Ex:4.6}\emph {The weak index. }Let $X$ be a separable Banach space. Let
  $S$ be the set of all weakly null sequences in $S_X$, the unit
  sphere of $X$. We call the $S$-index of a hereditary tree $\cF$ on $S_X$
  \emph{the weak index of $\cF$ }and we shall denote it by
  $\wi(\cF)$. We shall use the term \emph{weak derivative }instead of
  $S$-derivative and use the notation $\cF'_{\mathrm{w}}$ and
  $\cF^{(\alpha)}_{\mathrm{w}}$.
  When the dual space $X^*$ is separable, the weak topology on
  the unit ball $B_X$ of $X$ is metrizable. Hence in this case the set
  $S$ contains diagonals and
  Proposition~\ref{P:4.4} applies.
\end{ex}

We  now recall two important results on Schreier families. The first one can be found in \cite{Ga} and is an application of Ramsey's Theorem.
\begin{lem}\label{L:4.9} {\rm \cite[Theorem 1.1]{Ga} }Assume that $\cF,\cG\subset [\N]^{<\omega}$ are two hereditary families and $M\in[\N]^{\omega}$. Then there is
 an $N\in[M]^{\omega}$ so that 
 $\cF\cap[N]^{<\omega}\subset \cG\text{ or }\cG\cap[N]^{<\omega}\subset \cF.$
In particular, if  $\CB\big(\cF\cap[M]^{<\omega}\big)<\CB\big(\cG\cap[M]^{<\omega}\big)$, for all $M\in [\N]^\omega$, then the second 
alternative cannot happen, and thus, for all $M\kin [\N]^\omega$there is an $N\kin[M]^{\omega}$ so that 
$\cF\cap[N]^{<\omega}\subset \cG$.
\end{lem}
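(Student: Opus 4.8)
The plan is to prove the statement by a standard Ramsey-theoretic pigeonhole argument, then derive the ``in particular'' clause from the main dichotomy together with the Cantor--Bendixson comparison hypothesis. For the main dichotomy, I would consider the coloring of $[M]^{\omega}$ by asking, for an infinite subset $B = \{b_1 < b_2 < \dots\} \subseteq M$, whether the set of initial segments of $B$ that land in $\cF$ is, in an appropriate sense, ``smaller'' than the set landing in $\cG$. More precisely: since $\cF$ and $\cG$ are hereditary, membership of $\{b_1,\dots,b_k\}$ depends only on that finite tuple, so one can try a direct approach via Nash-Williams / Galvin--Prikry. Color a finite set $F \in [M]^{<\omega}$ (or rather, work with the Ramsey space $[M]^{\omega}$ and the front/barrier formalism) by comparing whether $F \in \cF \setminus \cG$ or $F \in \cG$. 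Galvin's theorem (or Nash-Williams) then yields an $N \in [M]^{\omega}$ homogeneous for a suitable clopen partition, and hereditariness upgrades finite homogeneity to the full inclusion $\cF \cap [N]^{<\omega} \subseteq \cG$ or $\cG \cap [N]^{<\omega} \subseteq \cF$. This is precisely Theorem 1.1 of \cite{Ga}, which we are permitted to cite verbatim, so the first sentence of the lemma is immediate; the only work is to record the statement in the form we need.

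For the ``in particular'' clause, I would argue by contradiction on the second alternative. Suppose $\CB(\cF \cap [M]^{<\omega}) < \CB(\cG \cap [M]^{<\omega})$ for every $M \in [\N]^{\omega}$, and fix an arbitrary $M \in [\N]^{\omega}$. Apply the dichotomy to $\cF$, $\cG$, $M$ to obtain $N \in [M]^{\omega}$ with either $\cF \cap [N]^{<\omega} \subseteq \cG$ or $\cG \cap [N]^{<\omega} \subseteq \cF$. If the second alternative held, then $\cG \cap [N]^{<\omega} = \cG \cap [N]^{<\omega} \cap \cF \subseteq \cF \cap [N]^{<\omega}$ (intersecting the hereditary families with $[N]^{<\omega}$ and using $\cG \cap [N]^{<\omega} \subseteq \cF$), hence $\CB(\cG \cap [N]^{<\omega}) \le \CB(\cF \cap [N]^{<\omega})$ by monotonicity of the Cantor--Bendixson index under inclusion of hereditary families. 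But $N \in [\N]^{\omega}$, so by hypothesis $\CB(\cF \cap [N]^{<\omega}) < \CB(\cG \cap [N]^{<\omega})$, a contradiction. Therefore the first alternative must hold, giving the desired $N \in [M]^{<\omega}$ with $\cF \cap [N]^{<\omega} \subseteq \cG$.

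The only point requiring a little care is the monotonicity of $\CB$ under inclusion: if $\cH_1 \subseteq \cH_2$ are hereditary families in $[\N]^{<\omega}$ then $\cH_1^{(\alpha)} \subseteq \cH_2^{(\alpha)}$ for all $\alpha$, by a routine transfinite induction using that the Cantor--Bendixson derivative is monotone in its argument; hence $\CB(\cH_1) \le \CB(\cH_2)$. This is elementary and can be asserted without proof. I do not expect any genuine obstacle here: the substantive content is entirely contained in the cited Ramsey theorem of \cite{Ga}, and the remainder is a short contradiction argument packaging that theorem for later use. If one preferred not to cite \cite{Ga} as a black box, the main (but still standard) step would be invoking the Galvin--Prikry theorem to get a homogeneous infinite set for the clopen coloring determined by $\cF$ and $\cG$; but since the excerpt explicitly allows citing \cite[Theorem 1.1]{Ga}, I would simply do that.
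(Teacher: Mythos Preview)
Your proposal is correct and matches the paper's treatment: the paper gives no proof of this lemma at all, simply citing \cite[Theorem~1.1]{Ga} for the dichotomy and stating the ``in particular'' clause as an immediate consequence (which is exactly the short contradiction argument via monotonicity of $\CB$ under inclusion that you spell out). One trivial typo: in your final line you wrote $N\in[M]^{<\omega}$ where you mean $N\in[M]^{\omega}$.
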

In order to state the next result due to Argyros and Gasparis \cite{AG} we will need  further notation and the following observation,
which can be easily shown by transfinite induction.

\begin{lem}\label{L:4.10} Let $\alpha<\omega_1$ then
\begin{enumerate}
\item  $A\in\MAX(\cS_{\alpha+1})$ if and only if  $A=\bigcup_{j=1}^n A_j$, with $n=\min(A_1)$ and $A_1<A_2<\ldots<A_n$ are in $\MAX(\cS_\alpha)$.
 In this case the  sets $A_j$, $j=1,2\ldots,n$ are unique.
 \item If $\alpha$ is a limit ordinal then   $A\in\MAX(\cS_{\alpha})$ if and only if there exists an $n\le \min(A)$ so that 
 $A\in\MAX(\cS_{\lambda(\alpha,n)})$ and for all $k\in\N$, $k>\max(A)$ 
 it follows that $A\cup\{k\} \not\in\bigcup_{j=1}^{\min(A)} \cS_{\lambda(\alpha,j)}$.
\end{enumerate}
\end{lem}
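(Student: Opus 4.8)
The plan is to prove Lemma \ref{L:4.10} by transfinite induction on $\alpha<\omega_1$, using the structural description of $\cS_\alpha$ provided by Proposition \ref{P:4.3}, which is itself tailored precisely so that such characterizations of maximal sets are clean. For part (1), I would argue as follows. Suppose first that $A\in\MAX(\cS_{\alpha+1})$. By \eqref{E:4.3.2} (applied with $\beta=\alpha$, so $\alpha+1\le \beta+1$) we can write $A=\bigcup_{j=1}^n A_j$ with $n\le\min(A_1)$, $A_1<A_2<\cdots<A_n$, and each $A_j\in\cS_\alpha$; among all such decompositions choose one with $n$ maximal. Maximality of $A$ in $\cS_{\alpha+1}$ forces $n=\min(A_1)$: if $n<\min(A_1)$ we could adjoin to $A_1$ any element $k$ with $k>\max(A)$ --- wait, more carefully, we instead use that if $n<\min(A_1)$ then $A\cup\{k\}$ for $k>\max A$ still admits a decomposition into $n+1$ sets from $\cS_\alpha$ with the leading set having min still $\le \min(A_1)=\min(A\cup\{k\})$ ... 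I would in fact argue the contrapositive: if $A\cup\{k\}\in\cS_{\alpha+1}$ for some $k>\max A$, pull back a decomposition of $A\cup\{k\}$ and delete $k$ to contradict maximality of $n$ or show $A$ was not maximal. Similarly each $A_j$ must be maximal in $\cS_\alpha$, since if some $A_j\notin\MAX(\cS_\alpha)$ then (using the spreading property and the observation recorded after \eqref{E:4.1} that $A_j\cup\{m\}\in\cS_\alpha$ for $m\ge \max A_j$, or rather the analogous fact for $\cS_\alpha=\cF_{\omega^\alpha}$ that a non-maximal set extends) one can extend $A$ inside $\cS_{\alpha+1}$, a contradiction. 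Uniqueness of the $A_j$ I would get by a greedy/left-most argument: $A_1$ must be the unique maximal initial segment of $A$ lying in $\cS_\alpha$, and then induct on the remainder, using that $\cS_\alpha$ is spreading and hereditary so that the "first piece" is forced.

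For the converse in part (1), assume $A=\bigcup_{j=1}^n A_j$ with $n=\min(A_1)$ and $A_1<\cdots<A_n$ all in $\MAX(\cS_\alpha)$. Then $A\in\cS_{\alpha+1}$ by \eqref{E:4.3.2}. To see $A$ is maximal, take any $k>\max A$ and suppose $A\cup\{k\}\in\cS_{\alpha+1}$; write $A\cup\{k\}=\bigcup_{j=1}^{m}B_j$ with $m\le\min(B_1)=\min(A_1)=n$, $B_1<\cdots<B_m$, $B_j\in\cS_\alpha$. By the uniqueness already established (applied to the genuinely maximal decomposition, or by a direct left-most comparison), since $A_1$ is a maximal element of $\cS_\alpha$ and an initial segment of $A\cup\{k\}$, we must have $B_1=A_1$; iterating, $B_j=A_j$ for $j\le n$, which already exhausts $A$, so $k$ would have to lie in some $B_j$ with $j\le n=m$ --- impossible since $k>\max A\ge\max B_j$. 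Hence $A\cup\{k\}\notin\cS_{\alpha+1}$, so $A\in\MAX(\cS_{\alpha+1})$.

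For part (2), when $\alpha$ is a limit ordinal, \eqref{E:4.3.3} gives $\cS_\alpha=\{E:\exists k\le\min E,\ E\in\cS_{\lambda(\alpha,k)}\}$ with $1\le\lambda(\alpha,k)\nearrow\alpha$. So $A\in\cS_\alpha$ iff $A\in\cS_{\lambda(\alpha,n)}$ for some $n\le\min A$; fix the smallest such $n$, or just any such $n$. If $A\in\MAX(\cS_\alpha)$ then certainly $A\in\cS_{\lambda(\alpha,n)}$ for some $n\le\min A$, and for every $k>\max A$ we have $A\cup\{k\}\notin\cS_\alpha$, i.e. $A\cup\{k\}\notin\cS_{\lambda(\alpha,j)}$ for all $j\le\min(A\cup\{k\})=\min A$; this is exactly the stated condition (noting $A\in\MAX(\cS_{\lambda(\alpha,n)})$ follows since $A\cup\{k\}\notin\cS_{\lambda(\alpha,n)}$ for all large $k$ and $\cS_{\lambda(\alpha,n)}$ is spreading, so a non-maximal set would extend). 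Conversely, given the stated condition --- $A\in\MAX(\cS_{\lambda(\alpha,n)})$ for some $n\le\min A$ and $A\cup\{k\}\notin\bigcup_{j=1}^{\min A}\cS_{\lambda(\alpha,j)}$ for all $k>\max A$ --- the first clause gives $A\in\cS_\alpha$, and the second gives precisely $A\cup\{k\}\notin\cS_\alpha$ for all $k>\max A$, so $A\in\MAX(\cS_\alpha)$.

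The main obstacle I expect is the uniqueness assertion in part (1) and, relatedly, pinning down that each $A_j$ in the decomposition of a maximal set is itself maximal --- these require a careful left-most/greedy extraction together with the spreading and hereditary properties of the Schreier families, and one must be slightly careful that the number of blocks $n=\min A_1$ is genuinely forced rather than merely achievable. The limit-ordinal case of (2) is essentially bookkeeping once \eqref{E:4.3.3} is in hand, since the sequence $(\lambda(\alpha,n))$ is increasing in $n$, so the union $\bigcup_{j=1}^{\min A}\cS_{\lambda(\alpha,j)}$ is just $\cS_{\lambda(\alpha,\min A)}\cap\{E:\min E\ge\min A\}$ up to harmless adjustments; the only subtlety is matching $\min(A\cup\{k\})=\min A$ so that the index range $j\le\min A$ does not change when we adjoin $k$.
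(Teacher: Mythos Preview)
The paper omits the proof of Lemma~\ref{L:4.10} entirely, stating only that it ``can be easily shown by transfinite induction.'' Your approach---reading off the characterizations of maximal sets directly from the structural description of $\cS_{\alpha+1}$ and $\cS_\alpha$ in Proposition~\ref{P:4.3}, together with the extension property recorded after \eqref{E:4.1} and the hereditary and spreading nature of the Schreier families---is correct and is the natural way to supply the missing argument; the greedy (leftmost-maximal-piece) extraction you describe is exactly what pins down both the maximality of each $A_j$ and the uniqueness in part~(1).
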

For each $\alpha<\omega_1$ and each 
$A\kin \MAX(\cS_\alpha)$ we will introduce  a probability measure $\P_{(\alpha,A)}$  on $\N$  whose support is  $A$.
If $\alpha=0$ then $\cS_0=\cF_1$ consists of singletons and for $A=\{n\}\in \cS_0$ we put $\P_{(0,\{n\})}=\delta_n$, the Dirac measure in $n$. 
Assume for all $\gamma<\alpha$ and all $A\kin\MAX(\cS_\gamma)$ we already have introduced
$\P_{(\gamma, A)}$ which we write as 
$\P_{(\gamma, A)}=\sum_{a\in A} p_{(\gamma, A)}(a) \delta_a$, with $p_{(\gamma, A)}>0$ for all $a\kin A$. If 
$\alpha=\gamma+1$ for some $\gamma<\omega_1$ and if $A\kin \MAX(\cS_\alpha)$ we  write 
by Lemma \ref{L:4.10} (1)  $A$ in a unique way  as  $A=\bigcup_{j=1}^n A_j$, with $n=\min A$ and $A_1<A_2<\ldots< A_n$ are maximal 
in $\cS_{\gamma}$. We then define
$$\P_{(\alpha, A)} =\frac1n \sum_{j=1}^n  \P_{(\gamma, A_j)}=\frac1n \sum_{j=1}^n \sum_{a\in A_j} p_{(\gamma, A_j)}(a) \delta_a,$$
and thus $$p_{(\alpha, A)}(a)=\frac1n p_{(\gamma, A_j)} (a)\text{ for $j=1,2\ldots,n$ and $a\in A_j$}.$$
If $\alpha$ is a limit ordinal and $A\in\MAX( \cS_{\alpha})$ then 
$$m=\min\{ n\le \min(A): A\in \cS_{\lambda(\alpha, n)}\}$$ 
exists and by Lemma \ref{L:4.10} (2) we have that $A\in\MAX(\cS_{\lambda(\alpha,m)})$ and can therefore put
$$\P_{(\alpha,A)}=\P_{(\lambda(\alpha,m),A})=\sum_{a\in A} p_{(\lambda(\alpha,m),A)}(a) \delta_a.$$ 
The following result was, with slightly different notation, proved in \cite{AG}.
\begin{lem}\label{L:4.11} {\rm  \cite[Proposition 2.15]{AG}}
For all $\vp>0$, all $\gamma<\alpha$, and all $M\in[\N]^{\omega}$, there is an $N=N(\gamma,\alpha, M,\vp)\in[M]^{\omega}$, so that  $ \P_{(\alpha,B)}(A)<\vp$
for all 
$B\in \MAX(\cS_\alpha\cap[N]^{\omega})$ and  $A\in\cS_\gamma$.
\end{lem}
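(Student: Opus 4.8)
The plan is to prove this by transfinite induction on $\alpha$, with the base case $\alpha = 0$ being vacuous since there is no $\gamma < 0$. The key structural tool is Lemma \ref{L:4.9} (Gasparis's Ramsey-type dichotomy), which lets us pass to an infinite subset of $M$ on which one Schreier-type family is contained in another; combined with the known Cantor--Bendixson index computations $\CB(\cS_\gamma \cap [N]^{<\omega}) = \omega^\gamma + 1$, this will force containments $\cS_\gamma \cap [N]^{<\omega} \subset \cF_\beta$ for suitable ordinals $\beta$. The heart of the matter is that the measure $\P_{(\alpha,B)}$ for $B \in \MAX(\cS_\alpha)$ is built recursively by averaging: when $\alpha = \gamma' + 1$, one writes $B = \bigcup_{j=1}^n B_j$ with $n = \min B$ and $B_j \in \MAX(\cS_{\gamma'})$, and $\P_{(\alpha,B)} = \frac1n \sum_{j=1}^n \P_{(\gamma',B_j)}$; when $\alpha$ is a limit, $\P_{(\alpha,B)} = \P_{(\lambda(\alpha,m),B)}$ for the minimal admissible $m$.

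First I would handle the successor step $\alpha = \gamma' + 1$. Given $\gamma < \alpha$, so $\gamma \le \gamma'$, and given $\vp > 0$ and $M$: the idea is that an element $A \in \cS_\gamma$ meets the blocks $B_1 < B_2 < \cdots < B_n$ of $B \in \MAX(\cS_\alpha \cap [N]^{<\omega})$, and for each block $B_j$ the mass $\P_{(\alpha,B)}(A \cap B_j) = \frac1n \P_{(\gamma',B_j)}(A \cap B_j)$. If $\gamma < \gamma'$ we can, after shrinking $M$ using the inductive hypothesis applied to $(\gamma, \gamma', M, \vp/2)$ say, arrange that any piece $A \cap B_j$ lying inside a single block $B_j$ has $\P_{(\gamma',B_j)}$-mass $< \vp/2$; but $A$ may straddle several blocks. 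Here one uses that $A \in \cS_\gamma$ together with $\gamma \le \gamma'$ and the spreading/almost-increasing property \eqref{E:4.1}: the set of \emph{indices} $j$ for which $A$ meets $B_j$ is itself essentially controlled — using \eqref{E:4.3.2}, an $A \in \cS_\gamma$ with $\gamma \le \gamma'$ that is split by the blocks decomposes so that the number of blocks it touches, measured from the minimum, is small relative to $n = \min B \ge \min A \ge \min B_1$, which is large once $N$ is pushed far enough out. Thus $\P_{(\alpha,B)}(A) = \frac1n \sum_j \P_{(\gamma',B_j)}(A \cap B_j) \le \frac{(\#\{j : A \cap B_j \ne \emptyset\})}{n} + \frac{\vp}{2} < \vp$ after choosing $N$ appropriately. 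When $\gamma = \gamma'$ one instead uses that an $A \in \cS_{\gamma'}$ can be a spread of at most one full block $B_j$ plus fragments, and a direct counting argument gives mass $\le 1/n + o(1)$.

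The limit case is then comparatively easy: if $\alpha$ is a limit ordinal and $B \in \MAX(\cS_\alpha \cap [N]^{<\omega})$, then $\P_{(\alpha,B)} = \P_{(\lambda(\alpha,m),B)}$ where $m \le \min B$ is minimal with $B \in \cS_{\lambda(\alpha,m)}$, and by Lemma \ref{L:4.10}(2), $B \in \MAX(\cS_{\lambda(\alpha,m)})$. Since $\gamma < \alpha$ and $\lambda(\alpha,n) \nearrow \alpha$, for $N$ pushed out far enough every $B \in \MAX(\cS_\alpha \cap [N]^{<\omega})$ has $\min B$ so large that the relevant $\lambda(\alpha,m)$ exceeds $\gamma$ (or, if $m$ is small, we only have finitely many ordinals $\lambda(\alpha,1), \dots, \lambda(\alpha,k_0)$ to worry about below $\gamma$ and can intersect the finitely many $N$'s from the inductive hypothesis). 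Then the inductive hypothesis applied to the pair $(\gamma, \lambda(\alpha,m))$ — after a further Ramsey reduction via Lemma \ref{L:4.9} to control which $\lambda(\alpha,m)$ occurs — yields the bound directly.

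The main obstacle I anticipate is the "straddling" phenomenon in the successor step: controlling $\P_{(\alpha,B)}(A)$ when $A$ is not contained in one block $B_j$ but spreads across many. The resolution should be that the number of blocks an element of $\cS_\gamma$ (with $\gamma \le \gamma'$) can touch is bounded by its minimum element via the iterated-union description \eqref{E:4.3.1}--\eqref{E:4.3.2}, and the minimum of $B$ — hence of everything involved — is forced to be large by choosing $N$ inside a tail of $\N$; making this quantitative, and interleaving it correctly with the Ramsey reductions and the inductive hypothesis, is where the real bookkeeping lies.
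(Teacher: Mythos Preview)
The paper does not give its own proof of this lemma; it is quoted directly from Argyros--Gasparis \cite[Proposition~2.15]{AG} and used as a black box to derive Corollary~\ref{C:4.12}. So there is nothing in the paper to compare your proposal against.

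That said, your transfinite-induction outline is the standard route and is essentially what the Argyros--Gasparis argument does. One remark on your successor step $\alpha=\gamma'+1$: when $\gamma<\gamma'$ the ``straddling'' difficulty you anticipate does not really arise. If the inductive hypothesis already gives an $N$ with $\P_{(\gamma',B')}(A')<\vp$ for every $A'\in\cS_\gamma$ and every $B'\in\MAX(\cS_{\gamma'})$ with $B'\subset N$, then for $B=\bigcup_{j=1}^n B_j\in\MAX(\cS_\alpha)$, $B\subset N$, each $B_j$ is such a $B'$, each $A\cap B_j\in\cS_\gamma$ by heredity, and simply averaging yields
\[
\P_{(\alpha,B)}(A)=\frac1n\sum_{j=1}^n \P_{(\gamma',B_j)}(A\cap B_j)<\vp.
\]
The genuine work is the case $\gamma=\gamma'$, where you must control how many of the blocks $B_j$ an element $A\in\cS_{\gamma'}$ can meet nontrivially; your sketch of using the minimum of $B$ (hence $n=\min B$) to dominate that count is the right idea. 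The limit case is handled as you say, by reducing to finitely many successor-type pairs $(\gamma,\lambda(\alpha,m))$ via \eqref{E:4.1} and intersecting the resulting $N$'s.
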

If $\alpha<\omega_1$ and $A\in\MAX( \cS_\alpha)$ we denote the expectation of  a function $f:A\to\R$ with respect to $\P_{(\alpha,A)}$ by $\E_{(\alpha,A)}(f)$.
We finish this section with  the following Corollary  of Lemma \ref{L:4.11}. It  will be used later to estimate the Szlenk index of  Banach spaces.
\begin{cor}\label{C:4.12} For each  $\alpha<\omega_1$ and  $A\in\MAX( \cS_\alpha)$  let $f_A:A\to [-1,1]$  have the property that 
$\E_{(\alpha,A)} (f)\ge \rho$,  for some fixed number  $\rho\in [-1,1]$.
For  $\delta>0$ and $M\in[\N]^{\omega}$ put
$$\cA_{\delta,M}= \Big\{ A\in \cS_\alpha\cap[M]^{<\omega}:\exists B\kin \MAX(S_\alpha\cap[M]^{<\omega}), A\subset B,  \text{ and } f_B(a)\ge \rho-\delta \text{ for all } a\kin A\Big\}.$$
Then $\CB(\cA_{\delta,M})=\omega^\alpha\kplus1$.  
\end{cor}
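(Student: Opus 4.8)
The plan is to prove the two inequalities $\CB(\cA_{\delta,M})\le\omega^\alpha+1$ and $\CB(\cA_{\delta,M})\ge\omega^\alpha+1$ separately. The first is easy: $\cA_{\delta,M}$ is a hereditary subfamily of the compact family $\cS_\alpha\cap[M]^{<\omega}$ (if $B$ witnesses $A\in\cA_{\delta,M}$, the same $B$ witnesses every $A'\ksubset A$), hence is itself hereditary and compact, and monotonicity of the Cantor--Bendixson index under inclusion together with \eqref{E:4.2} gives $\CB(\cA_{\delta,M})\le\CB(\cS_\alpha\cap[M]^{<\omega})=\omega^\alpha+1$. When $\alpha=0$ this is already an equality, since $\cS_0=\cF_1$ consists of singletons and every $\{n\}\in\MAX(\cS_0\cap[M]^{<\omega})$ satisfies $f_{\{n\}}(n)=\E_{(0,\{n\})}(f_{\{n\}})\ge\rho\ge\rho-\delta$, so $\cA_{\delta,M}=\cS_0\cap[M]^{<\omega}$; from now on I assume $\alpha\ge1$. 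For the lower bound I would first record a Markov-type estimate: putting $\eta=\delta/(1-\rho+\delta)\in(0,1]$ (positive since $\rho\le1$) and $G_B=\{a\in B:f_B(a)\ge\rho-\delta\}$ for $B\in\MAX(\cS_\alpha)$, splitting $\E_{(\alpha,B)}(f_B)\ge\rho$ over $G_B$ (where $f_B\le1$) and $B\setminus G_B$ (where $f_B<\rho-\delta$) yields $\rho\le\P_{(\alpha,B)}(G_B)+(1-\P_{(\alpha,B)}(G_B))(\rho-\delta)$, \ie $\P_{(\alpha,B)}(G_B)\ge\eta$. Note that if $B\in\MAX(\cS_\alpha\cap[M]^{<\omega})$ then $G_B$, and every subset of it, lies in $\cA_{\delta,M}$.

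The heart of the matter is the claim that \emph{for every $\gamma<\alpha$ and $k\in\N$ there is $N\in[M]^\omega$ with $\cS_{\gamma,k}\cap[N]^{<\omega}\ksubset\cA_{\delta,M}$}. To prove it I would first use the following routine strengthening of Lemma~\ref{L:4.11}: since by \eqref{E:4.3.1} every element of $\cS_{\gamma,k}=\cF_{\omega^\gamma k}$ is a union $E_1<\dots<E_k$ of members of $\cS_\gamma$, applying Lemma~\ref{L:4.11} with $\vp=\eta/k$ together with subadditivity of $\P_{(\alpha,B)}$ produces $N_1\in[M]^\omega$ with $\P_{(\alpha,B)}(A)<\eta$ for all $B\in\MAX(\cS_\alpha\cap[N_1]^{<\omega})$ and all $A\in\cS_{\gamma,k}$. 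Then I would feed $\cF=\cS_{\gamma,k}$, $\cG=\cA_{\delta,M}$ and the set $N_1$ into Lemma~\ref{L:4.9}: it returns $N\in[N_1]^\omega$ with either $\cS_{\gamma,k}\cap[N]^{<\omega}\ksubset\cA_{\delta,M}$, which is the desired conclusion, or $\cA_{\delta,M}\cap[N]^{<\omega}\ksubset\cS_{\gamma,k}$. In the latter case pick any $B\in\MAX(\cS_\alpha\cap[N]^{<\omega})$ (nonempty since $\cS_\alpha\cap[N]^{<\omega}$ is); because $\cS_\alpha$ is spreading, a branch maximal in $\cS_\alpha\cap[N]^{<\omega}$ is automatically maximal in $\cS_\alpha$, hence also in $\cS_\alpha\cap[N_1]^{<\omega}$ and in $\cS_\alpha\cap[M]^{<\omega}$; therefore $G_B\in\cA_{\delta,M}\cap[N]^{<\omega}\ksubset\cS_{\gamma,k}$, which forces $\P_{(\alpha,B)}(G_B)<\eta$, contradicting $\P_{(\alpha,B)}(G_B)\ge\eta$. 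The step I expect to be the main obstacle is precisely this maximality bookkeeping: one must be sure the good sets $G_B$ genuinely survive inside $\cA_{\delta,M}\cap[N]^{<\omega}$, and this rests on the (elementary but essential) fact that passing from the index set $\N$ to a sparse subset $N$ does not destroy maximality of branches of $\cS_\alpha$. Everything else is a mechanical combination of Lemmas~\ref{L:4.9} and~\ref{L:4.11} with the Markov estimate.

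Finally I would read off the bound. Let $\beta<\omega^\alpha$ be arbitrary and choose $\gamma<\alpha$, $k\in\N$ with $\beta\le\omega^\gamma k$ (take $k=1$ and $\beta<\omega^\gamma$ if $\alpha$ is a limit ordinal; take $\gamma$ the predecessor of $\alpha$ and $\beta<\omega^\gamma k$ if $\alpha$ is a successor). By \eqref{E:4.1} there is $n_0$ with $\cF_\beta\cap[\{n_0,n_0+1,\dots\}]^{<\omega}\ksubset\cF_{\omega^\gamma k}=\cS_{\gamma,k}$, so intersecting the set $N$ provided by the claim with $\{n_0,n_0+1,\dots\}$ yields $N'\in[M]^\omega$ with $\cF_\beta\cap[N']^{<\omega}\ksubset\cA_{\delta,M}$, whence $\CB(\cA_{\delta,M})\ge\CB(\cF_\beta\cap[N']^{<\omega})=\beta+1$ by monotonicity and \eqref{E:4.2}. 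Since $\beta<\omega^\alpha$ was arbitrary and $\omega^\alpha$ is a limit ordinal, this gives $\CB(\cA_{\delta,M})\ge\omega^\alpha$; and since the Cantor--Bendixson index of a hereditary tree is always a successor ordinal, it follows that in fact $\CB(\cA_{\delta,M})\ge\omega^\alpha+1$. Combined with the upper bound, $\CB(\cA_{\delta,M})=\omega^\alpha+1$.
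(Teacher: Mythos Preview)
Your proof is correct and follows essentially the same approach as the paper's: both combine Lemma~\ref{L:4.9}, Lemma~\ref{L:4.11}, and a probability estimate on the ``good set'' $G_B=\{b\in B:f_B(b)\ge\rho-\delta\}$. The differences are purely organizational: you apply Lemma~\ref{L:4.11} before Lemma~\ref{L:4.9} (the paper does the reverse), you phrase the key step as a direct claim rather than a global contradiction, and you isolate the Markov-type lower bound $\P_{(\alpha,B)}(G_B)\ge\eta$ explicitly whereas the paper bounds $\E_{(\alpha,B)}(f_B)$ from above; the underlying mechanism is identical.
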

\begin{proof} Assume our claim is not true. Then we choose  $\gamma<\alpha$ and  $k\kin\N$ so that $\CB(\cA_{\delta,M})<\omega^{\gamma}k$.
Indeed, if $\alpha$ is a successor ordinal we  choose $\gamma$ to be the predecessor of $\alpha$ and $k\in\N$ large enough and if $\alpha$ is limit ordinal  we  choose 
$\gamma<\alpha$ large enough and $k=1$.
Thus, $\CB(\cA_{\delta,M})<\CB(\cS_{\gamma,k})=\omega^{\gamma}k+1$. By Lemma \ref{L:4.9}  and the fact that 
$\CB(\cS_{\gamma,k}\cap[N]^{<\omega})=\omega^{\gamma}k+1$, for  all $N\in[M]^{\omega}$, we deduce that
there is an $N\in[M]^\omega$ so that  
$\cA_{\delta,N}\subset S_{\gamma,k} $. 

Let $0<\vp<\delta/2k$. We can use Lemma \ref{L:4.11} and assume that,  after possibly replacing  $N$ by an infinite  subset, that  
$\P_{(\alpha,B)} (A)<\vp$ for all $B\in\MAX(\cS_\alpha\cap[N]^{<\omega})$ and all $A\in \cS_\gamma\cap[N]^{<\omega}$. But this implies that for all 
 $B\in\MAX(\cS_\alpha\cap[N]^{<\omega})$ that $\{b: f_B(b)\ge \rho-\delta\}\in S_{\gamma, k}$ and thus
 $$\E_{(\alpha,B)}(f_B)\le \rho-\delta + \P_{(\alpha,B)}(\{ b\in B:  f_B(b)\ge \rho -\delta\})\le \rho-\delta+ k \vp<\rho-\delta/2$$
which contradicts our assumption on the expected value  of   $f_B$.
\end{proof}

\section{The Szlenk index of $Z$ and $W$}\label{S:5}

Let $X$ be our space with  separable dual and let $(E_n)$ be   a shrinking  FMD which together with its biorthogonal sequence$(F_n)$ satisfies the conclusions of Lemma  \ref{L:2.2}.
The main goal of this section is to show that the space $Z$, as constructed in Section \ref{S:3}, has the same Szlenk index as $X$, and that 
also $Z^*$ and $X^*$ share the same Szlenk index if $X$ is reflexive.
 Secondly we will prove that the space $W$ constructed from  a space $V$ with FDD $(V_j)$ beforeTheorem \ref{T:3.9} has the same Szlenk
 index as $V$, and that $W^*$ and $V^*$ have the same Szlenk indices if $V$ is reflexive. We thereby  verified  part  (a) and (b)  of our Main Theorem.
We first recall the definition and basic properties of the Szlenk
index. We then  prove further properties that are relevant
for our purposes, including the statement of Theorem C.
 
Let $K$ be
a non-empty bounded subset of $X^*$. For $\vp\kge 0$ the {\em $\vp$-derivative of }$K$ is 
\begin{align*}
K'_\vp &= \Big\{ x^*\kin X^*: \exists (x^*_i)_{i\in I}\ksubset K\text{ net, } \quad x^*_i\overset{w^*}\to_{i\in I} x^*, \text{ and } \|x^*-x_i\|\ge \vp \Big\}\ ,\\
&=
 \Big\{ x^*\kin X^*: \exists (x^*_n)_{n\in\N}\ksubset K  \quad x^*_n\overset{w^*}\to x^*, \text{ and } \|x^*-x_n\|\ge \vp \Big\}\ .
\end{align*}
The  second equality follows from the assumption that $X$ is separable, which yields that the  $w^*$-topology is metrizable on bounded subsets of $X^*$.
It is easy to see that $K'_\vp$ is  a $w^*$-compact  subset of $\overline{K}^{w^*}\!.$  Moreover, it is clear that if  $K \subset \tilde K\subset X^*$ are bounded, then 
$K'_\vp\subset \tilde K'_\vp$, and that $(rK)'_\vp=r\big( K'_{\vp/r}\big)$ for  $\vp,r>0$.
Next, we define for a  bounded set $K\subset X^*$, $\vp>0$ and an ordinal  $\alpha$ the {\em $(\alpha,\vp)$-derivative of }$K$ recursively  by 
\begin{align*}
  K^{(0)}_\vp &= K, \,\,
  K^{(\alpha+1)}_\vp = \big( K^{(\alpha)}_\vp
  \big)'_\vp\text{ for }\alpha\kle\omega_1,\text{ and }
  K^{(\lambda)}_\vp = \bigcap _{\alpha <\lambda}
  K^{(\alpha)}_\vp\,\,\text{for  limit ordinals  }\lambda\kle\omega_1.
\end{align*}

 It was shown  in \cite{Sz} that our assumption that $X^*$ is separable is equivalent  with the property that  for every bounded $K\subset X^*$ the  {\em $\vp$-Szlenk index of $K$ }, defined by  
$$ \Sz(K,\vp )<\min\{ \alpha<\omega_1:  K^{(\alpha)}_\vp= \emptyset\} ,$$
exists.
We define {\em the Szlenk index of $K\subset X^*$} and the {\em Szlenk index of $X$ by }
$$\Sz(K)=\sup_{\vp>0}  \Sz(K,\vp )\text{ and }\Sz(X)=\Sz(B_{X^*})=\sup_{\vp>0}  \Sz(B_{X^*},\vp ).$$

\begin{rem}\label{R:5.1}
  The original definition of $K'_\vp$ in~\cite{Sz} is slightly different, 
 and might lead to different $\vp$-Szlenk indices. Nevertheless it gives the
   same values of $\Sz(K)$ and $\Sz(X)$.    It is also not hard to see that $\Sz(X)$, and more generally $\Sz(K)$, for bounded $K\subset X^*$, are invariant under renormings of $X$.

  For our purposes it will also be important to recall the result of \cite[Theorems 3.22 and  4.2]{AJO} which states that $\Sz(X)$ is always  of the form $\Sz(X)=\omega^\alpha$, for some $\alpha<\omega_1$.
  
\end{rem}
 
 The following  equivalent characterization of the Szlenk index is a generalization of  \cite[Theorem 4.2]{AJO} where it was proven for  the case $K=B_{X^*}$. Our proof will be different and uses the properties of the FMD $(E_n)$.

 \begin{lem}\label{L:5.2} For a  $w^*$-compact set  $K\subset X^*$ and $0<c<1$ we define
 $$\cF_c(K)=\cF_c(X,(E_j),K)= \left \{(x_1,x_2,\ldots,x_l)\subset S_X: \begin{matrix}(x_j) \text{ is skipped with resp. to $(E_j)$,}\\
         \exists x^*\kin K \, \forall j=1,2,\ldots,l \quad x^*(x_j)\ge c                \end{matrix}\right\}.$$
 If $K\not=\emptyset$, but  $\|x^*\|<c$ for all $x^*\kin K$ and $x\in S_X$,  put $\cF_c(K)=\{\emptyset\}$ and 
 put $\cF_c(\emptyset)=\emptyset$.
  
 Then 
 $$\Sz(K) = \sup_{c>0} \wi\big(\cF_c(K)\big).$$
 \end{lem}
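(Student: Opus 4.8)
The strategy is to show the two inequalities $\Sz(K)\le\sup_c\wi(\cF_c(K))$ and $\Sz(K)\ge\sup_c\wi(\cF_c(K))$ separately, relating, for a fixed $c$, the $\vp$-derivatives $K^{(\alpha)}_\vp$ (for an appropriate $\vp$ comparable to $c$) to the weak derivatives $\cF_c(K)^{(\alpha)}_{\mathrm w}$ of the tree of skipped sequences witnessed by functionals in $K$. Throughout I will use that $X^*$ is separable, so the $w^*$-topology is metrizable on bounded sets and $S$ (weakly null sequences in $S_X$) contains diagonals, so Proposition~\ref{P:4.4} and Remark~\ref{R:4.5} apply to $\cF_c(K)$, which is plainly a hereditary tree on $S_X$. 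I will also use Corollary~\ref{C:2.4}: skipped blocks with respect to $(E_j)$ are basic with projection constant at most $2.5$, which lets me pass between a skipped sequence and the functionals it "sees."

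\textbf{The upper bound $\Sz(K,\vp)\le\wi(\cF_c(K))$ for suitable $c=c(\vp)$.} Fix $\vp>0$. I claim that if $x^*\in K^{(\alpha)}_\vp$ then $\alpha<\wi(\cF_c(K))$ for $c$ a fixed fraction of $\vp$ (something like $c=\vp/(3\cdot 2.5)$, using the projection constant $2.5$ and the bound on $\|K\|$). The idea is a transfinite induction building, alongside the derivation $K\supset K'_\vp\supset K''_\vp\supset\cdots$, a family $(x_F)_{F\in\cF_\alpha\setminus\{\emptyset\}}\subset S_X$ satisfying condition (ii) of Proposition~\ref{P:4.4}. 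Concretely: if $x^*\in K^{(\alpha+1)}_\vp$, there is a sequence $x^*_n\overset{w^*}\to x^*$ with $x^*_n\in K^{(\alpha)}_\vp$ and $\|x^*_n-x^*\|\ge\vp$; for each $n$ pick a normalized $y_n$ with $(x^*_n-x^*)(y_n)$ close to $\vp$; after passing to a subsequence (using shrinkingness of $(E_n)$ via Proposition~\ref{P:2.1}) one can arrange the differences $x^*_n-x^*$ to be essentially supported on later and later skipped blocks, so that the $y_n$ may be taken to form a skipped block sequence that is weakly null, with $x^*_n(y_n)\ge c$. Recursing on the $x^*_n$ (each still in $K^{(\alpha)}_\vp$) via Remark~\ref{R:4.5} produces the required family, giving $\alpha+1<\wi(\cF_c(K))$. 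Limit ordinals are handled by the usual intersection argument plus the observation (as in the Remark after Proposition~\ref{P:4.4}) that both indices are successors. Taking $\sup$ over $\vp$ yields $\Sz(K)\le\sup_c\wi(\cF_c(K))$.

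\textbf{The lower bound $\wi(\cF_c(K))\le\Sz(K,\vp)$ for $\vp$ comparable to $c$.} Conversely, suppose $\alpha<\wi(\cF_c(K))$. By Proposition~\ref{P:4.4} there is a family $(x_F)_{F\in\cF_\alpha\setminus\{\emptyset\}}\subset S_X$ with $(\xb_F)_{F\in\cF_\alpha}\subset\cF_c(K)$ and each $(x_{F\cup\{n\}})_{n>\max F}$ weakly null. I want to produce, for $\vp=c$ (or a fixed fraction, depending on the $2.5$ constant), points of $K^{(\alpha)}_\vp$. The point is that along a branch $\xb_F=(x_{F_1},\dots,x_{F_l})$ there is $x^*\in K$ with $x^*(x_{F_j})\ge c$ for all $j$; since these vectors form a skipped block sequence, which is $2.5$-basic by Corollary~\ref{C:2.4}, a weakly null subsequence among the "next" vectors $x_{F\cup\{n\}}$ forces the witnessing functionals to spread out in $w^*$, so that $w^*$-limits along the tree stay in successive $\vp$-derivatives. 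A transfinite induction up the Schreier tree $\cF_\alpha$, using that $w^*$-limit points of functionals whose values on a weakly null skipped sequence stay $\ge c$ must move by at least roughly $c$ in norm, shows $K^{(\alpha)}_\vp\neq\emptyset$, hence $\alpha<\Sz(K,\vp)\le\Sz(K)$. Taking $\sup$ over $c$ gives the remaining inequality.

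\textbf{Main obstacle.} The delicate point in both directions is the bookkeeping that converts "$w^*$-convergence with norm separation $\ge\vp$" into "a weakly null skipped block sequence on which some functional in $K$ stays $\ge c$," and back. This requires repeatedly passing to subsequences so that tails of the relevant functionals (differences $x^*_n-x^*$, or the witnessing functionals themselves) are concentrated on disjoint, and moreover \emph{skipped}, block intervals with respect to $(E_j)$ — here one uses Proposition~\ref{P:2.1}(2), which says $\|x^*|_{\spa(E_j:j>n)}\|\to0$, together with the fact that we may always skip one block without affecting the estimates more than the fixed factor $2.5$ from Corollary~\ref{C:2.4}. Getting these approximations uniform enough to survive a transfinite induction (in particular at limit stages, where one must diagonalize over countably many previously chosen subsequences, using that $S$ contains diagonals) is the technical heart of the argument; the loss in the constant relating $c$ and $\vp$ is harmless since we take suprema over all positive $c$ and $\vp$ at the end.
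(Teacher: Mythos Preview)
Your proposal is correct and identifies the same two key mechanisms as the paper: (i) a weakly null extension of a skipped block in $\cF_c(K)$ forces the witnessing functionals in $K$ to separate by roughly $c$ in norm at their $w^*$-limit, and (ii) conversely, a Szlenk derivation $x^*_n\overset{w^*}\to x^*$ with $\|x^*_n-x^*\|\ge\vp$ yields, via the shrinking FMD and Lemma~\ref{L:2.2}, a weakly null skipped block on which the $x^*_n$ stay large.

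The organization, however, differs. You route everything through Proposition~\ref{P:4.4}, building $\cF_\alpha$-indexed families by hand and diagonalizing at limit stages. The paper instead proves two \emph{single-step} set inclusions,
\[
\big(\cF_c(K)\big)'_{\mathrm w}\subset \cF_c\big(K'_{c-\eta}\big)
\qquad\text{and}\qquad
\cF_c\big(K'_{3c}\big)\subset \big(\cF_{c-\eta}(K)\big)'_{\mathrm w},
\]
and then the transfinite induction is automatic and content-free: one simply applies the one-step inclusion to the $\alpha$-th derived set. This buys a cleaner argument with no diagonalization at limit ordinals and no need to track the accumulated loss along a branch (your worry about ``getting these approximations uniform enough to survive a transfinite induction'' disappears, since the same fixed $\eta$ works at every step). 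Your route works, but the paper's formulation localizes all the analysis to a single derivative and is worth internalizing.
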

 \begin{rem}\label{R:5.3}  In the case that $K=B_{X^*}$ the set $\cF_c(B_X^*)$ can be rewritten 
 as 
 $$\cF_c(B_X^*)=\left \{(x_1,x_2,\ldots,x_l)\subset S_X: \begin{matrix} (x_j) \text{ is skipped block with respect to $(E_j)$ and }\\
              \forall a_1,a_2,\ldots,a_l\ge 0\quad   \Big\|\sum_{j=1}^l a_j x_j\Big\|\ge  c\sum_{j=1}^l a_j                 \end{matrix}\right\}.$$
 Indeed ``$\subset $'' is clear, while ``$\supset$'' follows  from applying the Hahn Banach  Theorem to  separate
 $0$ from the  convex hull of the set  $\{x_1,x_2\ldots,x_l\}$  for each $ (x_1,x_2,\ldots,x_l)$ in the left hand set.
 \end{rem}
 \begin{proof}[Proof of Lemma \ref{L:5.2}] Without loss of generality we assume that $K\subset B_{X^*}$
  and show first for $0<\eta<c<1$ that
   \begin{equation}\label{E:5.2.1}
 \big(\cF_c(K)\big)'_{{\mathrm w}}\subset \cF_c(K'_{c-\eta}).
 \end{equation}
Let $(x_1,x_2,\ldots,x_l)\kin  \big(\cF_c(K)\big)'_{{\mathrm w}}$, and let 
$(y_k)\ksubset S_X$  be a $w$-null sequence with  $(x_1,x_2,\ldots,x_l,  y_k)\kin \cF_c(K)$, for $k\kin\N$.
For $k\kin\N$  we  choose
a $x^*_k\kin K$ such that $x^*_k(x_i)\kge c$, for  $i=1,2,\ldots,l$, and $x^*_k(y_k)\ge c$.
Without loss of generality we can assume, after passing to subsequences if necessary, that $x^*_k$ converges in $w^*$ to some $x^*\in K$.
We observe that 
$$\limsup_{k\to\infty} \|x^*_k- x^*\|\ge \limsup_{k\to\infty} (x^*_k- x^*)(y_k)=\limsup_{k\to\infty} x^*_k(y_k)\ge c,$$
where in the equality we used the assumption that $(y_k)$ is weakly null.
It follows therefore that $x^*\in K'_{c-\eta}$ and since 
$x^*(x_i)= \lim_{k\to\infty} x^*_k(x_i)\ge c$ for all $i=1,2\ldots,l$ we deduce that
$(x_1,x_2,\ldots,x_l)\in \cF_c(K'_{c-\eta})$, which finishes the verification of \eqref{E:5.2.1}.

Using a straightforward  induction argument  \eqref{E:5.2.1} yields that for all $\alpha\kle\omega_1$ 
$$\big(\cF_c(K)\big)_{{\mathrm w}}^{(\alpha)} \subset \cF_{c}\big(K^{(\alpha)}_{c-\eta}\big).$$
In particular this yields that $K^{(\alpha)}_{c-\eta}\not=\emptyset$ if $\big(\cF_c(K)\big)_{{\mathrm w}}^{(\alpha)} \not=\emptyset$.
Thus we have $\wi(\cF_c(K))\le \Sz(K,c-\eta)$, for $0\kle\eta\kle c$ which  yields 
$\sup_{c>0} \wi(\cF_c(K))\le  \sup_{c>\eta>0}\Sz(K,c-\eta)=\Sz(K).$

In order to show the reverse inequality we show for $c<\frac13$  and $\eta< c$ that 
\begin{equation}\label{E:5.2.2}
\cF_c(K'_{3c})  \subset \big(\cF_{c-\eta}(K)\big)'_{{\mathrm w}}.\end{equation}
Let $(x_1,x_2,\ldots,x_l)\in \cF_c(K'_{3c}) $ and let $x^*\in K_{3c}'$ such that 
$x^*(x_i)\ge c$, for $i=1,2,\ldots,l$. We choose a sequence $(x^*_k)\subset K$ which converges in $w^*$ to $x^*$,
and for which $\|x^*-x^*_k\|\ge 3c$, for all $k\in\N$.   Without loss of generality  we assume
that $x^*_k(x_i)\kge c-\eta$, for all $k\kin\N$ and $i\keq1,2,\ldots,l$.

Since $(E_n)$ is a shrinking FMD, and thus $(x^*-x^*_k)\in \overline{ \spa(F_j:j\in\N)}$, for $k\kin\N$, we can,  after passing to a subsequence, assume that there is  a
{\em doubly-skipped } block sequence $(z^*_k)$ with respect to $(F_n)$ in $S_{X^*}$ (meaning
$\max \rg_E(z^*_k)<\min \rg_E(z^*_{k+1})-2 $, for $k\in\N$)  so that 
$\lim_{k\to\infty}\|z^*_k- (x_k^*- x^*)\|=0$. Since $(E_n)$ and $(F_n)$ satisfy   property (3) of  Lemma \ref{L:2.2}  we can, possibly by passing to  subsequences,  find a block 
$(z_k)$  (more precisely: $\rg_E(z_k)\subset [\min \rg_E(z^*_k) -1, \max \rg_E(z^*_k) +1]$) with respect to $(E_n)$  in $S_X$ so that $z^*_k(z_k)\ge c\frac{3}{2.5}$, for all $k \in\N$.
Since $(E_n)$ is shrinking $(z_k)$ is weakly null, and after passing to subsequences again, if necessary, we can assume that 
$$x^*_k(z_k)=   z^*_k(z_k)  +  ( x^*_k-x^*  -     z^*_k)(z_k)+x^*(z_k)\ge c-\eta \text{ for all $k\kin\N$}.$$
 
 It follows that $(x_1,x_2,\ldots,x_l,z_k)\in \cF_{c-\eta}(K)$  for all large enough   $k\kin\N$ and thus  it follows that $(x_1,x_2,\ldots,x_l)\in \big(\cF_{c-\eta}(K)\big)'_{{\mathrm w}}$, since $(z_k)$ is weakly null, and thus  yields  \eqref{E:5.2.2}.
 
 Again by  transfinite induction we deduce from  \eqref{E:5.2.2} that for all $\alpha<\omega_1$ (recall the notation 
 of  $\cF^{(\alpha)}_{{\mathrm w}}$ introduced in Example \ref{Ex:4.6} for the derivative with respect to  weak null sequences for trees $\cF$ on $S_X$)
 $$\cF_c(K^{(\alpha)}_{3c})  \subset \big(\cF_{c-\eta}(K)\big)^{(\alpha)}_{{\mathrm w}}.$$
 
 This implies in particular that if $K^{(\alpha)}_{3c}$ is not empty then $\big( \cF_{c-\eta}(K)\big)^{(\alpha)}_{{\mathrm w}}$ is not empty. 
Thus 
  $\Sz(K)= \sup_{c>0} \Sz(K,c) \le  \sup_{c>0} \wi\big(\cF_c(K)\big),$
 which  finishes our proof.
 \end{proof} 
In our next step   we prove  Theorem C using   Corollary \ref{C:4.12}. 

\begin{proof}[Proof of  Theorem C] Since $\Sz(X)$ is always of the form $\omega^\alpha$ for some $\alpha<\omega_1$, and since $\Sz(X)\ge \Sz(K)$,
we have $\Sz(X)\ge  \min \{ \omega^\alpha: \omega^{\alpha}\ge \Sz(K)\}$.

In order to show the reverse inequality, we first assume without loss of generality that $K$ is $1$-norming $X$, because  otherwise we could 
pass to  the equivalent norm defined by 
$$\tn x\tn=\sup_{x^*\in K}  |x^*(x) | \text{ for $x\in X$}.$$
Since $\Sz(K)=\Sz(\overline K^{w^*})$ we also assume that $K$ is $w^*$-compact.
Let $\alpha<\omega_1$ be such that $\Sz(X) = \omega^\alpha$, and assume that our claim is not true and that there is a $\beta<\alpha$, so that    
$\Sz(K)\le \omega^\beta$.
By Lemma \ref{L:5.2}
$\omega^\alpha=\Sz(X)=\sup_{c>0} \CB\big(\cF_c(B_{X^*})\big)$, where
 $$\cF_c(B_{X^*})=\cF_c(X,\!(E_j),\!B_{X^*})= \left \{(x_1,x_2,\ldots,x_l)\ksubset S_X: \begin{matrix}(x_j) \text{is skipped with resp. to $(E_j)$,}\\
         \exists x^*\kin B_{X^*} \, x^*(x_j)\kge c ,\, j\keq1,2\ldots l             \end{matrix}\right\}.$$
Thus, there is  $c\kin(0,1)$ with $\wi\big(\cF_c(B_{X^*})\big)> \omega^{\beta}$. Since $\cF_c(B_{X^*})$ is hereditary we can apply  Proposition  \ref{P:4.4} and
Remark \ref{R:4.5}, and  choose
a family $(x_F)_{F\in \cS_\beta\setminus\{\emptyset\}}\subset S_X$, so that  for every $F=\{m_1,m_2,\ldots,m_l\}\in \cS_\beta\setminus \{\emptyset\}$,
$\xb_F= \big( x_{\{m_1\}}, x_{\{m_1,m_2\}}, \dots,x_{\{m_1,m_2,\ldots,m_l\}}\big)\in \cF_c(B_{X^*})$, and so that for every 
$F\in\cS_\beta\setminus \MAX(\cS_\beta)$, $(x_{F\cup\{n\}}: n>\max(F))$ is a weak null sequence (recall that $\cS_\beta=\cF_{\omega^{\beta}}$). We now  want to apply Corollary
\ref{C:4.12}. 
For every $B=\{n_1,n_2,\ldots,n_l\}\in  \MAX(S_\beta)$, we have that 
$  \Big\|      \sum_{i=1}^l  p_{\beta,B}(n_i)  x_{\{n_1,n_2, \ldots,n_i\}})\Big\|\ge  c$.
We recall that the probability $\P_{\beta,B}$ on $B$, with coefficients $p_{\beta,B}(n_i)$, $i=1,2,\ldots,l$, where introduced in Section \ref{S:4}  before Lemma \ref{L:4.10}.
Since $K$ is $1$-norming and compact  we  choose to every $B=\{n_1,n_2,\ldots,n_l\}\in \MAX(S_\beta)$ an element $x^*_B\in K$, so that 
$$ 
x^*_B\Big( \sum_{i=1}^l  p_{\beta,B}(n_i)  x_{\{n_1,n_2, \ldots,n_i\}} \Big)=  \Big\|      \sum_{i=1}^l  p_{\beta,B}(n_i)  x_{\{n_1,n_2, \ldots,n_i\}}\Big\|\ge c \text{ for all $i=1,2\ldots,l$}.
$$
For every $B= \{n_1,n_2,\ldots,n_l\}\in\MAX(S_\beta) $ we define $f_B: B\to [-1,1]$, $n_i\mapsto x^*_B(x_{\{n_1,n_2, \ldots,n_i\}})$, 
and note that  we can apply Corollary \ref{C:4.12} to the family $(f_B:B\in \MAX(\cS_\beta))$, and obtain
an $M\in[\N]^\omega$ so that  for $\delta=c/2$ we have
$\CB(\cA_{\delta,M})=\omega^\beta +1$ , where 
$$\cA_{\delta,M}= \Big\{ A\in \cS_\beta\cap[M]^{<\infty}:\exists B\kin \MAX(S_\beta\cap[M]^{<\infty}), A\subset B,  \text{ and } f_B(a)\ge \rho-\delta \text{ for all } a\kin A\Big\}. $$
We now     verify (ii) of Proposition \ref{P:4.4} for the hereditary tree  $\cF_{c/2}(K)$, in order to conclude that $\wi(\cF_{c/2}(K))>\omega^\beta$, which would be a contradiction to the assumption that $\Sz(K)=\sup_{r>0}\wi(\cF_{r}(K))\le \omega^\beta$ (for the equality see Lemma \ref{L:5.2}).

By Remark \ref{R:4.5} we find  an order isomorphism $\pi: S_\beta=\cF_{\omega^\beta} \to \cA_{\delta,M}$,
so that     $\pi(A\cup\{n\})\setminus \pi(A)=\{\max\pi(A\cup\{n\})\}$
for all $A\in S_\beta\setminus \MAX(S_\beta)$, and all $n<\max(A)$.

Then define for $F=\{m_1,m_2,\ldots,m_l\}\in \cS_\beta\setminus\{\emptyset\}$, $z_F= x_{\pi(F)} \in S_X$.
Since $\pi(F) \in  \cA_{\delta,M}$, there is a maximal $B$ in $\cS_\beta\cap [M]^{<\omega}$ containing $\pi(B)$,
so that 
$x^*_B(z_{\{m_1,m_2, \ldots,m_i\}})\ge c/2$ for all $i=1,2,\ldots,l$. It follows therefore that $\zb_F=(z_{\{m_1\}},z_{\{m_1,m_2\}},\ldots,z_{\{m_1,m_2,\ldots,m_l\}})\in \cF_{c/2}(K)$ for all $F\in \cS_\beta\setminus\{\emptyset\}$. Secondly,
it follows  for any non maximal
$F\in \cS_\beta$, that 
$(z_{F\cup\{n\}}: n>\max(F))= (x_{\pi(F\cup\{n\})}:n>\max(F))= (x_{\pi(F)\cup\{\max(\pi(F\cup\{n\}))\}}:n>\max F)$ is weakly null. This verifies that 
 $(\zb_F:F\setminus\{\emptyset\})$ satisfies  the conditions in  (ii) of Proposition \ref{P:4.4} and finishes the proof. 
\end{proof}

\begin{rem}\label{R:5.5}   Theorem C states that if $\Sz(X)=\omega^\alpha$  then  for any  $\beta<\alpha$ and set $K\subset B_X^*$, 
which norms $X$, it follows
that $\Sz(K)>\omega^\beta$.
This is the optimal estimate we have for the Szlenk index of a norming set $K$. Indeed, if $X=C\big[0,\omega^{\omega^\alpha}\big]$ then it follows by 
 \cite[Th\'eor\`em, p.91]{Sa} that $\Sz(X)=\omega^{\alpha+1}$.  The set  $K=\big\{\delta_\gamma: \gamma \in [0,\omega^{\omega^\alpha}]\big\}$  of Dirac measures on $ \big[0,\omega^{\omega^\alpha}\big]$ is  norming $X$, and  $\Sz(K)$ equals to the Cantor Bendixson index of  $[0,\omega^{\omega^\alpha}]$ which is  $\omega^{\alpha}\kplus1$.
\end{rem}

We are now in the position to compute the Szlenk index of the space $Z$, which was constructed in Section \ref{S:3}.
We recall the definition of the sets  $D^*\subset X^*$, $B\subset X^*$, $\D^*\subset Z^*$ $\B^*\subset B_{Z^*}$, 
the spaces $U_{\jb}$, $\jb=(j_k)\in \prod_{k=1}^\infty [n_{k}, n_{k+1}]$, and the embedding $I:X\to Z$
defined in and before Proposition \ref{P:3.3}.

\begin{lem}\label{L:5.6} For $K \subset \B^*$, bounded, and $\eta>0$
$$I^*\big(K '_c\big)\subset \big(I^*(K )\big)'_{c-\eta}.$$
For $\alpha<\omega_1$ it follows  that 
 $$I^*\big(K^{(\alpha)}_c\big)\subset \big(I^*(K)\big)^{(\alpha)}_{c-\eta}.$$
\end{lem}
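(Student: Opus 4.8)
The plan is to prove the single-derivative statement first and then lift it to all ordinals $\alpha<\omega_1$ by transfinite induction, exactly as is done for similar derivative-comparison results such as \eqref{E:5.2.1} in the proof of Lemma \ref{L:5.2}. So fix a bounded $K\subset\B^*$, a number $c>0$, and $\eta\in(0,c)$. Let $x^*\in I^*\big(K'_c\big)$, say $x^*=I^*(w^*)$ with $w^*\in K'_c$. By the (metrizable, since $X$ separable) description of the $\vp$-derivative there is a sequence $(w^*_n)\subset K$ with $w^*_n\overset{w^*}{\to}w^*$ in $Z^*$ and $\|w^*-w^*_n\|_{Z^*}\ge c$ for all $n$.

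First I would transfer the $w^*$-convergence: since $I:X\to Z$ is bounded, $I^*:Z^*\to X^*$ is $w^*$-$w^*$ continuous, so $I^*(w^*_n)\overset{w^*}{\to}I^*(w^*)=x^*$ in $X^*$, and $I^*(w^*_n)\in I^*(K)$. It therefore suffices to show that, after passing to a subsequence, $\|x^*-I^*(w^*_n)\|_{X^*}\ge c-\eta$ for all $n$; that puts $x^*$ into $\big(I^*(K)\big)'_{c-\eta}$. Here is the point where the structure of $\B^*$ is needed. After passing to a subsequence I may assume, as in the proof of $w^*$-compactness of $\B^*$ in part (4) of Proposition \ref{P:3.3}, that there is a single $\jb=(j_k)\in\prod_k[n_k,n_{k+1}]$ with $w^*,w^*_n\in\Phi_{\jb}(U_{\jb})$ for all $n$; write $w^*=\Phi_{\jb}(x^*)$ (consistent with $x^*=I^*(w^*)$ by part (4)) and $w^*_n=\Phi_{\jb}(u^*_n)$ with $u^*_n\in U_{\jb}$, so $I^*(w^*_n)=u^*_n$. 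Since $\Phi_{\jb}$ is an isometric embedding (part (3) of Proposition \ref{P:3.3}), we get immediately
\[
\|x^*-u^*_n\|_{X^*}=\|\Phi_{\jb}(x^*)-\Phi_{\jb}(u^*_n)\|_{Z^*}=\|w^*-w^*_n\|_{Z^*}\ge c\ge c-\eta,
\]
which is in fact the full inequality with no loss, so the $\eta$ is not even needed at this stage. Hence $x^*\in\big(I^*(K)\big)'_{c-\eta}$, proving the case $\alpha=1$; since the bound is uniform and costs no $\eta$, one also directly gets $I^*(K'_c)\subset(I^*(K))'_c$.

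The transfinite induction is then routine. For the successor step, apply the inclusion just proved to $K^{(\alpha)}_c$ in place of $K$: by the induction hypothesis $I^*\big(K^{(\alpha)}_c\big)\subset\big(I^*(K)\big)^{(\alpha)}_c$, and since $K^{(\alpha)}_c$ is $w^*$-compact and contained in $\B^*$ up to rescaling (indeed $K^{(\alpha)}_c\subset\overline K^{w^*}\subset\B^*$ since $\B^*$ is $w^*$-compact by Proposition \ref{P:3.3}(4)), the single-step inclusion gives
\[
I^*\big(K^{(\alpha+1)}_c\big)=I^*\big((K^{(\alpha)}_c)'_c\big)\subset\big(I^*(K^{(\alpha)}_c)\big)'_{c-\eta}\subset\Big(\big(I^*(K)\big)^{(\alpha)}_c\Big)'_{c-\eta}\subset\big(I^*(K)\big)^{(\alpha+1)}_{c-\eta},
\]
using monotonicity of the $\vp$-derivative in both $K$ and (decreasingly) in $\vp$. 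For a limit ordinal $\lambda$, $I^*\big(K^{(\lambda)}_c\big)=I^*\big(\bigcap_{\alpha<\lambda}K^{(\alpha)}_c\big)\subset\bigcap_{\alpha<\lambda}I^*\big(K^{(\alpha)}_c\big)\subset\bigcap_{\alpha<\lambda}\big(I^*(K)\big)^{(\alpha)}_{c-\eta}=\big(I^*(K)\big)^{(\lambda)}_{c-\eta}$, which completes the induction.

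The main obstacle is the subsequence-extraction step that produces a \emph{single} sequence $\jb$ working simultaneously for $w^*$ and all the $w^*_n$; this is exactly the device used for $w^*$-compactness of $\B^*$ in Proposition \ref{P:3.3}(4), and one must be slightly careful that the diagonal subsequence can be chosen so that, for each fixed coordinate $k$, the supports $\rg_E(\cdot)$ of the $k$-th blocks all sit inside a common interval $(j_{k-1},j_k)$ eventually. Once that is in hand, the isometry of $\Phi_{\jb}$ does all the work and the norm inequality is exact; the $\eta$ is carried through only to keep the statement uniform with the convention used elsewhere in Section \ref{S:5} and plays no essential role.
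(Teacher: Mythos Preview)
Your proof is correct and follows essentially the same approach as the paper's: take a sequence in $K$ witnessing $w^*\in K'_c$, pass to a subsequence to obtain a common $\jb$ (the paper phrases this as ``$\rg_E(x^*_k(n))\subset(j_{k-1},j_k)$ for all $k\in\N$ and all $n\ge k$''), invoke the isometry of $\Phi_{\jb}$ from Proposition~\ref{P:3.3}(3)--(4) to transfer the norm lower bound to $X^*$, and then run the transfinite induction. Your remark that the $\eta$ is not actually used is consistent with the paper's argument, which obtains the exact equality $\|x^*(n)-x^*\|=\|z^*(n)-z^*\|\ge c$; the paper simply carries the $\eta$ for uniformity with the surrounding lemmas.
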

\begin{proof}

Assume $z^*\in K'_c$ and $\eta>0$. Let $(z^*(n):n\kin\N)\subset  K$, with $z^*(n)\to_{n\to\infty} z^*$ with respect to the $w^*$-topology in $Z^*$, and $\|z^*-z^*(n)\|\kge c$ for all $n\kin\N$. Write $z^*$ and $z^*(n)$ as $z^*=(x_k^*:k\kin\N)\kin \B^*$ and 
 $z^*(n)=(x_k^*(n):k\kin\N)\in \B^*$, 
and 
let $x^*\keq I^*(z^*)\keq w^*-\lim_{n\to\infty} I^*(z^*(n)) $,   and  $x^*(n)\keq I^*(z^*(n))$,  for all  $n\in\N$. After passing to subsequences we can assume that there is a sequence $\jb\keq=(j_k)\subset N$, with
$j_k\in[n_{k}, n_{k+1}]  $ so that $\rg_E(x^*_k(n))\subset (j_{k-1},j_k)$, for all $k\in\N$ and all $n\ge k$, and thus $\rg_E(x^*)\subset (j_{k-1},j_k)$, for all $k\in\N$.
Since $x^*$ and $x^*(n)$ are in $U_{\jb}$ it follows from
 Proposition \ref{P:3.3} part  (4)  that $\|x^*(n)-x^*\|=\|z^*(n)-z^*\|\ge c$.
 Since 
 $x^*=\lim_{n\to\infty} x^*(n)$ and $x^*(n)\in I^*(K)$, for $n\in\N$, it follows that
  $x^*\in \big(I^*(K)\big)'_{c}$, which of  proof the first claim.
  The second claim follows by transfinite induction for all $\alpha<\omega_1$.
\end{proof}
\begin{cor} $\Sz(X)=\Sz(Z)$.
\end{cor}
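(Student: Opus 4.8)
The plan is to sandwich $\Sz(Z)$ between $\Sz(X)$ and $\Sz(X)$, exploiting two facts already established: the distinguished sets $B^*\subset B_{X^*}$ and $\B^*\subset B_{Z^*}$ are $1$-norming $X$ and $Z$ respectively (so Theorem C expresses $\Sz(X)$ and $\Sz(Z)$ through $\Sz(B^*)$ and $\Sz(\B^*)$), and the adjoint $I^*$ of the isometric embedding $I:X\to Z$ of Proposition \ref{P:3.3}(1) transports $w^*$-derivatives. First note that $(Z_j)$ is shrinking (Lemma \ref{L:3.5}), so $Z^*$ is separable and $\Sz(Z)$ is defined; and by the result of \cite{AJO} recalled in Remark \ref{R:5.1} we may write $\Sz(X)=\omega^{\alpha_0}$ for some $\alpha_0<\omega_1$.

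First I would prove $\Sz(Z)\le\Sz(X)$. By Proposition \ref{P:3.3}(4) one has $I^*(\B^*)=B^*$, so Lemma \ref{L:5.6} with $K=\B^*$ gives $I^*\bigl((\B^*)^{(\alpha)}_c\bigr)\subset(B^*)^{(\alpha)}_{c-\eta}$ for all $\alpha<\omega_1$ and $0<\eta<c$. Since $I^*$ is a function, $(\B^*)^{(\alpha)}_c=\emptyset$ whenever $(B^*)^{(\alpha)}_{c-\eta}=\emptyset$, hence $\Sz(\B^*,c)\le\Sz(B^*,c-\eta)\le\Sz(B^*)$, and taking the supremum over $c$ gives $\Sz(\B^*)\le\Sz(B^*)$; moreover $\Sz(B^*)\le\Sz(B_{X^*})=\Sz(X)$ since $B^*\subset B_{X^*}$ and the $\vp$-derivative is monotone in the set. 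Now $\B^*$ is $w^*$-compact and $1$-norming $Z$ (Proposition \ref{P:3.3}(4)), so Theorem C applies to $K=\B^*\subset B_{Z^*}$ and yields $\Sz(Z)=\min\{\omega^\alpha:\alpha<\omega_1,\ \omega^\alpha\ge\Sz(\B^*)\}$. As $\Sz(\B^*)\le\Sz(X)=\omega^{\alpha_0}$, the ordinal $\alpha_0$ lies in the set on the right, so $\Sz(Z)\le\omega^{\alpha_0}=\Sz(X)$.

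For the reverse inequality $\Sz(X)\le\Sz(Z)$ I would use that $X$ embeds isometrically into $Z$ (Proposition \ref{P:3.3}(1)): then $I^*:Z^*\to X^*$ has norm $\le1$ and, by Hahn--Banach, $I^*(B_{Z^*})=B_{X^*}$, and a transfinite induction shows $I^*\bigl((B_{Z^*})^{(\alpha)}_\vp\bigr)\supseteq(B_{X^*})^{(\alpha)}_\vp$ for all $\alpha<\omega_1$ and $\vp>0$. At a successor step one lifts a net witnessing membership in $(B_{X^*})^{(\alpha+1)}_\vp$ to $(B_{Z^*})^{(\alpha)}_\vp$, extracts a $w^*$-convergent subnet by $w^*$-compactness, and uses $\|I^*\|\le1$ together with the $w^*$-continuity of $I^*$; at a limit ordinal $\lambda$ one intersects the nonempty decreasing $w^*$-compact sets $(I^*)^{-1}(x^*)\cap(B_{Z^*})^{(\alpha)}_\vp$, $\alpha<\lambda$. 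Consequently $(B_{X^*})^{(\alpha)}_\vp\neq\emptyset$ implies $(B_{Z^*})^{(\alpha)}_\vp\neq\emptyset$, so $\Sz(X,\vp)\le\Sz(Z,\vp)$ for every $\vp$, whence $\Sz(X)\le\Sz(Z)$; together with the previous step this gives $\Sz(X)=\Sz(Z)$.

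The only non-routine ingredient is the first half of the second paragraph, the transport of derivatives from $\B^*$ to $B^*$ along $I^*$ -- but this is exactly Lemma \ref{L:5.6}, so the essential work is already done; the remainder is the standard subspace-monotonicity of the Szlenk index and a direct appeal to Theorem C (whose hypotheses are met since $\B^*$ is norming $Z$ and $B^*$ is norming $X$). An alternative to the third paragraph that stays entirely within the present framework: the same lifting argument applied to the $w^*$-continuous surjection $I^*:\B^*\to B^*$ gives $I^*\bigl((\B^*)^{(\alpha)}_\vp\bigr)\supseteq(B^*)^{(\alpha)}_\vp$, hence $\Sz(B^*)\le\Sz(\B^*)$; combined with Lemma \ref{L:5.6} this gives $\Sz(B^*)=\Sz(\B^*)$, and then Theorem C, applied to the $1$-norming set $B^*$ of $X$ and to $\B^*$ of $Z$, gives $\Sz(X)=\min\{\omega^\alpha:\omega^\alpha\ge\Sz(B^*)\}=\min\{\omega^\alpha:\omega^\alpha\ge\Sz(\B^*)\}=\Sz(Z)$.
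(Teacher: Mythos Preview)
Your proof is correct and follows essentially the same route as the paper: use Lemma \ref{L:5.6} with $K=\B^*$ and $I^*(\B^*)=B^*$ to get $\Sz(\B^*)\le\Sz(B^*)\le\Sz(X)=\omega^{\alpha_0}$, then apply Theorem C to the norming set $\B^*$ to conclude $\Sz(Z)\le\omega^{\alpha_0}$, and finish with the subspace monotonicity $\Sz(X)\le\Sz(Z)$. The only difference is cosmetic: the paper simply invokes ``$X\subset Z$'' for the reverse inequality, whereas you spell out the standard adjoint-lifting transfinite induction behind it.
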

\begin{proof} We apply the second statement of Lemma \ref{L:5.6} to $K=\B^*$ and deduce from it that
$\Sz(\B^*)\le \Sz(B^*)$, since by Proposition \ref{P:3.3} $I^*(\B^*)=B^*$. If $\alpha$ is such  that $\Sz(X)=\omega^\alpha$, it follows from the fact that $\B^*$ is norming $Z$ and    Theorem C that $\Sz(Z)\le \omega^\alpha$, and thus, since $X\subset Z$, that $\Sz(Z)=\Sz(X)$.
\end{proof}

\begin{lem}\label{L:5.7} If $X$ is reflexive then  $\Sz(X^*)=\Sz(Z^*)$.
\end{lem}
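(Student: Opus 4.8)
The plan is to run the same two‑part argument used just above for $\Sz(Z)=\Sz(X)$ (Lemma \ref{L:5.6} together with Theorem C), but in the ``dual'' direction made available by reflexivity. Reflexivity first supplies the standing data: $Z$ is reflexive (Lemma \ref{L:3.7}), so $(Z^*_j)$ is a shrinking FDD of $Z^*$; and $(F_j)$ is a shrinking FMD of $X^*$ (Remark after Proposition \ref{P:2.1}) whose biorthogonal sequence is $(E_j)$, so — the conclusions \eqref{E:2.2.1}--\eqref{E:2.2.4} of Lemma \ref{L:2.2} being symmetric under $(E_j)\leftrightarrow(F_j)$ — $(F_j)$ again satisfies Lemma \ref{L:2.2}, and the whole of Sections \ref{S:3}--\ref{S:5} applies verbatim to $(X^*,(F_j))$. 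The inequality $\Sz(X^*)\kle\Sz(Z^*)$ is then immediate: $I:X\to Z$ is an isometric embedding and, under the canonical identifications $X=X^{**}$, $Z=Z^{**}$, $I=I^{**}$, so $I$ carries $B_X=B_{X^{**}}$ onto an isometric, $w^*$‑homeomorphic subset of $B_Z=B_{Z^{**}}$; the Szlenk derivations of the two correspond, whence $\Sz(X^*)=\Sz(B_X)=\Sz\big(I(B_X)\big)\le\Sz(B_Z)=\Sz(Z^*)$.

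For the reverse inequality I would construct inside $B_Z=B_{Z^{**}}$ a norming set for $Z^*$ carrying a norm‑preserving, weakly continuous map into $B_X$, dualizing Proposition \ref{P:3.3}(1),(3). For $\jb=(j_k)\in\prod_k[n_k,n_{k+1}]$ (skipped), put $V_{\jb}=\{x\in X:P^E_{j_k}(x)=0\text{ for all }k\}$ and define $\Psi_{\jb}:V_{\jb}\to Z$ by $x\mapsto\big(P^E_{(j_{k-1},j_k)}(x)\big)_k$; note $P^E_{(j_{k-1},j_k)}(x)\in Z_k$ and, since $(E_j)$ is shrinking, $x=\sum_k P^E_{(j_{k-1},j_k)}(x)$. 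Pairing $\Psi_{\jb}(x)$ with the elements $\Phi_{\jb}(x^*)\in\B^*$, $x^*\in U_{\jb}\cap B_{X^*}$, gives $\sum_k\big(P^F_{(j_{k-1},j_k)}(x^*)\big)\big(P^E_{(j_{k-1},j_k)}(x)\big)=x^*(x)$, and arguing exactly as for $I$ and $\Phi_{\jb}$ in Proposition \ref{P:3.3} one obtains that $\Psi_{\jb}$ is an (isometric) embedding, weak‑to‑weak continuous, and that the sets $\Psi_{\jb}(B_{V_{\jb}})$ glue together — over the compact index set $\prod_k[n_k,n_{k+1}]$, as in the proof of Proposition \ref{P:3.3}(4) — into a weakly compact set $\hat K\ksubset B_Z$. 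The crucial point is that $\hat K$ norms $Z^*$ (up to a constant, which by Remark \ref{R:5.1} is harmless): for $z^*=(x^*_k)\in Z^*$ one has $z^*\big(\Psi_{\jb}(x)\big)=\big(\sum_k P^F_{(j_{k-1},j_k)}(x^*_k)\big)(x)$, so one must check that the supremum defining $\|z^*\|_{Z^*}$ — a supremum over all of $B_Z$ — is recovered, up to a uniform factor, by re‑windowing $z^*$ along a well‑chosen $\jb$ and testing against $\Psi_{\jb}(B_{V_{\jb}})$; this is exactly where the overlapping window structure $Z_k=\spa(E_j:n_{k-1}<j<n_{k+1})$, the estimate \eqref{E:3.3}, and Corollary \ref{C:2.4} are used, the point being that although the bare projections $P^E_A,P^F_A$ on $X,X^*$ are not uniformly bounded (being only FMDs), the windowed and skipped combinations that actually occur are.

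Finally, $\Xi:\hat K\to B_X$, $\Psi_{\jb}(x)\mapsto x$, is well defined — if $\Psi_{\jb}(x)=\Psi_{\jb'}(x')$ then $x=\sum_k P^E_{(j_{k-1},j_k)}(x)=\sum_k P^E_{(j'_{k-1},j'_k)}(x')=x'$ — norm‑preserving and weakly continuous, and the computation of Lemma \ref{L:5.6} shows $\Xi(\hat K'_c)\ksubset(\Xi(\hat K))'_c$, hence $\Xi\big(\hat K^{(\alpha)}_c\big)\ksubset(\Xi(\hat K))^{(\alpha)}_c$ for all $\alpha\kle\omega_1$. Therefore $\Sz(\hat K)\le\Sz\big(\Xi(\hat K)\big)\le\Sz(B_X)=\Sz(X^*)$, and since $\hat K$ norms $Z^*$, Theorem C gives $\Sz(Z^*)=\min\{\omega^\alpha:\alpha\kle\omega_1,\ \omega^\alpha\kge\Sz(\hat K)\}\le\Sz(X^*)$, because $\Sz(X^*)=\omega^\beta$ for some $\beta\kle\omega_1$ with $\omega^\beta\kge\Sz(\hat K)$. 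Combined with the lower bound this yields $\Sz(Z^*)=\Sz(X^*)$. The step I expect to be the real work is verifying that $\hat K$ norms $Z^*$ with a constant independent of everything: one must pass from the supremum over $B_Z$ (whose vectors sit in mutually overlapping coordinate windows and so cannot simply be ``glued'' into one vector of $X$) to the supremum over the windowed vectors $\Psi_{\jb}(x)$, controlling the resulting cross‑terms using only the quantities that Lemma \ref{L:2.2} and Corollary \ref{C:2.4} genuinely bound — the dual analogue of the fact that $\B^*$ norms $Z$ by the very definition of the norm of $Z$.
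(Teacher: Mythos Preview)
Your approach is genuinely different from the paper's, and the step you flag as ``the real work'' --- that $\hat K$ norms $Z^*$ --- is a real gap, not a routine verification. The asymmetry is structural: $\B^*$ norms $Z$ \emph{by definition} of the norm on $Z$, whereas there is no dual definition making any analogous subset of $B_Z$ automatically norming for $Z^*$. Concretely, your claim that $\Psi_{\jb}$ is isometric would require that $U_{\jb}\cap B_{X^*}$ is $1$-norming for $V_{\jb}$; after the renorming \eqref{E:3.2} you know only that the \emph{union} $B^*=\bigcup_{\jb'}(U_{\jb'}\cap B_{X^*})$ is $1$-norming for $X$, and the norming functional for a given $x\in V_{\jb}$ may well lie in some $U_{\jb'}$ with $\jb'\ne\jb$. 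More seriously, a general $z^*\in Z^*$, when tested against $z\in B_Z$, sees $z$ through the overlapping windows $(n_{k-1},n_{k+1})$; to replace this by a test against some $\Psi_{\jb}(x)$ you must re-window $z$ along a single skipped sequence $\jb$, and the cross-terms incurred are governed by projections $P^E_A$, $P^F_A$ that are \emph{not} uniformly bounded for an FMD. Lemma~\ref{L:2.2} and Corollary~\ref{C:2.4} control only skipped blocks and interval projections with one-index leeway, and it is not clear that these suffice here.

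The paper avoids this entirely by a different route. Rather than building a norming subset of $B_Z$, it invokes Lemma~\ref{L:5.2} (applied to $X^*$ and to $Z^*$, both reflexive) to rewrite $\Sz(X^*)$ and $\Sz(Z^*)$ as suprema of weak indices $\wi(\cF_c^{X^*})$ and $\wi(\cF_c^{Z^*})$ of trees of skipped block sequences, and then proves directly that $\wi(\cF_c^{Z^*})\le\wi(\cF^{X^*}_{c/3-\eta})$. The key ingredient is Proposition~\ref{P:3.3}(4): any \emph{skipped} block sequence $(y^*_j)$ in $\D^*\subset Z^*$ is carried by $I^*$ to an isometrically equivalent skipped block sequence in $D^*\subset X^*$. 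Given $(z^*_j)\in\cF_c^{Z^*}$ one first replaces each $z^*_j$ by an element $y^*_j\in\B^*$ with the same $Z$-range (using that $\B^*$ norms $Z$ and losing the factor $3$ from the projection constant of $(Z_k)$), then applies $I^*$; this is Claim~1. The inequality of weak indices is then obtained by a transfinite induction (Claim~2) in which one tracks, at each stage, a replacement sequence $(y^*_j)$ with $\rg_Z(y^*_j)\subset\rg_Z(z^*_j)$, passing to norm-limits along the finitely many fixed coordinates at successor and limit steps. This argument uses only the isometry for \emph{skipped} blocks in $\D^*$, which is exactly what Proposition~\ref{P:3.3}(4) provides, and never needs a norming set inside $B_Z$.
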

\begin{proof} Since $X$ and $Z$ are  reflexive we can change the roles of $X$ and $X^*$  and of $Z$ and $Z^*$ in Lemma \ref{L:5.2} and deduce that 
$$\Sz(X^*) =\sup_{c>0}\wi\big(  \cF_c(X^*,(F_j), B_X)  \big),$$
where 
 $$\cF_c(X^*,(F_j), B_X)= \left \{( x^*_1,x^*_2,\ldots,x^*_l)\subset S_{X^*}: \begin{matrix}(x^*_j) \text{ is skipped with resp. to $(F_j)$,}\\
         \exists  x \kin B_X \, \forall i=1,2,\ldots,l \quad x^*_i(x)\ge c                \end{matrix}\right\}$$
and
$$\Sz(Z^*) =\sup_{c>0}\wi\big(  \cF_c(Z^*,(Z^*_j),B_Z)  \big),$$
where 
 $$\cF_c(Z^*,(Z^*_j),B_Z)= \left \{( z^*_1,z^*_2,\ldots,z^*_l)\subset S_{Z^*}: \begin{matrix}(z^*_j) \text{ is skipped with resp. to $(Z^*_j)$,}\\
         \exists  z \kin B_Z\, \forall i=1,2\ldots,l \quad z^*_i(z)\ge c                \end{matrix}\right\}.$$
We will abbreviate $\cF_c^{Z^*}= \cF_c(Z^*,(Z^*_j),B_Z)$ and  $\cF^{X^*}_c=\cF_c(X^*,(F_j), B_X$) and show that  for $0<c<1/3$ and for $0<\eta<c/3$
\begin{equation}\label{E:5.7.1} \wi(\cF_c^{Z^*})\le \wi(\cF_{c/3 -\eta}^{X^*})\end{equation} 
which, using Lemma \ref{L:5.2}, yields  the statement  of our lemma.
We first prove the following 

\noindent{\bf Claim 1.} If  $(z^*_j)_{j=1}^{l}\in \cF_c^{Z^*}$, then there exists a sequence 
  $(y^*_j)_{j=1}^{l} \subset \D^*$ so that  $\rg_Z(y^*_j)\subset \rg_Z(z^*_j)$, for $j=1,2,\ldots,l$  (and thus  $(y^*_j)_{j=1}^{l}$ is also 
  a skipped sequence  with respect to $Z_i^*$)  and so that 
  the sequence $(I^*(y^*_j))_{j=1}^l   $ is in $\cF_{c/3}^{X^*}$.

To show Claim 1, let $(z^*_j)_{j=1}^{l}\in \cF_c^{Z^*}$ and let $z\in B_Z$ so that  $z^*_j(z)\ge c  $, for all $j=1,2,\ldots, l$.
For $j=1,2,\ldots, l$ let $z_j=P^Z_{\rg_Z(z^*_j)}$. Since  $\B^*$ is $1$-norming $Z$ there are  $\tilde y^*_j\in \B^*$, $j=1,2,\ldots, l$, so that 
$\tilde y^*_j(z_j)= \|z_j\| \ge z^*_j(z) \ge c$.
We define $y^*_j= P^{(Z^*)}_{\rg_Z(z^*_j)}(\tilde y^*_j)/\|P^{(Z^*)}_{\rg_Z(\tilde y^*_j)}( \tilde y^*_j)\|$, for $j=1,2,\ldots, l$. Then $y^*_j\in \B^*$, and since the  projection constant of $(Z_j)$ does not exceed the value 3, it follows  for all $j=1,2,\ldots, $that 
$$y^*_j(z)=y_j^*\big(P^{Z^*}_{\rg_Z(z^*_j)}(z)\big)=y^*_j( z_j)\ge\frac{c}3.$$
Since $(y^*_j)_{j=1}^l$ is a skipped block sequence with respect to $(Z^*_j)$,   which is  in $\D^*$,  Proposition \ref{P:3.3}  (4)  yields that 
the sequence $(x^*_j)_{j=1}^l= (I^*(y^*_j))_{j=1}^l$ is a skipped block sequence in $D^*$ with respect to $(F_j)$ which is isometrically equivalent 
to  $(y^*_j)_{j=1}^l$.
It follows therefore that for all $(a_j)_{j=1}^l\subset[0,\infty)$ we have
$$\Big\|\sum_{j=1}^l  a_j x^*_j\Big\|=\Big\|\sum_{j=1}^l  a_j y^*_j\Big\|\ge \sum_{j=1}^l  a_j y^*_j(z)\ge \frac{c}3\sum_{j=1}^l  a_j .$$
By the Remark  \ref{R:5.3} this yields that $(x^*_j)_{j=1}^l\in\cF_c$ and thus proves our claim.

 \noindent{\bf Claim 2.} We will prove by transfinite induction for all $\alpha\ge 0$ that   if 
 $(z^*_j)_{j=1}^{l}$ is a skipped normalized block sequence with respect to $(Z^*_j)$ and $\wi\big(\cF_c^{Z^*}(z^*_1,z^*_2,\ldots,z^*_l)\big)\ge \alpha+1$, then there is  a
  sequence 
  $(y^*_j)_{j=1}^{l} \in \D^*$,  with $\rg_Z(y^*_j)\subset \rg_Z(z^*_j)$, and so that for
  $(x^*_j)_{j=1}^l= (I^*(y^*_j))_{j=1}^l$ and all $0<\eta<c/3 $ it follows that 
 $\wi\big(\cF_{c/3-\eta}^{X^*}(x^*_1,x^*_2,\ldots,x^*_l)\big)\ge \alpha+1$.
 
 For $\alpha=0$ Claim 2 reduces  to Claim 1 since  by \eqref{E:4.0a}
 $\wi\big(\cF_c^{Z^*}(z^*_1,z^*_2,\ldots,z^*_l)\big)\ge 1$ means that 
$(z^*_j)_{j=1}^{l}\in \cF_c^{Z^*}$. Assume that Claim 2 is true for $\alpha$ and let $0<\eta<c/3$ and 
$(z^*_j)_{j=1}^{l}$ be a skipped normalized block sequence with respect to $(Z^*_j)$  for which $\wi\big(\cF_c^{Z^*}(z^*_1,z^*_2,\ldots,z^*_l)\big)\ge \alpha+2$.
It follows  from \eqref{E:4.0b} that there is a weakly null sequence $(z^*_{l+1}(n))_{n\in\N}\subset  S_{Z^*}$ so that 
   $\wi\big(\cF_c^{Z^*}(z^*_1,z^*_2,\ldots,z^*_l,z^*_{l+1}(n) )\big)\ge \alpha+1$  for all $n\in\N$.
   By our induction hypothesis  (for $\eta/3$ instead of $\eta$)
   we can find for $n\in\N$   a sequence $(y^*_1(n),y^*_2(n),\ldots,y^*_l,y^*_{l+1}(n) )$ in $\B^*$,  so that 
    $\rg_Z(y^*_j(n))\ksubset \rg_Z(z^*_j)$, for $j\keq1,2,\ldots,l$,  and      $\rg_Z(y^*_{l+1}(n))\ksubset \rg_Z(z^*_{l+1}(n) )$, 
    and so that  for $\big(x^*_j(n)\big)_{j=1}^{l+1}=\big(I^*(y^*_j(n))\big)_{j=1}^{l+1}$ it follows that 
        $\wi\big(\cF_{(c-\eta)/3}^X(x^*_1(n), x^*_2(n),\ldots,   x^*_{l+1}(n)) \big)\ge \alpha\kplus1$,  for all $\eta\in(0,c/3)$.

    After passing to subsequences we can assume that
   $x^*_j=\lim_{n\in\N}  x^*_j(n)$ exists (in norm, because the ranges are bounded) 
  for $j=1,\ldots,l$. 
Using this convergence,  
we can choose  $n_0\kin\N$, large enough, so that  for all $n\ge n_0$,  for all sequences   $(x^*_{l+2}, x^*_{l+3},\ldots,x^*_L)\kin 
 \cF_{(c-\eta)/3}^X(x^*_1(n), x^*_2(n),\ldots,x^*_{l+1}(n))$,  and for all  numbers $a_j\ge 0$, $j=1,2,\ldots, L$,  and  we have 
 \begin{align*}
 \Big\|  \sum_{j=1}^l a_j x^*_j &+ a_{l+1}x^*_{l+1}(n) + \sum_{j=l+2}^L a_j x^*_j\Big\|\\
  &\ge \Big\|  \sum_{j=1}^l a_j x^*_j(n) + a_{l+1}x^*_{l+1}(n) + \sum_{j=l+2}^L a_j x^*_j\Big\|-\frac{\eta}2\sum_{j=1}^{L+2} a_i\\
 & \ge \Big(\frac{c-\eta}3 -\frac\eta2\Big)\sum_{j=1}^{L+2} a_i\ge\Big( \frac{c}3 -\eta\Big) \sum_{j=1}^{L+2} a_i, \end{align*}
which  proves that
 $$ \cF_{(c-\eta)/3}^X(x^*_1(n), x^*_2(n),\ldots,   x^*_{l+1}(n))\subset \cF_{(c/3)-\eta}^X(x^*_1, x^*_2,\ldots , x^*_l,  x^*_{l+1}(n)),$$
and thus
  $$\wi\big( \cF_{(c/3)-\eta}^X(x^*_1, x^*_2,\ldots, x^*_l,  x^*_{l+1}(n))\big)\ge 
 \wi\big( \cF_{(c-\eta)/3}^X)(x^*_1(n), x^*_2(n),\ldots,   x^*_{l+1}(n))\big)\ge \alpha+1,$$
 and therefore $x^*_{l+1}(n)\kin \big( \cF_{(c/3)-\eta}^X(x^*_1, x^*_2,\ldots, x^*_l)\big)^{(\alpha)}$, for $n\in\N$.
Since $(x^*_{l+1}(n))_{n=1}^\infty$ is weakly null (being a bounded block sequence in a reflexive space), it follows that
  $\emptyset \kin\big( \cF_{(c/3)-\eta}^X(x^*_1, x^*_2,\ldots, x^*_l)\big)^{(\alpha+1)}$, and thus
 $\wi\big( \cF_{(c/3)-\eta}^X(x^*_1, x^*_2,\ldots, x^*_l)\big)\!\ge\! \alpha\!+\!2$, which finishes the induction step in the case of successor ordinals.
 
 If $\alpha$ is a limit ordinal and $0<\eta<c/3$  and if 
 $(z^*_j)_{j=1}^{l}$ is a skipped normalized block sequence with respect to $(Z^*_j)$ with  $\wi\big(\cF_c^{Z^*}(z^*_1,z^*_2,\ldots,z^*_l)\big)\ge \alpha+1$
we proceed as follows. For every $\beta<\alpha$ we find by our induction hypothesis 
a
  sequence 
  $(y^*_j(\beta))_{j=1}^{l} \in \D^*$, which satisfies the conclusion of Claim 1  and so that  for
   $(x^*_j(\beta))_{j=1}^l= (I^*(y^*_j(\beta)))_{j=1}^l$ it follows that 
 $\wi\big(\cF_{(c-\eta)/3}^{X^*}(x^*_1(\beta),x^*_2,(\beta)\ldots,x^*_l(\beta))\big)\ge \beta$.
 We can assume that 
 $x^*_j= \lim_{n\to\infty} x^*_j(\beta_n)$ exists for all $j=1,2\ldots,l$ and  for some sequence $(\beta_n)\subset (0,\alpha)$ which increases to $\alpha$.
 A similar argument as in the successor case shows that  we can assume
 after passing to subsequences that for all $n\in\N$ 
 $$\cF_{(c-\eta)/3}^X(x^*_1(\beta_n), x^*_2(\beta_n),\ldots,   x^*_{l}(\beta_n))\subset \cF_{(c/3)-\eta}^X(x^*_1, x^*_2,\ldots, x^*_l),$$
and thus that 
$$\emptyset\in \bigcap_{n\in\N} \big(  \cF_{(c/3)-\eta}^X(x^*_1, x^*_2,\ldots, x^*_l)\big)^{(\beta_n)}= 
\big(  \cF_{(c/3)-\eta}^X(x^*_1, x^*_2,\ldots, x^*_l)\big)^{(\alpha)},$$
which yields our claim, and finishes the induction claim.

 The inequality \eqref{E:5.7.1} follows from Claim 2 applied to the empty sequence.
\end{proof}
The next result  proves part (a) and (b) of Theorem B, and applied  to the space $V=Z$  finishes the  proof of (a) and  (b) of  the Main Theorem.
\begin{lem}\label{L:5.9}
Let $V$ be a Banach space with shrinking FDD $(V_j)$, and let $W$ be the space with shrinking basis containing $V$, which was constructed before 
 Theorem \ref{T:3.9}.
 
 Then $\Sz(W)=\Sz(V)$ and, if $V$ is reflexive, then $\Sz(W^*) =\Sz(V^*)$.
\end{lem}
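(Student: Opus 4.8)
The plan is to treat the two equalities in parallel with the $Z$-case just handled, exploiting that the construction of $W$ from $V$ is entirely analogous to the construction of $Z$ from $X$, with the FMD $(E_j)$ replaced by the (bimonotone, $1$-unconditional-if-applicable) FDD $(V_j)$ and the norming sets $\B^*$, $\D^*$ replaced by the sets $B$, and the spans $\overline{\spa(x^*_n:n\in\N)}$ appearing in the definition of $W$. Concretely, let $J:V\to W$ be the isometric embedding from Theorem \ref{T:3.9} and let $J^*:W^*\to V^*$ be its adjoint. The key point to isolate first is the analogue of Proposition \ref{P:3.3}(4): for every skipped block $(w^*_i)$ with respect to $(W^*_j)$ whose elements lie in the norming set $B$ (or, more precisely, are of the form $\sum_{n} a_n e^*_{(n,i_n)}$ coming from a single choice function $(i_n)$), the sequence $(J^*(w^*_i))=(w^*_i|_V)$ is isometrically equivalent to $(w^*_i)$ and is a block (hence skipped) sequence with respect to $(V^*_j)$. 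This was already essentially proved inside the proof of Theorem \ref{T:3.9}: there it is shown that for a block $(w^*_m)\subset B$ with respect to $(W^*_n)$, the sequences $(w^*_m)$ and $(w^*_m|_V)$ are isometrically equivalent, via the criterion $\sum b_m w^*_m\in B \iff \sum b_m v^*_m\in A$. So the substitute for Proposition \ref{P:3.3}(4) is in hand.

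With that, I would mimic Lemma \ref{L:5.6} and its corollary to get $\Sz(W)=\Sz(V)$. First, $V\subset W$ gives $\Sz(W)\ge\Sz(V)$ directly, so only $\Sz(W)\le\Sz(V)$ needs work. For the reverse inequality, I would prove the analogue of Lemma \ref{L:5.6}: for a bounded $K\subset B$ and $\eta>0$, $J^*(K'_c)\subset (J^*(K))'_{c-\eta}$, and hence $J^*(K^{(\alpha)}_c)\subset (J^*(K))^{(\alpha)}_{c-\eta}$ for all $\alpha<\omega_1$. The argument is the same as in Lemma \ref{L:5.6}: given $w^*\in K'_c$, take $w^*(n)\to w^*$ in $w^*$-topology with $\|w^*-w^*(n)\|\ge c$, pass to a subsequence so that all $w^*(n)$ and $w^*$ share a common "block pattern" (i.e.\ the choice functions $(i_n)$ can be aligned and the supports are eventually separated), so that $w^*$ and all $w^*(n)$ lie in a common subspace on which $J^*$ acts isometrically by the equivalence just recalled; then $\|J^*(w^*(n))-J^*(w^*)\|=\|w^*(n)-w^*\|\ge c$ and $J^*(w^*(n))\to J^*(w^*)$ in $w^*$-topology, so $J^*(w^*)\in(J^*(K))'_c\subset(J^*(K))'_{c-\eta}$. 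Applying this to $K=B$ and using that $J^*(B)$ is (up to the routine identification) the norming set $A$ of $V$, I get $\Sz(B)\le\Sz(A)$; since $A$ is $1$-norming $V$, Theorem C (with $\Sz(V)=\omega^\alpha$) gives $\Sz(A)\le\omega^\alpha=\Sz(V)$, hence $\Sz(W,\vp)$ for $W$ norming set $B$ is $\le\omega^\alpha$, so $\Sz(W)\le\Sz(V)$. Combined with $\Sz(W)\ge\Sz(V)$ this yields $\Sz(W)=\Sz(V)$.

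For the reflexive case, $V$ reflexive implies (by Theorem \ref{T:3.9}(c)) $W$ reflexive, so I can swap the roles of $V,W$ with $V^*,W^*$ and of the FDDs $(V_j),(W_j)$ with $(V^*_j),(W^*_j)$, and run the analogue of Lemma \ref{L:5.7}. That is, using Lemma \ref{L:5.2} in the reflexive form, $\Sz(V^*)=\sup_{c}\wi(\cF_c(V^*,(V^*_j),B_V))$ and $\Sz(W^*)=\sup_c\wi(\cF_c(W^*,(W^*_j),B_W))$, and I would prove $\wi(\cF_c^{W^*})\le\wi(\cF_{c/3-\eta}^{V^*})$ for small $c,\eta$ by the same two-claim transfinite induction as in the proof of Lemma \ref{L:5.7}: Claim 1 produces, from a skipped block $(w^*_j)_{j=1}^l\in\cF_c^{W^*}$, a skipped block $(\tilde w^*_j)$ with $\tilde w^*_j$ in the norming set $B$ and $\rg(\tilde w^*_j)\subset\rg(w^*_j)$, with $(J^*(\tilde w^*_j))\in\cF_{c/3}^{V^*}$ (the factor $3$ coming from the projection constant of $(W_j)$, which is at most... here actually $(W_j)$ is bimonotone, so the constant is $1$ and one can even take $c/1-\eta$, but $c/3$ is safe); Claim 2 lifts this through the derivative by the identical limit-extraction argument, using at the successor step that bounded block sequences in the reflexive space $W^*$ are weakly null. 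Then \eqref{E:5.7.1}-analogue applied to the empty sequence gives $\Sz(W^*)\le\Sz(V^*)$; since $V^*\subset W^*$, equality follows. The main obstacle, as in the $Z$-case, is bookkeeping the alignment of choice functions $(i_n)$ and the skipped-block structure when passing to $w^*$-limits, so that the isometric equivalence between $(w^*_m)$ and $(w^*_m|_V)$ established in Theorem \ref{T:3.9} can be invoked; once that is set up, everything else is a verbatim transcription of Lemmas \ref{L:5.6}, \ref{L:5.7} and the preceding corollary, with $(E_j)\rightsquigarrow(V_j)$, $I\rightsquigarrow J$, $\B^*\rightsquigarrow B$, and Proposition \ref{P:3.3}(4)$\rightsquigarrow$ the isometric-equivalence statement inside the proof of Theorem \ref{T:3.9}.
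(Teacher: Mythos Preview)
Your proposal is correct and follows essentially the same route as the paper's own (sketched) proof: prove the analogue of Lemma~\ref{L:5.6} for $J^*$ and $B$, invoke Theorem~C to get $\Sz(W)\le\Sz(V)$, and in the reflexive case reproduce the two-claim transfinite induction of Lemma~\ref{L:5.7} for $\cF_c^{W^*}$ and $\cF_c^{V^*}$. Your observation that bimonotonicity of $(W_j)$ lets one replace $c/3$ by $c$ (and hence obtain $\wi(\cF_c^{W^*})\le\wi(\cF_{c-\eta}^{V^*})$) is exactly what the paper uses.
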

\begin{proof} Let  $\Gamma$ be the set  defined before  Theorem $\ref{T:3.9}$ and $(e_\gamma:\Gamma)$  the basis of $W$ as introduced there. 
For $n\in\N$ let $W_n=\spa(e_{(n,j)}:j\le l_n)$, for $n\in\N$. As  noted in the proof of Theorem \ref{T:3.9}, $(W_n)$ is an FDD of $W$.
Like in the proof of Theorem \ref{T:3.9}
we can assume that the set 
$$A=\Bigg\{ \sum a_n  x^*_{(n,i_n)} : (i_n)\in \prod_{n=1}^\infty \{1,2,\ldots,l_n\},\, 
\Big\| \sum_{n=1}^\infty  a_n x^*_{(n,i_n)}\Big\|\le 1\Bigg\}$$
is $1$ norming the space $V$\!. The set  
 $B=\big\{\sum_{n=1}^\infty a_n e^*_{(n,i_n)}:  
\sum_{n=1}^\infty  a_n x^*_{(n,i_n)}\in A\big\}$
is (by definition) $1$-norming $W$. We also recall the fact, which was obtained in   the proof of Theorem \ref{T:3.9}, that if 
$(w_j)$ is in $B$ and is a block sequence  with respect to $(W_n)$, then the sequence  $(w_j|_V)$ is in $A$ and it is a block sequence with respect to $(V_n)$  which is isometrically equivalent to $(w_j)$.

The proof that $\Sz(W)= \Sz(V)$ is very similar to the proof that $\Sz(Z)=\Sz(X)$, only a little bit easier since $W$and $V$ have an FDD.
We therefore will only sketch it.  Let $\alpha<\omega_1$ so that $\Sz(V)=\omega^\alpha$. By Theorem C it is enough to show that 
$\Sz(B) \le \omega^\alpha$.  In order to accomplish that   we  first  show that
for any compact $K\subset B$, any $0\kle\eta<  c\kle1$ 
\begin{equation}\label{E:5.9.1}  J^*(K'_c)\subset (J^*(K))'_{c-\eta}\end{equation}
where $J: V\to W$ is the embedding, and thus $J^*:W^*\to V^*$ is the restriction operator. Using the fact that $J^*$ is $w^*$-continuous and the fact  
that $R$ maps block sequences in $B$ into isometrically equivalent block sequences in $A$
(which was shown within the proof of Theorem \ref{T:3.9}), this can be done  the same way we proved Lemma \ref{L:5.6}.
From \eqref{E:5.9.1} we then deduce by transfinite induction for all $\alpha<\omega_1$,  and $0<\eta<c$  that 
$R\big(B^{(\alpha)}_c\big)\subset  \big(J^*(B)\big)^{(\alpha)}_{c-\eta}\subset A^{(\alpha)}_{c-\eta}$, and thus that
$Sz(B)\le Sz(A)\le \omega^\alpha$.

\medskip
Now assume that $V$ is reflexive. The  verification that $\Sz(W^*)=\Sz(V^*)$ is  similar to the proof  of Lemma \ref{L:5.7}, and again
 easier since we are dealing now with FDDs. We will therefore also only sketch it.
As in Lemma \ref{L:5.7} we define for $0<c<1$
 $$\cF_c^{V^*}=\cF_c(V^*,(V^*_j),B_V)= \left \{( v^*_1,v^*_2,\ldots,v^*_l)\subset S_{V^*}: \begin{matrix}(z^*_j) \text{ is skipped with resp. to $(V^*_j)$,}\\
         \exists  v \kin B_V\, \forall i=1,2\ldots,l \quad v^*_i(v)\ge c                \end{matrix}\right\},$$
         and
 $$\cF_c^{W^*}=\cF_c(W^*\!,\!(W^*_j),\!B_Z )= \left \{( w^*_1,w^*_2,\ldots,w^*_l)
 \ksubset S_{W^*}:\! \begin{matrix}(w^*_j) \text{ is skipped with resp.\! to \!$(W^*_j),$}\\
         \exists  w \kin B_W\, \forall i\keq1,2\ldots,l \quad w^*_i(w)\ge c                \end{matrix}\right\}\!,$$
and have to show that for any $0<\eta<c$
\begin{equation}\label{E:5.9.2} 
\wi\big(\cF_c^{W^*}\big)\le \wi\big(\cF_c^{V^*}\big).
\end{equation}
Lemma \ref{L:5.2} will then yield that $\Sz(V^*)=\Sz(W^*)$.

First we prove, as in  Lemma \ref{L:5.7}, the following claim:

\noindent{\bf Claim 1}: If $(w^*_j)_{j=1}^l \in \cF^{W^*}_c$, then there is a sequence $(\tilde w^*_j)_{j=1}^l$ in $B$ so that
$\rg_W(\tilde w^*_j)\subset \rg_W(w^*_j)$, for $j=1,2\ldots,l$,  and so that $(J^*(\tilde w^*_j))_{j=1}^l \in \cF^{V^*}_c$.

To show Claim 1 we will use the  bimonotonicity  of $(V_j)$  and $(W_j)$ in $V$ and $W$, respectively, the fact that $A$ and $B$ are  $1$ norming 
$V$ and $W$, respectively, and the fact that $J^*$ is $w^*$-continuous, and maps  block sequences of $B$ to isometrically equivalent
block sequences in $A$.

Secondly we show, analogously to the proof of Lemma \ref{L:5.7}, by transfinite   induction for all $\alpha<\omega_1$, that 
 if $(w^*_j)_{j=1}^l \in \cF^{W^*}_c$ is a skipped block basis, for which 
 $\wi\big(\cF^{W^*}_c(w^*_1,\ldots,w^*_l)\big)\ge \alpha+1$, then there is 
a sequence $(\tilde w^*_j)_{j=1}^l$ in $B$ for which  
$\rg_W(\tilde w^*_j)\subset \rg_W(w^*_j)$, for $j=1,2\ldots,l$, and for which 
 $\wi\big(\cF^{V^*}_{c-\eta}(J^*(\tilde w^*_1),\ldots,J^*(\tilde w^*_l)\big)\ge \alpha+1$.
 \eqref{E:5.9.2} follows then by applying Claim 2 to the empty sequence. 
\end{proof}
\begin{rem} Since in  Lemma \ref{L:5.9}   $(V_j)$ and $(W_j)$  are FDDs (and not only FMDs) it was actually unnecessary to define  the elements 
of $\cF^{V^*}_c$ and  $\cF^{W^*}_c$ to be skipped block bases, in order to obtain the second part of Lemma \ref{L:5.9}.
Nevertheless in order for the argument to also  hold in general FMDs, it is necessary to use skipped block bases.
\end{rem}
\section{Infinite Asymptotic Games with respect to FMDs}\label{S:6}
 
 In this section we present  {\em Infinite Asymptotic Games}, and show how to use them to deduce  embedding results. 
 They where introduced  for spaces with FDD
 in \cite{OS1,OS2}. The name {\em Infinite Assymptotic Games} was coined by Rosendal \cite{Ro} who generalized them   to  a more general setting.
 In this section we  present another generalization of  {\em Infinite Assymptotic Games}  by defining them with respect to Finite Dimensional  Markushevich 
 Decompositions, and deduce as in the FDD case a combinatorial principle (see Theorem \ref{T:6.11}), 
 which can be used to characterize the property that a  certain Banach embeds    into a space with a certain FDD or basis. One of these results is the intrinsic   characterization of
 subspaces of spaces with an unconditional basis
 by Johnson and Zhang \cite{JZ1,JZ2}.   We will  show  that if our space $X$ has the {\em Unconditional Tree Property}, as defined   in \cite{JZ1, JZ2},
  and if we have started out with an appropriately blocked FMD, then the FDD $(Z_i)$ of the  space $Z$ constructed   in Section 3 is  automatically unconditional.
   This will lead to an alternate   proof  of Johnson's and Zhang's results. Actually, some of the ideas, for example the idea of using FMDs instead of FDDs  can already be found in   their second paper \cite{JZ2}. Nevertheless, since we suspect that Theorem \ref{T:6.11} could lead to other interesting embedding results,
    we would like to present it in a more general form.
Some  of our arguments will be  very similar to the arguments in  \cite{OS1,OS2}. But for the sake of a  better readability  and for being   self-contained we present  the
complete arguments.
   
   We start with a general separable Banach space $X$ and we assume that we have chosen a fixed but, for the moment arbitrary,  $1$-norming FMD $(E_n)$
    and denote its biorthogonal sequence by $(F_j)$. We denote by $\cB_\omega=\cB_\omega(X,E)$, $\cB_f(X,E)$, and $\cB_n(X,E)$, $n\kin\N\cup\{0\}$,  the set of  infinite, or finite sequences, or sequences of length $n$ in
$S_X\cap c_{00}(E_j)$ which are block sequences with respect to $(E_j)$
 (we require now that also the last element of  a sequences $(x_j)_{j=1}^l\in \cB_l$ has finite support). 
   
   We consider on $\cB_\omega$ the product topology of the norm topology
on $\cB_1\equiv S_X\cap c_{00}(E_j)$ and denote the closure for  $\cA\ksubset \cB_\omega$ with respect to that topology by $\overline{\cA}$. Note that $\cA\subset  \cB_\omega$ is open  if and only  if
for $(x_j)\kin \cB_\omega$ 
\begin{align} \label{E:6.3} 
(x_j:j\kin\N)\in \cA&\iff \exists  n\kin\N,\, \delta>0\quad\{ (z_i)\in \cB_\omega :  \|x_i-z_i\|\le \delta, \,\,i=1,2,\ldots,n\}\subset \cA
\intertext{ and $\cA$ is closed  if and only if for every $(x_j)\kin \cB_\omega$ }
\label{E:6.4}
(x_j:j\kin\N)\in \cA&\iff  \forall  n\kin\N, \,\delta\kgr 0\,\exists (z_i)\in\cA\quad \|z_i-x_i\|\le \delta, \text{ for all $i=1,2\ldots,n$.} 
 \end{align}
For $\cA\subset \cB_\omega$ and a sequence $\vpb=(\vp_j)\subset\R^+$ we define the {\em  $\vpb$-fattening of $\cA$} by
\begin{align*}
\cA_\vpb&=\big\{(x_j)\in\cB_\omega: \exists (z_j)\in \cA \quad \|x_j-z_j\|\le \vp_j\big\}.
\end{align*}
For $\cA\subset \cB_\omega$ and  $\xb=(x_1,\ldots,x_n)\in \cB_f$ we define
$$\cA(\xb)=\big\{ \zb\in \cA:  \zb\succ\xb  \big\}.$$
Here we mean, as in Section \ref{S:2}, by $\zb\prec \xb$ that $\zb$ is an extension of $\xb$.

\medskip
For $\cA\subset \cB_\omega$ we now consider the following {\em $\cA$-game} between two players:

Player I chooses $k_1\in\N$, then Player II chooses $x_1\in S_X\cap c_{00}(E_j)$ with $\min \supp_E(x_1)\ge k_1$, 
then again Player  I chooses $k_2\in\N$, $k_2>\max\rg_E(x_1)$ and   Player II chooses $x_2\in  S_X\cap c_{00}(E_j)$ with $\min \supp_E(x_2)\ge k_2$.
This goes on for infinitely many steps and  Player I is declared the winner of that game if the resulting
sequence $(x_j)$ lies in $\cA$. 

Let us precisely formulate what it means that   Player II  has a winning strategy for that  $\cA$-game  and observe  what a winning strategy is.  In order to do so,
 we define the {\em full tree on $\N$ by}
 $\cT=[\N]^{<\omega}=\{A\subset \N, \text{ finite}\}$.  On $\cT$ we consider the  order of extensions  $\succ$ introduced in Section \ref{S:4}.
  A {\em full indexed tree } will be a family indexed by $\cT$, in our cases with values in  a Banach space $X$. For simplicity 
  we will in this section often call a full indexed tree $(x_t:t\in \cT)\subset X$ simply a {\em  tree in $X$} if it can not be confused with the type of  trees  which were  considered in Section \ref{S:4}.
   If $(x_t)_{t\in \cT}$ is a  full indexed tree in $X$ and $t\in \cT$, we call the sequence $(x_{(t,k)})_{k>\max(t)}$ a {\em node of }  $(x_t)_{t\in \cT}$
and  if $k_1<k_2<k_3<\ldots $ we call the sequence $(x_{\{k_1,k_2,\ldots,k_j\}}:j\kin\N)$ {\em a branch of } 
$(x_t)_{t\in \cT}$  (note that $x_\emptyset$ is not part of a branch). The  tree  $(x_t)_{t\in \cT}$ is called {\em normalized } if $(x_t)_{t\in \cT}\subset S_X$, {\em weakly null } if every 
node is weakly null and {\em a block tree with respect to  the FMD $(E_j)$, } if every node is a block sequence with respect to $(E_j)$.
More generally, if $\mathcal U$ is any topology on $X$ (for example $\sigma(X,Y)$ for some $Y\subset X^*$), a {$\mathcal U$-null tree} is a tree for which all nodes  are 
$\mathcal U$-null.

 An {\em  indexed  subtree }of a tree $(x_t:t\in \cT)$ is a family  $(x_t:t\in \cS)$  indexed by a  non empty subset $\cS$ of $\cT$, which is a subtree of $\cT$, \ie which is closed under taking restrictions. We call such an indexed subtree {\em well-founded } if $\cS$ is well founded (see Section \ref{S:3}).  We say that   $(x_t:t\in \cS)$ is
 {\em infinitely branching } if every non maximal $s\in\cS$ has infinitely many direct successors.
  Assume that       $(x_t:t\in \cT)$ is a  full indexed  tree and  that $\cT' \subset \cT$,  is a subtree  which has the property
that for all $t\in \cT'$ the set $\{ n\in\N: \{t,n\}\in \cT'\}$ is infinite. 
  We call then  $(x_t:t\in \cT')$ a  {\em full indexed  subtree} of $(x_t:t\in \cT)$.
 It is easy to see that,  that  there is an order isomorphism between $\cT'$ and $\cT$,  and, using that order isomorphism,   we can reorder   $(x_t:\in \cT')$  into
  $(z_t:t\in \cT)$, having the same branches and nodes as $(x_t:\in \cT')$. In that case we also call $(z_t:t \in \cT)$ a full indexed  subtree of $(x_t:t\in \cT)$.

   \begin{prop}\label{P:6.2a}  Assume that $Y$ is a  subspace of $X^*$ which  separates points of $X$. For example $Y$ could be the closed linear span 
   of the biorthogonal sequence  $(F_j)$.

   Let $(x_t:t\in \cT)\subset S_X$ be a  normalized  $\sigma(X,Y)$-null tree and let  $\vpb=(\vp_t:t\in \cT)\subset (0,1)$. Then there is 
 a full subtree $(z_t:t\in \cT)$ of $(x_t:t\in \cT)$ and a block  tree $(\tilde z_t:t\in \cT)\subset S_X\cap c_{00}(E_j)$ with respect to $(E_j)$ so that 
 $\|z_t-\tilde z_t\|<\vp_t$ for all $t\in \cT$. We say in that case that $(\tilde z_t:t\in \cT)$ is a {\em $\vpb$-perturbation of  $(z_t:t\in \cT)$}.
 
 Moreover, let $\cT$ be linearly ordered  into $t_0,t_1,t_2,\ldots $  consistent  with the partial order $\succ$,
   \ie
  if $m<n$, then $t_n$ and $t_m$ are either incomparable with respect to $\prec$ or  $t_m\prec t_n$.  
 Then $(\tilde z_t:t\in \cT)$ can be chosen so that $(\tilde z_{t_n})$ is a block sequence with respect to $(E_j)$.

  \end{prop}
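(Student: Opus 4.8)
The plan is to build the block tree $(\tilde z_t)$ by a single transfinite-free recursion along the enumeration $t_0, t_1, t_2, \dots$ of $\cT$, processing one node of $\cT$ at a time, and to extract the full subtree $(z_t)$ simultaneously. The key mechanism is that a $\sigma(X,Y)$-null sequence in $S_X$, being in particular weakly null for the dual pair $(X,Y)$, can be perturbed into a block sequence with respect to $(E_j)$: this is the classical ``gliding hump'' / small perturbation argument, which here uses only that $Y$ separates points of $X$ and that each $x_t$ lies in the (norm-dense in $X$) span $c_{00}(E_j)$ up to arbitrarily small error, so that for a $\sigma(X,Y)$-null node $(x_{(t,k)})_k$ one can pass to a subsequence whose members are, up to error $\vp_t$, supported on a tail interval of $\N$ after the supports already committed. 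Concretely: I would first observe that since each $x_t\in S_X$ and $c_{00}(E_j)$ is norm-dense, we may replace $x_t$ by an element of $c_{00}(E_j)$ within $\vp_t/3$; the issue is only that the supports of different nodes overlap, and that we must keep the tree structure intact while passing to subsequences of nodes.

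The recursion goes as follows. We process the nodes of $\cT$ in an order consistent with $\succ$ --- say we well-order the \emph{nodes} (equivalently the elements $t\in\cT$, each $t$ giving the node $(x_{(t,k)})_{k>\max t}$) so that a node is treated only after its parent node has been, and moreover so that, reading the chosen representatives off in the order $t_0,t_1,\dots$, their supports appear in increasing blocks. Suppose inductively we have already chosen, for finitely many nodes, infinite subsets of their index sets and block representatives $\tilde z_t$ for the finitely many $t$ handled so far, with all supports so far lying in an initial interval $[1,N]$. To handle the next node $(x_{(s,k)})_{k>\max s}$ (where $s$ is already ``alive'' in the subtree being built, having been produced as $\tilde z_s\in c_{00}(E_j)$): since $(x_{(s,k)})_k$ is $\sigma(X,Y)$-null, so is any subsequence; because $Y$ separates points and the $E_j$-coordinate projections are bounded, for each fixed $j$ we have $P^E_j(x_{(s,k)})\to 0$ in norm as $k\to\infty$ (a weakly null sequence in the finite-dimensional space $E_j$ is norm null), hence we may pass to a subsequence $(x_{(s,k)})_{k\in M}$, $M$ infinite, such that each $x_{(s,k)}$ with $k\in M$ is within $\vp_{(s,k)}$ in norm of a vector $\tilde z_{(s,k)}\in c_{00}(E_j)$ with $\min\supp_E(\tilde z_{(s,k)})>N$, and further, enumerating $M=\{k_1<k_2<\dots\}$ and shrinking once more diagonally, so that $\max\rg_E(\tilde z_{(s,k_i)})<\min\supp_E(\tilde z_{(s,k_{i+1})})$. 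Relabel this subsequence as the node of $s$ in the subtree; record that the supports now used lie in $[1,N']$ for the appropriate new $N'$ once we later reach the children along a given branch. Iterating this over all nodes (there are countably many) and diagonalizing produces the desired full subtree $\cT'\subset\cT$ (the chosen index sets at each node are infinite, so $\cT'$ is order-isomorphic to $\cT$), the reordering $(z_t:t\in\cT)$ of $(x_t:t\in\cT')$, and the block tree $(\tilde z_t:t\in\cT)$ with $\|z_t-\tilde z_t\|<\vp_t$.

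For the ``moreover'' clause, the point is that the enumeration $t_0,t_1,t_2,\dots$ consistent with $\succ$ visits each node's children before it visits anything that properly extends them past that child, but incomparable nodes may be interleaved; nevertheless, by performing the support-pushing step for node $t_n$ only after all of $t_0,\dots,t_{n-1}$ have been processed (which is legitimate since $t_m\prec t_n$ forces $m<n$, so every ancestor of $t_n$ is already done, and the diagonalization absorbs the finitely many incomparable predecessors), we can always force $\min\supp_E(\tilde z_{t_n})$ to exceed $\max\rg_E(\tilde z_{t_m})$ for every $m<n$. Hence $(\tilde z_{t_n})_{n}$ is a block sequence with respect to $(E_j)$, as required.

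I expect the main obstacle to be purely bookkeeping: keeping the tree structure coherent while passing to infinitely many nested subsequences (one per node) and simultaneously respecting the linear enumeration so that \emph{both} the branch-wise block condition \emph{and} the global block condition on $(\tilde z_{t_n})$ hold. This is handled cleanly by a standard diagonal argument --- fix a bijection $\N\to\cT$, at stage $n$ refine only finitely much and commit an initial segment of each relevant node --- but it is the step where care is needed to ensure the resulting $\cT'$ really is infinitely branching at every node. The analytic content (weakly null $\Rightarrow$ small-perturbation block sequence) is routine given that $Y$ separates points of $X$ and $c_{00}(E_j)$ is dense.
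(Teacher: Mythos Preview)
Your proposal is correct and uses the same analytic core as the paper: a $\sigma(X,Y)$-null sequence has $P^E_{[1,N]}$-images tending to zero in norm (finite-dimensionality), so one can select a far-out element and approximate it by a normalized vector in $c_{00}(E_j)$ supported past everything already committed.

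The organizational difference is that you first describe processing entire \emph{nodes} at once (passing to an infinite subsequence of each $(x_{(s,k)})_k$ and fixing block approximants for all of them), and only afterwards, for the ``moreover'' clause, switch to element-wise processing along the enumeration $t_0,t_1,\dots$ to force $\min\supp_E(\tilde z_{t_n})>\max\rg_E(\tilde z_{t_m})$ for $m<n$. These two passes are in tension: once an infinite subsequence of one node is committed with fixed supports, interleaving an incomparable sibling's supports correctly in the linear order requires going back and re-selecting. You flag this as ``bookkeeping'' and invoke a diagonal argument that commits only initial segments at each stage --- which is exactly right, but once you do that you have collapsed to the paper's single-pass construction. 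The paper simply recurses along $t_0,t_1,t_2,\dots$ from the outset: at step $k$ it identifies the tree-predecessor $t_j$ of $t_k$, picks one sufficiently far index $n$ in the node at $s_j$ so that $\|P^E_{[1,N]}(x_{s_j\cup\{n\}})\|<\vp_k/5$ with $N=\max\supp_E(\tilde z_{t_{k-1}})$, sets $s_k=s_j\cup\{n\}$, $z_{t_k}=x_{s_k}$, and takes $\tilde z_{t_k}$ to be a finite-support normalization of the tail projection. This single recursion yields the block-tree property, the full-subtree property, and the global block order on $(\tilde z_{t_n})_n$ simultaneously, with no separate diagonalization needed.
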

  \begin{proof}[Proof of Proposition \ref{P:6.2a}] Write $\vp_n=\vp_{t_n}$, and assume w.l.o.g. that $\vp_n<\frac12$ for $n\kin\N$.  
  Choose $\tilde z_\emptyset\in S_X\cap c_{00}(E_j)$
   so that $\|\tilde z_\emptyset -x_\emptyset\|<\vp_0$.  Since the node  $(x_{\{n\}}:n\kin\N)$ is $\sigma(X,Y)$-null, and
   thus $\big(P^E_{[1,\max\supp_E( \tilde z_\emptyset)]} (x_{\{n\}}):n\kin\N\big)$ is norm-null,  we can choose $k_1$ large enough, so that
  $$\|P^E_{[1,\max\supp_E( \tilde z_\emptyset)]} (x_{\{k_1\}})\|<\vp_1/5$$
  and choose $s_1=\{k_1\}$ (as element of $\cT$) and
  $$ z'_{\{1\}}=\frac{ P^E_{(N,\infty)} (x_{\{k_1\}})}{\|P^E_{(N,\infty)} (x_{\{k_1\}})\|} \text{ and  } z_{\{1\}}=x_{\{k_1\}},$$
  where $N=\max\supp_E( \tilde z_\emptyset))$.
  It follows that $\| z'_{\{1\}}-z_{\{1\}}\|<\vp_1$.
  Indeed, 
  \begin{align*}
 \Bigg\| x_{\{k_1\}} &-   \frac{ P^E_{(N,\infty)} (x_{\{k_1\}})}{\|P^E_{(N,\infty)} (x_{\{k_1\}})\|}\Bigg\| 
    \le \big\|x_{\{k_1\}}- P^E_{(N,\infty) }(x_{\{k_1\}})\big\| 
+ \big|\|P^E_{(N,\infty)} (x_{\{k_1\}})\|-1\big|
    <  \vp_1.
  \end{align*}
  Then  we can perturb $z'_{\{1\}}$ to an element $\tilde z_{\{1\}}$ in $S_X\cap c_{00}(E_j)$, with $\min\supp(\tilde z_{\{1\}})\ge N$
  still satisfying  $\| \tilde z_{\{1\}}-z_{\{1\}}\|<\vp_1$.
  
  Now assume that we have found $s_0,s_1,s_2,\ldots,s_{k-1}\in \cT$  and a block sequence $(\tilde z_ {t_0} ,\tilde z _{t_1},\ldots,\tilde z _{t_{k-1}})$ so that 
   the set $\cS_{k-1}=  \{s_0,s_1,s_2,\ldots,s_{k-1}\}$  is close under taking  restrictions,
   the map
  $$\{t_0,t_1,t_2, \ldots, t_{k-1}\}\to   \{s_0,s_1,s_2,\ldots,s_{k-1}\},\quad t_j\mapsto s_j$$
 is an order isomorphism,
  and 
  $\|\tilde z _{t_j} - x_{s_j}\|<\vp_j$, for $j=0,1,\ldots, k-1$.
  
   The element $t_k$ has then a direct predecessor $t_j$, $j<k$, with respect to $\prec$ (not necessarily $t_{k-1}$).
  Since the node $(x_{s_j\cup\{n\}}: n>\max(s_j))$ is $\sigma(X,Y)$-null we can find a large enough $n$ so that
  $\|P^E_{[1, N]} (x_{s_j\cup\{n\}})\|<\vp_{k}/5$ where $N=\max\supp_E(x_{s_{k-1}})$. Then we let 
  $s_k= s_j\cup\{n\}$ and find as before 
  $\tilde z_{t_k}\in S_X\cap c_{00}(E_j)$,
 so that $\|\tilde z_{t_k}- x_{s_j\cup\{n\}}\| <\vp_k$, and note that  the
  set $\cS_k= \{s_0,s_1,s_2,\ldots,s_{k}\}$
   is close under taking  restrictions, and 
   the map
  $$\{t_j:j\kleq k\}\to   \{s_0,s_1,s_2,\ldots,s_k\},\quad t_j\mapsto s_j$$
 is an order isomorphism.
 
 This finishes the recursive construction, and we observe that  $(x_s:s\in \cS)$ with $S=\bigcup_{k\in\N} \cS_k$ is a full subtree 
 and $(\tilde z_{t}: t\in \cT)$ is an $\vpb$-perturbation of that subtree, and, moreover, the sequence
 $(\zt_{t_n}: n\in\N_0)$ is a block sequence in $S_X$. 
    \end{proof}

 We now  give a formal description of  what it   means that Player II has a winning strategy.
\begin{rem}\label{R:6.3}
Assume $\cA\subset \cB_\omega$.
Then 
Player II has a winning strategy in the $\cA$-game if and only if there is a  block tree $(x_t:t\in \cT)$ so that no branch is in $\cA$.

Indeed, for $l\in\N$ and $k_1<k_2< \ldots,k_l$ we  define $x_{\{k_1, k_2,\ldots,k_l\}}$, to be the $k$-th choice of Player II following a winning strategy,
assuming Player I has chosen so far $k_1<k_2<k_3<\ldots<k_l$. This defines   a   tree $(x_t:t\kin \cT)$  in $S_X\cap(\oplus E_j)$, which has the property   that  no  branch 
of $(x_t:t\in \cT)$ is  in $\cA$.
Since $\min \supp_E(x_{\{k_1, k_2,\ldots, k_l\}})\ge k_l$, we can pass to a full subtree of $(x_t:t\in \cT)$ for which all nodes are block sequences.

 Conversely, if there is a  block tree $(x_t:t\kin \cT)$ so that no branch is in $\cA$
  we can first assume, after passing to a full subtree, that $\min \supp_E(x_{\{k_1,k_2,\ldots,k_l\}})\ge k_l$ for all $(k_1,\ldots,k_l)\in \cT$.
  Player II can now use this tree as her strategy: If Player I  has chosen  $k_1<k_2<\ldots<k_l$ so far,
 Player II answers with $x_{\{k_1,\ldots, k_l\}}$.  The result of the game is therefore a branch of $(x_t:t\in \cT)$, which by assumption does not lie in $\cA$.  
\end{rem}

\begin{prop}\label{P:6.4}
Assume $\cA\subset \cB_\omega$ is closed and assume that $(x_t:t\in \cT)$ is a winning strategy for Player II as in Remark \ref{R:6.3}. Then there exists a well founded  and infinitely branching subtree 
$(x_s:s\in \cS)$,  so that for every maximal $s\in  \cS$ 
$$\{ \zb: \zb\in\cB_\omega, \zb \succ \xb_s\}\cap \cA=\emptyset$$
Here we mean, as in Proposisition \ref{P:4.4}, by $\xb_t$ for $t=\{t_1,t_2,\ldots,t_l\}\in \cT$, the finite sequence
$$\xb_t=\big(x_{\{t_1\}}, x_{\{t_1,t_2\}},x_{\{t_1,t_2,t_3\}},\ldots,x_{\{t_1,t_2,\ldots, t_l\}}\big)$$
\end{prop}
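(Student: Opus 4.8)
\textbf{Proof plan for Proposition \ref{P:6.4}.} The plan is to extract the desired subtree $\cS$ from the winning strategy tree $(x_t:t\in\cT)$ by a Cantor--Bendixson style derivation argument, using the compactness (via closedness) of $\cA$ together with the fact that every branch of $(x_t:t\in\cT)$ avoids $\cA$. First I would observe that, since $\cA$ is closed in the product topology on $\cB_\omega$ described in \eqref{E:6.4}, for each branch $(x_{\{t_1,\dots,t_j\}}:j\in\N)$ of the strategy tree, the fact that this branch lies \emph{outside} $\cA$ means, by \eqref{E:6.3} applied to the open complement $\cB_\omega\setminus\cA$, that there is a finite initial segment $\xb_t$ (with $t=\{t_1,\dots,t_l\}$ a long enough restriction) and a $\delta>0$ so that every $\zb\in\cB_\omega$ agreeing with $\xb_t$ on its first $l$ coordinates already lies outside $\cA$; in particular $\{\zb\in\cB_\omega:\zb\succ\xb_t\}\cap\cA=\emptyset$. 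So along every branch such a ``good'' node $t$ exists.

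The key step is then to show one can choose these good nodes so that they form a \emph{well-founded, infinitely branching} subtree. For this I would define, on the tree $\cT'\subseteq\cT$ consisting of those $t$ for which $\{\zb\in\cB_\omega:\zb\succ\xb_t\}\cap\cA\ne\emptyset$ (the ``bad'' nodes), a Cantor--Bendixson-type derivative: remove a node $t$ from the current tree if it has only finitely many immediate successors remaining in the tree. The claim is that iterating this derivative transfinitely eventually empties the tree. If not, the stable tree one reaches would be a nonempty, infinitely branching subtree of bad nodes; but then, using König's lemma-style reasoning together with the fact that $\cA$ is closed, one can build an infinite branch all of whose initial segments are bad, and a compactness/diagonalization argument — essentially a reprise of the first-part argument used in Lemma \ref{L:3.5} and Lemma \ref{L:5.6}, passing to $w^*$-limits of functionals or extracting a convergent subsequence in the product topology — produces a branch lying in $\cA$, contradicting that $(x_t:t\in\cT)$ was a winning strategy (no branch in $\cA$). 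Hence the bad-node tree is well founded.

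With that in hand, I would let $\cS$ be the tree whose maximal nodes are exactly the minimal good nodes — i.e., starting from $\emptyset$, follow each branch until the first node $t$ at which $\{\zb\in\cB_\omega:\zb\succ\xb_t\}\cap\cA=\emptyset$, and declare that $t$ maximal in $\cS$; keep all its predecessors. Every non-maximal node of $\cS$ is a bad node, and since the bad-node tree is well founded, so is $\cS$. To make $\cS$ infinitely branching I would, at each non-maximal $s\in\cS$, retain \emph{all} immediate successors $\{s,n\}$ for $n>\max(s)$ (this is allowed since $(x_t:t\in\cT)$ is a \emph{full} indexed tree, so these are all defined), and then along each of those infinitely many successors again cut at the first good node; this keeps $\cS$ infinitely branching while preserving well-foundedness, because a branch of $\cS$ that never hit a good node would, by the previous paragraph, contradict well-foundedness of the bad-node tree. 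By construction, for every maximal $s\in\cS$ we have $\{\zb\in\cB_\omega:\zb\succ\xb_s\}\cap\cA=\emptyset$, as required.

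The main obstacle I expect is the compactness argument showing the bad-node tree is well founded: $\cB_1\equiv S_X\cap c_{00}(E_j)$ is \emph{not} compact in the norm topology, so one cannot naively apply König's lemma to extract a limit branch. The way around this is that one only needs a branch along which \emph{all} initial segments are bad, and ``bad'' is witnessed by an actual element $\zb\in\cA$; using that $\cA$ is \emph{closed} (equation \eqref{E:6.4}) one can diagonalize over these witnesses — choosing at stage $l$ a witness $\zb^{(l)}$ extending a length-$l$ bad initial segment and taking the coordinatewise limit of a subsequence of the $\zb^{(l)}$ — to land a genuine element of $\cA$ that is a branch of $(x_t:t\in\cT)$. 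Care is needed to ensure the diagonal limit is itself a block sequence (one may first pass to a full subtree on which $\min\supp_E(x_{\{k_1,\dots,k_l\}})\ge k_l$, exactly as in Remark \ref{R:6.3}), but this is routine. Everything else is bookkeeping with the order $\prec$ on $\cT$ and the definitions of well-founded and infinitely branching subtrees from Section \ref{S:6}.
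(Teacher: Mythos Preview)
Your overall architecture matches the paper's: define the tree $\cS'$ of ``bad'' nodes $t$ (those with $\{\zb\succ\xb_t\}\cap\cA\neq\emptyset$), show $\cS'$ is well founded using closedness of $\cA$, and then let $\cS$ be $\cS'$ together with one additional level of children, so that every non-maximal node of $\cS$ lies in $\cS'$ (hence has all its immediate successors in $\cS$, giving infinite branching) while every maximal node lies outside $\cS'$ (hence is good). That is exactly the paper's construction.

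However, your route to ``$\cS'$ is well founded'' has a genuine slip. You run a Cantor--Bendixson derivative (remove nodes with only finitely many successors) and argue that it must empty, then write ``Hence the bad-node tree is well founded.'' That inference is invalid: a single infinite path is killed immediately by your derivative yet is not well founded. The correct argument is the direct one you yourself sketch in your final paragraph, and it needs no CB step at all: if $\cS'$ were ill founded, pick an infinite chain $t_1\prec t_2\prec\cdots$; for each $l$ choose $\zb^{(l)}\in\cA$ with $\zb^{(l)}\succ\xb_{t_l}$; then the branch $(x_{t_l})_{l\in\N}$ satisfies \eqref{E:6.4} (with $\delta=0$, since the first $l$ coordinates of $\zb^{(l)}$ equal those of the branch exactly), hence lies in $\cA$, contradicting the winning strategy. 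This is precisely the paper's argument.

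Relatedly, your worry that $\cB_1$ is not compact and that one must ``diagonalize over witnesses'' and ``take coordinatewise limits'' is misplaced: no limits are needed, because the witnesses $\zb^{(l)}$ agree with the branch \emph{exactly} on initial segments, so closedness in the sense of \eqref{E:6.4} applies immediately. Drop the CB derivative and the diagonalization, and your proof becomes the paper's.
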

\begin{rem}\label{R:6.5}  Proposition \ref{P:6.4} means that if $\cA$ is closed and  Player II has a winning strategy, the outcome of the  game is determined after finitely many  (but possibly at the beginning of the game still  undetermined) steps.  
\end{rem}
\begin{proof}[Proof of Proposition \ref{P:6.4}] Define
$$\cS'=\big\{ s\in \cT:   \zb\in\cB_\omega,\zb\succ \xb_s\}\cap \cA\not=\emptyset\big\}\cup\{\emptyset\},$$
and note that $\cS'$ is  closed under taking restrictions.  Secondly,  it is also   well founded.  Indeed, otherwise there would be an  increasing sequence $(k_j)$ in $\N$ so that
$t_l=\{k_1,k_2,\ldots,k_l\} \in \cS'$,  for each $l\in\N$. But this would mean that
for each  $l$ there is a  block sequence $z^{(l)}\in \cB_\omega$ so that $(\xb_{t_l}, \zb^{(l)})\in\cA$. Since $\cA$ is closed this implies 
that the infinite sequence  $(x_{t_l}:l\kin \N)$ is in $\cA$.  Since $(x_{t_l}:l\kin \N)$ is a branch of $(x_t: t\kin \cT)$ this contradicts the assumption we made for $(x_t: t\kin \cT)$.
Now define 
$$\cS=\{ (s,n) : s\in \cS', n\in\N \text{ with } n>\max(s)\}.$$
Then $\cS$ is also well founded, and  no maximal element $s$ of $\cS$ is in $\cS'$ and thus  for every maximal element $s$ in $\cS$ we have $\{ (\xb_s, \zb): \zb\in\cB_\omega\}\cap \cA=\emptyset$. Moreover,
every element which is not maximal in $\cS$ must be in $\cS'$ and has therefore by definition of $\cS$ infinitely many successors.
\end{proof}
The following result was shown by Martin \cite{Mart} for more general games. In the  case that $\cA\subset \cB_\omega$ is closed it has an easy proof (see also \cite{Mart}) . 
\begin{thm}\label{T:6.5} {\rm\cite[Theorem]{Mart}} If $\cA\subset \cB_\omega$ is Borel then the $\cA$-game is determined, meaning that either Player I or Player II has a winning strategy.
\end{thm}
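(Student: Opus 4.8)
The plan is to observe that the general Borel case is exactly Martin's determinacy theorem \cite{Mart} once the game is described correctly, and to include the elementary Gale--Stewart argument for the closed case, which is the only one used below. First I would describe the $\cA$-game as a game of perfect information on the tree $\cT$ of legal positions: a position of even length $(k_1,x_1,\dots,k_m,x_m)$ is one after which it is Player~I's turn, a position of odd length $(k_1,x_1,\dots,k_m,x_m,k_{m+1})$ one after which it is Player~II's turn, the legal moves being $k_{m+1}\in\N$ with $k_{m+1}>\max\rg_E(x_m)$ for Player~I and $x_{m+1}\in S_X\cap c_{00}(E_j)$ with $\min\supp_E(x_{m+1})\ge k_{m+1}$ for Player~II. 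Since $X$ is infinite dimensional, $\spa(E_j:j\ge k)$ is infinite dimensional for every $k\in\N$, so neither player ever runs out of legal moves, every position extends to an infinite play, and the set of plays won by Player~I is the preimage of $\cA$ under the continuous projection of $[\cT]$ onto the coordinates carrying Player~II's moves; in particular it is closed, open, or Borel according as $\cA$ is so in $\cB_\omega$.

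For the Borel case I would then remark that a basic open subset $\{(z_j):\|z_j-x_j\|<\delta,\ j=1,\dots,n\}$ of $\cB_\omega$ in the topology of \eqref{E:6.3}--\eqref{E:6.4} is also open when $\cB_\omega$ is given the product of the \emph{discrete} topologies on $\cB_1=S_X\cap c_{00}(E_j)$ (a norm ball being a union of points), so every Borel subset of $\cB_\omega$ is Borel for that finer product topology; Martin's Borel determinacy theorem for games on an arbitrary move set \cite{Mart} then applies verbatim to $\cT$ and the payoff set above, and yields determinacy.

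For the closed case I would run the Gale--Stewart argument. Here the set of plays won by Player~II is open, and it is enough to show that if Player~II has no winning strategy then Player~I has one (both cannot). Calling a position $p$ \emph{I-favourable} if Player~II has no winning strategy in the subgame from $p$, the empty position is I-favourable by assumption, and one checks directly: (i) if $p$ is I-favourable and it is Player~II's turn, then $(p,x)$ is I-favourable for every legal move $x$ --- else Player~II would win from $p$ by playing such an $x$ and then following her winning strategy from $(p,x)$; (ii) if $p$ is I-favourable and it is Player~I's turn, then $(p,k)$ is I-favourable for some legal move $k$ --- else Player~II would win from $p$ by waiting for Player~I's move $k$ and following her strategy from $(p,k)$. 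By (ii) Player~I can play so as to keep every position reached I-favourable, and by (i) and (ii) every initial segment of a resulting play is I-favourable. Finally such a play $(k_1,x_1,k_2,x_2,\dots)$ must satisfy $(x_j)\in\cA$: otherwise $(x_j)$ lies in the open set $\cB_\omega\setminus\cA$, so a finite portion $(z_j)_{j\le n}$ of it already forces membership in $\cB_\omega\setminus\cA$, and then from the position $(k_1,x_1,\dots,k_n,x_n)$ every continuation is won by Player~II, contradicting I-favourability. Hence Player~I wins, giving the strategy.

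The one step requiring genuine care is the topological bookkeeping: one must make sure that the notions of closed, open, and Borel for $\cA\subset\cB_\omega$ given in \eqref{E:6.3}--\eqref{E:6.4} transfer correctly to the game tree, and in particular that an open winning condition for Player~II is decided at a finite stage of every play she wins --- this is the $\delta$-ball condition used in the last step, and it is exactly what makes the family of I-favourable positions behave well under the recursion. Apart from this (and the harmless observation that $X$ infinite dimensional keeps the game from ever getting stuck), there is no real difficulty.
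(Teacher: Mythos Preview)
Your proposal is correct and matches the paper's approach, which is simply to cite Martin~\cite{Mart} for the Borel case and remark that the closed case has an easy (Gale--Stewart) proof; the paper does not write out any argument. You go further by actually supplying the Gale--Stewart argument for closed $\cA$ and by spelling out the topological reduction to Martin's setting (including the observation, which the paper makes only as a remark after the theorem, that Borel for the product of the norm topologies implies Borel for the product of the discrete topologies). So your write-up is a strict expansion of what the paper does, and the added content is accurate.
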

\begin{rem}
From  \cite{Mart}  it actually follows that it is  enough   that $\cA$ is Borel with respect to the product topology of the discrete topology on $S_X\cap c_{00}(E_j)$, to imply that the $\cA$-game is determined.
\end{rem}

\begin{defin}\label{D:6.7} We  say that  $\cA\subset\cB_\omega$ {\em is closed under taking tails } if for every $(x_j:j\in\N)\in \cA$ and any $n\in \N$ it follows that $(x_{j+n}: j\in\N)$ is in $\cA$. 
\end{defin}

\begin{prop}\label{P:6.6} Assume that $(\cA_m:m\kin\N)$ is  an increasing family of closed subsets of $\cB_\omega$ which are closed under taking tails  and let 
 $\cA=\bigcup_{n=1}^\infty\cA_n$. If Player I  has 
a winning strategy for  the $\cA$-game then there is an $m\in\N$, so that she also has a winning strategy for the $\cA_m$-game.
\end{prop}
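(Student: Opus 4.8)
The plan is to argue by contradiction, combining Martin's determinacy theorem (Theorem \ref{T:6.5}) with a diagonalization that uses the ``closed under taking tails'' hypothesis. So suppose Player I has a winning strategy for the $\cA$-game but, for every $m\in\N$, Player I does \emph{not} have a winning strategy for the $\cA_m$-game. Each $\cA_m$ is closed, hence Borel, so by Theorem \ref{T:6.5} the $\cA_m$-game is determined; therefore for every $m$ Player II has a winning strategy for the $\cA_m$-game. By Remark \ref{R:6.3} this means that for each $m$ there is a block tree $(x^{(m)}_t:t\in\cT)$ (with respect to $(E_j)$) none of whose branches lies in $\cA_m$. I may additionally assume, after passing to a full subtree as in Remark \ref{R:6.3}, that $\min\supp_E(x^{(m)}_{\{k_1,\dots,k_l\}})\ge k_l$ for every node.

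The key step is to assemble from the sequence of trees $(x^{(m)}_t)_m$ a single block tree witnessing that Player II wins the $\cA$-game, contradicting the hypothesis that Player I has a winning strategy there (by Remark \ref{R:6.3} the existence of Player~I's winning strategy precludes the existence of such a tree, since a branch of Player~II's tree played against Player~I's strategy would have to lie in $\cA$ yet be outside $\cA$). The idea is: devote the ``top'' part of the constructed tree (roughly, the first coordinate) to running the tree $(x^{(1)}_t)$, and below any node of fixed depth switch to running $(x^{(m)}_t)$ for larger and larger $m$ as one goes deeper. Concretely, I would build a tree $(y_t:t\in\cT)$ so that a branch $(y_{\{k_1,k_2,k_3,\dots\}})$, after discarding its first $m-1$ entries, is (a tail of) a branch of $(x^{(m)}_t)$ for a suitable $m$ tending to infinity along the branch; since $x^{(m)}$ has no branch in $\cA_m$ and the $\cA_m$ are \emph{increasing} and \emph{closed under taking tails}, the full branch of $(y_t)$ avoids every $\cA_m$, hence avoids $\cA=\bigcup_m\cA_m$. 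Formally: let $(y_{\{k_1\}})_{k_1}$ enumerate the first node of $(x^{(1)}_t)$; having chosen $y_t$ for $t$ of length $\le l$ so that below $t$ we are ``about to continue the $m$-th tree'' for some $m\le l$, let the next node $(y_{(t,k)})_k$ continue the appropriate node of $(x^{(m)}_t)$, but starting at depth $\ge l+1$ also allow a switch: arrange (using that each node of each $x^{(m)}$ is infinite and block) that for infinitely many choices of the branch we increment $m$ to $m+1$ and restart at the root of $(x^{(m+1)}_t)$. Because each $x^{(m)}$ is a block tree and the support condition $\min\supp_E(x^{(m)}_{\{k_1,\dots,k_l\}})\ge k_l$ holds, one can always choose indices large enough that $(y_t)$ remains a block tree (and that it is a legal Player~II response: $\min\supp_E(y_t)$ exceeds the last index played by Player~I). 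Every branch of $(y_t)$ then has the form: finitely many initial vectors, then a tail of a branch of $(x^{(m)}_t)$ for the last value of $m$ reached — except that if $m\to\infty$ along the branch, then for every fixed $m_0$ the branch, after deleting finitely many entries, is a tail of a branch of some $(x^{(m)}_t)$ with $m\ge m_0$, and since $\cA_{m_0}\subset\cA_m$ and $\cA_m$ is closed under tails, no tail of that branch lies in $\cA_{m_0}$; if instead $m$ stabilizes at some $m^\star$, the branch is eventually a branch of $(x^{(m^\star)}_t)$ up to a tail and hence avoids $\cA_{m^\star}\supseteq\cA_{m_0}$ for $m_0\le m^\star$, and avoids $\cA_{m_0}$ for $m_0>m^\star$ trivially since we must recheck — here one must be slightly careful, and the cleanest fix is to \emph{force} $m\to\infty$ along every branch in the construction, which is possible since at every node we have infinitely many successors to play with. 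Then every branch of $(y_t)$ avoids $\cA$, the desired contradiction.

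The main obstacle is exactly the bookkeeping in that diagonal construction: one must interleave the countably many trees $(x^{(m)}_t)$ into a single tree so that (i) it stays a block tree with respect to $(E_j)$ and a legal strategy (both handled by always choosing sufficiently large indices, using the support lower bounds), and (ii) every infinite branch visits the tree $(x^{(m)}_t)$ for arbitrarily large $m$, with each such visit contributing a genuine \emph{tail} of a branch of $(x^{(m)}_t)$. Getting (ii) to hold along \emph{every} branch — not just a dense set of them — is where the ``infinitely many successors at each node'' freedom is essential: at each node of the tree under construction one reserves infinitely many successor-indices, partitions them into infinitely many infinite blocks, and uses the $j$-th block to continue a copy of the $(m+j)$-th tree, so that no matter how Player~I descends, $m$ is strictly increased infinitely often. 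Once this tree is in hand, the conclusion is immediate from the monotonicity of the $\cA_m$ and their closure under taking tails, together with Remark \ref{R:6.3}.
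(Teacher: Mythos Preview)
Your overall architecture---argue by contrapositive, use determinacy to get a Player~II strategy for each $\cA_m$-game, then splice these into a single Player~II strategy for the $\cA$-game---is exactly the paper's approach. But your splicing step has a genuine gap.

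You invoke only Remark~\ref{R:6.3}, obtaining for each $m$ a block tree $(x^{(m)}_t)$ none of whose \emph{infinite branches} lies in $\cA_m$. You then build a tree $(y_t)$ in which, along every branch, you follow $(x^{(m)}_t)$ for finitely many steps, then restart at the root of $(x^{(m+1)}_t)$, and so on, forcing $m\to\infty$. The problem is your sentence ``for every fixed $m_0$ the branch, after deleting finitely many entries, is a tail of a branch of some $(x^{(m)}_t)$ with $m\ge m_0$.'' This is false: once you have forced $m\to\infty$, the tail in question is a concatenation of \emph{finite} pieces from infinitely many different trees, not a tail of a branch of any single $(x^{(m)}_t)$. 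And knowing that no \emph{branch} of $(x^{(m)}_t)$ lies in $\cA_m$ tells you nothing about sequences that agree with such a branch only on a finite initial segment. So as written you cannot conclude the constructed branch avoids $\cA_{m_0}$.

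The missing ingredient is precisely Proposition~\ref{P:6.4}: because each $\cA_m$ is \emph{closed}, Player~II's win in the $\cA_m$-game is secured after finitely many moves. Concretely, one can prune $(x^{(m)}_t)$ to a well-founded, infinitely branching subtree $(x^{(m)}_s:s\in\cS_m)$ such that for every \emph{maximal} $s\in\cS_m$, \emph{every} extension $\zb\succ\xb^{(m)}_s$ in $\cB_\omega$ lies outside $\cA_m$---not just those extensions that stay in the tree. The paper's construction then follows $\cS_1$ to a maximal element, switches to $\cS_2$ from its root, follows to a maximal element, and so on. Each branch of the resulting tree decomposes into finite segments $(z_j:l_{m-1}<j\le l_m)$ with the property that \emph{any} continuation of that segment avoids $\cA_m$; in particular the actual tail $(z_j:j>l_{m-1})$ avoids $\cA_m$, and closure under tails gives $(z_j)\notin\cA_m$ for every $m$. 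Your argument becomes correct once you insert this use of Proposition~\ref{P:6.4}; without it the diagonalization does not go through.
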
 
\begin{rem} Let us first  present an intuitive argument for the claim  in Proposition \ref{P:6.6}.  
If Player II has a strategy for each $\cA_m$ game, for all $m\in\N$, she
can  use the following winning  strategy for the  $\bigcup_{m\in\N} \cA_m$-game:
 First she follows her strategy for $\cA_1$ and choses  $x_1,x_2,x_3,\ldots $. Since $\cA_1$ is closed, she will, after say $l_1$ moves,  be in the situation that for all $\zb\in\cB_\omega$, with $\zb\succ (x_1,x_2,\ldots,x_{l_1}) $, it follows that  $\zb\not\in \cA$. Then she switches to the  strategy for $\cA_2$, and after Player I chooses $k_{l_1+1}$ she 
 chooses  the  element  $x_{l_1+1}$ of $S_X\cap c_{00}(E_j)$  which she would have chosen, if $k_{l_1+1}$ had been the first step of Player I in the $\cA_2$-game. She follows her strategy 
  choosing $x_{l_{1}+2},x_{l_{1}+3},\ldots$ until,    after some $l_2$ steps, with $l_2>l_1$, she will again be in the situation 
  that for all $\zb\in\cB_\omega$, with $\zb\succ (x_{l_1+1},x_{l_1+2},\ldots,x_{l_{2}})$, it follows that $\zb\not\in \cA$.  She continuous that way and finally produces a sequences
  $(x_j)\in \cB_\omega$ and $(l_j)\subset \N$ so that
  $(x_{l_m+j}:j\in\N)\not \in \cA_m$ for all $m\in\N$. Since $\cA$ is closed under taking  tails, it follows that the whole sequence $(x_j)$ is not in $\cA$ and, thus, that Player II has won.
  
  Since by Theorem \ref{T:6.5} the games $\cA_m$, $m\kin\N$, and $\cA$ are determined, we deduce therefore that, if Player  I has a winning strategy 
  for the $\cA$ game, and thus player II was not a winning strategy for that game, it follows that there is an $m\in\N$ so that  player II has no winning strategy for the $\cA_m$-game, and thus   player I has a winning strategy for that game.
\end{rem}

\begin{proof}[Proof of Proposition  \ref{P:6.6}]  Since by Theorem \ref{T:6.5} the $\cA$-game  and the $\cA_m$-games, $m\kin\N$,  are determined, we need to show that Player II has a strategy for the $\cA$-game, assuming that she has a strategy for each $\cA_m$-game.
By Proposition  \ref{P:6.4}  there is for each $m\kin\N$ a well founded and infinitely branching tree  $(x^{(m)}_s\!:\!s\kin\cS_m)\subset S_X\cap c_{00}(\oplus E_j)$, $\cS_m\subset \cT$,
so that 
 $\{  \zb: \zb\kin\cB_\omega, \zb\!\succ\!\xb^{(m)}_s\}\cap \cA_m=\emptyset$, for each maximal $s\in  \cS_m$. 
After relabeling we can assume that for each non maximal $s=\{k_1,\ldots,k_l\}$ in $\cS_m$ it follows that $\{k_1,k_2,\ldots,k_l,k\}\in \cS_m$ for all $k>k_l$. 
We define a full tree  $(x_t ; t\kin \cT)$ as follows: If $ t=\emptyset$ we put  $x_\emptyset=x^{(1)}_\emptyset $ (this choice is irrelevant)
For any other  $t=\{k_1,k_2,\ldots,k_l\}\in \cT$, $l\ge 1$  we proceed as follows. We choose
$m\in\N$ and $0=l_0<l_1<l_2<\ldots< l_{m-1}<l_m= l$,  so that
for all $1\le j<m$, $\{k_{l_{j-1}}+1,  k_{l_{j-1}}+2, \ldots,   k_{l_{j}}\}$ is a maximal element  of $\cS_j$ and 
$\{k_{l_{m-1}}\kplus1, k_{l_{m-1}}\kplus2, \ldots, n_{l_m}\}$ is a (not necessary maximal) element of $\cS_m$.
Then we define for that $t$
$$x_t= x^{(m)}_{\{k_{l_{m-1}}\kplus1,  k_{l_{m-1}}\kplus 2, \ldots, k_{l_m}\}}.$$
 It follows that each branch  $(z_j)$ of $(x_t:t\in \cT)$ (\ie $z_j=x_{k_1,k_2,\ldots,k_j\}}$ for some increasing sequence $(k_j)\subset \N$) can be subdivided  into finite sequences 
 $(z_{j}:l_{m-1}<j\le l_m)$, for $m\in\N$, so that  for all $\zb\succ (z_{j}:l_{m-1}<j\le l_m)$ we have 
   $\zb\not\in \cA_m$. In particular, $(z_{j}:k_{m-1}<j)\not\in \cA_m$, for all $m\in\N$, and since
   $\cA_m$ is closed under taking  tails, it follows that $(z_j:j\kin\N)\not\in \cA$. Thus $(x_t:t\in\cT)$ is a winning strategy for Player II.
\end{proof}
Let us list some  examples of sets $\cA\subset \cB_\omega$ which are of interest (see \cite{FOSZ,JZ1, JZ2,OS1,OS2,OSZ})
\begin{exs}\label{Ex:6.10} The following sets in (a), (b)  (c)  $\cA\subset \cB_\omega$ are hereditary under taking tails and closed.
The example in (d) is  Borel.
 
 \begin{enumerate}
 \item[a)]  For $C\ge 1$ let
 $$\cA=\big\{(x_j)\in\cB_\omega: (x_j)\text{ is $C$-unconditional}\big\}$$
\item[b)] For $C\ge 1$ and $1\le p\le \infty$
 \begin{align*}
 \cA&=\big\{(x_j)\in\cB_\omega: (x_j)\text{ is $C$-equivalent to the $\ell_p$-unit vector basis }\big\} \text{ or }\\
 \cA&=\big\{(x_j)\in\cB_\omega: (x_j)\text{ $C$-dominates  the $\ell_p$-unit vector basis }\big\} \text{ or }\\
  \cA&=\big\{(x_j)\in\cB_\omega: (x_j)\text{ is $C$-dominated by  the $\ell_p$-unit vector basis }\big\} 
  \end{align*} 
 \end{enumerate}
 We could replace in the examples of (b) the $\ell_p$ unit vector basis by any other basic sequence $(v_j)$. But in the case
 that $(v_j)$ is not sub symmetric (if for example $(v_j)$ is the unit vector basis of a Tsirelson space) the following choice is 
 more meaningful (cf. \cite{FOSZ, OSZ}). 
\begin{enumerate}
\item[c)] Let $(v_j)$ be a normalized basic sequence and $C\ge 1$
\begin{align*}
 \cA&=\left\{(x_j)\in\cB_\omega:  \begin{matrix}\text{$(x_j)$ is $C$-equivalent to $(v_{m_j})$, where for $j\in\N$}\\
                              \text{$m_j\in[\max\supp_E(x_{j}),\max\supp_E(x_{j})]$}  \end{matrix}\right\} \text{ or }\\
 \cA&=\left\{(x_j)\in\cB_\omega:  \begin{matrix}\text{$(x_j)$ $C$-dominates  $(v_{m_j})$, where for $j\in\N$}\\
                              \text{$m_j\in[\max\supp_E(x_{j}),\max\supp_E(x_{j})]$}  \end{matrix}  \right\} \text{ or }\\
  \cA&=\left\{(x_j)\in\cB_\omega:   \begin{matrix}\text{$(x_j)$ is $C$-dominated by $(v_{m_j})$, where for $j\in\N$}\\
                              \text{$m_j\in[\max\supp_E(x_{j}),\max\supp_E(x_{j})]$}  \end{matrix} \right\} .
  \end{align*} 
  \item[d)]  For the next  example we assume that $\cF\subset [\N]^{<\omega}$ is hereditary, spreading and compact,
   $C \ge 1$ and $(v_j)$ is a normalized and subsymmetric  basic sequence 
   \begin{align*}
 \cA&=\big\{(x_j)\in\cB_\omega: \{ A\in [\N]^{<\omega} : (x_j: j\kin A) \text{ is $C$-equivalent to $(v_j:j\in A)$}\}\in \cF\big\} \\
  \cA&=\big\{(x_j)\in\cB_\omega: \{ A\in [\N]^{<\omega} : (x_j: j\kin A) \text{  $C$-dominates $(v_j:j\in A)$}\}\in \cF\big\} \\
  \cA&=\big\{(x_j)\in\cB_\omega: \{ A\in [\N]^{<\omega} : (x_j: j\kin A) \text{ is  $C$-dominated by $(v_j:j\in A)$}\}\in \cF\big\}
   \end{align*} 
Note that the first set in (d) can be written as
$$\cA=\bigcap_{B\in[\N]^{<\omega}\setminus\cF} \big\{(x_j)\in\cB_\omega: (x_j: j\kin B) \text{ is not $C$-equivalent to $(v_j:j\in B)$}\big\}.$$ \\
This implies easily that $\cA$ is Borel. A similar argument works for the two other sets.
\end{enumerate} 
\end{exs} 

We are now ready to state the main result of this section

\begin{thm}\label{T:6.11} Let $\cA\subset \cB_\omega$. The following are equivalent.
\begin{enumerate} 
 \item[a)] For all decreasing sequences $\vpb=(\vp_n)\ksubset(0,1)$  Player I has a winning strategy for the $\overline{\cA_\vpb}$-game.
 \item[b)]   For all decreasing sequences $\vpb=(\vp_n)\ksubset(0,1)$  every block tree $(x_t:t\in \cT)\ksubset S_X$ has a branch  which lies in $\overline{\cA_\vpb}$. 
 \item[c)] For all decreasing sequences $\vpb=(\vp_n)\ksubset(0,1)$   there is an increasing sequence $(m_j)\subset\N$  so that for the blocking
 $(H_j)$, with $H_j=\spa(E_i:m_{k-1}< i\le m_k)$  (with $m_0=0$), the following holds:
Every normalized   skipped block sequence  $(z_i)\ksubset S_X$  with respect to $(H_j)_{j\ge 2}$ lies in $\overline{\cA_\vpb}$.
 \end{enumerate}
and letting $Y$ be the closed linear span   of  $(F_j)$, then above conditions are equivalent with
 \begin{enumerate} 
  \item[d)] For all decreasing sequences $\vpb=(\vp_n)\ksubset(0,1)$  every  $\sigma(X,Y)$  null tree $(x_t:t\in \cT)\subset S_X$ has a branch  which lies in $\overline{\cA_\vpb}$.

 \end{enumerate}
\end{thm}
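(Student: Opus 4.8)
The plan is to prove the cycle of implications (a)$\Rightarrow$(d)$\Rightarrow$(b)$\Rightarrow$(c)$\Rightarrow$(a) (or a convenient reordering of these), since in a separable space with a weak topology induced by $Y=\overline{\spa(F_j)}$ one can move freely between abstract $\sigma(X,Y)$-null trees, block trees, and skipped block sequences via the perturbation machinery of Proposition~\ref{P:6.2a}. The role of the $\vpb$-fattening $\overline{\cA_\vpb}$ is exactly to absorb these perturbations: if a branch lands in $\cA$ up to a small error at each coordinate, it lands in $\overline{\cA_\vpb}$ on the nose.

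\medskip
\noindent\textbf{(a)$\Rightarrow$(d).} Fix a decreasing $\vpb=(\vp_n)\subset(0,1)$ and choose a decreasing $\deltab=(\delta_n)$ with $\sum \delta_n$ small enough and (say) $3\delta_n<\vp_n$. Suppose, toward a contradiction, that some $\sigma(X,Y)$-null tree $(x_t:t\in\cT)\subset S_X$ has \emph{no} branch in $\overline{\cA_\vpb}$. By Proposition~\ref{P:6.2a} there is a full subtree $(z_t:t\in\cT)$ of $(x_t)$ and a block tree $(\tilde z_t:t\in\cT)\subset S_X\cap c_{00}(E_j)$ with $\|z_t-\tilde z_t\|<\delta_t$ for all $t$; after reindexing we may assume $\min\supp_E(\tilde z_{\{k_1,\ldots,k_l\}})\ge k_l$. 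This block tree is a winning strategy for Player II in the $\overline{\cA_\vpb}$-game (in the sense of Remark~\ref{R:6.3}): any branch $(\tilde z_j)$ of $(\tilde z_t)$ is a $\deltab$-perturbation of a branch $(z_j)$ of $(z_t)$, which is itself a branch of $(x_t)$; since that branch is not in $\overline{\cA_\vpb}$, and since $\overline{\cA_\vpb}$ is closed and contains the $\deltab$-fattening of $\overline{\cA_{(\vp_n-\delta_n)}}$... more simply: if some branch $(\tilde z_j)$ of the block tree \emph{were} in $\overline{\cA_\vpb}$ we would not immediately get a contradiction, so instead we run the argument with $\vpb$ replaced by a slightly smaller $\vpb'$ with $\vp'_n+\delta_n\le\vp_n$: a branch of $(\tilde z_t)$ in $\overline{\cA_{\vpb'}}$ forces the corresponding branch of $(x_t)$ to be in $\overline{\cA_\vpb}$ (each coordinate is within $\delta_n$), contradicting our assumption. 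Hence \emph{no} branch of $(\tilde z_t)$ lies in $\overline{\cA_{\vpb'}}$, so $(\tilde z_t)$ is a winning strategy for Player II in the $\overline{\cA_{\vpb'}}$-game, contradicting (a) applied to $\vpb'$.

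\medskip
\noindent\textbf{(d)$\Rightarrow$(b) and (b)$\Rightarrow$(d).} A block tree is automatically $\sigma(X,Y)$-null, by Proposition~\ref{P:2.1} (since the nodes are block sequences with respect to the shrinking-in-the-relevant-sense $(E_j)$; more precisely block sequences in $c_{00}(E_j)$ are $\sigma(X,Y)$-null because $Y=\overline{\spa(F_j)}$ and $f\in F_j$ eventually vanishes on the tail), so (d)$\Rightarrow$(b) is immediate. For (b)$\Rightarrow$(d): given a $\sigma(X,Y)$-null tree $(x_t)$ and $\vpb$, apply Proposition~\ref{P:6.2a} with a smaller $\vpb'$ to get a full subtree and a block tree $\vpb'$-close to it; by (b) the block tree has a branch in $\overline{\cA_{\vpb'}}$, and perturbing back (each coordinate moving by $<\vp'_n$, with $\vp'_n+\vp'_n\le\vp_n$ after an innocuous rescaling) gives a branch of $(x_t)$ in $\overline{\cA_\vpb}$.

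\medskip
\noindent\textbf{(c)$\Rightarrow$(b) and (b)$\Rightarrow$(c).} The implication (c)$\Rightarrow$(b): given a block tree $(x_t:t\in\cT)$ and $\vpb$, use Lemma~\ref{L:3.1}--style control — more directly, pass down the tree along a branch while simultaneously thinning so that the chosen nodes form a skipped block sequence with respect to a blocking $(H_j)$ as in (c); a diagonal argument produces a single branch that is a skipped block sequence relative to \emph{some} blocking, but we need it for the \emph{prescribed} blocking $(H_j)$, so instead: first apply (c) to obtain $(m_j)$, then note that inside the given block tree one can choose a branch whose $j$-th coordinate has $\rg_E$ inside $(m_{k_{j-1}},m_{k_j})$ for a sufficiently sparse $(k_j)$, i.e. a skipped block sequence with respect to $(H_j)_{j\ge2}$; such a branch lies in $\overline{\cA_\vpb}$ by (c). For (b)$\Rightarrow$(c): given $\vpb$, suppose (c) fails; then for every blocking $(H_j)$ there is a normalized skipped block sequence with respect to $(H_j)_{j\ge2}$ not in $\overline{\cA_\vpb}$. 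Use this, together with Proposition~\ref{P:6.6} applied to an increasing exhaustion $\overline{\cA_\vpb}=\bigcup_m\cA_m$ by closed sets closed under tails (after checking $\overline{\cA_\vpb}$ is closed and can be so exhausted — this uses that $\cA_\vpb$ is a union of $\vp_n$-balls, hence $\overline{\cA_\vpb}$ is closed and $F_\sigma$ in the appropriate sense), to manufacture, by a Player~II tree-pasting argument as in the proof of Proposition~\ref{P:6.6}, a single block tree none of whose branches lies in $\overline{\cA_\vpb}$, contradicting (b). In fact the cleanest route for (b)$\Rightarrow$(c) is: negate (c) to get a winning strategy for Player~II in the $\overline{\cA_\vpb}$-game (the skipped-block counterexamples, assembled over all blockings, ARE such a strategy after a standard coding), which by Remark~\ref{R:6.3} yields a block tree with no branch in $\overline{\cA_\vpb}$, contradicting (b).

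\medskip
\noindent\textbf{(b)$\Rightarrow$(a).} This is the contrapositive of the easy direction of Remark~\ref{R:6.3}: if Player~I does \emph{not} have a winning strategy in the $\overline{\cA_\vpb}$-game then Player~II does (no determinacy needed here for closed $\cA$, but $\overline{\cA_\vpb}$ is closed so Theorem~\ref{T:6.5} applies anyway), hence by Remark~\ref{R:6.3} there is a block tree with no branch in $\overline{\cA_\vpb}$, contradicting (b). Combining, all four statements are equivalent.

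\medskip
\noindent\textbf{Main obstacle.} The delicate point is the equivalence with (c): translating an abstract Player~II winning strategy (a block tree with no good branch) into a \emph{skipped block sequence with respect to a single prescribed blocking} $(H_j)$, and conversely. This is where one must invoke Proposition~\ref{P:6.4} (the strategy is decided after finitely many steps) and the pasting construction from the proof of Proposition~\ref{P:6.6}, and carefully manage the bookkeeping so that the blocks of the glued tree respect the blocking $(H_j)$ while the branches remain genuine skipped block sequences with the same endpoints. Everything else is routine perturbation ($\vpb$ versus $\vpb'$) and the standard dictionary between trees, strategies, and Remark~\ref{R:6.3}.
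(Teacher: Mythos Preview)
Your treatment of (a)$\Leftrightarrow$(b)$\Leftrightarrow$(d) via Remark~\ref{R:6.3}, Proposition~\ref{P:6.2a}, and determinacy is correct and matches the paper; (c)$\Rightarrow$(a) is also fine. The genuine gap is (a)$\Rightarrow$(c). Your ``cleanest route'' claims that the negation of (c) --- for each blocking $(H_j)$ there exists a bad skipped block sequence --- assembles into a winning strategy for Player~II. This is unjustified: one bad sequence per blocking does not yield a block \emph{tree} with no good branches, because Player~I's moves in the game are adaptive (each $k_{l+1}$ depends on Player~II's previous choices), so no single blocking is determined in advance against which to deploy your counterexample. Your first approach via Proposition~\ref{P:6.6} is also confused: $\overline{\cA_\vpb}$ is already closed, so writing it as an increasing union $\bigcup_m\cA_m$ of closed sets is vacuous, and in any case that proposition would require Player~II strategies for each $\cA_m$, which the negation of (c) does not supply.

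The paper proves (a)$\Rightarrow$(c) by a direct recursive construction of $(m_n)$. One maintains the invariant that for \emph{every} skipped block $(x_i)_{i=1}^l$ with respect to $(H_j)_{2\le j\le n}$, Player~I still has a winning strategy for the $\overline{\cA_{\vpb(n)}}(x_1,\ldots,x_l)$-game, where $\vpb(n)=(\vp_j(1-2^{-n}))_j$ is a slightly shrunk fattening. The inductive step picks a finite $\vp_{n+2}2^{-n-2}$-net $B$ of such skipped blocks (possible by finite-dimensionality of $\spa(H_j:2\le j\le n)$), takes for each $b\in B$ a legal first move $k(b)$ of Player~I's winning strategy for $\overline{\cA_{\vpb(n)}}(b)$, and sets $m_{n+1}=\max_{b\in B}k(b)$; the $2^{-n}$ slack in $\vpb(n)$ absorbs the net error when passing from an arbitrary skipped block to a nearby $b\in B$. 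At the end, every skipped block sequence $(x_i)$ with respect to $(H_j)_{j\ge2}$ has $\overline{\cA_\vpb}(x_1,\ldots,x_l)\neq\emptyset$ for all $l$, whence $(x_i)\in\overline{\cA_\vpb}$ by closedness. This finite-net-plus-bookkeeping construction is the missing idea in your proposal.
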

\begin{proof}[Proof of Theorem \ref{T:6.11}]
The equivalences of (a), (b) and,  (d),  follow from Proposition  \ref{P:6.2a},   and Remark  \ref{R:6.3}.
It is also clear that (c) implies (a). Indeed, assuming (c), Player I has the following easy strategy: She chooses for given $\vp\keq(\vp_n)\ksubset(0,1)$ the sequence $(m_j)$ as in (d).
Her first move will be $n_1\keq m_1$, and after the $k$-th  step, in which Player II has chosen $x_k$, Player I choose $n_{k+1}\keq m_{N+1}$, where
$N\keq\max\supp_H(x_k)$. Therefore she forces  Player II  to  pick a skipped block sequence with respect to $(H_j)_{j\ge 2}$ which lies in $\overline{\cA_{\vpb}}$.

Now assume (a), our goal is to prove (c).  
 Let $\vpb=(\vp_j) \subset (0,1)$ be given.  We can assume that $(\vp_j)$ decreases.
For $n\kin\N$ put $\vpb{(n)}=(\vp_j{(n)}:j\kin\N)=(\vp_j(1-2^{-n}):j\kin\N)$

 We claim that we can recursively  choose $m_1<m_2<m_3<\ldots $  satisfying the following two properties
   (letting $H_j=\spa(E_i:m_{j-1}<i\le m_j)$, $j=1,2,\ldots, n$):
\begin{align}\label{E:6.11.1}
&\text{For every  skipped  block sequence $(x_i)_{i=1}^l$ in $S_X\cap \spa(H_j:2\kleq j\kleq n-1)$  (with  }\\
&\text{respect to $(H_j)$) and every $x\kin S_X\cap\spa(E_j:j \kgr m_n)$ Player 1 has a winning }\notag\\
&\text{strategy for the $\overline{\cA_{\vpb{(n)}}}(x_1,\ldots,x_l, x)$-game.}  \notag\\
\label{E:6.11.2}
&\text{For every skipped  block  $(x_i)_{i=1}^l$ in $S_X\cap \spa(H_j:2\kleq j\kleq n)$  (with respect to $(H_j)$)  }\\
&\text{Player 1 has a winning strategy for the $\overline{\cA_{\vpb{(n)}}}(x_1,\ldots,x_l)$-game.}  \notag
\end{align}
Since by assumption (d) Player 1 has a winning strategy for  $\overline{\cA_{\vpb(1)}}$  there is an $m_1$, so that for  all $x \in S_X \cap \spa(E_j:j\ge m_1)$ Player 1 has a winning 
strategy for the  $\overline{\cA_{\vpb(1)}}(x)$-game.
Note that for $n=1$, $\emptyset$ is the only skipped  block in $\spa(H_j: 2\le j\le 0)$. Thus,
in that case \eqref{E:6.11.1} simply says that   for any $x\in S_X\cap\spa(E_j: j>m_1))$ Player 1 has a winning strategy for the  $\overline{\cA_{\vpb{(1)}}}(x)$-game,
which follows from our choice of $m_1$ 
and \eqref{E:6.11.2}  means that Player 1 has a winning strategy for the $\overline{\cA_{\vp(1)}}$-game, which follows from our assumption (d).

Now assume that $m_1<m_2<\ldots<m_n$ have been chosen so that conditions \eqref{E:6.11.1} and  \eqref{E:6.11.2} hold.
We first choose a {\em dense enough} finite set  $B$ of skipped block sequences with respect to $(H_j:j=2,3,\ldots,n)$, more precisely, $B$ includes the empty block, and for any 
skipped  block sequence $(x_j)_{j=1}^l$ in $S_X$  with respect to $(H_j:j=2,\ldots,n)$, there is a sequence  $b=(\xt_j)_{j=1}^l\in B$ of the same length $l$,
so that $\supp_E(x_j)=\supp_E(\xt_j)$, for $j\keq1, 2,\ldots,l$, and so that $\|x_j\kminus\xt_j\|<\vp_{n+2} 2^{-n-2}$.  Then we choose,  using   \eqref{E:6.11.2},  to each $b\kin B$ a natural number  $k(b)\kgr m_n$, so that $k(b)$
 could be the first move of a winning strategy  for  Player 1 in the $\overline{\cA_{\vpb{(n)}}}(b)$-game. We let  $m_{n+1}=\max_{b\in B} k(b)$ and have to verify 
  \eqref{E:6.11.1} and \eqref{E:6.11.2} for $n\kplus1$. 
  
  To verify   \eqref{E:6.11.1} for $n\kplus1$ let 
 $(x_j)_{j=1}^l$ be a skipped block in $S_X\cap\spa(H_j:2\kleq j\kleq n)$ with respect to $(H_j)_{j=2}^n$. We first choose 
  $(\xt_j)_{j=1}^l\in B$,  so that $\supp_E(x_j)=\supp_E(\xt_j)$, and so that $\|x_j-\xt_j\|<\vp_{n+2} 2^{-n-2}$, for $j=1,2,\ldots, l$.
  Note that  $ \overline{\cA_{\vpb{(n)}}}(\xt_1,\xt_2,\ldots,\xt_l) \subset \overline{\cA_{\vpb{(n+1)}}}(x_1,x_2,\ldots,x_l)$. Indeed,
  \begin{align*}
\overline{\cA_{\vpb{(n)}}}(\xt_1,\ldots,\xt_l)&=\overline{\big\{ (z_j)\subset S_X: (\xt_1,\xt_2,\ldots,\xt_l,z_1,z_2,\ldots ) \kin \cA_{\vpb{(n)}}\big\}}\\
&\subset \overline{\big\{ (z_j)\subset S_X: (x_1,x_2,\ldots,x_l,z_1,z_2,\ldots ) \kin \cA_{\vpb{(n+1)}}\big\}}=\overline{\cA_{\vpb{(n+1)}}}(x_1,\ldots,x_l).
\end{align*}
  The choice of $m_{n+1}$ therefore yields  that  condition \eqref{E:6.11.1} is satisfied for $n+1$.
  Condition  \eqref{E:6.11.2} for $n+1$  follows now from condition \eqref{E:6.11.1} for $n$ if we note that for any normalized  sequence 
  $(x_i)_{i=1}^l\subset S_X \cap \spa(H_j: 2\le j\le n+1) $,
  which is a skipped block with respect to $(H_j)_{j=2}^{n+1}$
  the sequence $(x_j)_{j=1}^{l-1}$ must be a skipped block sequence in  $\spa(H_j:2\le j\le n-1)$.
  This finishes the inductive choice of $(m_j)$ and $(H_j)$.

 Now assume that $(x_i)$ is a skipped block sequence with respect to $(H_j)$ in $S_X$. Then for every initial segment $(x_j)_{j+1}^l$ Player 1 has a  winning strategy for 
 $\overline{\cA_{\vpb}(x_1,\ldots,x_l)}$ in particular this means that  $\cA_{\vpb}(x_1,\ldots,x_l)$ cannot be empty. Thus, since $\overline{\cA_{\vpb}}$ is closed  it follows that   $(x_j)\kin \overline{\cA_{\vpb}}$.
\end{proof}  
From Theorem \ref{T:6.11} we deduce the  missing part of the Main Theorem, namely the verification  that, under the appropriate assumption, the FDD $(Z_i)$ of $Z$ is uncondtional.

According to \cite{JZ2} we say that $X$ {\em has the $w^*$-Unconditional Tree Property} ($w^*$-UTP) if every $w^*$-null tree in $X^*$ has a branch which is unconditional.

\begin{cor}{ \rm{\cite{JZ2}}}
Assume that $X$ has the $w^*$-UTP and that $X^*$ is separable. Then there is an FMD $(E_j)$ with biorthogonal sequence $(F_j)$ so that the space FDD 
$(Z_j)$ of the space $Z$, as constructed in Section \ref{S:3} is unconditional.
\end{cor}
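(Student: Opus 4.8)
The plan is to combine Theorem \ref{T:6.11} with Proposition \ref{P:3.8}. By Proposition \ref{P:3.8}, it suffices to produce a shrinking FMD $(E_j)$ of $X$ (with biorthogonal sequence $(F_j)$) which, together with $(F_j)$, satisfies the conclusions of Lemma \ref{L:2.2}, and such that there is a constant $C\ge 1$ for which every skipped block sequence in $X^*$ with respect to $(F_j)$ is $C$-suppression unconditional. Since $(F_j)$ is an FMD of $Y=\overline{\spa(F_j:j\in\N)}$ whose biorthogonal sequence is $(E_j)$, and since (because $X^*$ is separable) $Y=X^*$, we are really asking for a shrinking FMD of $X^*$ whose skipped block sequences are uniformly suppression-unconditional.

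First I would apply the $w^*$-UTP: by \cite{JZ2} there is a uniform constant, i.e.\ every $w^*$-null tree in $X^*$ has a branch which is $C$-unconditional for a fixed $C$ (this uniformization is standard via a diagonal/compactness argument on the constants, or can be quoted from \cite{JZ2}). Now start with any $1$-norming shrinking $M$-basis of $X^*$ — one exists because $X^*$ is separable and, $X$ having separable dual, $X^*$ itself has separable dual iff $X^{**}$ is separable; more safely, one works directly on $X^*$ viewing it as a separable space with a shrinking FMD whenever $X$ is reflexive, and in the general (non-reflexive) case one uses that a $w^*$-null tree in $X^*$ is a $\sigma(X^*,X)$-null tree, which is exactly the setting of condition (d) of Theorem \ref{T:6.11} applied with the roles of the space and the predual-type functionals as in Proposition \ref{P:6.2a}. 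Concretely, fix a $1$-norming FMD $(E'_j)$ of $X$ with biorthogonal sequence $(F'_j)$; then $(F'_j)$ is an FMD of $X^*$, and $\spa(E'_j)$ plays the role of the norming subspace $Y$ in Theorem \ref{T:6.11} for the space $X^*$.

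Next I would apply Theorem \ref{T:6.11} to $X^*$ with $\cA=\{(x^*_j)\in\cB_\omega : (x^*_j)\text{ is }C\text{-unconditional}\}$, which by Example \ref{Ex:6.10}(a) is closed and closed under tails; note $\overline{\cA_{\vpb}}$ consists of sequences that are $C'$-unconditional for a constant $C'$ close to $C$ when $\vpb$ is small, by a routine perturbation estimate. The $w^*$-UTP says precisely that condition (d) of Theorem \ref{T:6.11} holds for this $\cA$ (every $\sigma(X^*,X)$-null tree has a branch in $\cA$, hence in $\overline{\cA_{\vpb}}$). Therefore condition (c) holds: for a suitably small $\vpb$ there is a blocking $(H_j)$ of $(F'_j)$ so that every normalized skipped block sequence with respect to $(H_j)_{j\ge 2}$ is $C'$-unconditional. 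Relabeling $(H_j)$ back on the $X$ side gives a blocking of $(E'_j)$ into an FMD whose biorthogonal blocking of $(F'_j)$ has uniformly suppression-unconditional skipped blocks.

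Finally I would feed this blocked FMD into the blocking Lemma \ref{L:2.2} (which only further blocks, so it preserves the skipped-block-unconditionality with the same constant since a skipped block with respect to a coarser blocking is a skipped block with respect to the finer one) to obtain an FMD $(E_j)$ satisfying the conclusions of Lemma \ref{L:2.2} and still having all skipped blocks of $(F_j)$ in $X^*$ uniformly suppression-unconditional; then Proposition \ref{P:3.8} yields that $(Z_j)$ is unconditional. The main obstacle I anticipate is bookkeeping: ensuring that (i) the uniformity of the unconditionality constant survives both the passage from ``some branch is unconditional'' to ``some branch is $C$-unconditional with $C$ fixed'' and the $\vpb$-fattening, and (ii) the two successive blockings (first via Theorem \ref{T:6.11}, then via Lemma \ref{L:2.2}) are genuinely compatible, i.e.\ the second blocking is a coarsening of the first so that skipped-block structure is not lost. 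Both are routine but must be checked; the conceptual content is entirely in Theorem \ref{T:6.11} and Proposition \ref{P:3.8}.
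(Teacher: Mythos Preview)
Your approach is essentially the paper's: start with a shrinking FMD $(E'_j)$ of $X$, apply Theorem~\ref{T:6.11} to the space $X^*$ with the FMD $(F'_j)$ and the set $\cA$ of unconditional block sequences (so that condition~(d) is exactly the $w^*$-UTP, since $Y=\overline{\spa(E'_j)}=X$), extract a blocking via condition~(c), further block via Lemma~\ref{L:2.2}, and conclude with Proposition~\ref{P:3.8}.

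The one place where you diverge from the paper is the uniformization of the unconditionality constant. You propose to obtain a fixed $C$ \emph{before} invoking Theorem~\ref{T:6.11}, either by quoting \cite{JZ2} or by a ``standard diagonal/compactness argument'', and then apply Theorem~\ref{T:6.11} directly to $\cA_C$. The paper instead applies Theorem~\ref{T:6.11} to $\cA=\bigcup_{m}\cA_m$ (all unconditional sequences, no fixed constant), uses (d)$\Rightarrow$(a) to get that Player~I wins the $\cA$-game, and then invokes Proposition~\ref{P:6.6} to descend to some $\cA_C$ before using (a)$\Rightarrow$(c). This is the cleaner route and is self-contained within the paper's machinery; your ``diagonal argument'' would in effect reprove Proposition~\ref{P:6.6}, and quoting \cite{JZ2} for a uniform constant is not quite what the $w^*$-UTP gives directly. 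So your obstacle~(i) is real but is handled precisely by Proposition~\ref{P:6.6}, which you should invoke. Your obstacle~(ii), and the passage from $(H_j)_{j\ge 2}$ to $(H_j)_{j\ge 1}$, are indeed routine and are treated by the paper exactly as you suggest (a further blocking, possibly enlarging~$C$).
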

\begin{proof} Let $(E'_j)$  be any shrinking FMD of $X$ and $(F'_j)$ its biorthogonal sequence and define for $C\ge 1$.
\begin{align*}
\cA_C&=\big\{ (x^*_j)\in \cB_\omega(X^*,F'): (x^*_j)\text{ is $C$-unconditional}\}\text{ and  } \\
   \cA&=\bigcup_{m\in\N} \cA_m=
  \big\{ (x^*_j)\in \cB_\omega(X^*,F'): (x^*_j)\text{ is unconditional}\} .
  \end{align*}
As noted in Examples \ref{Ex:6.10} $\cA_C$ is closed. We also note that for any summable and decreasing  sequence $\vpb=(\vp_j)\subset (0,1)$ we have 
$\cA=\cA_{\vpb}$,
and for  $C\ge 1$ there is a $C'=C'(\vp)$ so that $\cA_C\subset [\cA_C]_{\vpb}\subset \cA_{C'}$.
Using the equivalence (a)$\iff$(d) in Theorem  \ref{T:6.11} and  Proposition \eqref{P:6.6} we deduce that there is a $C\ge 1$ so that Player I has a winning strategy for the 
$\cA_C$-game.  But this implies, maybe after increasing $C$ slightly and  using  the  equivalence (a)$\iff$(c) in Theorem  \ref{T:6.11} that we can block $(F'_j)$ into an MFD  $(F_n)$ so that every skipped block in $S_{X^*}\cap \spa(F:j\ge 2)$  with respect  to $(F_j)$ is $C$-unconditional. After possibly increasing $C$ again 
and after possibly passing to further blocks , we can assume that very skipped block in $S_{X^*}\cap \spa(F:j\ge 1)$  with respect  to $(F_j)$ is $C$-unconditional
and that the conclusions of Lemma \ref{L:2.2}  are satisfied.  Therefore our claim follows  from Proposition \ref{P:3.8}.
\end{proof}
\begin{rem} As proved  in \cite[Theorem 2.12]{JZ1} if $X$ is reflexive the property of having the $w^*$-Unconditional Tree Property is equivalent with 
having the {\em $w$-Unconditional Tree Property} which means that every weakly null tree in $S_X$ (not in $X^*$) has a branch which is unconditional.
\end{rem}


\begin{thebibliography}{4321}
   
   \bibitem{AJO}
D.~Alspach, R.~Judd, and E.~Odell, \emph{The {S}zlenk index and local
  {$l\sb 1$}-indices}, Positivity \textbf{9} (2005), no.~1, 1 -- 44.


   
\bibitem{AG}
S.~Argyros, and I.~Gasparis, \emph{Unconditional structures of
  weakly null sequences.} 
  Trans. Amer. Math.Soc. \textbf{353} (2001), no.~5, 2019--2058

   
   
\bibitem{DFJP}
W.J.~Davis, T.~Figiel,  W.B.~Johnson, and A.~Pe\l czy\'nski, {\em
Factoring weakly compact operators. }
J. Functional Analysis {\bf 17} (1974), 31 -- 327. 


\bibitem{Ca1} R.~Causey, {\em Estimation of the Szlenk index of Banach spaces via Schreier spaces}, to appear in Studia Mathematica, arXiv:1212.5576.

\bibitem{Ca2} R.~Causey,{\em Estimation of the Szlenk index of reflexive Banach spaces using generalized Baernstein spaces}, preprint  arXiv:1308.5416.

\bibitem{DS}N.~Dunford, and J.~T.~ Schwartz, {\em Linear operators, Part I: general  theory.} Interscience Publishers, inc. New York.
\bibitem{FJT} T.~Figiel, W.B.~Johnson and L.~Tzafriri, {\em On Banach lattices and spaces having local unconditional structure,
with applications to Lorentz Function Spaces}, J. Approx. Theory {\bf 13} (1975) 395 -- 412.

\bibitem{FOS}  D.~Freeman, E.~Odell,  and Th.~Schlumprecht, {\em The universality of $\ell_1$ as a dual space}. Math. Ann. {\bf 351 }(2011), no. 1, 149 -- 186.

\bibitem{FOSZ} D.~Freeman, E.~Odell, and Th.~Schlumprecht, and A.~Zs\'ak, {\em Banach spaces of bounded Szlenk index. II. } Fund. Math. {\bf 205} (2009), no. 2, 16 -- 177.

\bibitem{Ga}  I.~Gasparis, {\em A dichotomy theorem for subsets of the power set of the natural numbers}. Proc. Amer. Math. Soc. {\bf 129} (2001), no. 3, 759 -- 764.


\bibitem{GMS} N.~Ghoussoub, B.~Maurey, and W.~Schachermayer, {\em  Slicings, selections and their applications.}  Canad. J. Math. {\bf 44} (1992), no. 3, 483 -- 504.


\bibitem{HMVZ} P.~ H\'ajek, V.~Montesinos Santaluc\'ia, J.~ Vanderwerff, and V. Zizler,  {\em Biorthogonal systems in Banach spaces.}  CMS Books in Mathematics/Ouvrages de MathŽmatiques de la SMC, 26. Springer, New York, (2008) xviii+339 pp.

\bibitem{Ja} R.C. James, {\em Uniformly non square Banach spaces}, Ann. Math. {\bf 80} (1964), 542 -- 550.

\bibitem{Jo1} W. B. Johnson, {\em Factoring compact operators}, Israel J. Math. {\bf 9} (1971), 337 -- 345.

 \bibitem{Jo2}  W.B. Johnson,
{\em On quotients of $L_p$ which are quotients of $l_p$}, 
Compositio Math. {\bf34} (1977), no.1, 69--89.

\bibitem{JZ1} 
 W.B.~Johnson, and B.~Zheng,  {\em A characterization of subspaces and quotients of reflexive Banach spaces with unconditional bases.} Duke Math. J. 
 {\bf 141}  (2008), no. 3, 505 -- 518.
 \bibitem{JZ2} 
 W.~B.~Johnson, and B.~Zheng, {\em  Subspaces and quotients of Banach spaces with shrinking unconditional bases.} Israel J. Math. {\bf 185} (2011), 375-- 388.
 

\bibitem{La}
G.~ Lancien, \emph{A survey on the {S}zlenk index and some of its
  applications}, RACSAM Rev. R. Acad. Cienc. Exactas F\'\i s. Nat. Ser. A Mat.
  \textbf{100} (2006), no.~1-2, 209--235.
  

 \bibitem{LT} J.~Lindenstrauss, and L.~Tzafriri, Classical Banach spaces. I. Sequence spaces. Ergebnisse der Mathematik und ihrer Grenzgebiete, Vol. 92. Springer-Verlag, Berlin-New York (1977).
 
\bibitem{Mark} A.I. Markushevich, {\em On a basis in the wide sense for linear spaces}, Dokl. Akad. Nauk. {\bf  41} (1943), 241-- 244.
 
\bibitem{Mart} 
D.A. Martin,  
{\em Borel determinacy }, 
Annals of Math. {\bf 102}  (1975),  363--371. 
derminancy 
 \bibitem{OS1} E.~Odell and Th. Schlumprecht, \emph{Trees and branches in {B}anach spaces},
  Trans. Amer. Math. Soc. \textbf{354} (2002), no.~10, 4085--4108
 
 \bibitem{OS2} E.~Odell, and Th.~ Schlumprecht, 
{\em Embedding into Banach spaces with finite dimensional decompositions.} 
RACSAM. Rev. R. Acad. Cienc. Exactas F's. Nat. Ser. A Mat. {\bf 100}  (2006), no. 1 -- 2, 295 -- 323. 
 
\bibitem{OSZ} E.~Odell, Th.~Schlumprecht, and A. ~Zs\'ak, {\em  Banach spaces of bounded Szlenk index.}  Studia Math. {\bf 183} (2007), no. 1, 63 -- 97. 

\bibitem{Pe} A. Pe\l czy\'nski, {\em Some problems in Banach spaces and Frechet spaces.} Israel J.~Math. {\bf 2} (1964), 132 -- 138.

\bibitem{Ro}  Ch.~Rosendal, {\em Infinite asymptotic games}. Ann. Inst. Fourier (Grenoble) {\bf  59 } (2009) no. 4, 1359 -- 1384.

\bibitem{Sa} C. Samuel, {\em Indice de Szlenk des C(K), in: S\'eminaire de GŽomŽtrie des Espaces de Banach}, vols. IÐII, Publications MathŽmatiques de lÕUniversitŽ Paris
VII, Paris (1983) 81-- 91.

\bibitem{Sz}
W.~Szlenk, \emph{The non-existence of a separable reflexive {B}anach space
  universal for all separable reflexive {B}anach spaces}, Studia Math.
  \textbf{30} (1968), 53--61. 

\bibitem{Z}
M. Zippin,
{\em Banach spaces with separable duals},
Trans. Amer. Math. Soc. {\bf 310}  (1988) no.~1, 371 -- 379.

\end{thebibliography}
\end{document}